%
%

\documentclass[
	a4paper,
	10pt,
	oneside
]{article}

\usepackage{amssymb,amsfonts,amsmath,amsthm}
\numberwithin{equation}{section}
\usepackage{mathtools,dsfont}
\usepackage{paralist,enumitem,bm,placeins,url}
\usepackage{array,color}
\usepackage{cite}

\usepackage[%
left=1in,
right=1in,
bottom=1in,
top=1in,
footskip=0.5in,
]{geometry}


\usepackage{tocbasic}
\DeclareTOCStyleEntry[
beforeskip=.2em plus 1pt,
]{tocline}{section}

\definecolor{FrameColor}{rgb}{0.85,0.85,0.85}

\newtheorem{theorem}{Theorem}[section]

\newtheorem{lemma}[theorem]{Lemma}
\newtheorem{proposition}[theorem]{Proposition}
\newtheorem{corollary}[theorem]{Corollary}
\newtheorem{remark}[theorem]{Remark}
\newtheorem{definition}[theorem]{Definition}

\definecolor{rosso}{rgb}{0.8,0,0}
\definecolor{LinkColor}{rgb}{0,0,1}
\definecolor{LinkColor2}{rgb}{0,0.5,0}
\definecolor{lg}{rgb}{.25,.25,.25}
\usepackage{hyperref} 
\makeatletter
\hypersetup{%
	colorlinks	=true,%
	linkcolor	=LinkColor,%
	citecolor	=LinkColor2,%
	urlcolor	=LinkColor,%
}
\makeatother


\newcommand{\Om}{\Omega}
\newcommand{\Ga}{\Gamma}

\newcommand{\Si}{{\Sigma}}

\newcommand{\LL}{\mathcal{L}}
\renewcommand{\SS}{\mathfrak{S}}

\newcommand{\N}{\mathbb{N}}
\newcommand{\R}{\mathbb{R}}

\newcommand{\dG}{\, \mathrm d\Gamma}

\newcommand{\dd}{\mathrm d}
\newcommand{\ds}{\, \mathrm ds}
\newcommand{\dx}{\, \mathrm dx}

\newcommand{\pd}{\partial}

\newcommand{\pdnu}{\pd_{\bm{n}}}

\newcommand{\abs}[1]{\left| #1 \right|}
\newcommand{\sm}[1]{[ \hspace{1pt} #1 \hspace{1pt} ]}
\newcommand{\norm}[1]{\| #1 \|}
\newcommand{\bignorm}[1]{\big\| #1 \big\|}
\newcommand{\inn}[2]{ \langle #1 , #2  \rangle}
\newcommand{\biginn}[2]{ \big< #1 , #2  \big>}
\newcommand{\scp}[2]{ \left( #1 , #2  \right)}
\newcommand{\bigscp}[2]{\big( #1 , #2 \big)}
\newcommand{\mean}[1]{\langle #1 \rangle}

\newcommand{\eps}{\varepsilon}
\newcommand{\Lap}{\Delta}
\newcommand{\Lapg}{\Delta_{\Gamma}}

\newcommand{\n}{\mathbf{n}}
\newcommand{\grad}{\nabla}
\newcommand{\gradg}{\nabla_\Gamma}
\newcommand{\mo}{m_\Omega}
\newcommand{\mg}{m_\Gamma}
\newcommand{\dtau}{\, \mathrm d\tau}

\newcommand{\del}{\partial}
\newcommand{\delt}{\partial_t}

\newcommand{\deln}{\partial_\n}

\newcommand{\VV}{{\mathcal{V}}}
\newcommand{\Vk}{{\mathcal{V}^k}}

\newcommand{\WW}{\mathcal{W}}
\newcommand{\Wm}{{\mathcal{W}_{\beta,m}^1}}
\newcommand{\Wmt}{{\mathcal{W}_{\beta,m}^2}}
\newcommand{\Wmk}{{\mathcal{W}_{\beta,m}^k}}
\newcommand{\Wo}{{\mathcal{W}_{\beta,0}^1}}
\newcommand{\Wot}{{\mathcal{W}_{\beta,0}^2}}

\newcommand{\DD}{\mathcal{D}}
\newcommand{\Db}{{\mathcal{D}_{\beta}}}

\renewcommand{\LL}{\mathcal{L}}

\newcommand{\HH}{\mathcal{H}}
\newcommand{\HLB}{\HH^1_{L,\beta}}

\newcommand{\intO}{\int_\Omega}
\newcommand{\intG}{\int_\Gamma}

\newcommand{\wto}{\rightharpoonup}

\newcommand{\emb}{\hookrightarrow}

\newcommand{\RP}{\mathbb{R}_0^+}

\newcommand{\mom}{m_\Om}
\newcommand{\mga}{m_\Ga}

\newcommand{\suchthat}{\;\ifnum\currentgrouptype=16 \middle\fi|\;}

\newcommand{\dist}{\mathrm{dist}}

\newcommand{\Index}[2][0pt]{%
	\raisebox{#1}{\scriptsize\ensuremath{#2}}
}

\makeatletter
\renewenvironment{proof}[1][\proofname]{%
	\par\pushQED{\qed}\normalfont%
	\topsep6\p@\@plus6\p@\relax
	\trivlist\item[\hskip\labelsep\bfseries#1\@addpunct{.}]%
	\ignorespaces
}{%
	\popQED\endtrivlist\@endpefalse
}
\makeatother

\makeatletter
\renewcommand\paragraph{\@startsection{paragraph}{4}{\z@}%
	{1ex \@plus1ex \@minus.2ex}%
	{-1em}%
	{\normalfont\normalsize\bfseries}}
\renewcommand\subparagraph{\@startsection{paragraph}{4}{\z@}%
	{1ex \@plus1ex \@minus.2ex}%
	{-1em}%
	{\normalfont\normalsize\itshape}}
\makeatother

%
%

\begin{document}
	
	\title{\sc Long-time dynamics of the Cahn--Hilliard equation with kinetic rate dependent\\ dynamic boundary conditions}
	
	\author{Harald Garcke \footnotemark[1] \and Patrik Knopf \footnotemark[1] \and Sema Yayla \footnotemark[2]}
	
	\renewcommand{\thefootnote}{\fnsymbol{footnote}}
	
	\footnotetext[1]{Fakult\"at f\"ur Mathematik, Universit\"at Regensburg, 93053 Regensburg, Germany,  \tt(\href{mailto:Harald Garcke@ur.de}{harald.garcke@ur.de}, \href{mailto:Patrik.Knopf@ur.de}{patrik.knopf@ur.de}) }
	\footnotetext[2]{Department of Mathematics, Faculty of Science, Hacettepe University, Beytepe 06800, Ankara, Turkey 
		\tt(\href{mailto:semasimsek@hacettepe.edu.tr}{semasimsek@hacettepe.edu.tr}) }	
	
	\date{}
	\maketitle
	
	\begin{center}
		\small
		{
			\textit{This is a preprint version of the paper. Please cite as:} \\  
			H. Garcke, P. Knopf, S. Yayla. Nonlinear Analysis, 215:112619, 2022  \\ 
			\url{https://doi.org/10.1016/j.na.2021.112619}
		}
	\end{center}

\begin{abstract}
	We consider a Cahn--Hilliard model with kinetic rate dependent dynamic boundary conditions that was introduced by Knopf, Lam, Liu and Metzger (ESAIM Math.~Model.~Numer.~Anal., 2021) and will thus be called the KLLM model.
	In the aforementioned paper, it was shown that solutions of the KLLM model converge to solutions of the GMS model proposed by Goldstein, Miranville and Schimperna (Physica D, 2011) as the kinetic rate tends to infinity.  
	We first collect the weak well-posedness results for both models and we establish some further essential properties of the weak solutions. Afterwards, we investigate the long-time behavior of the KLLM model. We first prove the existence of a global attractor as well as convergence to a single stationary point. Then, we show that the global attractor of the GMS model is stable with respect to perturbations of the kinetic rate. Eventually, we construct exponential attractors for both models, and we show that the exponential attractor associated with the GMS model is robust against kinetic rate perturbations.
\end{abstract}

\begin{small}
\noindent{\bf Keywords:}  Cahn-Hilliard equation, dynamic boundary conditions, long-time dynamics, stability of global attractors, robustness of exponential attractors. \\[1ex]
\noindent{{\bf 2010 MSC classification:}  
	35B40, 
	35B41, 
	35K35  
	35K61  
	35Q92, 
	37L30, 
}
\end{small}

%
%

\bigskip
\setlength\parindent{0ex}
\setlength\parskip{1ex}
\allowdisplaybreaks

\section{Introduction}

The Cahn--Hilliard equation was originally introduced in \cite{cahn-hilliard} to describe spinodal decomposition in binary alloys.  
Meanwhile, it has become one of the most popular models to describe various kinds of phase separation phenomena arising, for instance, in materials science, life sciences and image processing. The standard Cahn--Hilliard equation as proposed in \cite{cahn-hilliard} reads as follows:
\begin{subequations}
	\label{CH}
	\begin{alignat}{2}
	\label{CH:1}
	&\delt u = \mom \Lap \mu &&\quad\text{in } Q := \Omega\times (0,\infty),\\
	\label{CH:2}
	&\mu = -\eps \Lap u + \eps^{-1} F'(u) &&\quad\text{in } Q,\\
	\label{CH:3}
	&u\vert_{t=0}=u_0 &&\quad\text{in } \Omega.
	\end{alignat}
\end{subequations}
Here, $\Omega\subset\R^d$ stands for a bounded domain (usually $d\in\{2,3\}$) with boundary $\Gamma$, $\Lap$ stands for the Laplace operator acting on $\Omega$, and $\mo$ denotes a mobility parameter. For simplicity, it is assumed to be a non-negative constant, although non-constant mobilities find a use in some situations (see, e.g., \cite{elliotgarcke}). 

In order to describe a binary mixture, the phase-field variable $u$ represents the difference in volume fractions of both materials. 
After a short period of time, the solution $u$ will attain values
close to $\pm 1$ in most parts of the domain $\Omega$. These regions, which correspond to the pure phases of the materials, are 
separated by a diffuse interface whose thickness is proportional to the parameter $\eps>0$ appearing in \eqref{CH:2}. In most applications this interface will be very thin and thus, the parameter $\eps$ is usually chosen to be very small.
The time evolution of the mixture described by $u$ is driven by the chemical potential $\mu$ in the bulk (i.e., in $\Omega$).
It is given as the derivative of the following Ginzburg--Landau type free energy:
\begin{align}
E_\text{bulk}(u) = \int_\Omega \frac \eps 2|\grad u|^2 + \frac 1 \eps F(u) \dx.
\end{align}
The bulk potential $F$ is usually double-well shaped. Typically, it attains its minimum at $-1$ and $1$ and has a local maximum in between at $0$. From a mathematical point of view, $F$ is responsible for the phase separation since it is energetically favorable for $u$ to attain values close to $\pm 1$.
A physically relevant choice is the \emph{logarithmic potential}
\begin{align}
\label{POT:LOG}
F_\text{log}(s)= \frac\vartheta 2 \big((1+s)\ln(1+s) + (1-s)\ln(1-s)\big) 
- \frac{\vartheta_c}{2} s^2, \quad s\in (-1,1)
\end{align}
with constants $0<\vartheta<\vartheta_c$.
It is often approximated by the \emph{smooth double-well potential}
\begin{align}
\label{POT:SMOOTH}
F_\text{sdw}(s) = \frac{1}{4}(s^2-1)^2, \quad s\in\R
\end{align}
which is easier to handle in terms of mathematical analysis. In this paper, we will deal with general smooth potentials satisfying certain polynomial growth conditions such that \eqref{POT:SMOOTH} fits into our setting. However, singular potentials like \eqref{POT:LOG} cannot be taken into account in our approach.

To ensure well-posedness of the system \eqref{CH}, certain boundary conditions on $u$ and $\mu$ need to be imposed. The classical choices are the homogeneous Neumann conditions
\begin{alignat}{2}
\label{HNC:1}
\del_\n u &= 0 &&\quad\text{on}\;\; \Sigma:=\Gamma\times(0,\infty),\\
\label{HNC:2}
\del_\n \mu &= 0 &&\quad\text{on}\;\; \Sigma.
\end{alignat}
The \emph{no-flux condition} $\eqref{HNC:2}$ implies mass conservation in the bulk, meaning that (sufficiently regular) solutions satisfy
\begin{align}
\int_\Omega u(t) \dx = \int_\Omega u(0) \dx, \quad t\in [0,\infty).
\end{align}
Moreover, due to \eqref{HNC:1} and \eqref{HNC:2}, we obtain the following energy dissipation law:
\begin{align}
\frac{d}{dt} E_\text{bulk}\big(u(t)\big) + \mom \intO |\grad\mu(t)|^2 \dx = 0, 
\quad t\in  [0,\infty).
\end{align}
Furthermore, $\eqref{HNC:1}$ can be regarded as a \emph{contact angle condition} as it enforces the diffuse interface separating the regions of pure materials to intersect the boundary at a perfect right angle. 
The Cahn--Hilliard equation \eqref{CH} with the homogeneous Neumann conditions \eqref{HNC:1} and \eqref{HNC:2} is already very well understood and has been studied from many different viewpoints (see, e.g., 
\cite{Abels-Wilke,Bates-Fife,Cherfils,elliotgarcke,elliotzheng}). In particular, we refer to \cite{zheng,rybka,miranville-lt,GY,EMZ} for the investigation of long-time behavior. 

However, in many situations the contact angle condition \eqref{HNC:1} turned out to be very restrictive as in many applications the contact angle of the interface will not only deviate from ninety degrees but also change dynamically over the course of time.
In certain situations (e.g., in hydrodynamic applications), it also turned out to be essential to model short-range interactions between the mixture of materials and the solid wall of the container more precisely. To this end physicists (see \cite{Fis1,Fis2,Kenzler}) proposed that the total free energy should contain an additional contribution on the surface being also of Ginzburg--Landau type:
\begin{equation}\label{DEF:ENS}
E_\text{surf}(u)  = \intG \frac{\kappa \delta }{2} \abs{\gradg u}^2 + \frac{1}{\delta} G(u) 
\dG.
\end{equation} 
Here, $\gradg$ denotes the surface gradient operator, the constant $\kappa\ge 0$ acts as a weight for surface diffusion effects and the parameter $\delta>0$ is related to the thickness of the diffuse interface on the boundary.
Moreover, the function $G$ is an additional surface potential. If phase separation processes are expected to also occur on the boundary it makes sense to assume that $G$ exhibits a double-well structure similar to $F$. 
In this paper, we will thus impose similar conditions on $G$ as on $F$ such that the choice $G=F_\mathrm{sdw}$ is admissible.
The total free energy $E=E_\text{bulk}+E_\text{surf}$ then reads as
\begin{equation}\label{DEF:EN}
E(u) 
= \int_\Omega \frac \eps 2|\grad u|^2 + \frac 1 \eps F(u) \dx
+ \intG \frac{\kappa \delta }{2} \abs{\gradg u}^2 + \frac{1}{\delta} G(u) \dG.
\end{equation} 

Associated with this total free energy, various Cahn-Hilliard type systems with dynamic boundary conditions have been proposed and investigated in the literature (see,e.g.,
\cite{colli-fukao-ch,colli-gilardi,Gal1,GalWu,colli-gilardi-sprekels,liero,mininni,miranville-zelik,motoda,racke-zheng,WZ, FukaoWu, Wu,GalGra, CGG, CGW, CFW, MW}). 
In recent times, dynamic boundary conditions which also exhibit a Cahn--Hilliard type structure have become very popular.
Therefore, we want to highlight the Cahn--Hilliard equation subject to a class of dynamic boundary conditions of Cahn--Hilliard type depending on a parameter $L\in[0,\infty]$:
\begin{subequations}\label{CH:INT}
	\begin{alignat}{3}
	& \delt u = \mo \Delta\mu, 
	&&\mu = - \eps \Lap  u + \eps^{-1} F'( u) 
	&&\quad \text{in } Q, \label{CH:INT:1}\\
	& \delt v =  \mg \Lapg  \theta - \beta \mo \pdnu\mu,\quad 
	&&\theta = - \delta \kappa\Lapg v + \delta^{-1} G'(v) + \eps \pdnu  u 
	&&\quad \text{on } \Sigma, \label{CH:INT:2}\\
	& u\vert_\Sigma = v 
	&&&&\quad \text{on } \Sigma, \label{CH:INT:3}\\[1ex]
	&\left\{
	\begin{aligned}
	\mu\vert_\Si &= \beta  \theta \\
	L \pdnu\mu\vert_\Si &= \beta  \theta -  \mu\vert_\Si \\
	\pdnu\mu\vert_\Si &= 0  \\
	\end{aligned}
	\right.
	&&
	\begin{aligned}
	&\text{if}\; L=0,\\
	&\text{if}\; L\in(0,\infty),\\
	&\text{if}\; L=\infty,\\
	\end{aligned}
	&&\quad \text{on } \Sigma, \label{CH:INT:4} \\[1ex]
	& (u,v)\vert_{t=0} = (u_0,v_0) 
	&&\text{with}\; u_0\vert_\Si = v_0
	&&\quad \text{in } \Omega \times \Gamma. \label{CH:INT:5}
	\end{alignat}
\end{subequations}
The chemical potentials $\mu$ and $\theta$ which are coupled by the boundary condition $\eqref{CH:INT:4}$ describe the chemical interaction between the materials in the bulk and the materials on the surface. The constant $1/L$ is related to a \emph{kinetic rate}, and the term $L\deln\mu$ describes adsorption and desorption processes. Here, the mass flux $-\mo \deln \mu$ (which describes the motion of the materials towards and away from the boundary) is directly influenced by differences in the chemical potentials through the condition \eqref{CH:INT:4}. As the models corresponding to the cases $L=0$, $L=\infty$ and $0<L<\infty$ were introduced separately in different articles in the literature, we are now going to briefly highlight their most important features.

\paragraph{The case $L=0$ (GMS model).} The system \eqref{CH:INT} was first introduced with $L=0$ in \eqref{CH:INT:4} by G.~Goldstein, A.~Miranville and G.~Schimperna \cite{GMS}. In view of the authors' initials we will refer to this system as the \textit{GMS model}.

It can be regarded as an extension of a model that was previously introduced by Gal \cite{Gal1} where the equation $ \delt v = -\beta \pdnu \mu + \gamma \mu $ on $\Sigma$ (for some additional constant $\gamma$) was proposed instead of $\eqref{CH:INT:2}$.
Here, the chemical potentials in the bulk and on the boundary can differ only by the factor $\beta$, i.e., they are directly proportional.
In other words, this means that the chemical potentials $\mu$ and $\theta$ are always in chemical equilibrium. 
It is worth mentioning that in \cite{GMS}, $\beta$ is even allowed to be a uniformly positive function in $L^\infty(\Ga)$. However, in this paper we restrict ourselves to the case where $\beta$ is a positive constant.

We observe that any (sufficiently regular) solution to the GMS system satisfies the mass conservation law
\begin{align}
\label{GMS:MASS}
\beta \intO u(t) \dx + \intG v(t) \dG = \beta \intO u(0) \dx + \intG v(0) \dG, \quad t\in [0,\infty),
\end{align}
meaning that the parameter $\beta$ can be interpreted as a weight for the bulk mass compared to the surface mass. Moreover, the energy dissipation law 
\begin{align}
\label{GMS:NRG}
\frac{d}{dt} E\big(u(t),v(t)\big) + \mom \intO |\grad\mu(t)|^2 \dx +  \mga \intG|\gradg \theta(t)|^2 \dG = 0
\end{align}
is satisfied for all $t\in[0,\infty)$. In particular, we observe that the dissipation rate is strongly influenced by the values of the mobilities $\mom$ and $\mga$. 

The weak well-posedness and some results on long-time behavior were established in \cite{GMS}. We will present the well-posedness result as well as some additional important properties of weak solutions in Section~\ref{sec:wellposedGMS}. A summary of the results on long-time behavior is given in Section~\ref{LT:GMS}.

We further point out that numerical analysis for the GMS model as well as some numerical simulations can be found in \cite{Harder2020}. A nonlocal variant of the GMS model (including a nonlocal dynamic boundary condition) was proposed and analyzed in \cite{KS}.

\paragraph{The case $L=\infty$ (LW model).} Some time after the introduction of the GMS model, the system \eqref{CH:INT} with $L=\infty$ in \eqref{CH:INT:4} was derived by C.~Liu and H.~Wu \cite{LW} via an energetic variational approach. We will thus call this system the \textit{LW model}. 

The crucial difference to the GMS model is that the Dirichlet type boundary condition $\beta\theta = \mu$  is replaced by the no mass flux condition $\deln\mu = 0$. This means that the chemical potentials $\mu$ and $\theta$ are not directly coupled. However, mechanical interactions between the bulk and the surface materials are still taken into account through the trace condition \eqref{CH:INT:3} for the phase-field variable. Mathematically speaking, the elliptic subproblems $\big(\eqref{CH:INT:1}_1,\eqref{CH:INT:3}\big)$ and $\eqref{CH:INT:2}_1$ are coupled only through the trace relation \eqref{CH:INT:3}.

Compared to \eqref{GMS:MASS}, we obtain the very different mass conservation law
\begin{align}
\label{LW:MASS}
\intO u(t) \dx = \intO u(0) \dx 
\quad\text{and}\quad 
\intG v(t) \dG = \intG v(0) \dG, \quad t\in [0,\infty),
\end{align}
meaning that the bulk mass and the surface mass are conserved \textit{separately}. However, the energy dissipation law \eqref{GMS:NRG} is still satisfied by solutions of this system.

The well-posedness of the LW model was addressed in~\cite{LW,GK}, and its long-time behavior was investigated in~\cite{LW,MW}.
For numerical analysis and some simulations we refer to~\cite{Metzger2019,BaoZhangLW}. 

Moreover, a variant of the LW model \eqref{CH:INT} was proposed and investigated in \cite{KL} where the relation between $u$ and $v$ is given by the Robin type transmission condition
\begin{align*} 
K\deln u = H(v) - u \quad \text{ on } \Sigma
\end{align*}
with $K>0$ and a function $H\in C^2(\R)$ satisfying suitable growth conditions. In particular, it was rigorously established in \cite{KL} that in the case $H(s) = s$, solutions of this model converge to solutions of the LW model in the limit $K\to 0$ in some suitable sense. 

\paragraph{The case $0<L<\infty$ (KLLM model).} 
For $0<L<\infty$, the system \eqref{CH:INT} was recently proposed and analyzed by the second author in collaboration with K.F.~Lam, C.~Liu and S.~Metzger \cite{KLLM}. It will thus be referred to as the \textit{KLLM model}. 

The Robin type condition \eqref{CH:INT:3} establishes a connection between the GMS model (\eqref{CH:INT} with $L=0$) and the LW model (\eqref{CH:INT} with $L=\infty$) despite their very different chemical and physical properties. Suppose that $\beta>0$ and that $(u^L, v^L, \mu^L, \theta^L)$ is a solution of the system \eqref{CH:INT} corresponding to the parameter $L>0$.
Let $(u^0, v^0, \mu^0, \theta^0)$ denote its formal limit as $L\to 0$ and let $(u^\infty, v^\infty, \mu^\infty, \theta^\infty)$ denote its formal limit as $L\to\infty$. 
Passing to the limit in the Robin boundary condition, we deduce that 
\begin{align*}
\beta\theta^0 = \mu^0 \quad\text{on } \Sigma
\quad\text{and}\quad
\deln \mu^\infty = 0\quad\text{on } \Sigma.
\end{align*}
This corresponds to the limit cases of instantaneous relaxation to chemical equilibrium ($1/L\to\infty$), and the absence of adsorption and desorption ($1/L\to0$).
We infer that $(u^0, v^0, \mu^0, \theta^0)$ is a solution to the GMS model while $(u^\infty, v^\infty, \mu^\infty, \theta^\infty)$ is a solution to the LW model. These formal considerations are rigorously verified in \cite{KLLM}. In this regard, the Cahn--Hilliard system \eqref{CH:INT} with $L\in(0,\infty)$ can be interpreted as an interpolation between the GMS model and the LW model where the interpolation parameter $L$ corresponds to positive but finite kinetic rates.  

We observe that solutions of the KLLM model satisfy the same mass conservation law \eqref{GMS:MASS} as solutions of the GMS model. However, we obtain an additional term in the dissipation rate depending on the relaxation parameter $L$. To be precise, it holds that 
\begin{align}
\label{INT:NRG}
\frac{d}{dt} E\big(u(t)\big) + \mom \intO |\grad\mu(t)|^2 \dx + \mga \intG |\gradg \theta(t)|^2 \dG + \frac{\mom}{L} \intG \big(\beta\theta(t)-\mu(t)\big)^2 \dG = 0
\end{align}
for all $t\in [0,\infty)$.  In particular, this implies that the total free energy $E$ is decreasing along solutions, and as $E$ is bounded from below (at least for reasonable choices of $F$ and $G$), we infer that $\frac{d}{dt} E(u(t))$ tends to zero as $t\to\infty$. As a consequence, the chemical potentials will converge to the chemical equilibrium $\mu = \beta \theta$ as $t\to\infty$. 

We further point out that weak and strong well-posedness of the KLLM model was established in \cite{KLLM}. As already mentioned above, the asymptotic limits $L\to 0$ and $L\to \infty$ were also discussed in \cite{KLLM}. We will exploit some of these results for the limit $L\to 0$ in Section~5, Lemma~\ref{LEM:CONV:GMS}.

For simulations and numerical analysis concerning the KLLM model, we refer to \cite{KLLM,BaoZhangKLLM}.
A nonlocal variant of the KLLM model (including a nonlocal dynamic boundary condition) was proposed and investigated in \cite{KS}.

\paragraph{Contents of this paper.}
This paper is structured as follows. In Section~\ref{SEC:PIT}, we first introduce some notation and assumptions which will be used throughout the paper. We further define some suitable spaces and operators and we introduce several important interpolation inequalities. In Section~\ref{SEC:WPP}, we present the well-posedness results for both the KLLM model and the GMS model for the reader's convenience. Furthermore, we establish some properties of the weak solutions to both models which are essential to investigate their long-time behavior. Subsection~\ref{sec:long-time} is devoted to the long-time analysis of the KLLM model. We first characterize the set of stationary points containing the time-independent solutions to the KLLM model. Next, we establish the existence of a (unique) global attractor and we prove convergence of weak solutions to a single stationary point as $t\to\infty$ by means of a \L ojasiewicz--Simon inequality. Eventually, we show that the global attractor can be represented as the union of the unstable manifolds of all stationary points. In Subsection~\ref{LT:GMS}, supported by the results on long-time behavior that have already been established in \cite{GMS}, we sketch how all results we proved in Subsection~\ref{sec:long-time} for the KLLM model can be obtained also for the GMS model in a similar fashion. Next, in Section~\ref{sec:contglbatt}, we show that the global attractor of the GMS model (i.e., $L=0$) is stable with respect to perturbations of the kinetic rate (i.e., $L>0$ being small). Ultimately, in Section~\ref{SEC:EXP}, we show that there exists a family $\{\mathfrak M^L\}_{L\ge 0}$ of exponential attractors for the system \eqref{CH:INT} such that the attractor $\mathfrak M^0$ associated with the GMS model is robust in some certain sense against perturbations of the kinetic rate (i.e., $L>0$ being small).

\paragraph{Comparison of the GMS/KLLM dynamics with the LW dynamics.}
We point out that we do not see any possibility of transferring the results of Section~\ref{sec:contglbatt} and Section~\ref{SEC:EXP} to the scenario $L\to\infty$. This is mainly due to the different mass conservation law of the LW model. Roughly speaking, the mass conservation law \eqref{GMS:MASS} of the GMS/KLLM model fixes only one degree of freedom, whereas the mass conservation law \eqref{LW:MASS} of the LW model already fixes two degrees of freedom. As a consequence, the GMS/KLLM model and the LW model exhibit a different behavior regarding energy dissipation. Namely, as only one degree of freedom is fixed by the mass conservation law, the free energy can usually be decreased much further by the GMS/KLLM model than by the LW model. This phenomenon can also be observed in numerical simulations, see, e.g., \cite[Fig.~4]{KLLM} or \cite[Fig.~16]{BaoZhangKLLM}. This already indicates that the GMS/KLLM model and the LW model differ in their long-time behavior.
For more details why our analysis in Section~\ref{sec:contglbatt} and Section~\ref{SEC:EXP} fails in the situation $L\to\infty$, we refer to Remark~\ref{REM:FAIL}.

\paragraph{Further related results in the literature.}
For further results on long-time behavior for Cahn--Hilliard models with dynamic boundary conditions, we refer to \cite{pruss-wilke,GMS-lt,Gal-lt,MW,GMS,CFP}.
We also want to mention \cite{israel-lt} where the long-time dynamics of an Allen--Cahn model with dynamic boundary conditions were studied,
and \cite{israel-lt,CFP} where the long-time behavior of a Caginalp phase field model with dynamic boundary conditions was analyzed.

\section{Preliminaries and important tools}\label{SEC:PIT}

We will now fix some notation and assumptions that are supposed to hold throughout this paper.

\paragraph{Notation.}
\begin{enumerate}[label=$(\mathrm{N \arabic*})$, ref = $\mathrm{N \arabic*}$]
\item Sometimes, we will use the notation $\RP:=[0,\infty)$. The symbol $\N_0$ denotes the set of natural numbers including zero and $\N = \N_0\setminus\{0\}$.
\item For any real numbers $k \geq 0$ and $1 \leq p \leq \infty$, the standard Lebesgue and Sobolev spaces over a domain $\Omega$ are denoted as $L^p(\Omega)$ and $W^{k,p}(\Omega)$. We write $\norm{\cdot}_{L^p(\Omega)}$ and $\norm{\cdot}_{W^{k,p}(\Omega)}$ to denote the standard norms on these spaces. If $p = 2$, these spaces are Hilbert spaces and we use the notation $H^k(\Omega) = W^{k,2}(\Omega)$. We point out that $H^0(\Omega)$ can be identified with $L^2(\Omega)$. An analogous notation is used for Lebesgue and Sobolev spaces on $\Gamma:=\partial\Omega$, provided that the boundary is sufficiently regular.  
\item For any Banach space $X$, we write $X'$ to denote its dual space. The associated duality pairing of elements $\phi\in X'$ and $\zeta\in X$ is denoted as $\inn{\phi}{\zeta}_X$. If $X$ is a Hilbert space, we write $(\cdot, \cdot)_X$ to denote its inner product. 
\item We define
\begin{align*}
\mean{u}_\Omega := \begin{cases}
\frac{1}{\abs{\Omega}} \inn{u}{1}_{H^1(\Omega)} & \text{ if } u \in H^1(\Omega)', \\
\frac{1}{\abs{\Omega}} \int_\Omega u \dx & \text{ if } u \in L^1(\Omega)
\end{cases}
\end{align*}
to denote the (generalized) spatial mean of $u$. Here, $\abs{\Omega}$ denotes the $d$-dimensional Lebesgue measure of $\Omega$. The spatial mean of a function $v \in H^1(\Gamma)'$ (or $v \in L^1(\Gamma)$, respectively) is defined analogously.
\item Let $(X,d)$ be a metric space. Then for any sets $A,B\subset X$, the \emph{Hausdorff semidistance} is defined as
\begin{align}
	\label{DEF:SEMIDIST}
	\dist_{X}(A,B) :=\underset{a\in A}{\sup } \; \underset{b\in B}{\inf } \; d \left( a,b\right), 
\end{align} 
and the \emph{symmetric Hausdorff distance} is given by 
\begin{align}
	\label{DEF:SYMDIST}
	\dist_{\mathrm{sym},X}(A,B) := \max\big( \dist_{X}(A,B) \,{,}\, \dist_{X}(B,A) \big).
\end{align}
For any compact set $K\subset X$, the \emph{fractal dimension of $K$} is defined as
\begin{align}
	\label{DEF:DIMFRAC}
	\dim_{\mathrm{frac},X}(K) := \underset{r\to 0}{\lim\sup} \frac{\ln\big(\mathcal N_r(K;X)\big)}{-\ln(r)},
\end{align}
where $\mathcal N_r(K;X)$ denotes the minimal number of balls in $X$ with radius $r$ that are necessary to cover the set $K$.
\end{enumerate}

\paragraph{Assumptions.}%
We make the following general assumptions.
\begin{enumerate}[label=$(\mathrm{A \arabic*})$, ref = $\mathrm{A \arabic*}$]
	\item \label{ass:dom} We assume that $\Omega\subset \R^d$ with $d\in\{2,3\}$ is a bounded domain whose boundary $\Gamma:=\del\Omega$ is of class $C^3$. We further use the notation
	\begin{align*}
		Q:=\Omega\times(0,\infty),\quad \Si:=\Ga\times(0,\infty).
	\end{align*}
	\item \label{ass:const} In general, we assume that the constants occurring in the system \eqref{CH:INT} satisfy $T$, $\beta$, $\kappa$, $\mo$, $\mg>0$ and $L\in[0,\infty)$. 
	Since the choice of $\delta$, $\eps$, $\kappa$, $\mo$ and $\mg$ has no impact on the mathematical analysis, we will simply set $\delta=\eps=\kappa=\mo=\mg=1$ for convenience in the mathematical analysis.
	\item  \label{ass:pot}
	We assume that the potentials $F$ and $G$ are non-negative functions which can be written as $F=F_1+F_2$ and $G=G_1+G_2$ with  $F_1,F_2,G_1,G_2 \in C^2(\R)$ such that the following properties hold:
	\begin{enumerate}[label=$(\mathrm{A 3.\arabic*})$, ref = $\mathrm{A 3.\arabic*}$]
		\item \label{ass:pot:1} There exist exponents $2<p\le 4$ and $q>2$,
		as well as constants $a_{F},a_{F'},c_{F},c_{F'}>0$ and $b_{F},b_{F'}\ge 0$ such that for all $s\in\R$,
		\begin{subequations}
		\begin{alignat}{3}
		\label{GR:F}
		a_{F}\abs{s}^p - b_{F} &\le F(s) &&\le c_{F}(1+\abs{s}^p), \\
		\label{GR:G}
		a_{G}\abs{s}^q - b_{G} &\le G(s) &&\le c_{G}(1+\abs{s}^q), \\
		a_{F'}\abs{s}^{p-1} - b_{F'} &\le \abs{F'(s)} &&\le c_{F'}(1+\abs{s}^{p-1}), \\
		a_{G'}\abs{s}^{q-1} - b_{G'} &\le \abs{G'(s)} &&\le c_{G'}(1+\abs{s}^{q-1}).
		\end{alignat}
		\end{subequations}
		\item \label{ass:pot:2} The functions $F_1$ and $G_1$ are convex and non-negative. There further exist positive constants $c_{F''}$ and $c_{G''}$ such that the  second order derivatives satisfy the growth conditions
		\begin{align}
		0\le F_1''(s) \leq c_{F''}(1 + |s|^{p-2}), \quad 
		0\le G_1''(s) \leq c_{G''} (1 + |s|^{q-2})
		\end{align}
		for all $s\in\R$. 
		\item \label{ass:pot:3} The derivatives $F_2'$ and $G_2'$ are Lipschitz continuous. Consequently, there exist positive constants $d_F$, $d_G$, $d_{F'}$ and $d_{G'}$ such that for all $s\in\R$,
		\begin{alignat}{3}
		&\abs{F_2'(s)} \le d_{F'}(1+\abs{s}), \quad &&\abs{G_2'(s)} \le d_{G'}(1+\abs{s}), \\ 
		&\abs{F_2(s)} \le d_F(1+\abs{s}^2), \quad &&\abs{G_2(s)} \le d_G(1+\abs{s}^2).
		\end{alignat}
	\end{enumerate} 
\end{enumerate}
 
In the analysis of the long-time dynamics, we will frequently use the following additional assumptions:

\begin{enumerate}[label=$(\mathrm{A\arabic*})$, ref = $\mathrm{A\arabic*}$, start=4] 
	\item \label{ass:comp} The potentials $F$ and $G$ satisfy the compatibility condition $F(s) = \beta G(s)$ for all $s\in\R$.
	\item \label{ass:ana} The potentials $F,G\in C^\infty(\R)$ are analytic functions. 
\end{enumerate}

\bigskip

\begin{remark}\label{REM:ASS}\normalfont 
		We point out that the polynomial double-well potential
		\begin{align*}
		W_\text{dw}(s)=\tfrac 1 4 (s^2-1)^2,\quad s\in\R,
		\end{align*} 
		is a suitable choice for $F$ and $G$ as it satisfies \eqref{ass:pot} with $p = 4$ and $q = 4$, and obviously also \eqref{ass:ana}. However, singular potentials like the logarithmic potential or the obstacle potential are not admissible as they do not even satisfy \eqref{ass:pot}.
\end{remark}

\bigskip

\paragraph{Preliminaries.} We next introduce several function spaces, products, norms and operators that will be used throughout this paper.
\begin{enumerate}[label=$(\mathrm{P \arabic*})$, ref = $\mathrm{P \arabic*}$]
	\item For any $k\in\N_0$ and any real number $p\in[1,\infty]$, we set
	\begin{align*}
	\LL^p := L^p(\Omega)\times L^p(\Gamma), 
	\quad\text{and}\quad
	\HH^k := H^k(\Omega) \times H^k(\Gamma),
	\end{align*}
	and we identify $\LL^2$ with $\HH^0$. Note that $\HH^k$ is a Hilbert space with respect to the inner product
	\begin{align*}
	\bigscp{(\phi,\psi)}{(\zeta,\xi)}_{\HH^k} := \bigscp{\phi}{\zeta}_{H^k(\Omega)} + \bigscp{\psi}{\xi}_{H^k(\Gamma)}
	\quad\text{for all $(\phi,\psi),(\zeta,\xi)\in\HH^k$,}
	\end{align*}
	and its induced norm $\norm{\cdot}_{\HH^k}:= \scp{\cdot}{\cdot}_{\HH^k}^{1/2}$.
	
	\item \label{pre:V} 
	For any $k\in\N$, we introduce the Hilbert space
	\begin{align*}
	\Vk  := 
	\big\{ (\phi,\psi) \in \HH^k \suchthat \phi \vert_\Gamma = \psi \;\text{a.e.~on}\; \Gamma\big\}
	\end{align*}
	endowed with the inner product $\scp{\cdot}{\cdot}_{\Vk}:=\scp{\cdot}{\cdot}_{\HH^k}$ and the norm $\norm{\cdot}_{\Vk}:=\norm{\cdot}_{\HH^k}$.
	
	\item \label{pre:W}
	For any 
	$m\in\R$, $\beta>0$ and $k\in\N$, we set
	\begin{align*}
		\Wmk &:= \big\{ (\phi,\psi) \in \Vk  \suchthat \beta\abs{\Omega}\mean{\phi}_\Om + \abs{\Gamma}\mean{\psi}_\Ga = m\big\}.
	\end{align*}
	Endowed with the inner product
	\begin{align*}
	\scp{\cdot}{\cdot}_{\WW_{\beta,0}^k} := \scp{\cdot}{\cdot}_{\HH^k}
	\end{align*}
	and the induced norm, the space $\WW_{\beta,0}^k$ is a Hilbert space.
	
	\item \label{pre:D}
	For $\beta>0$, we introduce the subspace
	\begin{align*}
		\Db := \left\{ (\phi,\psi) \in \HH^1 \;\big\vert\; \phi\vert_\Ga = \beta\psi \;\; \text{a.e. on}\; \Ga \right\} \subset \HH^1.
	\end{align*}
	Endowed with the inner product
	\begin{align*}
		\scp{\cdot}{\cdot}_\Db := \scp{\cdot}{\cdot}_{\HH^1}
	\end{align*}
	and its induced norm,
	the space $\Db$ is a Hilbert space. Moreover, we define the product
	\begin{align*}
		\biginn{(\phi,\psi)}{(\zeta,\xi)}_{\Db} := \scp{\phi}{\zeta}_{L^2(\Omega)} + \scp{\psi}{\xi}_{L^2(\Gamma)}
	\end{align*}
	for all $(\phi,\psi), (\zeta,\xi)\in \LL^2$. By means of the Riesz representation theorem, this product can be extended to a duality pairing on $(\Db)'\times \Db$, which will also be denoted as $\inn{\cdot}{\cdot}_{\Db}$. 
	
	In particular, the spaces $\big(\Db,\LL^2,(\Db)'\big)$ form a Gelfand triplet, and the operator norm on $(\Db)'$ is given by
	\begin{align*}
		\norm{(\phi,\psi)}_{(\Db)'} := \sup\Big\{\, \big|\inn{(\phi,\psi)}{(\zeta,\xi)}_{\Db}\big| \;\Big\vert\; (\zeta,\xi)\in\Db \text{ with } \norm{(\zeta,\xi)}_{\Db} = 1 \Big\},
	\end{align*}
	for all $(\phi,\psi) \in(\Db)'$. Note that the mapping
	\begin{align*}
		\mathfrak{I}:\VV^1\to\Db,\quad (\phi,\psi) \mapsto (\beta\phi,\psi)
	\end{align*}
	is an isomorphism.
	
	\item \label{pre:H} Let $L\ge 0$, $\beta> 0$ and $m\in\R$ be arbitrary. We define the space 
	\begin{align*}
		\HLB &:= 
		\begin{cases}
			\left\{ (\phi,\psi) \in \HH^1 \suchthat \beta\abs{\Omega}\mean{\phi}_\Om + \abs{\Gamma}\mean{\psi}_\Ga = 0\right\},
			&\text{if}\; L>0,\\[2ex]
			\left\{ (\phi,\psi) \in \Db \suchthat \beta\abs{\Omega}\mean{\phi}_\Om + \abs{\Gamma}\mean{\psi}_\Ga = 0\right\},
			&\text{if}\; L=0,
		\end{cases}
	\end{align*}
	along with the inner product 
	\begin{align*}
		\bigscp{(\phi,\psi)}{(\zeta,\xi)}_{L,\beta} 
		&:= \intO \nabla \phi \cdot \nabla \zeta \dx + \intG \gradg \psi \cdot \gradg \xi\dG \\
		&\qquad + \sigma(L) \intG(\beta \psi-\phi)(\beta\xi-\zeta) \dG,
	\end{align*}
	for all $(\phi,\psi),(\zeta,\xi) \in \HLB$, where
	\begin{align}
	\label{DEF:SIG}
		\sigma(L) = 
		\begin{cases}
			L^{-1} &\text{if}\; L>0,\\
			0 &\text{if}\; L=0.
		\end{cases}
	\end{align}
	This is indeed an inner product since $\scp{(\phi,\psi)}{(\phi,\psi)}_{L,\beta}=0$ already entails that $\phi$ and $\psi$ are constant almost everywhere and satisfy $\phi\vert_\Gamma = \beta \psi$ almost everywhere on $\Gamma$. Then, the mean value constraint of $\HLB$ directly yields $(\phi,\psi)=(0,0)$ almost everywhere.
	The induced norm is given as $\norm{\,\cdot\,}_{L,\beta}:= \scp{\cdot}{\cdot}_{L,\beta}^{1/2}$. 
	
	\item \label{pre:lin:L} 
	Let $L,\beta> 0$ be arbitrary. We define the space
	\begin{align*}
	\HH_{\beta}^{-1} &:= \big\{ (\phi,\psi) \in (\HH^1)' \suchthat \beta\abs{\Omega}\mean{\phi}_\Om + \abs{\Gamma}\mean{\psi}_\Ga = 0 \big\}
	\subset (\HH^1)'.
	\end{align*}
	We conclude from \cite[Thm.~3.3]{knopf-liu} that for any $(\phi,\psi)\in \HH_{\beta}^{-1}$, there exists a unique weak solution $\SS^L(\phi,\psi) = (\SS^L_\Omega(\phi,\psi),\SS^L_\Gamma(\phi,\psi))\in \HLB $ to the elliptic problem%
	\begin{subequations}
		\label{EQ:LIN}
		\begin{align}
		- \Lap \SS^L_\Om &= - \phi && \text{ in } \Omega, \\
		- \Lapg \SS^L_\Ga + \beta \pdnu \SS^L_\Om &= - \psi && \text{ on } \Gamma, \\
		\label{EQ:LIN:3}
		L\pdnu \SS^L_\Om &= (\beta \SS^L_\Ga - \SS^L_\Om) && \text{ on } \Gamma.
		\end{align}
	\end{subequations}	
	This means that $\SS^L(\phi,\psi)$ satisfies the weak formulation
	\begin{align}
	\label{WF:LIN}
		\bigscp{\SS^L(\phi,\psi)}{(\zeta,\xi)}_{L,\beta} 
			= - \biginn{(\phi,\psi)}{(\zeta,\xi)}_{\HH^1}
	\end{align}
	for all test functions $(\zeta,\xi)\in \HH^1$.
	As in \cite[Thm.~3.3 and Cor.~3.5]{knopf-liu}, we can thus define the solution operator
	\begin{align*}
	\SS^L: \HH_{\beta}^{-1} \to \HLB,\quad (\phi,\psi) \mapsto \SS^L(\phi,\psi) = (\SS^L_\Omega(\phi),\SS^L_\Gamma(\phi,\psi))  
	\end{align*}
	as well as an inner product and its induced norm on the space $\HH_{\beta}^{-1}$ by
	\begin{align*}
	\bigscp{(\phi,\psi)}{(\zeta,\xi)}_{L,\beta,*} &:= \bigscp{\SS^L(\phi,\psi)}{\SS^L(\zeta,\xi)}_{L,\beta}, \\
	\norm{(\phi,\psi)}_{L,\beta,*} &:=\scp{(\phi,\psi)}{(\phi,\psi)}_{L,\beta,*}^{1/2} 
	\end{align*}
	for all $(\phi,\psi),(\zeta,\xi)\in \HH_{\beta}^{-1}$. This norm is equivalent to the standard operator norm on $\HH_{\beta}^{-1}$.	
	Since $\HH_{L,\beta}^1 \subset \HH_{\beta}^{-1}$, the product $\scp{\cdot}{\cdot}_{L,\beta,*}$ can also be used as an inner product on $\HH_{L,\beta}^1$.
	Moreover, $\norm{\cdot}_{L,\beta,*}$ is also a norm on $\HH_{L,\beta}^1$ but $\HH_{L,\beta}^1$ is not complete with respect to this norm.
	
	\item \label{pre:lin:0}
	Let $\beta>0$ be arbitrary. We define the space
	\begin{align*}
		\DD_{\beta}^{-1} &:= \Big\{ (\phi,\psi) \in (\Db)' \;\big\vert\; \biginn{(\phi,\psi)}{(\beta,1)}_\Db = 0\, \Big\}
		\subset (\Db)'.
	\end{align*}
	Proceeding exactly as in \cite[Proof of Thm.~3.3]{knopf-liu}, we use the Lax--Milgram theorem to show that for any $(\phi,\psi)\in\DD_{\beta}^{-1}$, there exists a unique weak solution $\SS^0(\phi,\psi) = (\SS^0_\Omega(\phi),\SS^0_\Gamma(\phi,\psi))\in \HH^1_{0,\beta} \subset \Db$ to the elliptic problem
	\begin{subequations}
		\label{EQ:LIN}
		\begin{align}
			- \Lap \SS^0_\Om &= - \phi && \text{ in } \Omega, \\
			- \Lapg \SS^0_\Ga + \beta \pdnu \SS^0_\Om &= - \psi && \text{ on } \Gamma, \\
			\label{EQ:LIN:3}
			 \SS^0_\Om \vert_\Ga &= \beta \SS^0_\Ga && \text{ on } \Gamma.
		\end{align}
	\end{subequations}	
	This means that $\SS^0(\phi,\psi)$ satisfies the weak formulation
	\begin{align}
		\label{WF:LIN}
		\biginn{ \SS^0(\phi,\psi) }{ (\zeta,\xi) }_{0,\beta} 
			= - \biginn{(\phi,\psi)}{(\zeta,\xi)}_{\Db}
	\end{align}
	for all test functions $(\zeta,\xi)\in \Db$.	
	Similar to \cite[Thm.~3.3 and Cor.~3.5]{knopf-liu}, we can thus define the solution operator
	\begin{align*}
		\SS^0: \DD_{\beta}^{-1} \to \HH^1_{0,\beta},\quad (\phi,\psi) \mapsto \SS^0(\phi,\psi) = (\SS^0_\Omega(\phi),\SS^0_\Gamma(\phi,\psi))  
	\end{align*}
	as well as an inner product and its induced norm on the space $\DD_{\beta}^{-1}$ by
	\begin{align*}
		\bigscp{(\phi,\psi)}{(\zeta,\xi)}_{0,\beta,*} &:= \bigscp{\SS^0(\phi,\psi)}{\SS^0(\zeta,\xi)}_{0,\beta}, \\
		\norm{(\phi,\psi)}_{0,\beta,*} &:=\scp{(\phi,\psi)}{(\phi,\psi)}_{0,\beta,*}^{1/2} 
	\end{align*}
	for all $(\phi,\psi),(\zeta,\xi)\in \DD_{\beta}^{-1}$.	
	Since $\HH_{0,\beta}^1 \subset \DD_\beta^{-1}$, the product $\scp{\cdot}{\cdot}_{L,\beta,*}$ can also be used as an inner product on $\HH_{0,\beta}^1$.
	Moreover, $\norm{\cdot}_{L,\beta,*}$ is also a norm on $\HH_{0,\beta}^1$ but $\HH_{0,\beta}^1$ is not complete with respect to this norm.
\end{enumerate}

We next show that for functions in $\Wo$, the norm $\norm{\cdot}_{L,\beta,*}$ can be bounded by the norm $\norm{\cdot}_{(\HH^1)'}$ uniformly in $L\in[0,1]$.

\begin{lemma}\label{LEM:LBS}
	Let $\beta>0$ be arbitrary.
	Then, there exists a constant $C>0$ depending only on $\beta$ and $\Omega$ such that for all $L\in[0,1]$ and all $(\phi,\psi)\in \Wo$, we have 
	\begin{align*}
		\norm{(\phi,\psi)}_{L,\beta,*} \le C \norm{(\phi,\psi)}_{(\HH^1)'}.
	\end{align*} 
\end{lemma}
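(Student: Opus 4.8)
The strategy is to test the weak formulation \eqref{WF:LIN} defining $\SS^L$ with a cleverly chosen test function and then reduce everything to the standard dual norm on $(\HH^1)'$ together with the equivalence of the various norms on the subspaces involved. Fix $L\in[0,1]$ and $(\phi,\psi)\in\Wo$. By definition, $\norm{(\phi,\psi)}_{L,\beta,*}^2 = \scp{\SS^L(\phi,\psi)}{\SS^L(\phi,\psi)}_{L,\beta} = -\biginn{(\phi,\psi)}{\SS^L(\phi,\psi)}_{\HH^1}$, where the last equality follows by choosing $(\zeta,\xi) = \SS^L(\phi,\psi) \in \HH^1$ in \eqref{WF:LIN}. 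Hence
\begin{align*}
	\norm{(\phi,\psi)}_{L,\beta,*}^2
	\le \norm{(\phi,\psi)}_{(\HH^1)'} \, \norm{\SS^L(\phi,\psi)}_{\HH^1}.
\end{align*}
So it suffices to control $\norm{\SS^L(\phi,\psi)}_{\HH^1}$ by $\norm{(\phi,\psi)}_{L,\beta,*}$ (times a constant independent of $L\in[0,1]$); then the two $\norm{\SS^L(\phi,\psi)}$-factors cancel one power and we are left with the claimed inequality.

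The key point is therefore the estimate $\norm{\SS^L(\phi,\psi)}_{\HH^1} \le C \norm{\SS^L(\phi,\psi)}_{L,\beta} = C\norm{(\phi,\psi)}_{L,\beta,*}$ with $C$ uniform in $L\in[0,1]$. Writing $(\chi_\Omega,\chi_\Gamma) := \SS^L(\phi,\psi)\in\HLB$, we know by construction that $\beta|\Omega|\mean{\chi_\Omega}_\Omega + |\Gamma|\mean{\chi_\Gamma}_\Gamma = 0$, and
\begin{align*}
	\norm{(\chi_\Omega,\chi_\Gamma)}_{L,\beta}^2 = \norm{\grad\chi_\Omega}_{L^2(\Omega)}^2 + \norm{\gradg\chi_\Gamma}_{L^2(\Gamma)}^2 + \sigma(L)\norm{\beta\chi_\Gamma - \chi_\Omega\vert_\Gamma}_{L^2(\Gamma)}^2.
\end{align*}
Dropping the nonnegative $\sigma(L)$-term, it remains to show a Poincaré-type inequality: for $(\chi_\Omega,\chi_\Gamma)\in\HH^1$ with vanishing weighted mean, $\norm{\chi_\Omega}_{H^1(\Omega)}^2 + \norm{\chi_\Gamma}_{H^1(\Gamma)}^2 \le C\big(\norm{\grad\chi_\Omega}_{L^2(\Omega)}^2 + \norm{\gradg\chi_\Gamma}_{L^2(\Gamma)}^2\big)$. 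This is a standard compactness/contradiction argument (or can be cited from the literature on these bulk–surface spaces, e.g.\ \cite{knopf-liu}): if it failed there would be a sequence with gradients tending to zero but $H^1$-norms equal to one; extracting a weakly convergent subsequence, the limit would have zero gradients, hence be a pair of constants, and the trace/mean constraints would force it to be zero, contradicting norm one after using the compact embedding $\HH^1 \emb \LL^2$. Note the constant $C$ here depends only on $\Omega$ and $\beta$, not on $L$, which is exactly what we need.

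The only subtlety — and the main thing to be careful about — is the $L=0$ endpoint, where $\SS^0(\phi,\psi)$ lives in $\HH^1_{0,\beta}\subset\Db$ and satisfies the extra trace relation $\chi_\Omega\vert_\Gamma = \beta\chi_\Gamma$, and where $\sigma(0)=0$. The Poincaré inequality above still applies verbatim (the trace constraint only helps), so the argument goes through uniformly across $L\in[0,1]$, including $L=0$; one should just check that for $(\phi,\psi)\in\Wo$ we indeed have $(\phi,\psi)\in\DD_\beta^{-1}$ when $L=0$ (which holds since $\biginn{(\phi,\psi)}{(\beta,1)}_\Db = \beta\int_\Omega\phi + \int_\Gamma\psi = 0$ by the definition of $\Wo$) so that $\SS^0(\phi,\psi)$ is well defined. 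Combining the two displayed estimates and cancelling yields $\norm{(\phi,\psi)}_{L,\beta,*}\le C\norm{(\phi,\psi)}_{(\HH^1)'}$ with $C = C(\beta,\Omega)$, which completes the proof.
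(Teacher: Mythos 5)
Your overall architecture is the same as the paper's: test the weak formulation with $\SS^L(\phi,\psi)$ to get $\norm{(\phi,\psi)}_{L,\beta,*}^2\le\norm{(\phi,\psi)}_{(\HH^1)'}\norm{\SS^L(\phi,\psi)}_{\HH^1}$, then bound $\norm{\SS^L(\phi,\psi)}_{\HH^1}\le C\norm{\SS^L(\phi,\psi)}_{L,\beta}$ uniformly in $L\in[0,1]$ and cancel. However, your justification of the second step contains a genuine error.

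You propose to drop the nonnegative term $\sigma(L)\norm{\beta\chi_\Gamma-\chi_\Omega}_{L^2(\Gamma)}^2$ from $\norm{(\chi_\Omega,\chi_\Gamma)}_{L,\beta}^2$ and then invoke a Poincar\'e inequality of the form
\begin{align*}
\norm{\chi_\Omega}_{H^1(\Omega)}^2+\norm{\chi_\Gamma}_{H^1(\Gamma)}^2\le C\big(\norm{\grad\chi_\Omega}_{L^2(\Omega)}^2+\norm{\gradg\chi_\Gamma}_{L^2(\Gamma)}^2\big)
\end{align*}
for all pairs in $\HH^1$ with $\beta\abs{\Omega}\mean{\chi_\Omega}_\Omega+\abs{\Gamma}\mean{\chi_\Gamma}_\Gamma=0$. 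This inequality is false. Take $\chi_\Omega\equiv\abs{\Gamma}$ and $\chi_\Gamma\equiv-\beta\abs{\Omega}$: both gradients vanish and the weighted mean constraint is satisfied, yet the pair is nonzero. The flaw in your compactness argument is the phrase ``the trace/mean constraints would force it to be zero'': for $L>0$ the space $\HLB$ imposes \emph{no} trace relation between the bulk and surface components (only the case $L=0$ does, via $\Db$), so the kernel of the gradient seminorm is the two-dimensional space of constant pairs $(c_1,c_2)$, and the single mean-value constraint removes only one of these two degrees of freedom. Since $\SS^L(\phi,\psi)$ for $L>0$ is a general element of $\HLB$ with no trace coupling, your estimate does not follow.

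The term you discarded is precisely what rescues the argument, and this is how the paper proceeds: for $L\in(0,1]$ one has $\sigma(L)=L^{-1}\ge 1=\sigma(1)$, hence $\norm{\cdot}_{1,\beta}\le\norm{\cdot}_{L,\beta}$ on $\HLB$ (and for $L=0$ the identity $\chi_\Omega\vert_\Gamma=\beta\chi_\Gamma$ makes the penalization term vanish, so $\norm{\cdot}_{1,\beta}=\norm{\cdot}_{0,\beta}$ there). One then applies the bulk--surface Poincar\'e estimate $\norm{\cdot}_{\HH^1}\le C\norm{\cdot}_{1,\beta}$ of \cite[Cor.~7.2]{knopf-liu}, whose right-hand side retains the term $\norm{\beta\chi_\Gamma-\chi_\Omega}_{L^2(\Gamma)}^2$; this extra term forces $c_1=\beta c_2$ in the compactness argument, which together with the mean constraint yields $(c_1,c_2)=(0,0)$. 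With this correction (keep the $K=1$ penalization term rather than dropping it), the rest of your argument, including the verification that $\SS^0(\phi,\psi)$ is well defined for $(\phi,\psi)\in\Wo$, is sound.
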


\begin{proof}
	Let $\beta>0$, $L\in[0,1]$ and $(\phi,\psi)\in \Wo$ be arbitrary. In the following, the letter $C$ will denote generic positive constants depending only on $\beta$ and $\Omega$. Recalling \eqref{pre:lin:L} and \eqref{pre:lin:0}, we obtain
	\begin{align}
		\label{LBS:1}
		\norm{(\phi,\psi)}_{L,\beta,*}^2 
		&= \norm{\SS^L(\phi,\psi)}_{L,\beta}^2
		= - \bigscp{\SS^L(\phi,\psi)}{(\phi,\psi)}_{\LL^2} \notag\\
		&\le \norm{\SS^L(\phi,\psi)}_{\HH^1}\, \norm{(\phi,\psi)}_{(\HH^1)'}.
	\end{align}
	We can now apply \cite[Cor.~7.2]{knopf-liu} (with $\alpha=\beta$ and $K=1$) to deduce that
	\begin{align}
		\label{LBS:2}
		\norm{\SS^L(\phi,\psi)}_{\HH^1} \le C\, \norm{\SS^L(\phi,\psi)}_{1,\beta} \; .
	\end{align}
	Plugging this estimate into \eqref{LBS:1}, and recalling that $L\in[0,1]$, we conclude that
	\begin{align}
		\label{LBS:3}
		\norm{(\phi,\psi)}_{L,\beta,*}^2 
		&\le C\, \norm{\SS^L(\phi,\psi)}_{1,\beta}\, \norm{(\phi,\psi)}_{(\HH^1)'}
		\le C\, \norm{\SS^L(\phi,\psi)}_{L,\beta}\, \norm{(\phi,\psi)}_{(\HH^1)'} \notag\\
		&= C\, \norm{(\phi,\psi)}_{L,\beta,*}\, \norm{(\phi,\psi)}_{(\HH^1)'}.
	\end{align}
	Since $(\phi,\psi)\in \Wo$ was arbitrary, the assertion directly follows and the proof is complete.
\end{proof}

\paragraph{Interpolation inequalities.}

We will further need the following interpolation estimates. It will turn out to be essential that the constants in these estimates are independent of the parameter $L$.

\begin{lemma} \label{LEM:INT}
	Suppose that \eqref{ass:dom} holds and let $L\ge 0$ and $\beta>0$ be arbitrary. Then there exists a constant $c>0$ depending only on $\beta$, such that for all $(u,v)\in \Wo\,$,
	\begin{align}
		\label{IEQ:INT:1}
		\norm{(u,v)}_{\LL^2}^2 \le 
		c\, \norm{(\grad u,\gradg v)}_{\LL^2}
			\left\|(u,v)\right\|_{L,\beta,*}.
	\end{align}
	Moreover, for any $\alpha>0$ and all $(u,v)\in \Wo\,$, it holds that
	\begin{align}
		\label{IEQ:INT:2}
		\norm{(u,v)}_{\LL^2}^2 
		\le \alpha \norm{(\grad u,\gradg v)}_{\LL^2}^2 
			+ \frac{c^2}{4\alpha} \norm{(u,v)}_{L,\beta,*}^2 .
	\end{align}
\end{lemma}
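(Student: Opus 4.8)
The plan is to prove the second inequality \eqref{IEQ:INT:2} as an immediate consequence of the first one \eqref{IEQ:INT:1} via Young's inequality, so that the real work lies in establishing \eqref{IEQ:INT:1}. For the first estimate, I would start from the identity $\norm{(u,v)}_{\LL^2}^2 = \bigscp{(u,v)}{(u,v)}_{\LL^2}$ and try to express the right-hand side through the bilinear pairing with the solution operator $\SS^L$ introduced in \eqref{pre:lin:L} and \eqref{pre:lin:0}. Concretely, since $(u,v)\in\Wo \subset \HH^{-1}_{\beta}$ (for $L>0$) or $(u,v)\in\Wo\subset\DD^{-1}_\beta$ (for $L=0$), the weak formulations \eqref{WF:LIN} applied with the test function $(\zeta,\xi)=(u,v)$ give
\begin{align*}
	\norm{(u,v)}_{\LL^2}^2 = \biginn{(u,v)}{(u,v)}_{\HH^1} = -\bigscp{\SS^L(u,v)}{(u,v)}_{L,\beta},
\end{align*}
where I need to be slightly careful about whether $(u,v)$ is tested against itself as an $\HH^1$-function (which is legitimate because $\Wo\subset\HH^1$). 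Expanding the inner product $\scp{\cdot}{\cdot}_{L,\beta}$ from \eqref{pre:H} and using integration by parts on the gradient terms — or, more cleanly, just using the Cauchy--Schwarz inequality directly on the pairing — I would bound
\begin{align*}
	\norm{(u,v)}_{\LL^2}^2 \le \norm{\SS^L(u,v)}_{L,\beta}\,\norm{(u,v)}_{\text{something}}.
\end{align*}

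The cleaner route is the following. Using \eqref{WF:LIN} with test function $(\zeta,\xi)=(u,v)$ and then Cauchy--Schwarz in $\LL^2$,
\begin{align*}
	\norm{(u,v)}_{\LL^2}^2
	&= \bigscp{\SS^L(u,v)}{(u,v)}_{L,\beta}
	\intertext{is not quite it; rather the weak formulation reads}
	\bigscp{\SS^L(u,v)}{(\zeta,\xi)}_{L,\beta} &= -\biginn{(u,v)}{(\zeta,\xi)}_{\HH^1},
\end{align*}
and I would instead test \eqref{WF:LIN} with $(\zeta,\xi)=\SS^L(u,v)$ to produce $\norm{\SS^L(u,v)}_{L,\beta}^2 = -\biginn{(u,v)}{\SS^L(u,v)}_{\HH^1}$, which is just $\norm{(u,v)}_{L,\beta,*}^2$ by definition. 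The key observation making \eqref{IEQ:INT:1} work is to pair $(u,v)$ with itself in $\LL^2$ and rewrite $(u,v)$ on one side via $\SS^L$: since $(u,v)\in\Wo$ has zero generalized mean (in the weighted sense $\beta|\Omega|\mean{u}_\Omega+|\Gamma|\mean{v}_\Gamma=0$), the $\LL^2$-pairing $\bigscp{\SS^L(u,v)}{(u,v)}_{\LL^2}$ is well defined, and I claim
\begin{align*}
	\norm{(u,v)}_{\LL^2}^2 = \bigscp{(u,v)}{(u,v)}_{\LL^2} = -\bigscp{\SS^L(u,v)}{(u,v)}_{L,\beta}
\end{align*}
does \emph{not} hold verbatim — instead one has, by the weak formulation with $(\zeta,\xi)=(u,v)\in\HH^1$, that $\biginn{(u,v)}{(u,v)}_{\HH^1} = \norm{(u,v)}_{\LL^2}^2$ on the left is wrong too since $\HH^1$-pairing of $\Wo$-elements is the $\LL^2$-scalar product only when one factor is smooth. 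Let me therefore commit to the approach: write $\norm{(u,v)}_{\LL^2}^2 = \bigscp{\SS^L(u,v)}{-\Delta u, \dots}$ — equivalently, use that $\SS^L(u,v)$ solves \eqref{EQ:LIN}, multiply the PDE system by $(u,v)$, integrate by parts (here I genuinely need integration by parts on $\Omega$ and $\Gamma$, and the boundary term from \eqref{EQ:LIN:3} combines with the diffusion terms), to obtain
\begin{align*}
	\norm{(u,v)}_{\LL^2}^2
	= \intO \grad\SS^L_\Omega(u,v)\cdot\grad u \dx
	+ \intG \gradg \SS^L_\Gamma(u,v)\cdot\gradg v \dG
	+ \sigma(L)\intG(\beta\SS^L_\Gamma - \SS^L_\Omega)(\beta v - u)\dG.
\end{align*}
Then Cauchy--Schwarz on each term gives $\norm{(u,v)}_{\LL^2}^2 \le C\,\norm{\SS^L(u,v)}_{1,\beta}\,\big(\norm{(\grad u,\gradg v)}_{\LL^2}^2 + \sigma(L)\norm{\beta v - u}_{L^2(\Gamma)}^2\big)^{1/2}$ — but the $\sigma(L)$ term is unbounded as $L\to 0^+$ for fixed $(u,v)$, which is a problem. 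The resolution, and the point where I expect the main obstacle, is that the factor on the right should assemble into $\norm{\SS^L(u,v)}_{L,\beta}\cdot\big(\text{gradient norm of }(u,v)\big)$: the Cauchy--Schwarz must be applied so that the $\sigma(L)$-weighted surface term on the $\SS^L$ side pairs with the \emph{gradient} norm of $(u,v)$, not a $\sigma(L)$-weighted norm of $(u,v)$. This works because after integrating by parts \emph{back}, the surface term $\sigma(L)\intG(\beta\SS^L_\Gamma-\SS^L_\Omega)(\beta v - u)$ can be absorbed: one re-expresses $\beta v - u$ using that $(u,v)$ need not lie in $\Db$, so this term does \emph{not} vanish, and one is forced to keep it — hence one bounds $\sigma(L)|\intG(\beta\SS^L_\Gamma-\SS^L_\Omega)(\beta v-u)| \le \norm{\SS^L(u,v)}_{L,\beta}\cdot\sigma(L)^{1/2}\norm{\beta v - u}_{L^2(\Gamma)}$, and then one uses a trace/Poincaré-type inequality $\sigma(L)^{1/2}\norm{\beta v - u}_{L^2(\Gamma)}\le C\norm{(\grad u,\gradg v)}_{\LL^2}$ valid uniformly for $L\in[0,1]$ — which is exactly the kind of $L$-uniform estimate from \cite{knopf-liu} invoked in the proof of Lemma~\ref{LEM:LBS}. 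I would look for precisely such a statement there (e.g., an $L$-uniform equivalence of norms, \cite[Cor.~7.2]{knopf-liu}), since for $(u,v)\in\Wo$ with $L\le 1$ one should have $\norm{(u,v)}_{1,\beta}^2 \sim \norm{(u,v)}_{L,\beta}^2 := \norm{(\grad u,\gradg v)}_{\LL^2}^2 + \sigma(L)\norm{\beta v - u}^2_{L^2(\Gamma)}$ bounded by $C\norm{(\grad u,\gradg v)}_{\LL^2}^2$. Wait — that last equivalence is false for $L=0$ unless $(u,v)\in\Db$; the honest fix is that for $L>0$ the $\sigma(L)$ term is genuinely present, and what makes it uniformly controllable is not a Poincaré inequality but rather that it appears \emph{symmetrically} on both sides, so by Cauchy--Schwarz $\norm{(u,v)}_{\LL^2}^2 \le \norm{\SS^L(u,v)}_{L,\beta}\,\norm{(u,v)}_{L,\beta}$, and then I use once more \eqref{WF:LIN} to identify $\norm{\SS^L(u,v)}_{L,\beta} = \norm{(u,v)}_{L,\beta,*}$, arriving at
\begin{align*}
	\norm{(u,v)}_{\LL^2}^2 \le \norm{(u,v)}_{L,\beta,*}\,\norm{(u,v)}_{L,\beta}
	\le \norm{(u,v)}_{L,\beta,*}\,\big(\norm{(\grad u,\gradg v)}_{\LL^2}^2 + \sigma(L)\norm{\beta v - u}^2_{L^2(\Gamma)}\big)^{1/2},
\end{align*}
and the remaining task — the genuine crux — is to bound $\sigma(L)\norm{\beta v - u}^2_{L^2(\Gamma)}$ by $C\norm{(\grad u, \gradg v)}^2_{\LL^2}$ uniformly for $L\in[0,1]$, which I believe follows from \cite[Cor.~7.2]{knopf-liu} applied to $(u,v)$ itself (the same ingredient used in Lemma~\ref{LEM:LBS}), yielding $\norm{(u,v)}_{L,\beta} \le C\norm{(\grad u,\gradg v)}_{\LL^2}$. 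Plugging this in produces \eqref{IEQ:INT:1} with $c=C$, and then \eqref{IEQ:INT:2} follows from $ab \le \alpha a^2 + \tfrac{1}{4\alpha}b^2$ applied with $a = \norm{(\grad u,\gradg v)}_{\LL^2}$ and $b = c\norm{(u,v)}_{L,\beta,*}$. The main obstacle, to restate it plainly, is obtaining the $L$-uniform control of the interface term $\sigma(L)^{1/2}\norm{\beta v - u}_{L^2(\Gamma)}$ by the gradient norm; everything else is Cauchy--Schwarz, the definition of $\norm{\cdot}_{L,\beta,*}$, and Young's inequality.
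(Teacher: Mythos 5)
Your reduction of \eqref{IEQ:INT:2} to \eqref{IEQ:INT:1} via Young's inequality and your identification $\norm{\SS^L(u,v)}_{L,\beta}=\norm{(u,v)}_{L,\beta,*}$ are fine, but the step you yourself flag as ``the genuine crux'' is a genuine gap, and the inequality you hope will close it is false. Testing \eqref{WF:LIN} with $(\zeta,\xi)=(u,v)$ and applying Cauchy--Schwarz in the $\scp{\cdot}{\cdot}_{L,\beta}$ inner product leaves the factor $\norm{(u,v)}_{L,\beta}$, whose surface contribution is $\sigma(L)\norm{\beta v-u}_{L^2(\Gamma)}^2$. For $(u,v)\in\Wo$ one has $u\vert_\Gamma=v$, hence $\beta v-u\vert_\Gamma=(\beta-1)v$, a quantity independent of $L$ and generically nonzero when $\beta\neq 1$; multiplied by $\sigma(L)=L^{-1}$ it blows up as $L\to 0^+$, so no bound $\sigma(L)\norm{\beta v-u}_{L^2(\Gamma)}^2\le C\norm{(\grad u,\gradg v)}_{\LL^2}^2$ uniform in $L\in(0,1]$ can hold. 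The cited \cite[Cor.~7.2]{knopf-liu} is a Poincar\'e-type inequality controlling the full $\HH^1$-norm by the $\norm{\cdot}_{1,\beta}$-norm; it goes in the wrong direction and cannot supply this. Moreover, for $L=0$ the test function $(u,v)$ is not even admissible in \eqref{WF:LIN}, since test functions must then lie in $\Db$ (i.e.\ satisfy $\zeta\vert_\Gamma=\beta\xi$), which $(u,v)$ with $u\vert_\Gamma=v$ violates unless $\beta=1$ or $v=0$.

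The missing idea is a different choice of test function: take $(\zeta,\xi)=(\beta u,v)$. This is admissible for every $L\in[0,\infty)$ (for $L=0$ note $\beta u\vert_\Gamma=\beta v$, so $(\beta u,v)\in\Db$), and in the $\scp{\cdot}{\cdot}_{L,\beta}$-pairing the troublesome surface term involves $\beta\xi-\zeta\vert_\Gamma=\beta v-\beta u\vert_\Gamma=\beta(v-u\vert_\Gamma)=0$, so the $\sigma(L)$-weighted integral vanishes identically rather than having to be estimated. Since $\min\{\beta,1\}\,\norm{(u,v)}_{\LL^2}^2\le\beta\norm{u}_{L^2(\Omega)}^2+\norm{v}_{L^2(\Gamma)}^2=-\bigscp{\SS^L(u,v)}{(\beta u,v)}_{L,\beta}$, Cauchy--Schwarz on the two surviving gradient terms gives \eqref{IEQ:INT:1} with $c=\max\{\beta,1\}/\min\{\beta,1\}$; this is exactly the paper's argument, and everything else in your plan then goes through.
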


\begin{proof}
	Let $L\in [0,\infty)$ and $(u,v)\in \Wo$ be arbitrary. Recalling the definition of the operator $\SS^L$ (see \eqref{pre:lin:L} and \eqref{pre:lin:0}) and that $u\vert_\Gamma = v$ almost everywhere on $\Gamma$, we obtain
	\begin{align*}
		&\min\{\beta,1\}\, \norm{(u,v)}_{\LL^2} 
		\le \beta \left\| u\right\|^{2}_{L^{2}(\Omega)}
			+\left\| v\right\|^{2}_{L^{2}(\Gamma)} \\
		&\quad =\scp{u}{\beta u}_{L^2(\Omega)}
			+\scp{v}{v}_{L^2(\Omega)} \\[1ex]
		&\quad= - \int _{\Omega}\grad \big( \SS^L_{\Omega}(u,v) \big) \cdot \grad(\beta u)  \dx
			- \int _{\Gamma}\gradg\big(\SS^L_{\Gamma}(u,v)\big)\cdot \gradg v \dG \\
		&\qquad\quad - \sigma(L) \int _{\Gamma}(\beta\SS^L_{\Gamma}(u,v)-\SS^L_{\Omega}(u,v))(\beta v-\beta v) \dG\\[1ex]
		&\quad\leq \beta \bignorm{ \grad \big( \SS^L_{\Omega}(u,v) \big) }_{L^{2}(\Omega)}
				\bignorm{\grad u}_{L^{2}(\Omega)}
			+ \bignorm{ \gradg \big( \SS^L_{\Gamma}(u,v) \big) }_{L^{2}(\Gamma)}
				\bignorm{ \gradg v }_{L^{2}(\Gamma)}\\
		&\quad\leq \max\{\beta,1\}\, \norm{(\grad u,\gradg v)}_{\LL^2} 
			\,\norm{\SS^L(u,v)}_{L,\beta}\\
		&\quad= \max\{\beta,1\}\, \norm{(\grad u,\gradg v)}_{\LL^2} 
			\,\norm{(u,v)}_{L,\beta,*}\;.
	\end{align*}
	This proves \eqref{IEQ:INT:1} and thus, \eqref{IEQ:INT:2} direcly follows by means of Young's inequality.
\end{proof}

\medskip

\begin{lemma}\label{LEM:INT:2}
	Suppose that \eqref{ass:dom} holds. Then there exist constants $C_1,C_2,C_3,C_4>0$ such that 
	\begin{alignat}{2}
	\label{INT:H1}
		\norm{(u,v)}_{\HH^1} &\le C_1 \norm{(u,v)}_{\HH^2}^{\frac 12} \norm{(u,v)}_{\LL^2}^{\frac 12} 
			&&\quad\text{for all $(u,v) \in \HH^2$}, \\
	\label{INT:H2}
		\norm{(u,v)}_{\HH^2} &\le C_2 \norm{(u,v)}_{\HH^3}^{\frac 12} \norm{(u,v)}_{\HH^1}^{\frac 12} 
			&&\quad\text{for all $(u,v) \in \HH^3$}, \\
	\label{INT:H1:ALT}
	\norm{(u,v)}_{\HH^1} &\le C_3 \norm{(u,v)}_{\HH^3}^{\frac 13} 
	\norm{(u,v)}_{\LL^2}^{\frac 23} 
			&&\quad\text{for all $(u,v) \in \HH^3$},\\
	\label{INT:H2:ALT}
		\norm{(u,v)}_{\HH^2} &\le C_4 \norm{(u,v)}_{\HH^3}^{\frac 23} 
			\norm{(u,v)}_{\LL^2}^{\frac 13} 
			&&\quad\text{for all $(u,v) \in \HH^3$}.
	\end{alignat}	
\end{lemma}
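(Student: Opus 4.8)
The plan is to reduce the four estimates to the classical componentwise Gagliardo--Nirenberg interpolation inequalities on the bounded $C^3$-domain $\Om$ and on the closed $C^3$-manifold $\Ga$, to glue the two components together by the Cauchy--Schwarz inequality in $\R^2$, and finally to derive the fractional-power estimates \eqref{INT:H1:ALT} and \eqref{INT:H2:ALT} by iterating \eqref{INT:H1} and \eqref{INT:H2}. No genuine obstacle arises here; the only point meriting a little care is the justification of the componentwise interpolation inequality on the boundary manifold $\Ga$, and for this the $C^3$-regularity assumed in \eqref{ass:dom} is far more than enough. All constants will be absolute (depending only on $\Om$), so no uniformity issue appears.

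\textbf{Step 1 (componentwise inequalities).} First I would record that there is a constant $c>0$, depending only on $\Om$, such that $\norm{u}_{H^1(\Om)} \le c\,\norm{u}_{H^2(\Om)}^{1/2}\norm{u}_{L^2(\Om)}^{1/2}$ for all $u\in H^2(\Om)$ and $\norm{u}_{H^2(\Om)} \le c\,\norm{u}_{H^3(\Om)}^{1/2}\norm{u}_{H^1(\Om)}^{1/2}$ for all $u\in H^3(\Om)$, together with the analogous estimates on $\Ga$. For $\Om$ these follow from a bounded Sobolev extension operator $H^k(\Om)\to H^k(\R^d)$, which exists and may be taken simultaneously bounded on $L^2$ since $\Ga$ is $C^3$ (hence Lipschitz), combined with the elementary Fourier estimate $\norm{w}_{H^1(\R^d)}^2 = \int(1+\abs{\xi}^2)\abs{\hat w}^2\,\dd\xi \le \big(\int(1+\abs{\xi}^2)^2\abs{\hat w}^2\,\dd\xi\big)^{1/2}\big(\int\abs{\hat w}^2\,\dd\xi\big)^{1/2} = \norm{w}_{H^2(\R^d)}\norm{w}_{L^2(\R^d)}$ obtained from Cauchy--Schwarz, and, analogously, $\norm{w}_{H^2(\R^d)}^2 \le \norm{w}_{H^3(\R^d)}\norm{w}_{H^1(\R^d)}$. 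On the closed manifold $\Ga$ one argues in the same way using the spectral decomposition of $\mathrm{Id}-\Lapg$, or a partition of unity subordinate to $C^3$-charts.

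\textbf{Step 2 (product estimates and iteration).} Squaring and inserting the componentwise bounds yields $\norm{(u,v)}_{\HH^1}^2 = \norm{u}_{H^1(\Om)}^2 + \norm{v}_{H^1(\Ga)}^2 \le c^2\big(\norm{u}_{H^2(\Om)}\norm{u}_{L^2(\Om)} + \norm{v}_{H^2(\Ga)}\norm{v}_{L^2(\Ga)}\big)$, and the discrete Cauchy--Schwarz inequality $a_1 b_1 + a_2 b_2 \le (a_1^2+a_2^2)^{1/2}(b_1^2+b_2^2)^{1/2}$ bounds the right-hand side by $c^2\,\norm{(u,v)}_{\HH^2}\norm{(u,v)}_{\LL^2}$, which is \eqref{INT:H1} with $C_1=c$; the identical computation with $(H^1,H^2,H^3)$ in place of $(L^2,H^1,H^2)$ gives \eqref{INT:H2}. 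Finally, substituting \eqref{INT:H2} into \eqref{INT:H1} gives $\norm{(u,v)}_{\HH^1} \le C_1 C_2^{1/2}\,\norm{(u,v)}_{\HH^3}^{1/4}\norm{(u,v)}_{\HH^1}^{1/4}\norm{(u,v)}_{\LL^2}^{1/2}$; absorbing the factor $\norm{(u,v)}_{\HH^1}^{1/4}$ into the left-hand side (i.e.\ raising to the power $4/3$) produces \eqref{INT:H1:ALT} with $C_3=(C_1 C_2^{1/2})^{4/3}$, and plugging \eqref{INT:H1:ALT} back into \eqref{INT:H2} yields $\norm{(u,v)}_{\HH^2} \le C_2\norm{(u,v)}_{\HH^3}^{1/2}\big(C_3\norm{(u,v)}_{\HH^3}^{1/3}\norm{(u,v)}_{\LL^2}^{2/3}\big)^{1/2} = C_2 C_3^{1/2}\norm{(u,v)}_{\HH^3}^{2/3}\norm{(u,v)}_{\LL^2}^{1/3}$, which is \eqref{INT:H2:ALT}.
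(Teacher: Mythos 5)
Your proof is correct and follows essentially the same route as the paper: componentwise Gagliardo--Nirenberg-type interpolation on $\Om$ and on $\Ga$, combination of the two components, and iteration of \eqref{INT:H1} and \eqref{INT:H2} to obtain \eqref{INT:H1:ALT} and \eqref{INT:H2:ALT}. The only difference is cosmetic: you re-derive the classical componentwise inequalities (via extension plus Fourier transform, respectively the spectral decomposition of $\mathrm{Id}-\Lapg$) where the paper simply cites the Gagliardo--Nirenberg inequality and its analogue for compact Riemannian manifolds.
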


\begin{proof}
	To prove \eqref{INT:H1}, let $(u,v)\in\HH^2$ be arbitrary. It obviously holds that
	\begin{align}
		\label{EST:L2}
		\norm{(u,v)}_{\LL^2}^2 \le \norm{(u,v)}_{\HH^2} \norm{(u,v)}_{\LL^2}.	
	\end{align}	
	Using the Gagliardo--Nierenberg inequality, we obtain 
	\begin{align}
		\label{EST:H1:BULK}
		\norm{\grad u}_{L^2(\Omega)}^2 
		\le \norm{u}_{H^2(\Omega)} \norm{u}_{L^2(\Omega)}
		\le \norm{(u,v)}_{\HH^2} \norm{(u,v)}_{\LL^2}.
	\end{align}
	Moreover, the Gagliardo--Nierenberg inequality for compact Riemannian manifolds (see, e.g., \cite[Thm.~3.70]{Aubin}) implies that
	\begin{align}
		\label{EST:H1:SURF}
		\norm{\gradg v}_{L^2(\Gamma)}^2 
		\le \norm{v}_{H^2(\Gamma)} \norm{v}_{L^2(\Gamma)}
		\le \norm{(u,v)}_{\HH^2} \norm{(u,v)}_{\LL^2}.
	\end{align}
	Now, adding \eqref{EST:L2}, \eqref{EST:H1:BULK} and \eqref{EST:H1:SURF}  proves \eqref{INT:H1}. 
	
	The inequality \eqref{INT:H2} follows by applying \eqref{INT:H1} on the components $\big([\grad u]_i,[\gradg v]_i\big)$, $i=1,2,3$. Eventually, the estimates \eqref{INT:H1:ALT} and \eqref{INT:H2:ALT} are direct consequences of \eqref{INT:H1} and \eqref{INT:H2}. This completes the proof.
\end{proof}

The following result is a direct consequence of Lemma~\ref{LEM:INT} and Lemma~\ref{LEM:INT:2}.

\begin{corollary} \label{COR:INT}
	Suppose that \eqref{ass:dom} holds and let $L\ge 0$ and $\beta>0$ be arbitrary. Then there exist positive constants $c_1$ and $c_2$ depending only on $\beta$ such that
	\begin{alignat}{2}
		\norm{(u,v)}_{\HH^1} &\le c_1\, \norm{(u,v)}_{\HH^2}^{\frac 23} \norm{(u,v)}_{L,\beta,*}^{\frac 13} 
		&&\quad \text{for all $(u,v)\in \WW_{\beta,0}^2$,}\\
		\norm{(u,v)}_{\HH^1} &\le c_2\, \norm{(u,v)}_{\HH^3}^{\frac 12} \norm{(u,v)}_{L,\beta,*}^{\frac 12} 
		&&\quad \text{for all $(u,v)\in \WW_{\beta,0}^3$.}
	\end{alignat}
\end{corollary}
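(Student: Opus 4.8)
The plan is to interpolate between the estimates of Lemma~\ref{LEM:INT:2}, which are stated with respect to the $\LL^2$-norm, and the estimate \eqref{IEQ:INT:2} of Lemma~\ref{LEM:INT}, which controls the $\LL^2$-norm by the $\norm{\cdot}_{L,\beta,*}$-norm together with a small multiple of the $\HH^1$-seminorm. The key point is that for $(u,v)\in\WW_{\beta,0}^k$ one has $\norm{(\grad u,\gradg v)}_{\LL^2}\le\norm{(u,v)}_{\HH^1}$, so \eqref{IEQ:INT:2} yields, for every $\alpha>0$,
\begin{align*}
	\norm{(u,v)}_{\LL^2}^2 \le \alpha \norm{(u,v)}_{\HH^1}^2 + \frac{c^2}{4\alpha}\norm{(u,v)}_{L,\beta,*}^2.
\end{align*}

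For the first inequality, let $(u,v)\in\WW_{\beta,0}^2$. I would start from \eqref{INT:H1:ALT}, which gives $\norm{(u,v)}_{\HH^1}\le C_3\norm{(u,v)}_{\HH^3}^{1/3}\norm{(u,v)}_{\LL^2}^{2/3}$; however, since we only assume $(u,v)\in\HH^2$ here, I would instead use \eqref{INT:H1} in the form $\norm{(u,v)}_{\HH^1}^2\le C_1^2\norm{(u,v)}_{\HH^2}\norm{(u,v)}_{\LL^2}$. Substituting the displayed bound on $\norm{(u,v)}_{\LL^2}^2$ (hence on $\norm{(u,v)}_{\LL^2}$, via $\sqrt{a+b}\le\sqrt a+\sqrt b$) produces a term $\alpha^{1/2}\norm{(u,v)}_{\HH^2}\norm{(u,v)}_{\HH^1}$ on the right-hand side; using Young's inequality this is absorbed as $\tfrac12\norm{(u,v)}_{\HH^1}^2 + C\alpha\norm{(u,v)}_{\HH^2}^2$ provided $\alpha$ is chosen small, but this does not directly close since it leaves a full $\norm{(u,v)}_{\HH^2}^2$. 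The cleaner route is to avoid self-absorption: bound $\norm{(u,v)}_{\LL^2}^2\le\alpha\norm{(u,v)}_{\HH^1}^2 + \tfrac{c^2}{4\alpha}\norm{(u,v)}_{L,\beta,*}^2$, insert into \eqref{INT:H1} to get $\norm{(u,v)}_{\HH^1}^2 \le C_1^2\norm{(u,v)}_{\HH^2}\big(\alpha^{1/2}\norm{(u,v)}_{\HH^1} + \tfrac{c}{2\alpha^{1/2}}\norm{(u,v)}_{L,\beta,*}\big)$, then choose $\alpha$ with $C_1^2\alpha^{1/2} = \tfrac12\cdot\tfrac{\norm{(u,v)}_{\HH^1}}{\norm{(u,v)}_{\HH^2}}$ — i.e. optimize $\alpha$ pointwise — and solve the resulting algebraic inequality for $\norm{(u,v)}_{\HH^1}$. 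This optimization is exactly the standard trick that converts a ``$\LL^2$-on-the-right'' interpolation inequality into a ``$\norm{\cdot}_{L,\beta,*}$-on-the-right'' one, and after rearranging one lands at $\norm{(u,v)}_{\HH^1}\le c_1\norm{(u,v)}_{\HH^2}^{2/3}\norm{(u,v)}_{L,\beta,*}^{1/3}$ with $c_1$ depending only on $\beta$ (through $c$ and $C_1$). The degenerate case $\norm{(u,v)}_{\HH^2}=0$ forces $(u,v)=(0,0)$ and is trivial.

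For the second inequality, the argument is the same with \eqref{INT:H1:ALT} in place of \eqref{INT:H1}: starting from $\norm{(u,v)}_{\HH^1}\le C_3\norm{(u,v)}_{\HH^3}^{1/3}\norm{(u,v)}_{\LL^2}^{2/3}$, raising to a suitable power and inserting the $\alpha$-dependent bound on $\norm{(u,v)}_{\LL^2}^2$, then optimizing over $\alpha$ pointwise and solving for $\norm{(u,v)}_{\HH^1}$, yields $\norm{(u,v)}_{\HH^1}\le c_2\norm{(u,v)}_{\HH^3}^{1/2}\norm{(u,v)}_{L,\beta,*}^{1/2}$ for $(u,v)\in\WW_{\beta,0}^3$, with $c_2=c_2(\beta)$. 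Throughout, the essential structural fact making everything work is that the constant $c$ in Lemma~\ref{LEM:INT} is independent of $L\in[0,\infty)$, so the resulting $c_1,c_2$ inherit this $L$-independence.

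\textbf{Main obstacle.} The only real subtlety is bookkeeping in the absorption/optimization step: one must make the choice of $\alpha$ \emph{depend on the ratio} $\norm{(u,v)}_{\HH^1}/\norm{(u,v)}_{\HH^k}$ (equivalently, optimize $\alpha$ after substitution) rather than picking a fixed small $\alpha$, otherwise the $\HH^2$- or $\HH^3$-term cannot be made to appear with the correct fractional exponent and one is left with a non-homogeneous estimate. Once the exponents are tracked by homogeneity (both sides scale like length$^{1}$ under $(u,v)\mapsto\lambda(u,v)$ combined with the natural scaling of the norms, which fixes the exponents $\tfrac23,\tfrac13$ and $\tfrac12,\tfrac12$ uniquely), the computation is routine.
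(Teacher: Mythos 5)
Your argument is correct and follows the paper's intended route: the corollary is stated there as a direct consequence of Lemma~\ref{LEM:INT} and Lemma~\ref{LEM:INT:2}, which is exactly what you combine. The only remark is that the $\alpha$-optimization is an unnecessary detour: using the multiplicative form \eqref{IEQ:INT:1} directly, i.e.\ $\norm{(u,v)}_{\LL^2}^2 \le c\,\norm{(u,v)}_{\HH^1}\norm{(u,v)}_{L,\beta,*}$, inserting it into \eqref{INT:H1} (resp.\ \eqref{INT:H1:ALT}) and dividing by the appropriate power of $\norm{(u,v)}_{\HH^1}$ yields the exponents $\tfrac23,\tfrac13$ (resp.\ $\tfrac12,\tfrac12$) in one line, with the same $L$-independence of the constants that you correctly identify as the essential point.
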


%

\section{Well-posedness and further essential properties}\label{SEC:WPP}
In this section, we present the weak well-posedness results for the problem \eqref{CH:INT} with $L\in [0,\infty)$, and we show that these weak solutions satisfy a certain parabolic smoothing property. For simplicity, and as their choice has no impact on the analysis we will carry out, the positive constants $m_\Omega$, $m_\Gamma$, $\eps$, $\delta$ and $\kappa$ are set to one. 
By this choice, the system \eqref{CH:INT} can be restated as follows.
\begin{subequations}\label{CH:INT.}
	\begin{alignat}{3}
	& \delt u = \Delta
	\mu, 
	&&\mu = -  \Lap  u +  F'( u) 
	&&\quad \text{in } Q, \label{CH:INT:1.}\\
	& \delt v =  \Lapg  \theta - \beta  \pdnu\mu, 
	&&\theta = - \Lapg v +  G'(v) + \pdnu  u 
	&&\quad \text{on } \Sigma, \label{CH:INT:2.}\\
	& u\vert_\Gamma = v 
	&&&&\quad \text{on } \Sigma, \label{CH:INT:3.}\\[1ex]
	&\left\{
	\begin{aligned}
	\mu\vert_\Si &= \beta  \theta \\
	L \pdnu\mu\vert_\Si &= \beta  \theta -  \mu\vert_\Si
	\end{aligned}
	\right.
	&&\quad
	\begin{aligned}
	&\text{if}\; L=0,\\
	&\text{if}\; L\in(0,\infty),
	\end{aligned}
	&&\quad \text{on } \Sigma, \label{CH:INT:4.} \\[1ex]
	& (u,v)\vert_{t=0} = (u_0,v_0) 
	&&\text{with}\; u_0\vert_\Si = v_0 
	&&\quad \text{in } \Omega \times \Gamma. \label{CH:INT:5.}
	\end{alignat}
\end{subequations}
The total free energy associated with this system is given as
\begin{align}
	\label{DEF:EN}
	E(u,v) = \intO \frac 12 \abs{\grad u}^2 + F(u) \dx + \intG \frac 12 \abs{\gradg v}^2 + G(v) \dG.
\end{align}
In the following, we will discuss the cases $0<L<\infty$ (KLLM~model) and $L=0$ (GMS~model) separately.

\subsection{The KLLM model ($0<L<\infty$)}

\begin{theorem}[Weak well-posedness and smoothing property for the KLLM model] \label{THM:WP:KLLM}
	Suppose that \eqref{ass:dom}--\eqref{ass:pot} hold and that $L\in(0,\infty)$. Let $m\in\R$ be arbitrary and let 
	$(u_0,v_0) \in \Wm$ be any initial datum.
	
	Then there exists a unique global weak solution $(u^L,v^L,\mu^L,\theta^L)$ of the system \eqref{CH:INT.} existing on the whole time interval $\RP$ and having the following properties:
	\begin{enumerate}[label=$(\mathrm{\roman*})$, ref = $\mathrm{\roman*}$]
	\item For every $T>0$, the solution has the regularity
	\begin{subequations}
	\label{REG:KLLM}
	\begin{align}
	\label{REG:KLLM:1}
	&\begin{aligned}
	\big(u^L,v^L\big) 
		& \in C^{0,\frac{1}{4}}([0,T];\LL^2) \cap L^\infty(0,T;\VV^1) \\
		& \qquad \cap L^2(0,T;\HH^3) \cap H^1\big(0,T;(\HH^1)'\big), 
	\end{aligned}
	\\
	\label{REG:KLLM:2}
	&\big(\mu^L,\theta^L\big) \in L^2(0,T;\HH^1).
	\end{align}
	\end{subequations}
	\item
	The solution satisfies
	\begin{subequations}
		\label{WF:KLLM}
		\begin{align}
		\label{WF:KLLM:1}
		\biginn{\delt u^L}{ w}_{H^1(\Omega)} &= - \intO \grad\mu^L \cdot \grad w \dx 
		+  \intG \tfrac 1 L(\beta\theta^L-\mu^L)  w \dG, \\
		\label{WF:KLLM:2}
		\biginn{\delt v^L}{ z}_{H^1(\Gamma)} &= - \intG \gradg\theta^L \cdot \gradg z \dG 
		- \intG \tfrac 1 L (\beta\theta^L-\mu^L) \beta z \dG, 
		\end{align}
	a.e.~on $\RP$ for all test functions $(w,z)\in\HH^1$, and
	\begin{alignat}{2}
		\label{WF:KLLM:3}
		&\mu^L = -\Lap u^L + F'(u^L) 
		&&\quad\text{a.e.~in Q},\\
		\label{WF:KLLM:4}
		&\theta^L = -\Lapg v^L + G'(v^L) + \deln u^L ,
		&&\quad\text{a.e.~in $\Sigma$},\\
		\label{WF:KLLM:TR}
		&u^L\vert_\Sigma = v^L ,
		&&\quad\text{a.e.~in $\Sigma$},\\
		\label{WF:KLLM:5}
		&(u^L,v^L)\vert_{t=0} = (u_0,v_0) 
		&&\quad \text{a.e.~in } \Omega \times \Gamma.
		\end{alignat}
	\end{subequations}
	\item 
	The solution satisfies the global mass conservation law
	\begin{align}
	\label{MASS:KLLM}
		\beta\intO u^L(t) \dx + \intG v^L(t) \dG = \beta\intO u_0 \dx + \intG v_0 \dG = m \quad 
	\end{align}
	for all $t\in\RP$, i.e., $\big(u^L(t),v^L(t)\big)\in\Wm$ for almost all $t\in\RP$.
	
	\item The solution satisfies the energy inequality
	\begin{align}
	\label{ENERGY:KLLM}
		E\big(u^L(t),v^L(t)\big) + \frac 1 2 \int_0^t \bignorm{\big(\mu^L(s),\theta^L(s)\big)}_{L,\beta}^2 \ds 
		\le E(u_0,v_0)
	\end{align}	
	for almost all $t\ge 0$. 
	
	\item The solution satisfies the smoothing property, i.e., there exists a constant $C_*>0$ depending only on $\Omega$, $\beta$, $F$, $G$ and  $\norm{(u_0,v_0)}_{\HH^1}$, such that
	\begin{align}
	\label{SMOOTH:KLLM}
		\bignorm{ \big(u^L(t),v^L(t)\big) }_{\HH^3} 
		\leq C_* \left( \dfrac{1+t}{t}\right)^{\frac{1}{2}}
	\end{align}
	for almost all $t\ge 0$.
	\end{enumerate}
\end{theorem}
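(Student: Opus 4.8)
The strategy is to recall properties~(i)--(iv) together with the $\HH^3$-regularity in~\eqref{REG:KLLM} from the well-posedness theory of~\cite{KLLM}, and to prove the smoothing estimate~(v) in detail. For~(i)--(iv): existence follows from a Faedo--Galerkin scheme adapted to the bilinear form $\scp{\cdot}{\cdot}_{L,\beta}$ on $\HLB$, so that the discrete version of~\eqref{WF:KLLM:1}--\eqref{WF:KLLM:2} is exact; testing with the discrete chemical potentials gives the energy identity, and since $F,G\ge0$ the total energy $E$ is a non-negative Lyapunov functional, which yields the a priori bounds, the passage to the limit, the regularity~\eqref{REG:KLLM}, and (by weak lower semicontinuity, which is responsible for the factor $\tfrac12$) the energy inequality~\eqref{ENERGY:KLLM}. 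The mass conservation law~\eqref{MASS:KLLM} comes from choosing the constant test pair $(w,z)=(\beta,1)$ in~\eqref{WF:KLLM:1}--\eqref{WF:KLLM:2}, where all gradient terms drop out and the two boundary terms cancel. Uniqueness is obtained by testing the equation for the difference of two solutions with $\SS^L$ of that difference, using monotonicity of $F_1',G_1'$, the Lipschitz bounds on $F_2',G_2'$ from~\eqref{ass:pot:2}--\eqref{ass:pot:3}, the interpolation inequality~\eqref{IEQ:INT:2}, and a Gronwall argument in the $\norm{\cdot}_{L,\beta,*}$-norm; finally, \eqref{WF:KLLM:3}--\eqref{WF:KLLM:TR} and the $\HH^3$-regularity follow from elliptic regularity for the bulk--surface elliptic problem (cf.~\cite{KLLM,knopf-liu}), applied for a.e.\ fixed $t$.

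For~(v), I would first use~\eqref{ENERGY:KLLM}, $F,G\ge0$, and the coercivity $a_F\abs s^p-b_F\le F$, $a_G\abs s^q-b_G\le G$ with $p,q>2$ from~\eqref{ass:pot:1} to get a uniform bound $\norm{(u^L(t),v^L(t))}_{\VV^1}\le C_0$ for a.e.\ $t\ge0$, with $C_0$ depending only on $\Omega,F,G,\norm{(u_0,v_0)}_{\HH^1}$; moreover, since $E\ge0$, \eqref{ENERGY:KLLM} gives $\int_t^{t+1}\norm{(\mu^L,\theta^L)}_{L,\beta}^2\ds\le 2E(u_0,v_0)$ uniformly in $t\ge0$. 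Next, working on the Galerkin level and passing to the limit afterwards, I differentiate the system in time and put $(u',v'):=(\delt u^L,\delt v^L)$; this solves the linearised problem with coefficients $F''(u^L),G''(v^L)$, boundary conditions $u'|_\Ga=v'$ and $L\deln\mu'=\beta\theta'-\mu'|_\Ga$, and satisfies $\beta\intO u'\dx+\intG v'\dG=0$, so that $(u',v')\in\Wo$ and $(\delt u',\delt v')\in\HH_{\beta}^{-1}$. Testing the equation for $(\delt u',\delt v')$ with $\SS^L(u',v')$, inserting $\mu'=-\Lap u'+F''(u^L)u'$ and the analogous surface identity for $\theta'$, and using the cancellation of the two $\deln u'$ boundary terms enforced by $u'|_\Ga=v'$, one arrives at
\[
\tfrac12\,\tfrac{d}{dt}\norm{(u',v')}_{L,\beta,*}^2+\norm{(\grad u',\gradg v')}_{\LL^2}^2\le c_0\,\norm{(u',v')}_{\LL^2}^2,
\]
with $c_0$ controlled by the Lipschitz constants of $F_2',G_2'$; absorbing the right-hand side by~\eqref{IEQ:INT:2} produces
\[
\tfrac{d}{dt}\norm{(u',v')}_{L,\beta,*}^2+\norm{(\grad u',\gradg v')}_{\LL^2}^2\le C_\star\,\norm{(u',v')}_{L,\beta,*}^2 .
\]

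Since $\SS^L(\delt u^L,\delt v^L)$ and $(\mu^L,\theta^L)$ differ only by an element of the kernel $\{c(\beta,1):c\in\R\}$ of $\scp{\cdot}{\cdot}_{L,\beta}$, we have $\norm{(\delt u^L,\delt v^L)}_{L,\beta,*}=\norm{(\mu^L,\theta^L)}_{L,\beta}$, so the first step above yields $\int_t^{t+1}\norm{(u',v')}_{L,\beta,*}^2\ds\le 2E(u_0,v_0)$. A $t$-weighted Gronwall argument on $(0,1]$ together with a standard uniform Gronwall argument on $[1,\infty)$ then give
\[
\norm{(\mu^L(t),\theta^L(t))}_{L,\beta}^2=\norm{(\delt u^L(t),\delt v^L(t))}_{L,\beta,*}^2\le C\,\frac{1+t}{t},\qquad t>0 .
\]
Finally I would upgrade this to~\eqref{SMOOTH:KLLM} by elliptic bootstrapping for the problem $-\Lap u^L+F'(u^L)=\mu^L$ in $\Omega$, $-\Lapg v^L+G'(v^L)+\deln u^L=\theta^L$ on $\Ga$, $u^L|_\Ga=v^L$: the $\VV^1$-bound and the growth conditions~\eqref{ass:pot:1}--\eqref{ass:pot:2} (with the three-dimensional Sobolev embeddings $H^1\emb L^6$, $H^2\emb W^{1,6}$, $H^2(\Ga)\emb W^{1,r}(\Ga)$) bound $F'(u^L),G'(v^L)$ and their gradients by $C\norm{(u^L,v^L)}_{\HH^2}$, while the means of $\mu^L,\theta^L$ are controlled through $\deln u^L$, which is absorbed after interpolating $u^L$ in $H^{3/2+\eps}$ between $\LL^2$ and $\HH^2$ and applying Young's inequality; this gives first $\norm{(u^L(t),v^L(t))}_{\HH^2}^2\le C\tfrac{1+t}{t}$ and then $\norm{(u^L(t),v^L(t))}_{\HH^3}^2\le C_*^2\tfrac{1+t}{t}$.

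The delicate points are the rigorous justification of the time differentiation in the second step of~(v) --- which I would handle by carrying out the estimate on the Galerkin approximations (where $(\delt u^L_n,\delt v^L_n)$ is smooth in time) with constants independent of $n$ and passing to the limit, using uniqueness of the weak solution --- and the careful bookkeeping of the $t$-dependence through the elliptic bootstrap, which has to be arranged so that no power of $\tfrac{1+t}{t}$ larger than $\tfrac12$ is ever produced; this is exactly what forces the interpolation/Young trick for the mean values of $\mu^L$ and $\theta^L$.
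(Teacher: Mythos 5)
Your proposal is correct in substance and follows the same overall strategy as the paper: items (i)--(iv) are imported from \cite{KLLM}, and the smoothing property is obtained by testing the time-differentiated system with $\SS^L$ applied to the time derivative, dropping the monotone parts $F_1'',G_1''$, controlling the remainder by the Lipschitz constants of $F_2',G_2'$, absorbing via the interpolation inequality \eqref{IEQ:INT:2}, exploiting the identity $\norm{(\delt u^L,\delt v^L)}_{L,\beta,*}=\norm{(\mu^L,\theta^L)}_{L,\beta}$ together with the dissipation bound from \eqref{ENERGY:KLLM}, a $t$-weighted Gronwall argument, and finally elliptic regularity for the bulk--surface problem. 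The one genuine divergence is how the formal time differentiation is justified: you propose to differentiate on the Galerkin level and pass to the limit, whereas the paper works directly with the forward difference quotients $\partial_t^h$ of the already-constructed weak solution and sends $h\to0$ at the very end. The difference-quotient route is cleaner here because it never requires revisiting the approximation scheme; your Galerkin route is workable but needs an extra technical ingredient you do not mention, namely a discrete analogue of $\SS^L$ (the test function $\SS^L(\delt u^L_n,\delt v^L_n)$ does not lie in the finite-dimensional ansatz space, so one must either use eigenfunctions of the associated bulk--surface operator or project, with constants uniform in $n$). A second, harmless difference is in the elliptic step: you bootstrap $\HH^2$ first and then $\HH^3$, controlling the means of $\mu^L,\theta^L$ by a trace/interpolation/Young argument on $\deln u^L$, while the paper passes to $\HH^3$ in one step, quoting the mean-value estimates from \cite[Proof of Thm.~3.1, Step~3]{KLLM} and absorbing the $\HH^3$ norm produced by $\norm{F''(u^L)\grad u^L}_{L^2}$ via \eqref{INT:H2} and Young's inequality; both yield the same exponent $\tfrac12$ in \eqref{SMOOTH:KLLM}.
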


\medskip

\begin{remark}\normalfont
	We point out that the above results of Theorem~\ref{THM:WP:KLLM} hold true, if the bounds from below in the conditions \eqref{GR:F} and \eqref{GR:G} are omitted. This means, it would be sufficient to demand that
	\begin{align*}
		F(s) \le c_F(1+\abs{s}^p)
		\quad\text{and}\quad
		G(s) \le c_G(1+\abs{s}^q)
	\end{align*}
	for all $s\in\R$, even though the stronger assumptions were used in \cite{KLLM} to which we will refer in the subsequent proof. This is because instead of \cite[Lem.~2.1]{KLLM}, the stronger Poincaré type inequality
	\begin{align}
	\label{POINC:ALT}
		\norm{(u,v)}_{\LL^2} \le C_P \norm{\grad u}_{L^2(\Omega)}
		\quad\text{for all $(u,v)\in \Wo$}
	\end{align}
	could be used to prove the existence of a weak solution. The proof of \eqref{POINC:ALT} can be done similarly to the proof of \cite[Lem.~2.1]{KLLM} via a contradiction argument. It relies on the fact that if $\grad u = 0$ a.e. in $\Omega$, then the functions $u$ and $v=u\vert_\Gamma$ must be constant a.e. in $\Omega$ and a.e. on $\Gamma$, respectively. Then, the mean value condition $\beta\abs{\Omega}\mean{u}_\Omega + \abs{\Gamma}\mean{v}_\Gamma = 0$ ensures that $u = 0$ a.e. in $\Omega$ and $v=0$ a.e. on $\Gamma$. 

	Nevertheless, in Assumption~\ref{ass:pot}, we stick to the stronger conditions as demanded in \cite{KLLM} since such kind of estimates are very useful in the
	proof of Lemma~\ref{LM.boundedstat} (see \eqref{EST:F1P} and \eqref{EST:G1P}).
	
	We further point out that the above comments also apply for the GMS model ($L=0$) whose well-posedness will be discussed in Subsection~\ref{sec:wellposedGMS}.
\end{remark}

\begin{proof}[Proof of Theorem~\ref{THM:WP:KLLM}]
	The existence and uniqueness of a global unique weak solution which satisfies the assertions (i)--(iv) has already been established in \cite[Thm.~3.1]{KLLM}. 
	
	Hence, only the smoothing property (v) remains to be proved. 
	In the following, we omit the superscript $L$ to provide a cleaner presentation.
	Let us fix arbitrary test functions $(w,z)\in\HH^1$ be arbitrary.
	Adding \eqref{WF:KLLM:1} and \eqref{WF:KLLM:2}, we obtain
	\begin{align}
	\label{sumweak1} 
	\big\langle \delt u,w \big\rangle_{H^1(\Omega)} 
		+\big\langle \delt v,z \big\rangle_{H^1(\Gamma)}
		= -\big((\mu,\theta),(w,z) \big)_{L,\beta} 
	\quad\text{a.e.~on $\RP$}
	\end{align}
	for all test functions $(w,z)\in\HH^1$. 
	
	In the following, let $C$ denote a generic positive constant depending only on $\Omega$, $\beta$, $F$, $G$ and  $\norm{(u_0,v_0)}_{\HH^1}$ that may change its value from line to line.
	For any $\tau\in\RP$ and $h>0$, we will write
	\begin{align}
	\label{NOT:FDQ}
		\partial_{t}^{h}f(\tau)= \frac 1h \big(f(\tau+h)-f(\tau)\big)
	\end{align}
	to denote the forward-in-time difference quotient of a suitable function $f$.
	
	From \eqref{sumweak1}, we deduce that
	\begin{align}
	\big\langle\partial_{t}^{h}\delt u(\tau),w \big\rangle_{H^1(\Omega)} 
		+\big\langle \partial_{t}^{h}\delt v(\tau),z \big\rangle_{H^1(\Gamma)}
		= -\Big(\big(\partial_{t }^{h}\mu(\tau),\partial_{t }^{h}\theta(\tau)\big),(w,z) \Big)_{L,\beta} \label{sumweak2} 
	\end{align} 
	for almost all $\tau\in\RP$ where 
	\begin{alignat}{3}
	\partial_{t }^{h}\mu(\tau) 
	&=-\Delta\partial_{t }^{h}u(\tau)+\frac{1}{h}\Big(F^{\prime}\big(u(\tau+h)\big)-F^{\prime}\big(u(\tau)\big)\Big)\label{mu} 
	&&\quad\text{a.e.~in $\Om$,}\\
	\partial_{t }^{h}\theta(\tau)
	&=-\Delta_{\Gamma}\partial_{t}^{h}v(\tau)+\frac{1}{h}\Big(G^{\prime}\big(v(\tau+h)\big)-G^{\prime}\big(v(\tau)\big)\Big)
		+\partial_{n }\partial_{t }^{h}u(\tau) \label{teta}
	&&\quad\text{a.e.~on $\Ga$}
	\end{alignat}
	due to \eqref{WF:KLLM:3} and \eqref{WF:KLLM:4}. 
	Plugging $(w,z)=\SS^L\big(\partial_{t}^{h}u(\tau),\partial_{t}^{h}v(\tau)\big)$ into \eqref{sumweak2}, recalling \eqref{mu} and \eqref{teta}, and using integration by parts, we obtain
	\begin{align*}
	&-\dfrac{1}{2}\dfrac{\dd}{\dd \tau}\big\| \big(\partial_{t }^{h}u(\tau),\partial_{t }^{h}v(\tau) \big)\big\|_{L,\beta,*}^{2}
	= -\Big(\big(\partial_{t }^{h}\mu(\tau),\partial_{t }^{h}\theta(\tau)\big),
		\SS^L\big( \partial_{t }^{h}u(\tau), \partial_{t }^{h}v(\tau)\big) \Big)_{L,\beta}\\
	&\quad=\big(\partial_{t }^{h}\mu(\tau),\partial_{t }^{h}u(\tau) \big)_{L^2(\Omega)}
		+\big(\partial_{t }^{h}\theta(\tau),\partial_{t }^{h}v(\tau) \big)_{L^2(\Gamma)}\\
	&\quad =\big\|\nabla\partial_{t }^{h}u(\tau) \big\|^{2} _{L^{2}(\Omega)}
		+\dfrac{1}{h}\int_{\Omega }
		\Big(F^{\prime}\big(u(\tau+h)\big)-F^{\prime}\big(u(\tau)\big)\Big) \partial_{t }^{h}u(\tau) \dx\\
	&\qquad	+\big\|\gradg\partial_{t }^{h}v(\tau) \big\|^{2}_{L^{2}(\Gamma)}
		+\dfrac{1}{h}\int_{\Gamma }
		\Big(G^{\prime}\big(v(\tau+h)\big)-G^{\prime}\big(v(\tau)\big)\Big) \partial_{t }^{h}v(\tau) \dG
	\end{align*}
	for almost all $\tau\in\RP$. Hence, we have
	\begin{align*}
	&\dfrac{1}{2}\dfrac{\dd}{\dd \tau}\big\|\big(\partial_{t }^{h}u(\tau),\partial_{t }^{h}v(\tau) \big) \big\|_{L,\beta,*}^{2} 
		+\big\|\nabla\partial_{t }^{h}u(\tau) \big\|^{2} _{L^{2}(\Omega)}
		+\big\|\gradg\partial_{t }^{h}v(\tau) \big\|^{2} _{L^{2}(\Gamma)}\nonumber\\
	&\qquad +\dfrac{1}{h}\intO \Big(F_1^{\prime}\big(u(\tau+h)\big)-F_1^{\prime}\big(u(\tau)\big)\Big) \partial_{t }^{h}u(\tau)\dx \\
	&\qquad +\dfrac{1}{h}\intG \Big(G_1^{\prime}\big(v(\tau+h)\big)-G_1^{\prime}\big(v(\tau)\big)\Big) \partial_{t }^{h}v(\tau)\dG
	\\
	&=-\dfrac{1}{h}\intO\Big(F_2^{\prime}\big(u(\tau+h)\big)-F_2^{\prime}\big(u(\tau)\big)\Big) \partial_{t }^{h}u(\tau) \dx \\
	&\qquad-\dfrac{1}{h}\intG\Big(G_2^{\prime}\big(v(\tau+h)\big)-G_2^{\prime}\big(v(\tau)\big)\Big) \partial_{t }^{h}v(\tau) \dG
	\end{align*}
	for all $\tau\in\RP$.
	Recall that $ F_{2} $, $ G_{2} $ are Lipschitz continuous, and that $ F_{1} $ and $ G_{1}$ are convex which directly implies that $ F_{1}^\prime $ and $ G_{1}^\prime$ are monotone. Hence, we obtain 
	\begin{align}
	\label{est0}
	\begin{aligned}
	&\dfrac{1}{2}\dfrac{\dd}{\dd \tau}\big\|  \big(\partial_{t }^{h}u(\tau),\partial_{t }^{h}v(\tau) \big) \big\|_{L,\beta,*}^{2}
		+\big\|\nabla\partial_{t }^{h}u(\tau) \big\|^{2} _{L^{2}(\Omega)}
		+\big\|\gradg\partial_{t }^{h}v(\tau) \big\|^{2} _{L^{2}(\Gamma)}\\
	&\quad \leq C\big(\big\|\partial_{t }^{h}u(\tau) \big\|^{2} _{L^{2}(\Omega)}
		+\big\|\partial_{t }^{h}v(\tau) \big\|^{2} _{L^{2}(\Gamma)} \big) 
	\end{aligned}
	\end{align}
	for all $\tau\in\RP$.
	Using Lemma~\ref{LEM:INT} we conclude that
	\begin{align}
	\label{est1}
	\begin{aligned}
	&\dfrac{\dd}{\dd \tau}\big\| \big(\partial_{t }^{h}u(\tau),\partial_{t }^{h}v(\tau) \big) \big\|_{L,\beta,*}^{2} 
	+\big\|\nabla\partial_{t }^{h}u(\tau) \big\|^{2} _{L^{2}(\Omega)}
	+\big\|\gradg\partial_{t }^{h}v(\tau) \big\|^{2} _{L^{2}(\Gamma)}
	\\
	&\quad\leq C\big\| \big(\partial_{t }^{h}u(\tau),\partial_{t }^{h}v(\tau) \big) \big\|_{L,\beta,*}^{2} 
	\end{aligned} 
	\end{align}
	for all $\tau\in\RP$.
	Multiplying both sides by $\tau$ and integrating the resulting inequality with respect to $\tau$ from $0$ to $t$, we obtain
	\begin{align}
	t\big\| \big(\partial_{t }^{h}u(t),\partial_{t }^{h}v(t) \big) \big\|_{L,\beta,*}^{2}  
	\leq\int_{0}^{t}
	\big(C\tau+1 \big)\big\| \big(\partial_{t }^{h}u(\tau),\partial_{t }^{h}v(\tau) \big) \big\|_{L,\beta,*}^{2} \dtau 
	\label{est2}
	\end{align}
	for all $t\in\RP$. Moreover, we have
	\begin{align}
	\label{est2.1}
	&\big\| \big(\partial_{t }^{h}u(\tau),\partial_{t }^{h}v(\tau) \big) \big\|_{L,\beta,*}^{2}
	\leq\dfrac{1}{h}\int_{\tau}^{\tau+h}\big\| \big(\delt u(s),\delt v(s)\big) \big\|_{L,\beta,*}^{2} \ds, \\
	\label{est2.2}
	&\lim_{h\to 0}\dfrac{1}{h}\int_{\tau}^{\tau+h}\big\| \big(\delt u(s),\delt v(s)\big) \big\|_{L,\beta,*}^{2} \ds
	=\big\| \big(\delt u(\tau),\delt v(\tau)\big) \big\|_{L,\beta,*}^{2}.
	\end{align}
	Recalling the energy inequality \eqref{ENERGY:KLLM} and the fact that the energy $E$ is non-negative (since the potentials $F$ and $G$ are non-negative), we derive the estimate
	\begin{align}
		\int_{0}^{t}	\big\Vert \nabla \mu(\tau) \big\Vert _{L^{2}(\Omega) }^{2}
		+\big\Vert \gradg \theta(\tau) \big\Vert _{L^{2}(\Ga) }^{2}
		+\frac{1}{L}\big\|\beta\theta(\tau)-\mu(\tau) \big\|^2_{L^{2}(\Ga) } \dtau 
		\leq C. \label{est3}
	\end{align}
	By the definition of the solution operator $\SS^L$, we infer from \eqref{WF:KLLM:1} and \eqref{WF:KLLM:2} that
	\begin{align}
	\label{REP:MUTHETA}
	\mu(\tau) = \SS^L_\Om\big(\delt u(\tau),\delt v(\tau)\big) + c(\tau)\beta, \quad 
	\theta(\tau) = \SS^L_\Ga\big(\delt u(\tau),\delt v(\tau)\big) + c(\tau)
	\end{align}
	for almost all $\tau\in\RP$ and some function $c=c(\tau)$.
	Testing \eqref{WF:KLLM:1} and \eqref{WF:KLLM:2} with $\SS^L\big(\delt u(\tau),\delt v(\tau)\big)$, and adding the resulting equations, we infer that
	\begin{align}
	\label{est3.5}
	\begin{aligned}
		&\bignorm{\big(\delt u(\tau),\delt v(\tau)\big)}_{L,\beta,*}^2 
		= \big\|\big(\mu(\tau),\theta(\tau)\big)\big\|_{L,\beta}^2 \\
		&\quad = \big\Vert \nabla \mu(\tau) \big\Vert _{L^{2}(\Omega) }^{2}
			+\big\Vert \gradg \theta(\tau) \big\Vert _{L^{2}(\Ga) }^{2}
			+\frac{1}{L}\big\|\beta\theta(\tau)-\mu(\tau) \big\|^2_{L^{2}(\Ga) }
	\end{aligned}
	\end{align}
	for almost all $\tau\in\R$.
	Consequently, after integrating with respect to $\tau$ from $0$ to $t$, \eqref{est3} and \eqref{est3.5} imply
	\begin{align}
	\int_{0}^{t}\big\| \big(\delt u(\tau),\delt v(\tau)\big) \big\|_{L,\beta,*}^{2} \dtau \leq C \quad\text{for all $t\in\RP$.}\label{est4}
	\end{align}
	Therefore, recalling \eqref{est2.1}, \eqref{est2.2} and \eqref{est4}, we obtain from \eqref{est2} that
	\begin{align*}
	t\big\| \big(\partial_{t }^{h}u(t),\partial_{t }^{h}v(t)\big) \big\|_{L,\beta,*}^{2}\leq C\big(t+1 \big) \quad\text{for all $t\in\RP$.}
	\end{align*}
	Invoking Lebesgue's convergence theorem and passing to the limit $ h\to 0 $, we get
	\begin{align}
	\label{est:u*}
	\big\| \big(\delt u(t),\delt v(t)\big) \big\|_{L,\beta,*}^{2}\leq C\left(\dfrac{t+1}{t} \right) \quad\text{for all $t\in\RP$},
	\end{align}
	which in turn yields
	\begin{align}
	\big\| \nabla\mu(t)\big\|_{L^{2}(\Omega)}^{2}
		+\big\| \gradg\theta(t)\big\|_{L^{2}(\Gamma)}^{2}\leq C\left(\dfrac{t+1}{t} \right)\label{est5}
	\quad\text{for almost all $t\in\RP$}
	\end{align}
	because of \eqref{est3.5}. Proceeding as in \cite[Proof of Thm.~3.1, Step 3]{KLLM} we obtain the estimates
	\begin{align}
	\label{est:mu}
	\big\|\mu(t) \big\|_{L^{2}(\Omega)}
	&\leq C\big( 1+\big\|\nabla\mu(t) \big\|_{L^{2}(\Omega)}\big),\\
	\label{est:theta}
	\big\|\theta(t) \big\|_{L^{2}(\Gamma)}
	&\leq C\big( 1+\big\|\nabla\mu(t) \big\|_{L^{2}(\Omega)}
	+\big\|\gradg\theta(t) \big\|_{L^{2}(\Gamma)}\big)
	\end{align}
	for almost all $t\in\RP$. This directly implies that
	\begin{align}
		\label{est:mutheta}
		\bignorm{\big(\mu(t),\theta(t)\big)}_{\HH^1} \le C\left(\dfrac{t+1}{t} \right)^{\frac 12} 
	\end{align}
	for almost all $t\in\RP$. Furthermore, recalling the growth assumptions on $F$ and $G$ imposed in \eqref{ass:pot}, we conclude from the energy inequality \eqref{ENERGY:KLLM} that
	\begin{align}
	\label{est:h1}
		\bignorm{\big(u(t),v(t)\big)}_{\HH^1} 
		\le C + CE\big(u(t),v(t)\big)
		\le C + CE\big(u_0,v_0\big)
		\le C
	\end{align}
	for almost all $t\in\RP$.
	Since $(u,v) \in L^2(0,T;\HH^3)$ for all $T>0$, we have
	\begin{alignat}{3}
	\label{sf1}
	-\Delta u(t)&=\mu(t)-F^{\prime}\big(u(t)\big) 
	&&\quad\text{a.e.~in $\Om$},\\
	\label{sf2}
	- \Delta_{\Gamma} v(t) + \partial_{n }u(t) &=\theta(t)-G^{\prime}\big(v(t)\big)
	&&\quad\text{a.e.~on $\Ga$}
	\end{alignat}
	for almost all $t\in\RP$. 
	Invoking the growth assumptions on $F$ and $G$ in \eqref{ass:pot}, recalling that $p\le 4$, 
	and using the continuous embeddings $H^1(\Omega)\emb L^6(\Omega)$ and $H^1(\Gamma)\emb L^r(\Gamma)$ for all $r\in[1,\infty)$, we further deduce that for almost all $t\in\RP$, 
	\begin{align*}
		\bignorm{F'\big(u(t)\big)}_{L^2(\Omega)} 
		&\le C + C \norm{u(t)}_{H^1(\Omega)}^{p-1} \le C,\\
		\bignorm{G'\big(v(t)\big)}_{L^2(\Gamma)} 
		&\le C + C \norm{v(t)}_{H^1(\Gamma)}^{q-1} \le C,\\
		\bignorm{F''\big(u(t)\big)}_{L^3(\Omega)} 
		&\le C + C \norm{u(t)}_{H^1(\Omega)}^{p-2} \le C,\\
		\bignorm{G''\big(v(t)\big)}_{L^3(\Gamma)} 
		&\le C + C \norm{v(t)}_{H^1(\Gamma)}^{q-2} \le C.
	\end{align*}
	Using Hölder's inequality, the continuous embedding $H^2(\Omega)\emb W^{1,6}(\Omega)$, interpolation inequality \eqref{INT:H1:ALT}, estimate \eqref{est:h1}, and  Young's inequality, we thus infer that for any $\alpha\in (0,1)$ and almost all $t\in\RP$,
	\begin{align}
	\label{est:f}
		&\bignorm{F'\big(u(t)\big)}_{H^1(\Omega)}^2
		= \bignorm{F'\big(u(t)\big)}_{L^2(\Omega)}^2 
			+ \bignorm{F''\big(u(t)\big) \grad u(t)}_{L^2(\Omega)}^2 
		\notag\\
		&\le C + C \bignorm{F''\big(u(t)\big)}_{L^3(\Omega)}^2 
			\bignorm{\grad u(t)}_{L^6(\Omega)}^2 
		\le C + C \bignorm{u(t)}_{H^1(\Omega)}\, \bignorm{u(t)}_{H^3(\Omega)}
		\notag\\
		&\le \frac{C}{\alpha} + \alpha \bignorm{u(t)}_{H^3(\Omega)}^2.
	\end{align}
	Proceeding similarly, we deduce that for any $\alpha\in (0,1)$ and for any $\alpha>0$ and almost all $t\in\RP$,
	\begin{align}
	\label{est:g}
		\bignorm{G'\big(v(t)\big)}_{H^1(\Gamma)}^2
		\le \frac{C}{\alpha} + \alpha \bignorm{v(t)}_{H^3(\Gamma)}^2.
	\end{align}
	Furthermore, a regularity estimate for elliptic problems with bulk-surface coupling (see \cite[Thm.~3.3]{knopf-liu}) implies that for almost all $t\in\RP$,
	\begin{align*}
		\bignorm{\big(u(t),v(t)\big)}_{\HH^3}^2
		&\le C \bignorm{\mu(t) - F'\big(u(t)\big)}_{H^1(\Omega)}^2 + C \bignorm{\theta(t) - G'\big(u(t)\big)}_{H^1(\Gamma)}^2 \\
		&\le C \bignorm{\big(\mu(t),\theta(t)\big)}_{\HH^1}^2 
		+ C\bignorm{F'\big(u(t)\big)}_{H^1(\Omega)}^2
		+ C\bignorm{G'\big(v(t)\big)}_{H^1(\Gamma)}^2 .
	\end{align*}
	Eventually, invoking the estimates \eqref{est:mutheta}, \eqref{est:f} and \eqref{est:g}, and choosing $\alpha\in(0,1)$ sufficiently small, we conclude that for almost all $t\in\RP$,
	\begin{align*}
	\bignorm{\big(u(t),v(t)\big)}_{\HH^3}^2
	\le C + C \bignorm{\big(\mu(t),\theta(t)\big)}_{\HH^1}^2  
	\le C\left(\frac{t+1}{t}\right),
	\end{align*}
	which verifies the smoothing property \eqref{SMOOTH:KLLM}. 
	Thus, the proof of Theorem~\ref{THM:WP:KLLM} is complete.
\end{proof}

\medskip

Using the interpolation inequalities from Section~\ref{SEC:PIT} and the energy inequality \eqref{ENERGY:KLLM}, we further establish improved continuity results for the weak solution.

\begin{proposition}[Improved continuity results] \label{PROP:IMP:KLLM}
	Suppose that \eqref{ass:dom}--\eqref{ass:pot} hold and that $L\in(0,\infty)$. Let $m\in\R$ be arbitrary,
	let $(u_{0},v_{0}) \in \Wm $ be any initial datum, and let $(u^L,v^L,\mu^L,\theta^L)$ denote the corresponding weak solution of the system \eqref{CH:INT.}.
	Then it holds that%
	\begin{align}
		\label{CONT:UV}
		(u^L,v^L) \in C([0,\infty);\HH^1) \cap C((0,\infty);\HH^2).
	\end{align}
\end{proposition}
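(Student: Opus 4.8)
The strategy is to upgrade the regularity $(u^L,v^L)\in C^{0,1/4}([0,T];\LL^2)\cap L^\infty(0,T;\VV^1)\cap L^2(0,T;\HH^3)$ from Theorem~\ref{THM:WP:KLLM}(i) to genuine continuity into $\HH^1$ and $\HH^2$ by combining weak continuity with norm convergence. First I would establish weak continuity: since $(u^L,v^L)\in L^\infty(0,\infty;\HH^1)$ together with $\delt(u^L,v^L)\in L^2_{\mathrm{loc}}((0,\infty);(\HH^1)')$ (from the $H^1(0,T;(\HH^1)')$ regularity and the smoothing estimate \eqref{SMOOTH:KLLM}), the classical Lions--Magenes interpolation lemma gives $(u^L,v^L)\in C_w([0,\infty);\HH^1)$, i.e.\ continuity with respect to the weak topology of $\HH^1$. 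Using the smoothing property \eqref{SMOOTH:KLLM} in place of the $L^\infty(0,T;\HH^1)$ bound, on any interval $[\tau,T]$ with $\tau>0$ one gets $(u^L,v^L)\in L^\infty(\tau,T;\HH^3)$ and hence, together with $\delt(u^L,v^L)\in L^2(\tau,T;(\HH^1)')$, also $(u^L,v^L)\in C_w([\tau,T];\HH^3)$, in particular $C_w((0,\infty);\HH^2)$.

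To pass from weak to strong continuity one needs continuity of the appropriate norm. For the $\HH^1$-part: I would test the weak formulation (more precisely, use the chain rule that is justified by the regularity $(u^L,v^L)\in L^2(0,T;\HH^3)$ and $\delt(u^L,v^L)\in L^2(0,T;(\HH^1)')$) to derive an energy identity of the form
\begin{align*}
E\big(u^L(t),v^L(t)\big) + \int_s^t \bignorm{\big(\mu^L(r),\theta^L(r)\big)}_{L,\beta}^2 \dr = E\big(u^L(s),v^L(s)\big)
\end{align*}
for all $0\le s\le t$, upgrading the energy \emph{inequality} \eqref{ENERGY:KLLM} to an equality. This shows $t\mapsto E(u^L(t),v^L(t))$ is continuous on $[0,\infty)$. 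Splitting $E$ as a sum of the quadratic Dirichlet part $\tfrac12\|(\grad u^L,\gradg v^L)\|_{\LL^2}^2$ and the lower-order part $\int_\Omega F(u^L)\dx + \int_\Gamma G(v^L)\dG$, and observing that $(u^L,v^L)\in C([0,\infty);\LL^2)$ together with the growth bounds \eqref{ass:pot} forces the lower-order part to be continuous (via the $\HH^1\emb\LL^6$ embedding and dominated convergence, controlling $F(u^L)$ by $1+|u^L|^p$ with $p\le 4<6$), one concludes that $t\mapsto\|(\grad u^L,\gradg v^L)\|_{\LL^2}$ is continuous. Norm convergence plus weak convergence in the Hilbert space $\VV^1$ (equivalently in $\HH^1$ on the Dirichlet seminorm, closed up with the $\LL^2$-continuity) then yields strong continuity $(u^L,v^L)\in C([0,\infty);\HH^1)$.

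For the $\HH^2$-part on $(0,\infty)$ I would argue analogously but one level higher, using the equations \eqref{WF:KLLM:3}--\eqref{WF:KLLM:4} rewritten as the elliptic system \eqref{sf1}--\eqref{sf2}: the elliptic regularity estimate of \cite[Thm.~3.3]{knopf-liu} bounds $\|(u^L(t),v^L(t))\|_{\HH^3}$ by $\|(\mu^L(t),\theta^L(t))\|_{\HH^1}$ plus lower-order terms, so it suffices to show $t\mapsto(\mu^L(t),\theta^L(t))$ is continuous into $\HH^1$ on $(0,\infty)$; here the key is the identity \eqref{est3.5} relating $\|(\delt u^L,\delt v^L)\|_{L,\beta,*}$ to $\|(\mu^L,\theta^L)\|_{L,\beta}$, combined with continuity of $t\mapsto(\delt u^L(t),\delt v^L(t))$ in $(\HH^1)'$ (which follows from $\delt(u^L,v^L)\in H^1_{\mathrm{loc}}((0,\infty);(\HH^1)')$, itself obtained from differentiating the equation in time as in \eqref{est1}, valid away from $t=0$), and with the interpolation inequalities of Lemma~\ref{LEM:INT:2} and Corollary~\ref{COR:INT} to absorb the highest-order term. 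Finally, elliptic regularity turns strong $\HH^1$-continuity of $(\mu^L,\theta^L)$ into $\HH^3$-continuity of $(u^L,v^L)$, a fortiori $\HH^2$-continuity.

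The main obstacle is upgrading the energy \emph{inequality} to an \emph{identity} and, relatedly, justifying the chain-rule/integration-by-parts manipulations at the borderline regularity $(u^L,v^L)\in L^2(0,T;\HH^3)\cap H^1(0,T;(\HH^1)')$ — in particular making sense of the pairing $\langle\delt(u^L,v^L),(\mu^L,\theta^L)\rangle$, since $(\mu^L,\theta^L)$ is only known to lie in $L^2(0,T;\HH^1)$ and not in $L^\infty$ in time; one resolves this by working with the difference-quotient scheme already used in the proof of Theorem~\ref{THM:WP:KLLM}, or by a density/mollification argument in time, exploiting that the $L,\beta,*$-norm machinery makes $\delt(u^L,v^L)$ pair nicely with $(\mu^L,\theta^L)$ through the solution operator $\SS^L$.
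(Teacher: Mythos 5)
Your treatment of the two halves of the statement is of very different quality, so let me separate them.

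\textbf{The $\HH^1$-part.} Your scheme (weak continuity via the Lions--Magenes lemma, continuity of the energy, convergence of the potential terms by interpolation against the uniform $\HH^1$-bound and generalized dominated convergence, and finally norm convergence plus weak convergence in a Hilbert space) is sound and, at $t=0$, close in spirit to what the paper does. However, you route the whole argument through an energy \emph{identity}, and you yourself identify the chain rule needed for that identity as the main obstacle. This obstacle can be avoided entirely: continuity at interior times is subsumed by the $\HH^2$-continuity on $(0,\infty)$, so the energy argument is only needed at $t=0$, where only \emph{right}-continuity of $t\mapsto E(u^L(t),v^L(t))$ matters. That follows already from the energy \emph{inequality}: $E$ is non-increasing along the solution, so $\lim_{t\to 0^+}E(t)$ exists and is $\le E(u_0,v_0)$, while weak lower semicontinuity of $E$ along $(u^L(t_k),v^L(t_k))\wto(u_0,v_0)$ in $\HH^1$ gives the reverse inequality. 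This is exactly the paper's Step~2, and it makes the borderline chain-rule/mollification discussion unnecessary.

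\textbf{The $\HH^2$-part.} Here there is a genuine gap. You base the argument on continuity of $t\mapsto(\delt u^L(t),\delt v^L(t))$ in $(\HH^1)'$, claiming $\delt(u^L,v^L)\in H^1_{\mathrm{loc}}((0,\infty);(\HH^1)')$ ``from differentiating the equation in time as in \eqref{est1}''. The estimate \eqref{est1} controls only \emph{first} time differences of $(u^L,v^L)$; in the limit it yields $\delt(u^L,v^L)\in L^\infty_{\mathrm{loc}}((0,\infty);\HH^{-1}_\beta)\cap L^2_{\mathrm{loc}}((0,\infty);\HH^1)$, not an $L^2_{\mathrm{loc}}$-bound on $\delt\delt(u^L,v^L)$ in $(\HH^1)'$. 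Obtaining the latter would require differentiating the equation a second time in time, which is not available at this regularity level. Moreover, even granting continuity of $\delt(u^L,v^L)$ in the $\norm{\cdot}_{L,\beta,*}$-norm, the passage to $\HH^1$-continuity of $(\mu^L,\theta^L)$ via \eqref{REP:MUTHETA} also requires continuity of the time-dependent constants $c(t)$, which are computed from $\int_\Omega F'(u^L)-\intG\deln u^L$ and hence already presuppose $\HH^2$-continuity of $(u^L,v^L)$ --- the argument is circular. The paper's proof is far simpler and bypasses all of this: for $t$ near $t_0>0$ it applies the interpolation inequality $\norm{\cdot}_{\HH^2}\le C\norm{\cdot}_{\HH^3}^{2/3}\norm{\cdot}_{\LL^2}^{1/3}$ from Lemma~\ref{LEM:INT:2} to the difference $(u^L(t),v^L(t))-(u^L(t_0),v^L(t_0))$; the $\HH^3$-factor is bounded uniformly on $[\tfrac12 t_0,t_0+1]$ by the smoothing property \eqref{SMOOTH:KLLM}, and the $\LL^2$-factor tends to zero by the already established $C([0,T];\LL^2)$-regularity. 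You should replace your $\HH^2$-argument by this interpolation step; it is the same device you already use for the potential terms, applied one level higher.
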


\begin{proof}
	\textit{Step 1.}
	We first prove that $(u^L,v^L) \in C((0,\infty);\HH^2)$. Therefore, we fix an arbitrary time $t_0\in (0,\infty)$. Let further $t\in (0,\infty)$ with $\frac 12 t_0 < t < t_0+1$ be arbitrary. We already know from Theorem~\ref{THM:WP:KLLM} that $$(u^L,v^L) \in L^\infty\big(\tfrac 12 t_0, t_0+1 ;\HH^2\big) \cap C\big([\tfrac 12 t_0, t_0+1];\LL^2\big).$$ This entails that $(u^L,v^L)(t) \in \HH^2$ for all $t\in [\tfrac 12 t_0, t_0+1]$ (see, e.g., \cite[Chap.~III,\,§1,\,Lem.~1.4.]{temam}). 
	Using Lemma~\ref{LEM:INT:2} and the smoothing property \eqref{SMOOTH:KLLM} we deduce that
	\begin{align*}
	&\bignorm{ \big(u^L(t),v^L(t)\big) - \big(u^L(t_0),v^L(t_0)\big) }_{\HH^2}^2 \\
	&\quad \le C
	\bignorm{ \big(u^L(t),v^L(t)\big) 
		- \big(u^L(t_0),v^L(t_0)\big) }_{\HH^3}^{\frac 43}
	\bignorm{ \big(u^L(t),v^L(t)\big) 
		- \big(u^L(t_0),v^L(t_0)\big) }_{\LL^2}^{\frac 13} \\
	&\quad \le C\left[\left(\frac{2(2+t_0)}{t_0}\right)^{\frac 12} + \left(\frac{1+t_0}{t_0}\right)^{\frac 12} \right]^{\frac 43}
	\bignorm{ \big(u^L(t),v^L(t)\big) 
		- \big(u^L(t_0),v^L(t_0)\big) }_{\LL^2}^{\frac 13}.
	\end{align*}
	We now choose $T:= 1 + t_0$ and recall that $(u^L,v^L) \in C([0,T];\LL^2)$.
	Hence, the last line of the above estimate tends to zero as $t\to t_0$ which proves $(u^L,v^L) \in C((0,\infty);\HH^2)$.
	
	\textit{Step 2.}
	We now want to show that $(u^L,v^L) \in C([0,\infty);\HH^1)$. As $(u^L,v^L) \in C((0,\infty);\HH^1)$ is already established, only the continuity at $t=0$ needs to be investigated. 
	To this end, let $\{t_k\}_{k\in\N}\subset (0,\infty)$ be an arbitrary sequence with $t_k\to 0$ as $k\to \infty$. 
	
	From the growth conditions on $F$ and $G$ in \eqref{ass:pot}, we deduce that
	\begin{align}
	\bignorm{\big(u(t_k),v(t_k)\big)}_{\HH^1} 
	\le C + C E\big(u(t_k),v(t_k)\big)
	\le C + C E(u_0,v_0) 
	\le C
	\end{align}
	for all $k\in\N$. Hence, there exists a subsequence $\{t_k^*\}_{k\in\N}$ of $(t_k)_{k\in\N}$ and a pair $(u_*,v_*)\in \HH^1$ such that 
	\begin{align}
	\label{CONV:UV:W*}
		\big(u^L(t_k^*),v^L(t_k^*)\big) \wto (u_*,v_*) \quad\text{in $\HH^1$.}
	\end{align}
	Since $(u^L,v^L) \in C([0,\infty);\LL^2)$, we also know that 
	\begin{align}
	\label{CONV:UV:S}
		\big(u^L(t_{k}),v^L(t_{k})\big) \to (u_0,v_0) \quad\text{in $\LL^2$}
	\end{align}
	and thus $(u_*,v_*)=(u_0,v_0)$ a.e.~in $\Omega\times\Gamma$ due to uniqueness of the limit. This means that the weak limit in \eqref{CONV:UV:W*} does not depend on the subsequence extraction and thus, \eqref{CONV:UV:W*} holds true for the \emph{whole} sequence, that is 
	\begin{align}
	\label{CONV:UV:W}
		\big(u^L(t_k),v^L(t_k)\big) \wto (u_0,v_0) \quad\text{in $\HH^1$.}
	\end{align}
	With some abuse of notation, we will write
	$E(t) := E\big(u(t),v(t)\big)$ for all $t\ge 0$.
	We further recall that $F_1$ and $G_1$ are convex, and that $F_2$ and $G_2$ have at most quadratic growth. Using Lebesgue's general convergence theorem (see \cite[Sect.~3.25]{Alt}) for the non-convex contributions, and lower-semicontinuity of the convex contributions, we obtain
	\begin{align}
	\label{LIMINF:E}
		E(t_0) \le \underset{k\to\infty}{\lim\inf}\; E(t_k).
	\end{align}
	Moreover, we infer from the energy inequality \eqref{ENERGY:KLLM} that the sequence $(E(t_k))_{k\in\N}$ is non-decreasing and bounded from above by $E(t_0)$. Hence, the limit $k\to \infty$ exists, and in combination with \eqref{LIMINF:E} we conclude that
	\begin{align}
	\label{LIM:E}
		E(t_k) \to E(t_0)\quad\text{as $k\to\infty$.}
	\end{align}
	By the definition of the energy functional $E$ (see \eqref{DEF:EN}), this implies that
	\begin{align}
	\label{ID:NRG}
	\begin{aligned}
		&\frac 12 \bignorm{ \big(\grad u^L(t_k), \gradg v^L(t_k)\big) }_{\LL^2}^2 
			- \frac 12 \bignorm{\big(\grad u_0, \gradg v_0\big)}_{\LL^2}^2 \\
		&= E(t_k) - E(t_0) 
			+ \intO F\big(u_0\big) - F\big(u^L(t_k)\big) \dx  
			+ \intG G\big(v_0\big) - G\big(v^L(t_k)\big) \dG .
	\end{aligned}
	\end{align}
	Since $(u(t_k),v(t_k))\to (u_0,v_0)$ strongly in $\LL^2$, and $(u(t_k),v(t_k))_{k\in\N}$ is bounded in $\HH^1$, we also have $(u(t_k),v(t_k))\to (u_0,v_0)$ strongly in $L^p(\Omega)\times L^q(\Gamma)$ via interpolation.
	We can thus use Lebesgue's general convergence theorem (see \cite[Sect.~3.25]{Alt}) to infer that the right-hand side of \eqref{ID:NRG} tends to zero as $k\to\infty$. Along with \eqref{CONV:UV:S}, this yields
	\begin{align}
	\label{CONV:UV:NORM}
		\bignorm{ \big(u^L(t_k), v^L(t_k)\big) }_{\HH^1}^2 
			- \bignorm{\big(u_0, v_0\big)}_{\HH^1}^2 \to 0
		\quad\text{$k\to\infty$.}
	\end{align}
	Combining \eqref{CONV:UV:W} and \eqref{CONV:UV:NORM}, we eventually conclude that
	$(u^L(t_k),v^L(t_k))$ converges to $(u_0,v_0)$ strongly in $\HH^1$ as $k\to\infty$. As the null sequence $(t_k)_{k\in\N}$ was arbitrary, this proves $(u^L,v^L)\in C([0,\infty);\HH^1)$. 
	
	Thus, the proof is complete.
\end{proof}

\medskip
	
Moreover, we deduce continuous dependence of the weak solutions on the initial data.

\begin{proposition}[Continuous dependence on the initial data]\label{PROP:CD:KLLM}
	Suppose that \eqref{ass:dom}--\eqref{ass:pot} hold and that $L\in(0,\infty)$. Let $m\in\R$ be arbitrary.
	For $i\in\{1,2\}$, let $(u_{0,i},v_{0,i}) \in \Wm $ be any initial datum and let $(u_i^L,v_i^L,\mu_i^L,\theta_i^L)$ denote the corresponding weak solution of the system \eqref{CH:INT.}. 
	
	Then there exist positive, non-decreasing functions $\Lambda_k^* \in C(\RP)$, $k\in\{-1,1,2\}$ depending only on $\Omega$, $\beta$, $F$ and $G$ such that the following statements hold:
	\begin{alignat}{2}
		\label{CD:KLLM:1}
		\bignorm{\big(u^L_{2},v_2^L\big)(t) - \big(u^L_{1},v_1^L\big)(t)}_{L,\beta,*}
		&\le \Lambda_{-1}^*(t) \bignorm{(u_{0,2},v_{0,2})-(u_{0,1},v_{0,1})}_{L,\beta,*},
		&&\;\; t\ge 0, 
		\\[0.5ex]
		\label{CD:KLLM:2}
		\bignorm{\big(u^L_{2},v_2^L\big)(t) - \big(u^L_{1},v_1^L\big)(t)}_{\HH^1} 
		&\le \Lambda_1^*(t) \bignorm{(u_{0,2},v_{0,2})-(u_{0,1},v_{0,1})}_{L,\beta,*},
		&&\;\; t\ge 1. 
		\\
		\label{CD:KLLM:3}
		\bignorm{\big(u^L_{2},v_2^L\big)(t) - \big(u^L_{1},v_1^L\big)(t)}_{\HH^2} 
		&\le \Lambda_2^*(t) \bignorm{(u_{0,2},v_{0,2})-(u_{0,1},v_{0,1})}_{L,\beta,*}^{\frac 12},
		&&\;\; t\ge 1.
	\end{alignat}
\end{proposition}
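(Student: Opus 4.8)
The plan is to derive a differential inequality for the difference of two solutions in the $\norm{\cdot}_{L,\beta,*}$-norm, exactly as is standard for Cahn--Hilliard type problems, and then to bootstrap to the higher-order estimates \eqref{CD:KLLM:2} and \eqref{CD:KLLM:3} via the parabolic smoothing property \eqref{SMOOTH:KLLM} together with the interpolation inequalities of Corollary~\ref{COR:INT}. Write $(u,v) := (u_2^L,v_2^L) - (u_1^L,v_1^L)$ and $(\mu,\theta):=(\mu_2^L,\theta_2^L)-(\mu_1^L,\theta_1^L)$; since both solutions share the same mass $m$, we have $(u(t),v(t))\in\Wo$ for all $t$, so all the inequalities involving $\norm{\cdot}_{L,\beta,*}$ apply. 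Subtracting the weak formulations \eqref{WF:KLLM:1}--\eqref{WF:KLLM:2} for the two solutions yields $\inn{\delt u}{w}_{H^1(\Omega)} + \inn{\delt v}{z}_{H^1(\Gamma)} = -\bigscp{(\mu,\theta)}{(w,z)}_{L,\beta}$; testing with $(w,z)=\SS^L(u,v)$ gives
\begin{align*}
\frac12 \frac{\dd}{\dt} \norm{(u,v)}_{L,\beta,*}^2 + \norm{\grad u}_{L^2(\Omega)}^2 + \norm{\gradg v}_{L^2(\Gamma)}^2
= -\biginn{F'(u_2^L)-F'(u_1^L)}{u}_{L^2(\Omega)} - \biginn{G'(v_2^L)-G'(v_1^L)}{v}_{L^2(\Gamma)}.
\end{align*}
Splitting $F=F_1+F_2$, $G=G_1+G_2$, the convex monotone parts have a favorable sign, and the Lipschitz parts $F_2',G_2'$ contribute a term bounded by $C\norm{(u,v)}_{\LL^2}^2$. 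Absorbing this using the interpolation inequality \eqref{IEQ:INT:2} with a small parameter $\alpha$ (to soak up the gradient terms), Gronwall's lemma yields \eqref{CD:KLLM:1} with $\Lambda_{-1}^*(t) = e^{Ct/2}$, and in fact also the integrated dissipation bound $\int_0^t \norm{(\grad u,\gradg v)}_{\LL^2}^2 \ds \le C e^{Ct}\, \norm{(u_{0,2},v_{0,2})-(u_{0,1},v_{0,1})}_{L,\beta,*}^2$.

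**Bootstrapping to $\HH^1$ and $\HH^2$.** For \eqref{CD:KLLM:2}, differentiate the difference equation in time: testing the time-differentiated system with $\SS^L(\delt u,\delt v)$ and proceeding exactly as in the proof of the smoothing property \eqref{SMOOTH:KLLM}, one obtains a differential inequality of the form $\frac{\dd}{\dt}\norm{(\delt u,\delt v)}_{L,\beta,*}^2 + \norm{(\grad\delt u,\gradg\delt v)}_{\LL^2}^2 \le C\,\norm{(\delt u,\delt v)}_{L,\beta,*}^2 + (\text{lower order terms controlled by the smoothing bounds on } u_i^L)$. Multiplying by $t$, integrating on $[0,t]$, and using the already-established integrated dissipation bound together with \eqref{est4}-type estimates for the two individual solutions, one gets $t\,\norm{(\delt u,\delt v)(t)}_{L,\beta,*}^2 \le C(1+t)\,e^{Ct}\,\norm{(u_{0,2},v_{0,2})-(u_{0,1},v_{0,1})}_{L,\beta,*}^2$, hence $\norm{(\delt u,\delt v)(t)}_{L,\beta,*}$ is controlled for $t\ge 1$. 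Then, since $-\Lap u(t) = \mu(t) - (F'(u_2^L)-F'(u_1^L))(t)$ on $\Omega$ with the analogous bulk--surface coupled equation on $\Gamma$, and since $\mu(t),\theta(t)$ are recovered from $\SS^L(\delt u,\delt v)$ up to a constant (as in \eqref{REP:MUTHETA}), elliptic regularity (\cite[Thm.~3.3]{knopf-liu}) combined with the Lipschitz-type Nemytskii estimates $\norm{F'(u_2^L)-F'(u_1^L)}_{L^2(\Omega)} \le C\norm{u}_{\HH^1}$ (using the $\HH^1$-boundedness of $u_i^L$ from \eqref{est:h1} and the growth of $F''$) and an absorption argument gives \eqref{CD:KLLM:2}. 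For \eqref{CD:KLLM:3}, interpolate: by Corollary~\ref{COR:INT} (or \eqref{INT:H2:ALT}), $\norm{(u,v)(t)}_{\HH^2} \le C\,\norm{(u,v)(t)}_{\HH^3}^{2/3}\,\norm{(u,v)(t)}_{\LL^2}^{1/3}$; the $\HH^3$-norm is bounded by $\Lambda(t)\,\norm{\cdot}_{L,\beta,*}$ via a second bootstrap with elliptic regularity (estimating $F'(u_i^L)$ in $H^1$ using the $\HH^2$-smoothing of the individual solutions), and $\norm{(u,v)(t)}_{\LL^2}^2 \le C\norm{(\grad u,\gradg v)(t)}_{\LL^2}\,\norm{(u,v)(t)}_{L,\beta,*}$ by \eqref{IEQ:INT:1}, which feeds back into the exponent $\tfrac12$ on the right-hand side of \eqref{CD:KLLM:3}.

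**Main obstacle.** The routine part is the basic Gronwall estimate for \eqref{CD:KLLM:1}. The delicate point is obtaining the higher-order estimates \eqref{CD:KLLM:2}--\eqref{CD:KLLM:3} with the \emph{right} dependence on the initial data: the right-hand sides must be the \emph{weak} $\norm{\cdot}_{L,\beta,*}$-norm of the initial difference (not a stronger norm), so one cannot simply differentiate the equation and start from $\norm{(\delt u,\delt v)(0)}_{L,\beta,*}$, which is not controlled. The resolution — and the technical heart of the argument — is the $t$-weighting trick: multiply the differential inequality for $\norm{(\delt u,\delt v)}_{L,\beta,*}^2$ by $t$ before integrating, exactly as in the smoothing-property proof, so that the estimate at $t\ge 1$ feeds only on $\int_0^t \norm{(\delt u,\delt v)(s)}_{L,\beta,*}^2\ds$, which in turn is bounded by $\int_0^t \norm{(\grad u,\gradg v)(s)}_{\LL^2}^2\ds$ (via \eqref{est3.5}-type identities) and hence by the $\norm{\cdot}_{L,\beta,*}$-norm of the initial difference through \eqref{CD:KLLM:1}. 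One must also keep careful track of the fact that the constants in Lemma~\ref{LEM:INT}, Corollary~\ref{COR:INT} and the smoothing bound \eqref{SMOOTH:KLLM} are independent of $L\in(0,\infty)$, so that $\Lambda_k^*$ does not degenerate as $L\to 0$ — this uniformity is what makes the estimates usable later in Sections~\ref{sec:contglbatt} and~\ref{SEC:EXP}.
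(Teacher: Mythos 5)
Your treatment of \eqref{CD:KLLM:1} is exactly the paper's: subtract the weak formulations, test with $\SS^L(\bar u,\bar v)$, use monotonicity of $F_1',G_1'$ and Lipschitz continuity of $F_2',G_2'$, absorb $\norm{(\bar u,\bar v)}_{\LL^2}^2$ via Lemma~\ref{LEM:INT}, and apply Gronwall; the integrated dissipation bound you record is \eqref{EST:GW2}. The problem is your route to \eqref{CD:KLLM:2}. You differentiate the \emph{difference} system in time and claim that ``proceeding exactly as in the proof of the smoothing property'' yields a closed differential inequality for $\norm{(\delt\bar u,\delt\bar v)}_{L,\beta,*}^2$ with only lower-order remainders. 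That step fails: the good sign in the smoothing proof comes from the monotonicity of $F_1'$ applied to time increments of \emph{one} solution, and it does not survive for differences of two solutions. After differentiating, the nonlinear contribution is $\int_\Omega\big[F''(u_2^L)\delt u_2^L-F''(u_1^L)\delt u_1^L\big]\delt\bar u\,\dx$, whose problematic part is the cross term $\int_\Omega\big[F''(u_2^L)-F''(u_1^L)\big]\delt u_1^L\,\delt\bar u\,\dx$. Controlling it requires (i) a Lipschitz-type bound on $F''$ (i.e.\ more regularity than the $C^2$ required by \eqref{ass:pot}) and, more seriously, (ii) a bound on $\delt u_1^L$ in $\LL^2$ (or some positive-order space), whereas the smoothing analysis only provides $\norm{(\delt u_1^L,\delt v_1^L)}_{L,\beta,*}\le C((1+t)/t)^{1/2}$, a negative-order bound. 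These terms are not ``controlled by the smoothing bounds on $u_i^L$'', and no argument is offered for them. Indeed, if your inequality did hold, you would obtain, via \eqref{REP:MUTHETA} and elliptic regularity, an $\HH^3$-Lipschitz estimate for the difference linear in $\norm{(\bar u_0,\bar v_0)}_{L,\beta,*}$, which is strictly stronger than \eqref{CD:KLLM:3} and would make the exponent $\tfrac12$ there superfluous --- a sign that the real difficulty has been glossed over. (There is also a secondary loose end: for the difference, the constant in the representation $\bar\mu=\SS^L_\Omega(\delt\bar u,\delt\bar v)+\bar c\beta$ must itself be estimated in terms of the initial difference, which you do not address.)

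The paper avoids time-differentiation altogether: it mollifies in time and performs a time-weighted $\HH^1$-energy estimate directly on the difference, i.e.\ it differentiates $(\tau-t_0)\big(\norm{\grad\bar u}_{L^2(\Omega)}^2+\norm{\gradg\bar v}_{L^2(\Gamma)}^2\big)$ and pairs the resulting equation with $(\delt\bar u,\delt\bar v)$ through $\SS^L$ (see \eqref{EST:CLIP:1}). The only nonlinear quantity that then appears is $\norm{F'(u_2^L)-F'(u_1^L)}_{H^1(\Omega)}$ (and its surface analogue), which \emph{is} controlled by $C\norm{\bar u}_{H^1(\Omega)}$ using the $W^{1,\infty}$-bounds on the individual solutions coming from \eqref{SMOOTH:KLLM} --- no time derivative of the nonlinearity and no $\LL^2$-bound on $\delt u_i^L$ is ever needed. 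Your instinct about the $t$-weighting is right, but the weight must sit on the $\HH^1$-seminorm of the difference, not on $\norm{(\delt\bar u,\delt\bar v)}_{L,\beta,*}^2$. Finally, for \eqref{CD:KLLM:3} the paper's argument is much simpler than your sketch: by \eqref{INT:H2}, $\norm{(\bar u,\bar v)(t)}_{\HH^2}^2\le C\norm{(\bar u,\bar v)(t)}_{\HH^3}\norm{(\bar u,\bar v)(t)}_{\HH^1}$, where the $\HH^3$-factor is bounded by a \emph{constant} (triangle inequality plus \eqref{SMOOTH:KLLM} for each solution separately) and the $\HH^1$-factor by \eqref{CD:KLLM:2}; taking square roots gives precisely the exponent $\tfrac12$. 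Your version, which asserts an $\HH^3$-bound for the difference that is linear in $\norm{(\bar u_0,\bar v_0)}_{L,\beta,*}$, is both unproven and inconsistent with the weaker conclusion you then derive.
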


\begin{proof}
	Let $C$ denote generic positive constants depending only on $\Omega$, $\beta$, $F$ and $G$.
	For brevity, we write
	\begin{align*}
		(\bar u,\bar v,\bar \mu,\bar \theta) := (u^L_2,v^L_2,\mu^L_2,\theta^L_2) - (u^L_1,v^L_1,\mu^L_1,\theta^L_1), \quad
		(\bar u_0, \bar v_0) = ( u_{0,2} , v_{0,2}) - ( u_{0,1}, v_{0,1} ).
	\end{align*}
	From the weak formulation \eqref{WF:KLLM:1}--\eqref{WF:KLLM:4}, we directly obtain that
	\begin{align}
		\label{CD:EQ:1}
		\inn{\delt \bar u(t)}{w}_{H^1(\Omega)} + \inn{\delt \bar v(t)}{z}_{H^1(\Gamma)} 
		= \Big(\big(\bar\mu(t),\bar\theta(t)\big),(w,z)\Big)_{L,\beta}
	\end{align}
	for almost all $t\in\RP$ and all $(w,z)\in\HH^1$, and
	\begin{alignat}{2}
		\label{CD:EQ:2}
		&\bar\mu(t) = -\Lap\bar u(t) + F'\big(u_2^L(t)\big) - F'\big(u_1^L(t)\big) &&\quad\text{a.e.~in $\Omega$} ,\\
		\label{CD:EQ:3}
		&\bar\theta(t) = -\Lapg\bar v(t) + G'\big(v_2^L(t)\big) - G'\big(v_1^L(t)\big) + \deln\bar u(t) &&\quad\text{a.e.~in $\Gamma$}
	\end{alignat}
	for almost all $t\in\RP$.  
	Arguing similarly as in the proof of Theorem~\ref{THM:WP:KLLM} (cf. the derivation of \eqref{est1}), we establish the estimate
	\begin{align}
	\label{EST:GW1}
		\dfrac{\dd}{\dd \tau} \bignorm{ \big(\bar u(\tau),\bar v(\tau)\big) }_{L,\beta,*}^{2} 
		+ \bignorm{ \big(\grad \bar u(\tau),\gradg \bar v(\tau)\big) }_{\LL^2}^{2}
		\leq C\, \bignorm{ \big(\bar u(\tau),\bar v(\tau)\big) }_{L,\beta,*}^{2} 
	\end{align}
	for all $\tau\ge 0$. Hence, Gronwall's lemma implies that 
	\begin{align}
	\label{EST:GW2}
	\bignorm{ \big(\bar u(t),\bar v(t)\big) }_{L,\beta,*}^{2} 
	+ \int_0^t \bignorm{ \big(\grad \bar u(\tau),\gradg \bar v(\tau)\big) }_{\LL^2}^{2} \ds
	\leq C e^{Ct}\, \bignorm{ \big(\bar u(0),\bar v(0)\big) }_{L,\beta,*}^{2} 
	\end{align}
	for all $t\ge  0$. In particular, this proves \eqref{CD:KLLM:1}.
	
	Let now $T>1$ be arbitrary. Moreover, let
	$\varphi \in C^\infty_c(\R;\RP)$ with $\mathrm{supp}\, \varphi \subset (-1,0)$ and $\norm{\varphi}_{L^1(\R)}=1$ be an arbitrary function. 
	For any $k\in\N$, we set
	\begin{align*}
		\varphi_k(s) := k \varphi(ks) 
		\quad\text{for all $s\in\R$.}
	\end{align*}
	This defines functions $\varphi_k \in C^\infty_c(\R;\RP)$ with $\mathrm{supp}\, \varphi_k \subset (-\frac 1k,0)$ and $\norm{\varphi_k}_{L^1(\R)}=1$ for all $k\in\mathbb N$. For any Banach space $X$, any function $f\in L^1([0,T];X)$ and all $k\in\N$, we now define
	\begin{align*}
		\sm{f}_k:\RP \to X, \quad \sm{f}_k(t):= \int_t^{t+1} \varphi_k(t-s)\, f(s) \ds .
	\end{align*} 
	It is well-known (cf.~\cite[Sect.~4.13]{Alt}) that if $f\in L^r([0,T+1];X)$ for any real number $r\in[1,\infty)$, then $\sm{f}_k \in C^\infty([0,T];X)\cap L^r([0,T];X)$ for all $k\in\N$, and $\sm{f}_k\to f$ in $L^r([0,T];X)$ as $k\to\infty$.
	
	Using this smooth approximation, we obtain 
	\begin{align}
	\label{CD:EQ:1k}
		\biginn{ \sm{ \delt \bar u }_k(t) }{w}_{H^1(\Omega)} 
		+ \biginn{ \sm{ \delt \bar v }_k(t) }{z}_{H^1(\Gamma)} 
		= \Big(\big( \sm{\bar\mu }_k(t), \sm{\bar\theta}_k (t)\big),(w,z)\Big)_{L,\beta}, 
	\end{align}
	for almost all $t\in(0,T)$, where
	\begin{alignat}{2}
	\label{CD:EQ:2k}
	\sm{\bar\mu}_k &= -\Lap \sm{\bar u}_k
	+ \big[F'(u_2^L) - F'(u_1^L)\big]_k 
	&&\quad\text{a.e.~in $Q$} ,\\
	\label{CD:EQ:3k}
	\sm{\bar\theta}_k  &= -\Lapg \sm{\bar v}_k 
	+ \big[G'(v_2^L) - G'(v_1^L)\big]_k + \deln \sm{\bar u}_k
	&&\quad\text{a.e.~in $\Sigma$.}
	\end{alignat}
	Note that $\delt \sm{\bar u}_k = \sm{\delt \bar u}_k$ and $\delt \sm{\bar v}_k = \sm{\delt \bar v}_k$, which can be verified by using the change of variables $s\mapsto t-s$ and Leibniz's rule for differentiation under the integral sign. For more details, see \cite[Sect.~4.13]{Alt}.
	
	Let us now choose an arbitrary number $t_0\in(0,1)$, and let $C_0$ denote generic positive constants depending only on $\Omega$, $\beta$, $F$, $G$ and $t_0$. 
	We thus obtain
	\begin{align*}
		&\frac 12 \frac{\mathrm d}{\mathrm d\tau}
		\Big( (\tau-t_0) \bignorm{\grad \sm{\bar u}_k (\tau)}_{L^2(\Omega)}^2
		+ (\tau-t_0) \bignorm{\gradg \sm{\bar v}_k (\tau)}_{L^2(\Gamma)}^2 \Big) 
		\\[1ex]
		&= \frac 12 \bignorm{\grad \sm{\bar u}_k (\tau)}_{L^2(\Omega)}^2
		+ \frac 12 \bignorm{\gradg \sm{\bar v}_k (\tau)}_{L^2(\Gamma)}^2 \\
		&\quad + (\tau-t_0) \bigscp{\grad \sm{\bar u}_k (\tau)}{\delt \grad \sm{\bar u}_k (\tau)}_{L^2(\Omega)}
		+ (\tau-t_0) \bigscp{\gradg \sm{\bar v}_k (\tau)}{\delt \gradg \sm{\bar v}_k (\tau)}_{L^2(\Gamma)} 
		\\[1ex]
		&= \frac 12 \bignorm{\grad \sm{\bar u}_k (\tau)}_{L^2(\Omega)}^2
		+ \frac 12 \bignorm{\gradg \sm{\bar v}_k (\tau)}_{L^2(\Gamma)}^2 \\
		&\quad + (\tau-t_0) \bigscp{\sm{\bar \mu}_k (\tau)}{\delt \sm{\bar u}_k (\tau)}_{L^2(\Omega)}
			+ (\tau-t_0) \bigscp{\sm{\bar \theta}_k (\tau)}{\delt \sm{\bar v}_k (\tau)}_{L^2(\Gamma)} \\
		&\quad - (\tau-t_0) \bigscp{\big[F'(u_2^L) - F'(u_1^L)\big]_k(\tau)}{\delt \sm{\bar u}_k (\tau)}_{L^2(\Omega)}\\ 
		&\quad - (\tau-t_0) \bigscp{\big[G'(v_2^L) - G'(v_1^L)\big]_k(\tau)}{\delt \sm{\bar v}_k (\tau)}_{L^2(\Gamma)} \\[1ex]
		&= \frac 12 \bignorm{\grad \sm{\bar u}_k (\tau)}_{L^2(\Omega)}^2
		+ \frac 12 \bignorm{\gradg \sm{\bar v}_k (\tau)}_{L^2(\Gamma)}^2 
		- (\tau-t_0) \bignorm{\big(\delt \sm{\bar u}_k (\tau),\delt \sm{\bar v}_k (\tau)\big)}_{L,\beta,*}^2\\
		&\quad - (\tau-t_0) \bigscp{\big[F'(u_2^L) - F'(u_1^L)\big]_k(\tau)}{\delt \sm{\bar u}_k (\tau)}_{L^2(\Omega)} \\
		&\quad
		- (\tau-t_0) \bigscp{\big[G'(v_2^L) - G'(v_1^L)\big]_k(\tau)}{\delt \sm{\bar v}_k (\tau)}_{L^2(\Gamma)} 
	\end{align*}
	for all $t_0<\tau<T$.
	Integrating with respect to $\tau$ from $t_0$ to an arbitrary $t\in [t_0,T]$, we infer that
	\begin{align*}
		&\begin{aligned}
		&\frac 12 \Big( (t-t_0) \bignorm{\grad \sm{\bar u}_k (t)}_{L^2(\Omega)}^2
			+ (t-t_0)  \bignorm{\gradg \sm{\bar v}_k (t)}_{L^2(\Gamma)}^2 \Big)
		\\
		&\quad + \int_{t_0}^t (\tau-t_0) \bignorm{\big(\delt \sm{\bar u}_k (\tau), 
			\delt \sm{\bar v}_k (\tau)\big)}_{L,\beta,*}^2 \dtau
		\end{aligned}
		\\[1ex]
		&\begin{aligned}
		&=  \frac 12 \int_{t_0}^t \bignorm{\grad \sm{\bar u}_k (\tau)}_{L^2(\Omega)}^2 \dtau
		+ \frac 12 \int_{t_0}^t \bignorm{\gradg \sm{\bar v}_k (\tau)}_{L^2(\Gamma)}^2 \dtau\\
		&\quad - \int_{t_0}^t (\tau-t_0) \biginn{\delt \sm{\bar u}_k (\tau)}{\big[F'(u_2^L) - F'(u_1^L)\big]_k(\tau)}_{H^1(\Omega)} \dtau \\
		&\quad - \int_{t_0}^t (\tau-t_0) \biginn{\delt \sm{\bar v}_k (\tau)}{\big[G'(v_2^L) - G'(v_1^L)\big]_k(\tau)}_{H^1(\Gamma)} \dtau
		\end{aligned}
	\end{align*}
	for all $t\in [t_0,T]$.
	Passing to the limit $k\to\infty$, we obtain 
	\begin{align}
	\label{EST:CLIP:1}
		&\frac 12 \Big((t-t_0) \bignorm{\grad \bar u(t)}_{L^2(\Omega)}^2
		+ (t-t_0) \bignorm{\gradg \bar v(t)}_{L^2(\Gamma)}^2 \Big) 
		\notag\\
		&\;\; + \int_{t_0}^t (\tau-t_0) \bignorm{\big(\delt \bar u, 
			\delt \bar v\big)(\tau)}_{L,\beta,*}^2 \dtau 
		\notag\\[1ex]
		&=  \frac 12 \int_{t_0}^t \bignorm{\grad \bar u(\tau)}_{L^2(\Omega)}^2 \dtau
		+ \frac 12 \int_{t_0}^t \bignorm{\gradg \bar v(\tau)}_{L^2(\Gamma)}^2 \dtau
		\notag\\
		&\;\; - \int_{t_0}^t (\tau-t_0) \biginn{\delt \bar u(\tau)}{F'\big(u_2^L(\tau)\big) - F'(u_1^L\big(\tau)\big)}_{H^1(\Omega)} \dtau 
		\notag\\
		&\;\; - \int_{t_0}^t (\tau-t_0) \biginn{\delt \bar v(\tau)}{G'\big(v_2^L(\tau)\big) - G'\big(v_1^L(\tau)\big)}_{H^1(\Gamma)} \dtau
		\notag\\[1ex]
		&=  \frac 12 \int_{t_0}^t \bignorm{\grad \bar u (\tau)}_{L^2(\Omega)}^2 \dtau
		+ \frac 12 \int_{t_0}^t \bignorm{\gradg \bar v (\tau)}_{L^2(\Gamma)}^2 \dtau
		\notag\\
		&\;\; + \int_{t_0}^t (\tau-t_0) 
		\Big(
		\SS^L \big(
		(\delt \bar u,
		\delt \bar v
		)(\tau)\big),
		\big[
		F'\big(u_2^L(\tau)\big) - F'\big(u_1^L(\tau)\big),
		G'\big(v_2^L(\tau)\big) - G'\big(v_1^L(\tau)\big)
		\big]
		\Big)_{L,\beta} \mathrm d\tau 
		\notag\\[1ex]
		&\le \frac 12 \int_{t_0}^t \bignorm{\grad \bar u (\tau)}_{L^2(\Omega)}^2 \dtau
		+ \frac 12 \int_{t_0}^t \bignorm{\gradg \bar v (\tau)}_{L^2(\Gamma)}^2 \dtau
		\notag\\
		&\;\; + C\int_{t_0}^t 
		(\tau-t_0) \Big(\bignorm{ F'\big(u_2^L(\tau)\big) - F'\big(u_1^L(\tau)\big) }_{H^1(\Omega)}^2
		+  \bignorm{ G'\big(v_2^L(\tau)\big) - G'\big(v_1^L(\tau)\big) }_{H^1(\Gamma)}^2\Big)
		\dtau
		\notag\\
		&\;\; + \frac 12 \int_{t_0}^t		
		(\tau-t_0) \bignorm{\big(
		\delt \bar u ,
		\delt \bar v 
		\big)(\tau)}_{L,\beta,*}^2 \dtau 
	\end{align}
	for all $t\in [t_0,T]$. Since $T>0$ was arbitrary, \eqref{EST:CLIP:1} actually holds true for all $t\ge t_0$.
	Invoking the smoothing property \eqref{SMOOTH:KLLM} and Sobolev's embedding theorem, we infer that
	\begin{align}
	\bignorm{ u_i^L(\tau) }_{W^{1,\infty}(\Omega)} + \bignorm{ v_i^L(\tau) }_{W^{1,\infty}(\Gamma)} 
	\le C \bignorm{ \big(u_i^L(\tau),v_i^L(\tau)\big) }_{\HH^3} 
	\leq C \left( \dfrac{1+t_0}{t_0}\right)^{\frac{1}{2}} 
	\end{align}
	for almost all $\tau\ge t_0$. By a straightforward computation, this entails that
	\begin{align}
	\label{EST:F:DIFF}
		\bignorm{ F'\big(u_2^L(\tau)\big) - F'\big(u_1^L(\tau)\big) }_{H^1(\Omega)}^2
		&\le C_0 \norm{\bar u(\tau)}_{H^1(\Omega)}^2,\\
	\label{EST:G:DIFF}
		\bignorm{ G'\big(v_2^L(\tau)\big) - G'\big(v_1^L(\tau)\big) }_{H^1(\Gamma)}^2
		&\le C_0 \norm{\bar v(\tau)}_{H^1(\Gamma)}^2,
	\end{align}
	for almost all $\tau \ge t_0$ and $i\in\{1,2\}$.	Plugging the estimates \eqref{EST:F:DIFF} and \eqref{EST:G:DIFF} into \eqref{EST:CLIP:1}, and applying Lemma~\ref{LEM:INT}, we deduce that
	\begin{align}
		\label{EST:CLIP:2}
		&(t-t_0) \bignorm{\big(\grad \bar u, \gradg \bar v\big)(t)}_{\LL^2}^2
		\notag\\
		&\quad\le C_0 \int_0^t \bignorm{\big(\grad \bar u, \gradg \bar v\big)(\tau)}_{\LL^2}^2 \dtau
			+ C_0 \int_{t_0}^t (\tau-t_0) \,\bignorm{\big(\grad \bar u, \gradg \bar v\big)(\tau)}_{\LL^2}^2 \dtau
		\notag\\
		&\qquad + C_0 \int_{t_0}^t (\tau-t_0) \, \bignorm{\big(\bar u, \bar v\big) (\tau)}_{L,\beta,*}^2 \dtau
	\end{align}
	for all $t\ge t_0$.
	Recalling \eqref{EST:GW2}, we infer that 
	\begin{align}
	\label{EST:CLIP:3}
	&(t-t_0) \bignorm{\big(\grad \bar u, \gradg \bar v\big)(t)}_{\LL^2}^2
	\notag\\
	&\quad\le C_0 \int_{t_0}^t (\tau-t_0) \,\bignorm{\big(\grad \bar u, \gradg \bar v\big)(\tau)}_{\LL^2}^2 \dtau
		+ C_0\, e^{Ct}\, \big(1+ (t-t_0)^2\big)\, \bignorm{\big(\bar u, \bar v\big) (0)}_{L,\beta,*}^2 
	\end{align}
	for all $t\ge t_0$.
	We now fix $t_0:=\tfrac 12$. Hence, Gronwall's lemma eventually implies that
	\begin{align}
	\label{EST:CLIP:3}
	\bignorm{\big(\grad \bar u, \gradg \bar v\big)(t)}_{\LL^2}^2
	\le C\, e^{Ct}\, \frac{\big(1+ (t-\tfrac 12)^2\big)}{t-\tfrac 12}\, \bignorm{\big(\bar u, \bar v\big) (0)}_{L,\beta,*}^2 
	\end{align}
	for all $t\ge \tfrac 12$. 
	In view of \eqref{CD:KLLM:1} and Lemma~\ref{LEM:INT}, the compact Lipschitz estimate \eqref{CD:KLLM:2} now directly follows.

	Using Lemma~\ref{LEM:INT:2}, the smoothing property \eqref{SMOOTH:KLLM} and the estimate \eqref{CD:KLLM:2}, we further conclude that
	\begin{align*}
	\bignorm{ \big(\bar u(t),\bar v(t)\big) }_{\HH^2}^2 
	\le C\, \bignorm{ \big(\bar u(t),\bar v(t)\big) }_{\HH^3}
	\bignorm{ \big(\bar u(t),\bar v(t)\big) }_{\HH^1} 
	\le C\, \Lambda_{1}^*(t) \bignorm{ \big(\bar u_0,\bar v_0\big) }_{L,\beta,*}
	\end{align*}
	for almost all $t\ge 1$.
	This proves \eqref{CD:KLLM:3} and thus, the proof is complete.
\end{proof}

We can further show that the weak solution is Hölder continuous in time with a Hölder constant being uniform in $L$.

\begin{proposition}[Uniform Hölder estimate]\label{PROP:HLD:KLLM}
	Suppose that \eqref{ass:dom}--\eqref{ass:pot} hold and that $L\in(0,\infty)$. Let $m\in\R$ be arbitrary, let $(u_{0},v_{0}) \in \Wm $ be any initial datum, and let $(u^L,v^L,\mu^L,\theta^L)$ denote the corresponding weak solution of the system \eqref{CH:INT.}. 
	
	Then there exists a positive constant $\Theta^*$ depending only on $\Omega$ and $E(u_0,v_0)$ such that 
	\begin{alignat}{2}
		\bignorm{(u^L,v^L)(t) - (u^L,v^L)(s)}_{\HH^1} 
		\le \Theta^*\, \abs{s-t}^{\frac{3}{16}},
		\quad \text{for all $s,t\ge 1$.}
	\end{alignat}
\end{proposition}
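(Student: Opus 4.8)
The plan is to interpolate a H\"older-$\tfrac12$ bound for $(u^L,v^L)$ in the weak norm $\norm{\cdot}_{L,\beta,*}$ against the $L$-uniform higher-order bounds furnished by the smoothing property \eqref{SMOOTH:KLLM}. Throughout I write $w(r):=(u^L,v^L)(r)$, I assume $1\le s<t$ (the case $s=t$ being trivial), and I note that by the mass conservation law \eqref{MASS:KLLM} both $w(s),w(t)\in\Wm$, so that $w(t)-w(s)\in\Wo$, and in fact $w(t)-w(s)\in\WW_{\beta,0}^k$ as soon as $w(s),w(t)\in\HH^k$. The decisive feature — emphasised already before Lemma~\ref{LEM:INT} — is that every constant below must be kept independent of $L$: the smoothing constant $C_*$, the interpolation constants of Lemma~\ref{LEM:INT}, Lemma~\ref{LEM:INT:2} and Corollary~\ref{COR:INT}, and the energy inequality \eqref{ENERGY:KLLM} are all $L$-independent, and $\norm{(u_0,v_0)}_{\HH^1}$ is controlled by $E(u_0,v_0)$ through the growth conditions \eqref{GR:F}--\eqref{GR:G}, so that the final constant $\Theta^*$ will depend only on $\Omega$ and $E(u_0,v_0)$.

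\textit{Step 1 (H\"older-$\tfrac12$ in the weak norm).} From the identity \eqref{est3.5}, namely $\bignorm{\delt w(\tau)}_{L,\beta,*}^2=\bignorm{(\mu^L,\theta^L)(\tau)}_{L,\beta}^2$, together with the energy inequality \eqref{ENERGY:KLLM} and the non-negativity of $E$, one obtains $\int_0^\infty\bignorm{\delt w(\tau)}_{L,\beta,*}^2\dtau\le 2E(u_0,v_0)$. Since $w\in H^1\big(0,T;(\HH^1)'\big)$ for every $T>0$ and $\delt w(\tau)$ has vanishing generalised spatial mean by \eqref{MASS:KLLM}, one may write $w(t)-w(s)=\int_s^t\delt w(\tau)\dtau$ as a Bochner integral in $\HH_\beta^{-1}$, and the Cauchy--Schwarz inequality yields
\begin{equation*}
\bignorm{w(t)-w(s)}_{L,\beta,*}\le\int_s^t\bignorm{\delt w(\tau)}_{L,\beta,*}\dtau\le\big(2E(u_0,v_0)\big)^{\frac12}\,\abs{t-s}^{\frac12}
\end{equation*}
for all $s,t\ge 0$.

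\textit{Step 2 (H\"older-$\tfrac38$ in $\LL^2$).} By \eqref{SMOOTH:KLLM}, $\norm{w(r)}_{\HH^3}\le\sqrt2\,C_*$ for almost every $r\ge 1$, hence $\norm{w(t)-w(s)}_{\HH^3}\le 2\sqrt2\,C_*$ for almost every $s,t\ge 1$, where additionally $w(t)-w(s)\in\WW_{\beta,0}^3$. Chaining \eqref{IEQ:INT:1} with the trivial bound $\norm{(\grad\phi,\gradg\psi)}_{\LL^2}\le\norm{(\phi,\psi)}_{\HH^1}$ and the second inequality of Corollary~\ref{COR:INT} gives the interpolation estimate
\begin{equation*}
\norm{(\phi,\psi)}_{\LL^2}\le C\,\norm{(\phi,\psi)}_{\HH^3}^{\frac14}\,\norm{(\phi,\psi)}_{L,\beta,*}^{\frac34}\qquad\text{for all }(\phi,\psi)\in\WW_{\beta,0}^3,
\end{equation*}
with $C$ depending only on $\beta$. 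Applying this to $w(t)-w(s)$ and inserting Step~1 produces $\norm{w(t)-w(s)}_{\LL^2}\le C\,\abs{t-s}^{\frac38}$ for almost every $s,t\ge 1$; since $(u^L,v^L)\in C([0,\infty);\LL^2)$ by Theorem~\ref{THM:WP:KLLM}, this extends to \emph{all} $s,t\ge 1$ by a routine continuity and density argument.

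\textit{Step 3 (conclusion: H\"older-$\tfrac{3}{16}$ in $\HH^1$).} By Proposition~\ref{PROP:IMP:KLLM} one has $w\in C\big((0,\infty);\HH^2\big)$, and combining \eqref{SMOOTH:KLLM} with the interpolation inequality \eqref{INT:H2:ALT} and the $L$-uniform bound $\norm{w(r)}_{\LL^2}\le C$ that follows from \eqref{ENERGY:KLLM} and \eqref{GR:F}--\eqref{GR:G} shows $\norm{w(r)}_{\HH^2}\le C$ for all $r\ge 1$, whence $\norm{w(t)-w(s)}_{\HH^2}\le 2C$ for all $s,t\ge 1$. Finally, \eqref{INT:H1} and Step~2 give
\begin{equation*}
\bignorm{w(t)-w(s)}_{\HH^1}\le C_1\,\bignorm{w(t)-w(s)}_{\HH^2}^{\frac12}\,\bignorm{w(t)-w(s)}_{\LL^2}^{\frac12}\le\Theta^*\,\abs{t-s}^{\frac{3}{16}}
\end{equation*}
for all $s,t\ge 1$, as asserted. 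The hard part is not any single estimate but the bookkeeping needed to make sure that \emph{no} constant depends on $L$ — this is exactly what forces the somewhat roundabout chain above instead of directly invoking the (non-uniform) H\"older regularity of Theorem~\ref{THM:WP:KLLM}; the only other mildly technical point is the passage from ``almost every $s,t$'' to ``all $s,t$'' in Steps~2 and~3, which is handled by the time-continuity statements in Theorem~\ref{THM:WP:KLLM} and Proposition~\ref{PROP:IMP:KLLM}.
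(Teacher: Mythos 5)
Your proof is correct, but it follows a genuinely different route from the paper's. The paper tests \eqref{WF:KLLM:1} with $w\in H^1_0(\Omega)$ so that the $L$-dependent Robin term drops out, obtains the H\"older-$\tfrac12$ bound $\norm{u^L(t)-u^L(s)}_{H^{-1}(\Omega)}\le\sqrt{2E(u_0,v_0)}\,\abs{t-s}^{1/2}$ for the \emph{bulk component only}, interpolates $H^{3/2}(\Omega)$ between $H^{-1}(\Omega)$ and $H^3(\Omega)$ (exponents $\tfrac38$ and $\tfrac58$, whence $\tfrac12\cdot\tfrac38=\tfrac3{16}$), and finally recovers the surface part of the $\HH^1$-norm through the trace embedding $H^{3/2}(\Omega)\emb H^1(\Gamma)$. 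You instead work with the coupled dual norm $\norm{\cdot}_{L,\beta,*}$, achieving $L$-uniformity through the identity \eqref{est3.5} and the sign of the $\tfrac1L$-term in the energy dissipation rather than by eliminating it, and then climb back up entirely within the bulk--surface interpolation machinery of Section~\ref{SEC:PIT} (Lemma~\ref{LEM:INT}, Corollary~\ref{COR:INT}, Lemma~\ref{LEM:INT:2}); your exponent bookkeeping ($\tfrac12\to\tfrac38\to\tfrac3{16}$) is correct, and the chained inequality $\norm{\cdot}_{\LL^2}\le C\norm{\cdot}_{\HH^3}^{1/4}\norm{\cdot}_{L,\beta,*}^{3/4}$ does follow from \eqref{IEQ:INT:1} and Corollary~\ref{COR:INT}. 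What each approach buys: the paper's is shorter and needs only classical one-domain Sobolev interpolation plus a trace theorem, but it relies on the slightly ad hoc trick of restricting to test functions vanishing on $\Gamma$; yours treats bulk and surface symmetrically, reuses only tools already established for the coupled system, and makes the a.e.-to-everywhere passage explicit (which the paper leaves implicit). Both yield the same exponent and the same dependence of $\Theta^*$ on the data.
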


\begin{proof}
	Let $C$ denote a generic positive constant depending only on $\Omega$ and $E(u_0,v_0)$ that may change its value from line to line.
	Let $s,t\ge 0$ be arbitrary, and without loss of generality we assume $t>s$. Moreover, let $w\in H^1_0(\Omega)$ (i.e., $w\vert_\Gamma = 0$)  be an arbitrary test function.
	Integrating \eqref{WF:KLLM:1} with respect to time, we get
	\begin{align*}
		\biginn{u^L(t)-u^L(s)}{w}_{H^1(\Omega)}
		= - \int_s^t \grad\mu^L(\tau) \cdot \grad w \dtau.
	\end{align*} 
	Now, taking the supremum over all test functions $w\in H^1_0(\Omega)$ with $\norm{w}_{H^1(\Omega)} = 1$, and recalling the energy inequality \eqref{ENERGY:KLLM}, we infer that
	\begin{align*}
		&\bignorm{u^L(t)-u^L(s)}_{H^{-1}(\Omega)} 
		\le \int_s^t \bignorm{\grad\mu^L(\tau)}_{L^2(\Omega)} \dtau \\
		&\quad \le \abs{t-s}^{\frac 12} \left(\int_0^t \bignorm{\grad\mu^L(\tau)}_{L^2(\Omega)}^2 \dtau\right)^{\!\frac 12} 
		\le \sqrt{2E(u_0,v_0)}\, \abs{t-s}^{\frac 12},
	\end{align*}
	where $H^{-1}(\Omega)$ stands for the dual space of $H^1_0(\Omega)$. 
	We now suppose that $s,t\ge 1$.
	Interpolating $H^{3/2}(\Omega)$ between $H^{-1}(\Omega)$ and $H^3(\Omega)$ (see, e.g., \cite[Sec.~4.3.1]{Triebel}), and invoking the smoothing property \eqref{SMOOTH:KLLM}, we obtain
	\begin{align*}
		\bignorm{u^L(t)-u^L(s)}_{H^{3/2}(\Omega)} 
		&\le C \bignorm{u^L(t)-u^L(s)}_{H^{-1}(\Omega)}^{\frac 38}\, \bignorm{u^L(t)-u^L(s)}_{H^{3}(\Omega)}^{\frac 58} \\
		&\le C \abs{t-s}^{\frac{3}{16}}.
	\end{align*}
	Eventually, by means of the continuous embedding $H^{3/2}(\Omega)\emb H^1(\Gamma)$, we conclude that
	\begin{align*}
		\bignorm{(u^L,v^L)(t) - (u^L,v^L)(s)}_{\HH^1} 
		\le C \abs{t-s}^{\frac{3}{16}},
	\end{align*}
	which completes the proof.
\end{proof}

As a consequence of Theorem~\ref{THM:WP:KLLM}, Proposition~\ref{PROP:IMP:KLLM} and Proposition~\ref{PROP:CD:KLLM}, the weak solutions to the problem \eqref{CH:INT.} can be described via a strongly continuous semigroup.

\begin{corollary}
	Suppose that \eqref{ass:dom}--\eqref{ass:pot} hold and that $L\in(0,\infty)$. Let $m\in\R$ be arbitrary. For any $(u_0,v_0)\in\Wm$ let $(u^L,v^L,\mu^L,\theta^L)$ denote the corresponding unique weak solution of the system \eqref{CH:INT.}.  
	
	For any $t\ge 0$, we define the operator
	\begin{align}
		S_m^L(t):\Wm\to\Wm,\quad (u_0,v_0) \mapsto \big(u^L(t),v^L(t)\big).
	\end{align}
	
	Then the family $\big\{S_m^L(t)\big\}_{t\ge 0}$ defines a strongly continuous semigroup on $\Wm$.
\end{corollary}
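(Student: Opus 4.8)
The plan is to package the well-posedness theory developed above into the statement that $\{S_m^L(t)\}_{t\ge 0}$ is a strongly continuous semigroup on $\Wm$, i.e.\ that it satisfies $S_m^L(0)=\mathrm{Id}$, the semigroup law $S_m^L(t+s)=S_m^L(t)\circ S_m^L(s)$, the continuity of each operator $S_m^L(t)$, and the continuity of the orbit maps $t\mapsto S_m^L(t)(u_0,v_0)$. Here $\Wm$ is regarded as a complete metric space, being a closed affine subspace of the Hilbert space $\VV^1$ endowed with the metric induced by $\norm{\cdot}_{\HH^1}$. As a first step I would check that $S_m^L(t)$ is a well-defined map $\Wm\to\Wm$: given $(u_0,v_0)\in\Wm$, Theorem~\ref{THM:WP:KLLM} yields a unique global weak solution, and Proposition~\ref{PROP:IMP:KLLM} gives $(u^L,v^L)\in C([0,\infty);\HH^1)$, so the value $(u^L(t),v^L(t))$ is a well-defined element of $\HH^1$ for \emph{every} $t\ge 0$ (not merely for almost every $t$); it inherits the trace relation $u^L(t)\vert_\Gamma=v^L(t)$ from \eqref{WF:KLLM:TR} by continuity, and by the mass conservation law \eqref{MASS:KLLM} it satisfies $\beta\abs{\Omega}\mean{u^L(t)}_\Omega+\abs{\Gamma}\mean{v^L(t)}_\Gamma=m$. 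Thus $S_m^L(t)$ maps $\Wm$ into $\Wm$, and $S_m^L(0)=\mathrm{Id}$ is immediate from the initial condition \eqref{WF:KLLM:5}.

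Next I would establish the semigroup law. Fix $s\ge 0$ and set $(\tilde u,\tilde v,\tilde\mu,\tilde\theta)(\tau):=(u^L,v^L,\mu^L,\theta^L)(s+\tau)$ for $\tau\ge 0$. Since the system \eqref{CH:INT.} is autonomous and the regularity classes in \eqref{REG:KLLM} are translation invariant on $(0,\infty)$, the shifted quadruple again satisfies \eqref{REG:KLLM}--\eqref{WF:KLLM} on $\RP$, and its initial datum is $(\tilde u,\tilde v)(0)=(u^L(s),v^L(s))=S_m^L(s)(u_0,v_0)\in\Wm$, attained in $\HH^1$ thanks to the continuity noted above. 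By the uniqueness part of Theorem~\ref{THM:WP:KLLM}, one concludes $(\tilde u,\tilde v)(\tau)=S_m^L(\tau)\big(S_m^L(s)(u_0,v_0)\big)$ for all $\tau\ge 0$, and evaluating at $\tau=t$ gives $S_m^L(t+s)=S_m^L(t)\circ S_m^L(s)$.

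For the continuity statements, the orbit continuity $t\mapsto S_m^L(t)(u_0,v_0)$ is immediate from $(u^L,v^L)\in C([0,\infty);\HH^1)$ (Proposition~\ref{PROP:IMP:KLLM}), since the metric on $\Wm$ is the $\HH^1$-metric. For the continuity of the operator $S_m^L(t)$ with $t$ fixed, the case $t\ge 1$ is exactly the Lipschitz estimate \eqref{CD:KLLM:2} of Proposition~\ref{PROP:CD:KLLM}, combined with Lemma~\ref{LEM:LBS} to dominate $\norm{(u_{0,2},v_{0,2})-(u_{0,1},v_{0,1})}_{L,\beta,*}$ by $\norm{(u_{0,2},v_{0,2})-(u_{0,1},v_{0,1})}_{(\HH^1)'}\le\norm{(u_{0,2},v_{0,2})-(u_{0,1},v_{0,1})}_{\HH^1}$. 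For $0<t<1$ one simply repeats the argument of Proposition~\ref{PROP:CD:KLLM} with the auxiliary time $t_0$ chosen in $(0,t)$ instead of $t_0=\tfrac12$; the smoothing property \eqref{SMOOTH:KLLM} is available for all positive times, so the same chain of estimates produces a bound $\norm{(u_2^L,v_2^L)(t)-(u_1^L,v_1^L)(t)}_{\HH^1}\le\Lambda(t)\,\norm{(u_{0,2},v_{0,2})-(u_{0,1},v_{0,1})}_{L,\beta,*}$ with $\Lambda(t)$ finite for each fixed $t>0$ (blowing up as $t\to 0^+$), while $S_m^L(0)=\mathrm{Id}$ is trivially continuous.

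The step demanding the most care is the semigroup law: one has to be sure that a time-translate of a weak solution is again an \emph{admissible} weak solution in the precise sense of Theorem~\ref{THM:WP:KLLM}, i.e.\ that it lies in the correct function spaces, preserves the trace relation \eqref{WF:KLLM:TR} and the mass constraint \eqref{MASS:KLLM}, and, crucially, \emph{attains} the shifted datum $(u^L(s),v^L(s))$ in $\HH^1$. This is precisely where the improved continuity $(u^L,v^L)\in C([0,\infty);\HH^1)$ from Proposition~\ref{PROP:IMP:KLLM} enters, together with the autonomy of \eqref{CH:INT.}; the remaining assertions amount to directly quoting the cited results.
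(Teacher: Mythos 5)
Your proof is correct and follows exactly the route the paper intends: the corollary is stated there without proof as a direct consequence of Theorem~\ref{THM:WP:KLLM} (well-definedness, uniqueness, hence the semigroup law for the autonomous system), Proposition~\ref{PROP:IMP:KLLM} (orbit continuity in $\HH^1$), and Proposition~\ref{PROP:CD:KLLM} (continuity of each $S_m^L(t)$), which are precisely the ingredients you assemble. Your observation that the stated Lipschitz estimate \eqref{CD:KLLM:2} only covers $t\ge 1$ and must be re-derived with an auxiliary time $t_0\in(0,t)$ for small positive $t$ (using that the smoothing property is available for every positive time) is a legitimate and correctly resolved point that the paper glosses over.
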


\subsection{The GMS model ($L=0$)}\label{sec:wellposedGMS}

\begin{theorem}[Strong well-posedness and smoothing property for the GMS model] \label{THM:WP:GMS} 
	Suppose that \eqref{ass:dom}--\eqref{ass:pot} hold and that $L=0$. Let $m\in\R$ be arbitrary and let 
	$(u_0,v_0) \in \Wm$ be any initial datum.
	
	Then there exists a unique global weak solution $(u^0,v^0,\mu^0,\theta^0)$ of the system \eqref{CH:INT.} existing on the time interval $\RP$ and having the following properties:
	\begin{enumerate}[label=$(\mathrm{\roman*})$, ref = $\mathrm{\roman*}$]
		\item For every $T>0$, the solution has the regularity
		\begin{subequations}
			\label{REG:GMS}
			\begin{align}
			\label{REG:GMS:1}
			&\begin{aligned}
			\big(u^0,v^0\big) 
			& \in C^{0,\frac{1}{4}}([0,T];\LL^2) \cap L^\infty(0,T;\VV) \\
			& \qquad \cap L^2(0,T;\HH^3) \cap H^1\big(0,T;(\Db)'\big), 
			\end{aligned}
			\\
			\label{REG:GMS:2}
			&\big(\mu^0,\theta^0\big) \in L^2(0,T;\HH^1).
			\end{align}
		\end{subequations}
		
		\item
		The solution satisfies
		\begin{subequations}
			\label{WF:GMS}
			\begin{align}
			\label{WF:GMS:1}
			\biginn{(\delt u^0,\delt v^0)}{(w,z)}_{\Db} 
			&= - \intO \grad\mu^0 \cdot \grad w \dx - \intG \gradg\theta^0 \cdot \gradg z \dG
			\end{align}
			a.e.~on $\RP$ for all test functions $w\in\Db$, and
			\begin{alignat}{2}
			\label{WF:GMS:3}
			&\mu^0 = -\Lap u^0 + F'(u^0) &&\quad\text{a.e.~in Q}, \\
			\label{WF:GMS:4}
			&\theta^0 = -\Lapg v^0 + G'(v^0) + \deln u^0 &&\quad\text{a.e.~in $\Sigma$}, \\
			\label{WF:GMS:5}
			&\mu^0\vert_{\Si} = \beta\theta^0, 
			\quad u^0\vert_{\Si} = v^0
			&&\quad\text{a.e.~in $\Sigma$}, \\
			\label{WF:GMS:6}
			&(u^0,v^0)\vert_{t=0} = (u_0,v_0) &&\quad \text{a.e.~in } \Omega \times \Gamma.
			\end{alignat}
		\end{subequations}
		
		\item The solution satisfies the mass conservation law
		\begin{align}
		\label{MASS:GMS}
		\beta \intO u^0(t) \dx + \intG v^0(t) \dG = \beta \intO u_0 \dx + \intG v_0 \dG = m,\quad
		\end{align}
		for all $t\in\RP$, i.e., $\big(u^0(t),v^0(t)\big)\in\Wm$ for almost all $t\in\RP$.
		
		\item The solution satisfies the energy inequality
		\begin{align}
		\label{ENERGY:GMS}
		E\big(u^0(t),v^0(t)\big)
		+ \frac 12 \int_0^t \bignorm{\big(\mu^0(s),\theta^0(s)\big)}_{0,\beta}^2
		\le E(u_0,v_0)
		\end{align}	
		for almost all $t\ge 0$. 
		
		\item The solution satisfies the smoothing property, i.e. there exists a constant $C_0>0$ depending only on $\Omega$, $\beta$, $F$, $G$ and  $\norm{(u_0,v_0)}_{\HH^1}$, such that
		\begin{align}
			\label{SMOOTH:GMS}
			\bignorm{ \big(u^0(t),v^0(t)\big) }_{\HH^3} 
			\leq C_0\left( \dfrac{1+t}{t}\right)^{\frac{1}{2}}
		\end{align}
		for almost all $t\ge 0$.
	\end{enumerate}
\end{theorem}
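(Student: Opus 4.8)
The plan is to mirror the proof of Theorem~\ref{THM:WP:KLLM} almost line by line, replacing every object attached to $L>0$ by its $L=0$ counterpart: the space $\HH^1$ by $\Db$, the solution operator $\SS^L$ by $\SS^0$ (see~\eqref{pre:lin:0}), the norm $\norm{\cdot}_{L,\beta,*}$ by $\norm{\cdot}_{0,\beta,*}$, and the flux term involving $L^{-1}(\beta\theta-\mu)$ by the Dirichlet-type coupling $\mu^0\vert_\Sigma = \beta\theta^0$, which is now encoded in the test space $\Db$. The existence and uniqueness of a global weak solution satisfying (i)--(iv) have already been established in~\cite{GMS}, so only the smoothing property (v) remains to be shown.

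First I would reformulate~\eqref{WF:GMS:1} using forward-in-time difference quotients $\partial_t^h$ as in~\eqref{NOT:FDQ}, and test the resulting identity with $(w,z)=\SS^0\big(\partial_t^h u^0(\tau),\partial_t^h v^0(\tau)\big)\in\HH^1_{0,\beta}\subset\Db$. Inserting the pointwise relations~\eqref{WF:GMS:3}--\eqref{WF:GMS:4}, integrating by parts, and recalling that $\norm{(\partial_t^h u^0,\partial_t^h v^0)}_{0,\beta,*}^2=\norm{\SS^0(\partial_t^h u^0,\partial_t^h v^0)}_{0,\beta}^2$, this yields
\begin{align*}
	&\tfrac12 \tfrac{\dd}{\dd\tau}\bignorm{\big(\partial_t^h u^0,\partial_t^h v^0\big)(\tau)}_{0,\beta,*}^2 + \bignorm{\grad\partial_t^h u^0(\tau)}_{L^2(\Omega)}^2 + \bignorm{\gradg\partial_t^h v^0(\tau)}_{L^2(\Gamma)}^2 \\
	&\qquad + \tfrac1h \intO \big(F_1'(u^0(\tau+h)) - F_1'(u^0(\tau))\big) \partial_t^h u^0(\tau)\dx + \tfrac1h \intG \big(G_1'(v^0(\tau+h)) - G_1'(v^0(\tau))\big) \partial_t^h v^0(\tau)\dG \\
	&= -\tfrac1h\intO \big(F_2'(u^0(\tau+h)) - F_2'(u^0(\tau))\big) \partial_t^h u^0(\tau)\dx -\tfrac1h\intG \big(G_2'(v^0(\tau+h)) - G_2'(v^0(\tau))\big) \partial_t^h v^0(\tau)\dG .
\end{align*}
Convexity of $F_1,G_1$ makes the two terms on the second line nonnegative, Lipschitz continuity of $F_2',G_2'$ controls the right-hand side, and Lemma~\ref{LEM:INT}, which is valid for all $L\ge0$ with the norm $\norm{\cdot}_{0,\beta,*}$, then upgrades the estimate to the differential inequality
\begin{align*}
	\tfrac{\dd}{\dd\tau}\bignorm{\big(\partial_t^h u^0,\partial_t^h v^0\big)(\tau)}_{0,\beta,*}^2 + \bignorm{\big(\grad\partial_t^h u^0,\gradg\partial_t^h v^0\big)(\tau)}_{\LL^2}^2 \le C\,\bignorm{\big(\partial_t^h u^0,\partial_t^h v^0\big)(\tau)}_{0,\beta,*}^2 .
\end{align*}

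Multiplying by $\tau$, integrating from $0$ to $t$, and using the uniform bound $\int_0^t\norm{(\partial_t u^0,\partial_t v^0)(\tau)}_{0,\beta,*}^2\dtau\le C$ — which follows from the representation $\mu^0=\SS^0_\Omega(\partial_t u^0,\partial_t v^0)+c\beta$, $\theta^0=\SS^0_\Gamma(\partial_t u^0,\partial_t v^0)+c$ (for a suitable time-dependent constant $c$), the identity $\norm{(\partial_t u^0,\partial_t v^0)}_{0,\beta,*}^2=\norm{(\mu^0,\theta^0)}_{0,\beta}^2$, and the energy inequality~\eqref{ENERGY:GMS} — and finally sending $h\to0$ via Lebesgue's theorem, I obtain $\norm{(\partial_t u^0,\partial_t v^0)(t)}_{0,\beta,*}^2\le C(t+1)/t$ and hence $\norm{\grad\mu^0(t)}_{L^2(\Omega)}^2+\norm{\gradg\theta^0(t)}_{L^2(\Gamma)}^2\le C(t+1)/t$. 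From here the bootstrap is identical to the KLLM case: Poincaré-type estimates for $\mu^0,\theta^0$ (as in~\cite{GMS}) give $\norm{(\mu^0,\theta^0)(t)}_{\HH^1}\le C((t+1)/t)^{1/2}$; the energy inequality together with the growth bounds~\eqref{ass:pot} gives $\norm{(u^0,v^0)(t)}_{\HH^1}\le C$; reading~\eqref{WF:GMS:3}--\eqref{WF:GMS:4} as an elliptic bulk--surface system with Dirichlet-type coupling~\eqref{WF:GMS:5}, the embeddings $H^1(\Omega)\emb L^6(\Omega)$, $H^2(\Omega)\emb W^{1,6}(\Omega)$, the interpolation inequality~\eqref{INT:H1:ALT} and Young's inequality bound $\norm{F'(u^0(t))}_{H^1(\Omega)}^2$ and $\norm{G'(v^0(t))}_{H^1(\Gamma)}^2$ by $C/\alpha+\alpha\norm{(u^0,v^0)(t)}_{\HH^3}^2$; and the elliptic regularity estimate~\cite[Thm.~3.3]{knopf-liu} for the $L=0$ problem, after choosing $\alpha$ small, absorbs the $\HH^3$-term and yields~\eqref{SMOOTH:GMS}.

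The main obstacle is not conceptual but one of bookkeeping: I need to make sure that every step survives the change of admissible test space from $\HH^1$ to $\Db$ — in particular that $\SS^0$ maps $\DD_\beta^{-1}$ (hence $\HH^1_{0,\beta}$) into $\Db$, that Lemma~\ref{LEM:INT} and Corollary~\ref{COR:INT} hold with $\norm{\cdot}_{0,\beta,*}$, that the representation of $(\mu^0,\theta^0)$ via $\SS^0$ carries the correct weight $\beta$ in the additive constant (consistently with $\mu^0\vert_\Sigma=\beta\theta^0$), and that the elliptic regularity theorem of~\cite{knopf-liu} is available in the form needed for the Dirichlet-type boundary condition appearing in~\eqref{pre:lin:0}. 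Once these points are checked, the computation is word-for-word the one carried out for the KLLM model.
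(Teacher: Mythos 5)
Your proposal is correct and follows essentially the same route as the paper: the paper likewise delegates (i)--(iv) to the known weak well-posedness result and establishes the smoothing property (v) by testing the difference-quotient identity with $\SS^0\big(\partial_t^h u^0(\tau),\partial_t^h v^0(\tau)\big)$, exploiting convexity of $F_1,G_1$ and Lipschitz continuity of $F_2',G_2'$, and then repeating the KLLM bootstrap verbatim with $\Db$, $\SS^0$ and $\norm{\cdot}_{0,\beta,*}$ in place of their $L>0$ counterparts, noting exactly as you do that the elliptic regularity of \cite[Thm.~3.3]{knopf-liu} covers the Dirichlet-type coupling. The bookkeeping points you flag (in particular the weight $\beta$ in the additive constant of the representation of $(\mu^0,\theta^0)$, which is consistent with $\mu^0\vert_\Sigma=\beta\theta^0$) all check out.
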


\begin{proof}
	For the existence and uniqueness of a weak solution satisfying the assertions (i)--(iv), we refer the reader to \cite[Prop.~3.4]{KLLM}. 
	
	As the proof of the smoothing property (v) is very similar to the approach in the proof of Theorem~\ref{THM:WP:KLLM}, we only sketch the most important steps. To provide a cleaner presentation we omit the superscript $0$. Let $(w,z)\in\Db$ be arbitrary (i.e., $w\vert_\Gamma = \beta z$ a.e.~on $\Gamma$). Testing \eqref{WF:GMS:1} with $(w,z)$ and recalling the definition of the product $\inn{\cdot}{\cdot}_{\Db}$ (see \eqref{pre:D}), we obtain
	\begin{align}
	\begin{aligned}
	\big\langle (\delt u,\delt v), (w,z) \big\rangle_{\Db} 
	&= - \big( \grad\mu, \grad w \big)_{L^2(\Omega)} 
		- \big( \gradg \theta , \gradg z \big)_{L^2(\Gamma)}\\
	&= -\big((\mu,\theta),(w,z) \big)_{0,\beta}. 
	\end{aligned}
	\end{align}
	Hence, using the notation for the difference quotient introduced in \eqref{NOT:FDQ}, we infer that 
	\begin{align}
		\label{sumweak:GMS} 
		\begin{aligned}
			\big\langle (\partial_{t}^{h}\delt u(\tau),\partial_{t}^{h}\delt v(\tau)), (w,z) \big\rangle_{\Db} 
			= -\Big(\big(\partial_{t}^{h}\mu(\tau),\partial_{t}^{h}\theta(\tau)\big),(w,z) \Big)_{0,\beta} 
		\end{aligned}
	\end{align}
	for almost all $\tau\in\RP$, where $\del_t^h \mu(\tau)$ and $\del_t^h \theta(\tau)$ can be expressed by means of
	$\eqref{WF:GMS:3}$ and $\eqref{WF:GMS:4}$.
	Choosing the test function $ (w,z) =\SS^0\big(\partial_{t }^{h}u(\tau),\partial_{t }^{h}v(\tau)\big)$, a straightforward computation reveals that
	\begin{align*}
	&-\dfrac{1}{2}\dfrac{\dd}{\dd \tau}\big\| \partial_{t }^{h}u(\tau)\big\|_{0,\beta,*}^{2} \\
	&\quad =\big\|\nabla\partial_{t }^{h}u(\tau) \big\|^{2} _{L^{2}(\Omega)}
	+\dfrac{1}{h}\int_{\Omega }
	\Big(F^{\prime}\big(u(\tau+h)\big)-F^{\prime}\big(u(\tau)\big)\Big) \partial_{t }^{h}u(\tau) \dx\\
	&\qquad	+\big\|\gradg\partial_{t }^{h}v(\tau) \big\|^{2}_{L^{2}(\Gamma)}
	+\dfrac{1}{h}\int_{\Gamma }
	\Big(G^{\prime}\big(v(\tau+h)\big)-G^{\prime}\big(v(\tau)\big)\Big) \partial_{t }^{h}v(\tau) \dG
	\end{align*}
	for almost all $\tau\in\RP$.
	Based on this identity, the smoothing property can be established by proceeding exactly as in the proof of Theorem~\ref{THM:WP:KLLM}. Note that the regularity estimates in \cite[Thm.~3.3]{knopf-liu} are also applicable for bulk-surface elliptic problems with Dirichlet type coupling conditions.	
\end{proof}

\medskip

As for the KLLM model, we obtain an improved continuity result, Lipschitz/Hölder continuous dependence of the weak solutions on the initial data, and a uniform Hölder estimate for times in $[1,\infty)$.

\begin{proposition}[Improved continuity results]\label{PROP:ICR:GMS}
	Suppose that \eqref{ass:dom}--\eqref{ass:pot} hold and that $L=0$. Let $m\in\R$ be arbitrary,
	let $(u_{0},v_{0}) \in \Wm $ be any initial datum, and let $(u^0,v^0,\mu^0,\theta^0)$ denote the corresponding weak solution of the system \eqref{CH:INT.}.
	Then it holds that%
	\begin{align}
	\label{CONT:UV}
	(u^0,v^0) \in C([0,\infty);\HH^1) \cap C((0,\infty);\HH^2).
	\end{align}
\end{proposition}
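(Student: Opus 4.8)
The plan is to follow the same two-step strategy as in the proof of Proposition~\ref{PROP:IMP:KLLM}. The point is that the assertion only uses ingredients that are equally available for the GMS model: the smoothing property \eqref{SMOOTH:GMS}, the interpolation inequalities from Section~\ref{SEC:PIT}, the energy inequality \eqref{ENERGY:GMS}, and the regularity of $(u^0,v^0)$ provided by Theorem~\ref{THM:WP:GMS}(i). The only structural difference to the KLLM setting is that the ambient function space is now $\Db$ (with solution operator $\SS^0$ and norm $\norm{\cdot}_{0,\beta,*}$) instead of $\HH^1$ with $\SS^L$, but this does not enter any of the estimates used here, so the argument transfers essentially verbatim.

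\emph{Step 1 (continuity into $\HH^2$ on $(0,\infty)$).} I would fix $t_0\in(0,\infty)$ and use Theorem~\ref{THM:WP:GMS}(i) to see that $(u^0,v^0)\in L^\infty(\tfrac12 t_0,t_0+1;\HH^2)\cap C([\tfrac12 t_0,t_0+1];\LL^2)$, so that $(u^0,v^0)(t)\in\HH^2$ for every $t$ in that interval. Applying the interpolation inequality \eqref{INT:H2:ALT} to the difference $(u^0,v^0)(t)-(u^0,v^0)(t_0)$, bounding its $\HH^3$-norm by a constant depending on $t_0$ via the smoothing property \eqref{SMOOTH:GMS}, and exploiting $(u^0,v^0)\in C([0,t_0+1];\LL^2)$, the $\LL^2$-factor in the resulting estimate tends to $0$ as $t\to t_0$. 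Since $t_0$ was arbitrary, this yields $(u^0,v^0)\in C((0,\infty);\HH^2)$ and, a fortiori, $(u^0,v^0)\in C((0,\infty);\HH^1)$; only continuity at $t=0$ then remains to be shown.

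\emph{Step 2 (continuity into $\HH^1$ at $t=0$).} Let $t_k\to0$ be an arbitrary null sequence in $(0,\infty)$. Using the growth conditions \eqref{ass:pot} and the energy inequality \eqref{ENERGY:GMS} I would first bound $\norm{(u^0(t_k),v^0(t_k))}_{\HH^1}$ uniformly in $k$; extracting a weakly convergent subsequence in $\HH^1$ and identifying its limit with $(u_0,v_0)$ via the strong $\LL^2$-convergence coming from $(u^0,v^0)\in C([0,\infty);\LL^2)$ shows that the whole sequence converges weakly to $(u_0,v_0)$ in $\HH^1$. Writing $E(t):=E(u^0(t),v^0(t))$, weak lower semicontinuity of the convex part of $E$ and the generalized dominated convergence theorem applied to the at most quadratically growing parts $F_2,G_2$ give $E(u_0,v_0)\le\liminf_{k\to\infty}E(t_k)$, while \eqref{ENERGY:GMS} (which, by Step~1, holds for every $t>0$) forces $E(t_k)\le E(u_0,v_0)$; hence $E(t_k)\to E(u_0,v_0)$. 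Decomposing $E$ as in \eqref{DEF:EN} and passing to the limit in the potential integrals using strong convergence in $L^p(\Omega)\times L^q(\Gamma)$ (obtained by interpolating the $\LL^2$-convergence against the $\HH^1$-bound), this improves to $\norm{(u^0(t_k),v^0(t_k))}_{\HH^1}\to\norm{(u_0,v_0)}_{\HH^1}$, and together with the weak convergence this gives strong convergence in $\HH^1$. As the null sequence was arbitrary, $(u^0,v^0)\in C([0,\infty);\HH^1)$.

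I expect Step~2 to be the delicate part, specifically the passage from the weak $\HH^1$-convergence at $t=0$ to strong convergence: one must verify that the energy values converge and that the generalized Lebesgue theorem genuinely applies to the non-convex contributions $F_2,G_2$ (using their quadratic growth and the strong $\LL^2$-convergence). This is precisely the mechanism used for the KLLM model in Proposition~\ref{PROP:IMP:KLLM}, and it carries over because the GMS energy has the same structure; the Dirichlet-type coupling $\mu^0\vert_\Sigma=\beta\theta^0$ plays no role in this argument.
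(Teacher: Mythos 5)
Your proposal is correct and follows essentially the same route as the paper, which itself proves this proposition by transferring the two-step argument of Proposition~\ref{PROP:IMP:KLLM} (interpolation \eqref{INT:H2:ALT} plus the smoothing property for interior $\HH^2$-continuity; weak $\HH^1$-convergence upgraded to strong convergence at $t=0$ via convergence of the energy) to the GMS setting. You correctly identify that none of the estimates involved depend on the Robin versus Dirichlet coupling, so the adaptation is indeed verbatim.
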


\medskip

\begin{proposition}[Continuous dependence on the initial data]\label{PROP:CD:GMS}
	Suppose that \eqref{ass:dom}--\eqref{ass:comp} hold and that $L=0$. Let $m\in\R$ be arbitrary.
	For $i\in\{1,2\}$, let $(u_{0,i},v_{0,i}) \in \Wm $ be any initial datum and let $(u_i^0,v_i^0,\mu_i^0,\theta_i^0)$ denote the corresponding weak solution of the system \eqref{CH:INT.}. 
	
	Then there exist positive, non-decreasing functions $\Lambda_k^0 \in C(\RP)$, $k\in\{-1,1,2\}$ depending only on $\Omega$, $\beta$, $F$ and $G$ such that the following statements hold:
	\begin{alignat}{2}
	\label{CD:GMS:1}
	\bignorm{\big(u^0_{2},v_2^0\big)(t) - \big(u^0_{1},v_1^0\big)(t)}_{0,\beta,*}
	&\le \Lambda_{-1}^0(t) \bignorm{(u_{0,2},v_{0,2})-(u_{0,1},v_{0,1})}_{0,\beta,*},
	&&\;\; t\ge 0, \\
	\label{CD:GMS:2}
	\bignorm{\big(u^0_{2},v_2^0\big)(t) - \big(u^0_{1},v_1^0\big)(t)}_{\HH^1} 
	&\le \Lambda_1^0(t) \bignorm{(u_{0,2},v_{0,2})-(u_{0,1},v_{0,1})}_{0,\beta,*},
	&&\;\; t\ge 1, \\
	\label{CD:GMS:3}
	\bignorm{\big(u^0_{2},v_2^0\big)(t) - \big(u^0_{1},v_1^0\big)(t)}_{\HH^2} 
	&\le \Lambda_2^0(t) \bignorm{(u_{0,2},v_{0,2})-(u_{0,1},v_{0,1})}_{0,\beta,*}^{\frac 12},
	&&\;\; t\ge 1.
	\end{alignat}
\end{proposition}

\medskip

\begin{proposition}[Uniform Hölder estimate]\label{PROP:HLD:GMS}
	Suppose that \eqref{ass:dom}--\eqref{ass:comp} hold and that $L=0$. Let $m\in\R$ be arbitrary, let $(u_{0},v_{0}) \in \Wm $ be any initial datum, and let $(u^0,v^0,\mu^0,\theta^0)$ denote the corresponding weak solution of the system \eqref{CH:INT.}. 
	
	Then there exists a positive constant $\Theta^0$ depending only on $\Omega$ and $E(u_0,v_0)$ such that 
	\begin{alignat}{2}
	\bignorm{(u^0,v^0)(t) - (u^0,v^0)(s)}_{\HH^1} 
	&\le \Theta^0\, \abs{s-t}^{\frac{3}{16}},
	&&\quad t\ge 1.
	\end{alignat}
\end{proposition}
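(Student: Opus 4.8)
The plan is to transcribe the proof of Proposition~\ref{PROP:HLD:KLLM}, adjusting it for the Dirichlet-type coupling present in the GMS model. First I would test the weak formulation \eqref{WF:GMS:1} with pairs of the form $(w,0)$, where $w\in H^1_0(\Omega)$ (that is, $w\vert_\Gamma = 0$); these pairs are admissible since then $w\vert_\Gamma = 0 = \beta\cdot 0$ a.e.~on $\Gamma$, so $(w,0)\in\Db$. For this choice the surface term involving $\gradg\theta^0$ vanishes. Recalling the definition of the pairing $\inn{\cdot}{\cdot}_{\Db}$ from \eqref{pre:D} and the continuity $(u^0,v^0)\in C([0,\infty);\HH^1)$ established in Proposition~\ref{PROP:ICR:GMS}, integrating \eqref{WF:GMS:1} in time over $[s,t]$ yields
\begin{align*}
	\biginn{u^0(t) - u^0(s)}{w}_{H^1(\Omega)} = - \int_s^t \intO \grad\mu^0(\tau)\cdot\grad w \dx \dtau
\end{align*}
for all $w\in H^1_0(\Omega)$ and all $s,t\ge 0$.

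Next, taking the supremum over all $w\in H^1_0(\Omega)$ with $\norm{w}_{H^1(\Omega)} = 1$ and combining the Cauchy--Schwarz inequality in time with the energy inequality \eqref{ENERGY:GMS} together with the elementary bound $\bignorm{(\mu^0,\theta^0)}_{0,\beta}^2 = \norm{\grad\mu^0}_{L^2(\Omega)}^2 + \norm{\gradg\theta^0}_{L^2(\Gamma)}^2 \ge \norm{\grad\mu^0}_{L^2(\Omega)}^2$ (here $\sigma(0)=0$ in \eqref{DEF:SIG}), I would obtain
\begin{align*}
	\bignorm{u^0(t) - u^0(s)}_{H^{-1}(\Omega)} \le \abs{t-s}^{\frac 12}\left(\int_0^t \norm{\grad\mu^0(\tau)}_{L^2(\Omega)}^2\dtau\right)^{\!\frac 12} \le \sqrt{2\,E(u_0,v_0)}\;\abs{t-s}^{\frac 12}
\end{align*}
for all $s,t\ge 0$, where $H^{-1}(\Omega)$ denotes the dual space of $H^1_0(\Omega)$.

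Then, restricting to $s,t\ge 1$, I would interpolate $H^{3/2}(\Omega)$ between $H^{-1}(\Omega)$ and $H^3(\Omega)$ with exponents $\tfrac 38$ and $\tfrac 58$ (exactly as in the proof of Proposition~\ref{PROP:HLD:KLLM}), and bound $\bignorm{u^0(t)-u^0(s)}_{H^3(\Omega)}$ by a constant using the smoothing property \eqref{SMOOTH:GMS} together with $\tfrac{1+t}{t}\le 2$ for $t\ge 1$; this produces $\bignorm{u^0(t)-u^0(s)}_{H^{3/2}(\Omega)} \le C\,\abs{t-s}^{3/16}$. Finally, using the continuous embeddings $H^{3/2}(\Omega)\emb H^1(\Omega)$ and $H^{3/2}(\Omega)\emb H^1(\Gamma)$ (the latter being the trace embedding) and recalling $v^0 = u^0\vert_\Gamma$, the asserted $\HH^1$-Hölder estimate follows. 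Since the whole scheme merely transcribes the proof of Proposition~\ref{PROP:HLD:KLLM}, no genuine obstacle is expected; the only step requiring (mild) care is to verify that the test functions $(w,0)$ are admissible in $\Db$ and that on this class the $\Db$-duality reduces to the standard $H^{-1}(\Omega)$--$H^1_0(\Omega)$ pairing.
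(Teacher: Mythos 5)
Your proposal is correct and follows exactly the route the paper intends: the authors omit the proof of Proposition~\ref{PROP:HLD:GMS} with the remark that it is analogous to Proposition~\ref{PROP:HLD:KLLM}, and your transcription — testing \eqref{WF:GMS:1} with $(w,0)\in\Db$ for $w\in H^1_0(\Omega)$, using \eqref{ENERGY:GMS} with $\sigma(0)=0$, interpolating $H^{3/2}(\Omega)$ between $H^{-1}(\Omega)$ and $H^3(\Omega)$, and invoking \eqref{SMOOTH:GMS} — is precisely that analogue. The only adaptation genuinely required is the admissibility of the test pair $(w,0)$ in $\Db$, which you verify correctly.
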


The proofs of Proposition~\ref{PROP:ICR:GMS}, Proposition~\ref{PROP:CD:GMS} and Proposition~\ref{PROP:HLD:GMS} are very similar to those of the corresponding results for the KLLM model. Therefore, they will not be presented. However, we point out that the compatibility condition \eqref{ass:comp} is required in the proof of Proposition~\ref{PROP:CD:GMS} in order to proceed analogously as in the proof of Proposition~\ref{PROP:CD:KLLM}.

\medskip

As a consequence of Theorem~\ref{THM:WP:GMS}, Proposition~\ref{PROP:ICR:GMS} and Proposition~\ref{PROP:CD:GMS}, the weak solutions to the problem \eqref{CH:INT.} can be described via a strongly continuous semigroup.

\begin{corollary}
	Suppose that \eqref{ass:dom}--\eqref{ass:pot} hold and that $L=0$. Let $m\in\R$ be arbitrary. For any $(u_0,v_0)\in\Wm$ let $(u^0,v^0,\mu^0,\theta^0)$ denote the corresponding unique weak solution. 
	
	For any $t\ge 0$, we define the operator
	\begin{align}
	S_m^0:\Wm\to\Wm,\quad (u_0,v_0) \mapsto \big(u^0(t),v^0(t)\big).
	\end{align}
	
	Then the family $\big\{S_m^0(t)\big\}_{t\ge 0}$ defines a strongly continuous semigroup on $\Wm$.
\end{corollary}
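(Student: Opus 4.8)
The plan is to verify the defining properties of a strongly continuous (nonlinear) semigroup on $\Wm$, equipped with the metric inherited from $\HH^1$; note that $\Wm$ is a closed affine subset of $\HH^1$, since $\VV^1$ is closed in $\HH^1$ and the mean-value constraint $\beta\abs{\Omega}\mean{\cdot}_\Omega+\abs{\Gamma}\mean{\cdot}_\Gamma=m$ is a continuous linear condition. The three ingredients are the well-posedness of Theorem~\ref{THM:WP:GMS} and the continuity statements of Proposition~\ref{PROP:ICR:GMS} and Proposition~\ref{PROP:CD:GMS}. First I would check well-definedness: for $(u_0,v_0)\in\Wm$, Theorem~\ref{THM:WP:GMS} supplies a unique global weak solution $(u^0,v^0,\mu^0,\theta^0)$; the mass conservation law \eqref{MASS:GMS} gives $\beta\abs{\Omega}\mean{u^0(t)}_\Omega+\abs{\Gamma}\mean{v^0(t)}_\Gamma=m$ for all $t\ge0$, while Proposition~\ref{PROP:ICR:GMS} gives $(u^0,v^0)\in C([0,\infty);\HH^1)$, so, since $(u^0(t),v^0(t))\in\VV^1$ for a.e.\ $t$ and $\VV^1$ is closed in $\HH^1$, in fact $(u^0(t),v^0(t))\in\VV^1$ for \emph{every} $t\ge0$. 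Hence $S_m^0(t)(u_0,v_0)=(u^0(t),v^0(t))\in\Wm$ for all $t\ge0$, and $S_m^0(0)=\mathrm{Id}_{\Wm}$ is immediate from the initial condition \eqref{WF:GMS:6}.

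Next I would establish the semigroup law $S_m^0(t+s)=S_m^0(t)\circ S_m^0(s)$ from the autonomy of \eqref{CH:INT.} together with uniqueness. Fix $(u_0,v_0)\in\Wm$ and $s\ge0$, let $(u^0,v^0,\mu^0,\theta^0)$ be the corresponding solution, and set $(\tilde u,\tilde v,\tilde\mu,\tilde\theta)(t):=(u^0,v^0,\mu^0,\theta^0)(t+s)$. Because the system contains no explicit time dependence, the time-shifted quadruple satisfies the weak formulation \eqref{WF:GMS:1}--\eqref{WF:GMS:6} on $\RP$ with initial datum $(u^0,v^0)(s)=S_m^0(s)(u_0,v_0)\in\Wm$, and it inherits the regularity class \eqref{REG:GMS} on each bounded interval $[0,T]$ since $(u^0,v^0,\mu^0,\theta^0)$ enjoys it on $[0,T+s]$. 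Thus it is \emph{the} weak solution emanating from $S_m^0(s)(u_0,v_0)$, so by the uniqueness part of Theorem~\ref{THM:WP:GMS} we get $(\tilde u,\tilde v)(t)=S_m^0(t)\big(S_m^0(s)(u_0,v_0)\big)$; on the other hand $(\tilde u,\tilde v)(t)=(u^0,v^0)(t+s)=S_m^0(t+s)(u_0,v_0)$ by definition, and comparing the two identities proves the claim.

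Finally I would verify strong continuity. For fixed $(u_0,v_0)\in\Wm$, Proposition~\ref{PROP:ICR:GMS} shows that $t\mapsto S_m^0(t)(u_0,v_0)=(u^0,v^0)(t)$ is continuous from $[0,\infty)$ into $\HH^1$, hence into $\Wm$ with its subspace topology; in particular $S_m^0(t)(u_0,v_0)\to(u_0,v_0)$ in $\Wm$ as $t\to0^+$. For fixed $t\ge0$, the estimates \eqref{CD:GMS:1}--\eqref{CD:GMS:3} of Proposition~\ref{PROP:CD:GMS}, combined with the bound $\norm{\,\cdot\,}_{0,\beta,*}\le C\norm{\,\cdot\,}_{(\HH^1)'}\le C\norm{\,\cdot\,}_{\HH^1}$ on $\Wo$ from Lemma~\ref{LEM:LBS}, show that $(u_0,v_0)\mapsto S_m^0(t)(u_0,v_0)$ is continuous on $\Wm$ --- Lipschitz for $t\ge1$, and Lipschitz for any fixed $t>0$ by running the argument of Proposition~\ref{PROP:CD:GMS} with a general $t_0\in(0,1)$ in place of $\tfrac12$. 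Since $\Lambda_1^0$ is non-decreasing and continuous, these Lipschitz constants are locally uniform in $t$, which upgrades the two separate continuities to joint continuity of $(t,(u_0,v_0))\mapsto S_m^0(t)(u_0,v_0)$ on $(0,\infty)\times\Wm$ and completes the proof.

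I do not expect a genuine obstacle; everything reduces to bookkeeping once Theorem~\ref{THM:WP:GMS} and Propositions~\ref{PROP:ICR:GMS}--\ref{PROP:CD:GMS} are in hand. The two points that deserve a little care are the inheritance of the full regularity class \eqref{REG:GMS} on every bounded interval by the time-shifted solution --- so that the uniqueness statement of Theorem~\ref{THM:WP:GMS} genuinely applies to it --- and the comparability on $\Wo$ of the norms $\norm{\,\cdot\,}_{0,\beta,*}$ and $\norm{\,\cdot\,}_{\HH^1}$ invoked in the last step, which is exactly what Lemma~\ref{LEM:LBS} provides.
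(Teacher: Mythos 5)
Your argument is correct and follows exactly the route the paper intends: the corollary is stated as a direct consequence of Theorem~\ref{THM:WP:GMS}, Proposition~\ref{PROP:ICR:GMS} and Proposition~\ref{PROP:CD:GMS}, and you fill in precisely the expected bookkeeping (well-definedness via mass conservation and closedness of $\Wm$ in $\HH^1$, the semigroup law via autonomy plus uniqueness, and strong continuity via the cited continuity and continuous-dependence results together with Lemma~\ref{LEM:LBS}). No gaps.
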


\section{Long-time dynamics and existence of a global attractor}\label{SEC:LTD}
\subsection{Long-time dynamics of the KLLM model}\label{sec:long-time}
In this section, we investigate the long-time behavior of the KLLM model \eqref{CH:INT.} for any fixed $L\in(0,\infty)$. We prove the existence of a global attractor for weak solutions of this system, and we establish the convergence of each weak solution to a single stationary point as $ t\rightarrow \infty$.
To this end, we first analyze the set of stationary points of the problem \eqref{CH:INT.} with $L\in(0,\infty)$.

\subsubsection{The set of stationary points} \label{sec:stat}
For the semigroup $ {\left\lbrace S_{m}^{L}(t)\right\rbrace }_{t\geq0} $ on $ \Wm $ corresponding to the KLLM model with fixed $L\in(0,\infty)$, the corresponding set of stationary points of the problem \eqref{CH:INT.} can be expressed as follows:
\begin{align}
\mathcal{N}_{m}^L=\left\lbrace  (u,v) \in \Wm:S_{m}^{L}(t) (u,v) = (u,v) \;\text{for all}\; t\geq 0\right\rbrace.\nonumber  
\end{align}
We aim to show that the set $ \mathcal{N}_{m}^L $ is a nonempty and bounded subset of $ \Wmt $. 

To this end, let $ (u,v) \in \mathcal{N}_{m}^L$ be arbitrary, and let $\mu$ and $\theta$ denote the corresponding chemical potentials given by \eqref{WF:KLLM:3} and \eqref{WF:KLLM:4}.
Then the energy inequality \eqref{ENERGY:KLLM} entails that
\begin{equation*}
\int_{0}^{T}\left( \left\Vert \nabla \mu \right\Vert _{L^{2}\left(
	\Omega \right) }^{2}+\left\Vert \nabla \theta \right\Vert _{L^{2}\left(
	\Gamma \right) }^{2}+\dfrac{1}{L}\left\| \beta\theta-\mu\right\|_{L^{2}\left(
	\Gamma \right) }^{2} \right) dt \leq0. 
\end{equation*}%
Hence, there exists a constant $\lambda\in\R$ such that $\mu=\beta\lambda$ a.e.~in $\Omega$ and $\theta=\lambda$ a.e.~on $\Gamma$.
Integrating \eqref{WF:KLLM:3} over $\Omega$ and \eqref{WF:KLLM:4} over $\Gamma$, we infer that 
\begin{equation}
\lambda
=\dfrac{\int_{\Omega }F ^{\prime }\left( u \right)\dx + \int_{\Gamma }G ^{\prime }\left( v \right) \dG}{\beta\left\vert \Omega\right\vert+\left\vert\Gamma\right\vert} 
\text{.}\label{5.33}
\end{equation}%
Therefore, we obtain that $\mathcal{N}_{m}^L$ consists of solutions to the following stationary problem.%
\begin{subequations}
	\label{5.34}
\begin{alignat}{2}
	\label{5.34:1}
-\Delta u +F ^{\prime }\left( u \right)  &=\beta\lambda 
	&&\quad\text{ in } \Omega, \\ 
	\label{5.34:2}
-\Delta_{\Gamma}v+G ^{\prime }\left( v \right)+\partial _{n }u &=\lambda 
	&&\quad\text{ on } \Gamma,\\
	\label{5.34:3} 
u|_{\Gamma} &=v 
&&\quad \text{ on } \Gamma,\\
	\label{5.34:4}
\textstyle \beta\int_\Omega u \dx + \int_\Gamma v  \dG &= m.
\end{alignat}
\end{subequations}
Hence, in order to prove that $\mathcal{N}_{m}^L$ is nonempty, it suffices to show that \eqref{5.34} has at least one solution.
Since the problem \eqref{5.34} does not depend on $ L $, we notice that the set of stationary points $\mathcal{N}_{m}^L$ is actually independent of $ L $. 
Thus, in the remainder of this section, we will just use the notation  $\mathcal{N}_{m} := \mathcal{N}_{m}^{L}$. 

In the following, we interpret the total free energy as a continuous functional
\begin{align}
\label{DEF:ENF}
	E: \VV^1 \to [0,\infty), \quad (u,v) \mapsto E(u,v)
\end{align}
where for any $(u,v)\in\Wm$, the expression $E(u,v)$ is as defined in \eqref{DEF:EN}.

We first prove that the energy functional $E$ has a global minimizer on the space $\Wm$. Next, we show that a minimizer of $E$ on $\Wm$ is actually a strong solution of the system $(\eqref{5.33},\eqref{5.34})$ meaning that it is a stationary point.	

\begin{lemma}\label{LM:minimizer}
	Suppose that the assumptions \eqref{ass:dom}-\eqref{ass:pot} hold. Then the functional $E\vert_{\Wm}$ has at least one minimizer $ (u_*,v_*) \in  \Wm $, i.e.,
	\begin{equation*}
	E(u_*,v_*)=\inf_{ (u,v)\in \Wm} E\left(u,v\right).
	\end{equation*}
\end{lemma}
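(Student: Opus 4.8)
The plan is to use the direct method of the calculus of variations. First I would note that the infimum $I := \inf_{(u,v)\in\Wm} E(u,v)$ is finite: it is bounded below by $0$ since $F,G\ge 0$, and it is finite since $\Wm$ is nonempty (e.g., it contains suitable constants satisfying the mass constraint, which lie in $\HH^1$ and have finite energy by the upper growth bounds in \eqref{ass:pot}). Then I would take a minimizing sequence $(u_k,v_k)\subset\Wm$ with $E(u_k,v_k)\to I$, so that $E(u_k,v_k)\le I+1$ for large $k$.

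\textbf{A priori bounds and weak compactness.} From the lower bounds \eqref{GR:F} and \eqref{GR:G}, the energy controls $\norm{u_k}_{L^p(\Omega)}$, $\norm{v_k}_{L^q(\Gamma)}$, $\norm{\grad u_k}_{L^2(\Omega)}$ and $\norm{\gradg v_k}_{L^2(\Gamma)}$; combined with the Poincaré-type inequality available for functions in $\Wo$ (subtracting the mean, as in \cite[Lem.~2.1]{KLLM}, or the variant \eqref{POINC:ALT}) together with the fixed mass constraint, this yields a uniform bound for $(u_k,v_k)$ in $\HH^1$. Hence, up to a subsequence (not relabeled), $(u_k,v_k)\wto (u_*,v_*)$ in $\HH^1$ for some $(u_*,v_*)\in\HH^1$. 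By compact embedding ($H^1(\Omega)\emb\emb L^2(\Omega)$, $H^1(\Gamma)\emb\emb L^2(\Gamma)$, and the compactness of the trace map), we also get strong convergence $u_k\to u_*$ in $L^2(\Omega)$ and $v_k\to v_*$ in $L^2(\Gamma)$, as well as a.e.\ convergence along a further subsequence. The constraint set $\Wm$ is weakly closed: the trace condition $u_k\vert_\Gamma = v_k$ passes to the limit because the trace operator is weakly continuous $H^1(\Omega)\to L^2(\Gamma)$, and the linear mass constraint $\beta\abs{\Omega}\mean{u_k}_\Omega + \abs{\Gamma}\mean{v_k}_\Gamma = m$ passes to the limit under weak $\HH^1$-convergence. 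Therefore $(u_*,v_*)\in\Wm$.

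\textbf{Lower semicontinuity.} It remains to show $E(u_*,v_*)\le\liminf_{k\to\infty} E(u_k,v_k) = I$, which combined with $(u_*,v_*)\in\Wm$ forces $E(u_*,v_*)=I$ and finishes the proof. The gradient terms $\tfrac12\norm{\grad u_k}_{L^2(\Omega)}^2 + \tfrac12\norm{\gradg v_k}_{L^2(\Gamma)}^2$ are weakly lower semicontinuous as they are (squared) norms associated with the weak $\HH^1$-convergence. For the potential terms, I would use the splitting $F = F_1+F_2$, $G=G_1+G_2$ from \eqref{ass:pot}: the convex parts $\intO F_1(u_k)\dx$ and $\intG G_1(v_k)\dG$ are weakly lower semicontinuous (convexity plus non-negativity, via Fatou along the a.e.-convergent subsequence, or Ioffe's theorem), while the remainders $\intO F_2(u_k)\dx$ and $\intG G_2(v_k)\dG$ converge \emph{strongly} because $F_2,G_2$ have at most quadratic growth by \eqref{ass:pot:3} and $u_k\to u_*$, $v_k\to v_*$ strongly in $L^2$: indeed one can invoke Lebesgue's generalized dominated convergence theorem (as cited elsewhere in the paper, \cite[Sect.~3.25]{Alt}) with dominating functions built from $d_F(1+\abs{u_k}^2)$, whose integrals converge by strong $L^2$ convergence. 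Summing the three contributions gives the desired lower semicontinuity.

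\textbf{Main obstacle.} The only genuinely delicate point is handling the potential terms under the mere weak $\HH^1$ (hence strong $\LL^2$) convergence, since $F$ and $G$ may grow like $\abs{s}^p$ with $p$ up to $4$ and $\abs{s}^q$ with $q>2$ arbitrary, so the integrands are \emph{not} uniformly integrable merely from $L^2$ bounds. The resolution is exactly the convex/non-convex splitting described above: one never needs strong convergence of the super-quadratic parts, only lower semicontinuity, which convexity plus a.e.\ convergence (Fatou) supplies; the non-convex parts are tamed by their quadratic growth. Everything else — the a priori estimate, extraction of the subsequence, weak closedness of $\Wm$ — is routine.
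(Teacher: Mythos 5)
Your proof is correct and follows the same overall strategy as the paper (direct method: coercivity from the lower growth bounds in \eqref{ass:pot}, weak compactness in $\HH^1$, weak closedness of $\Wm$, lower semicontinuity). The one genuine difference lies in how the potential terms are treated. You split $F=F_1+F_2$, $G=G_1+G_2$, handle the non-negative convex parts by Fatou along the a.e.-convergent subsequence, and handle the remainders by their quadratic growth together with strong $\LL^2$ convergence. The paper instead upgrades the convergence of the minimizing sequence to \emph{strong} convergence in $L^p(\Omega)\times L^q(\Gamma)$ via the compact Sobolev embeddings (using $p\le 4<6$ in the bulk and that $H^1(\Gamma)$ embeds compactly into every $L^q(\Gamma)$ on the at most two-dimensional boundary), and then applies Lebesgue's generalized convergence theorem to the whole potentials, so that $\intO F(u_k)\dx\to\intO F(u_*)\dx$ and $\intG G(v_k)\dG\to\intG G(v_*)\dG$ outright; only the gradient terms then need weak lower semicontinuity. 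So the ``main obstacle'' you identify is in fact avoidable here: the coercivity estimate already yields uniform $L^p/L^q$ bounds, not merely $L^2$ bounds, which supply the uniform integrability you were concerned about. Your route is nonetheless valid, and it is marginally more general in that it does not rely on $p$ being subcritical for the bulk Sobolev embedding. Two minor remarks: the Poincar\'e step in your a priori bound is superfluous, since the $L^p(\Omega)$ and $L^q(\Gamma)$ bounds with $p,q>2$ on bounded domains already control the $\LL^2$ norm; and your explicit verification that $\Wm$ is weakly closed fills in a detail the paper leaves implicit.
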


\begin{proof}
	To prove the assertion, we employ the direct method of calculus of variations.
	From the assumptions \eqref{ass:pot} on $ F $ and $ G $, we obtain the lower bound 
	\begin{equation}
	E\left( u,v \right) \geq \frac{1}{2}\left\Vert \nabla u
	\right\Vert _{L^{2}\left( \Omega \right) }^{2}+\frac{1}{2}\left\Vert\gradg v
	\right\Vert _{L^{2}\left( \Gamma \right) }^{2}+ a_{F}\left\| u\right\| _{L_{p}\left( \Omega \right)}^{p}+a_{G}\left\| v\right\| _{L_{q}\left( \Gamma \right)}^{q}-b_{F}-b_{G} \label{5.35}
	\end{equation}
	for all $(u,v) \in \Wm$.
	Hence, the infimum 
	$${E}_{*}:=\underset{(u,v) \in \Wm}{\inf }E\left( u,v \right) $$ 
	exists and thus, we can find a minimizing sequence $\left\{(u _{k},v _{k}) \right\}_{k\in\N}\subset $ $\Wm$
	such that%
	\begin{equation}\label{5.36}
	\lim_{k\rightarrow \infty }E\left( u _{k},v _{k}\right)={E}_{*} \qquad\text{and}\qquad E\left( u_{k},v _{k}\right) \leq {E}_{*}+1 \quad\text{for all $k\in\N$.} 
	\end{equation}
	Combining \eqref{5.35} and \eqref{5.36}, it follows that $\left\{ \left( u	_{k},v _{k}\right) \right\}_{k\in\N}$ is bounded in ${\cal H}^{1} $. Then, according to the Banach-Alaoglu theorem,
	it holds that
	\begin{equation}
 	(u_k, v_k)\wto ( {u }^{*},{u }^{*}) \quad \text{in $\HH^1$} \label{5.37}
	\end{equation}
	along a non-relabeled subsequence.
	Invoking Sobolev's embedding theorem we further deduce that
	\begin{align}
	\left(  u _{k}, v _{k}\right)\rightarrow \left( {u }^{*},{v }^{*}\right) 
	\quad\text{in }
	L^{p}\left( \Omega \right) \times L^{q}\left( \Gamma \right), \text{ and a.e.~in } \Omega \times \Gamma
	\label{5.39}
	\end{align}%
	after another subsequence extraction, where $p$ and $q$ are the exponents introduced in \eqref{ass:pot}.
	Since  $F $ and $ G $ are continuous, \eqref{5.39} implies%
	\begin{equation*}
	F \left( u _{k}\right) \rightarrow F \left( {u }^{*}\right) \quad\text{a.e.~in }\Omega,
	\quad\text{and}\quad
	G \left( v _{k}\right) \rightarrow G \left( {v }^{*}\right) \quad\text{a.e.~in }\Gamma.
	\end{equation*}%
	Recalling assumption \eqref{ass:pot}, it follows from Lebesgue's generalized convergence theorem \cite[Sect.\,3.25]{Alt} that%
	\begin{equation*}
	\lim_{k\rightarrow \infty } \int_{\Omega }F \left( u
	_{k}\right)\dx =\int_{\Omega }F \left( {u }^{*}
	\right)\dx ,
	\quad\text{and}\quad
	\lim_{k\rightarrow \infty } \int_{\Gamma}G \left( v
	_{k}\right)\dG =\int_{\Gamma }G \left( {v }^{*}
	\right)\dG .
	\end{equation*}%
	As all the other terms in the energy functional are convex, they are lower semicontinuous, and we thus conclude that
	\begin{equation*}
	E\left( u^{*},v ^{*} \right) \leq 
	\underset{m\rightarrow \infty }{\lim \inf }\; E\left( u
	_{k},v
	_{k}\right) ={E}_{*}.
	\end{equation*}%
	This proves that $ (u^{*},v ^{*}) $ is
	a minimizer of the functional $E$ on $\Wm$.
\end{proof}

\medskip\pagebreak[2]

We further show that the energy functional is of class $C^2$.
\begin{lemma}[Regularity of the energy functional]
	\label{LEM:ENC2}
	The energy functional $ E $ is twice continuously Fr\'echet differentiable. For every $ (u,v), (\zeta,\eta) \in \VV^1 $, we have
	\begin{align}\label{frstFrec.der}
	\left\langle E^{\prime}(u,v) ,(\zeta,\eta)\right\rangle_{\VV^1}
	&=\intO \nabla u \cdot \nabla \zeta + F^{\prime}(u) \zeta\dx 
		+ \intG \gradg v \cdot \gradg \eta + G^{\prime}(v) \eta \dG.
	\end{align}
	For every $ (u,v), (\zeta_{1},\eta_{1}), (\zeta_{2},\eta_{2}) \in \VV^1 $, we have
	\begin{align}\label{scndFrec.der}
	\begin{aligned}
	\left\langle E^{\prime \prime}(u,v)(\zeta_{1},\eta_{1}) ,(\zeta_{2},\eta_{2}) \right\rangle_{\VV^1}
	& =\intO \nabla \zeta_{1} \cdot \nabla \zeta_{2} + F^{\prime \prime}(u) \zeta_{1} \zeta_{2}\dx \\
	&\quad + \intG \gradg \eta_{1} \cdot \gradg \eta_{2} + G^{\prime \prime}(v) \eta_{1} \eta_{2}\dG.
	\end{aligned}
	\end{align}
\end{lemma}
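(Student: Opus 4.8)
The strategy is the standard one for showing that an energy functional built from a quadratic Dirichlet-type form plus lower-order nonlinear terms is $C^2$: treat the functional as a sum $E = E_{\mathrm{quad}} + E_{F} + E_{G}$, where $E_{\mathrm{quad}}(u,v) = \tfrac12\|\grad u\|_{L^2(\Omega)}^2 + \tfrac12\|\gradg v\|_{L^2(\Gamma)}^2$ is a continuous quadratic form on $\VV^1$ and $E_F(u,v) = \intO F(u)\dx$, $E_G(u,v)=\intG G(v)\dG$ are the Nemytskii-type parts. The quadratic part is smooth with derivatives given by the obvious bilinear forms, so the whole point is to handle $E_F$ (and $E_G$ identically). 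First I would record the Sobolev embeddings that make the growth conditions \eqref{ass:pot} usable: since $d\le 3$ and $2<p\le 4$, $q>2$, we have $H^1(\Omega)\emb L^s(\Omega)$ for all $s\in[1,6]$ and $H^1(\Gamma)\emb L^s(\Gamma)$ for all $s\in[1,\infty)$, so $u\mapsto F'(u)$ maps $H^1(\Omega)$ into $L^2(\Omega)$ (using $|F'(s)|\le c_{F'}(1+|s|^{p-1})$ with $2(p-1)\le 6$) and $u\mapsto F''(u)$ maps $H^1(\Omega)$ into $L^3(\Omega)$ (using \eqref{ass:pot:2} with $3(p-2)\le 6$); the surface counterparts hold for all polynomial exponents.

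\textbf{Step 1: first derivative.} I would show that the candidate functional in \eqref{frstFrec.der} defines, for each $(u,v)\in\VV^1$, a bounded linear functional on $\VV^1$ — this uses Cauchy--Schwarz on the gradient terms and Hölder plus the embeddings on the $F'(u)\zeta$ and $G'(v)\eta$ terms. Then I would prove Gâteaux differentiability: for fixed $(u,v),(\zeta,\eta)$, differentiate $t\mapsto E(u+t\zeta,v+t\eta)$ at $t=0$; the quadratic part is immediate, and for $E_F$ one writes the difference quotient $\tfrac1t\intO F(u+t\zeta)-F(u)\dx = \intO \big(\int_0^1 F'(u+st\zeta)\dd s\big)\zeta\dx$ and passes to the limit $t\to0$ by dominated convergence, the dominating function coming from the growth bound on $F'$ together with $\zeta\in L^6$. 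Finally, continuity of $(u,v)\mapsto E'(u,v)$ in the operator norm: this follows once one knows the Nemytskii operator $u\mapsto F'(u)$ is continuous from $L^{2(p-1)}(\Omega)$ (hence from $H^1(\Omega)$) into $L^2(\Omega)$, a standard fact; continuity of the Gâteaux derivative then upgrades it to Fréchet differentiability.

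\textbf{Step 2: second derivative.} Exactly the same scheme one level up. The candidate \eqref{scndFrec.der} defines, for each $(u,v)$, a bounded bilinear form on $\VV^1\times\VV^1$: the $F''(u)\zeta_1\zeta_2$ term is estimated by $\|F''(u)\|_{L^3(\Omega)}\|\zeta_1\|_{L^3(\Omega)}\|\zeta_2\|_{L^3(\Omega)}$, which is finite since $H^1(\Omega)\emb L^3(\Omega)$ and $F''(u)\in L^3(\Omega)$; the surface term is handled the same way with any polynomial exponent. One checks that $(\zeta_1,\eta_1)\mapsto E''(u,v)(\zeta_1,\eta_1)$ is the Gâteaux derivative of $(u,v)\mapsto E'(u,v)$ by differentiating $t\mapsto \langle E'(u+t\zeta_1, v+t\eta_1),(\zeta_2,\eta_2)\rangle$ and using the integral-remainder trick for $F'$ together with dominated convergence (now the domination uses the bound on $F''$ and $\zeta_1,\zeta_2\in L^3$). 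Continuity of $(u,v)\mapsto E''(u,v)$ in the operator norm on bilinear forms reduces to continuity of the Nemytskii operator $u\mapsto F''(u)$ from $H^1(\Omega)$ into $L^3(\Omega)$ (and $v\mapsto G''(v)$ from $H^1(\Gamma)$ into every $L^s(\Gamma)$), again a standard consequence of the growth conditions in \eqref{ass:pot:2} and the continuity of Nemytskii operators between Lebesgue spaces. This gives $E\in C^2(\VV^1)$.

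\textbf{Main obstacle.} There is no serious analytic difficulty here; the only point requiring care is bookkeeping of the integrability exponents so that every application of Hölder's inequality is justified by the precise growth rates $p\le 4$, $q>2$ and the dimension restriction $d\le 3$ — in particular verifying that $F''(u)\in L^3$ suffices (because each test-function factor lands in $L^3$) rather than needing $F''(u)$ in a better space. Writing this cleanly, and invoking the continuity of Nemytskii operators on Lebesgue spaces as a black box, is the bulk of the work; everything else is the textbook passage from "bounded candidate derivative" via "Gâteaux derivative" to "Fréchet derivative" using continuity.
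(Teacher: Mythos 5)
Your proposal is correct and is exactly the standard Nemytskii-operator argument that the paper itself omits as ``straightforward'', delegating the details to the analogous \cite[Lem.~6.2]{CFP}; the exponent bookkeeping ($2(p-1)\le 6$ for $F'(u)\in L^2(\Omega)$, $3(p-2)\le 6$ for $F''(u)\in L^3(\Omega)$, and arbitrary polynomial growth on $\Gamma$) is right. The only cosmetic point is that the growth bound on the full second derivative $F''=F_1''+F_2''$ uses \eqref{ass:pot:2} for $F_1''$ \emph{together with} the boundedness of $F_2''$ coming from the Lipschitz continuity of $F_2'$ in \eqref{ass:pot:3}, not \eqref{ass:pot:2} alone.
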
 

The proof of this lemma is straightforward and will thus not be presented. For a very similar result including a detailed proof, we refer to \cite[Lem.~6.2]{CFP}.

\begin{lemma}[Critical points of $E$ are steady states]
	\label{LM:crtclpnt}
	Suppose that the assumptions \eqref{ass:dom}-\eqref{ass:pot} are satisfied and suppose that $( u_*,v_*)\in\Wm $. Then, the following statements are equivalent:
	\begin{enumerate}[label=$(\mathrm{\roman*})$, ref = $\mathrm{\roman*}$]
		\item $(u_*,v_*)$ is a critical point of $E\vert_{\Wm}$.
		\item $(u_*,v_*) \in \Wmt$ and $(u_*,v_*)$ is a strong solution of the problem $(\eqref{5.33},\eqref{5.34})$. In particular, this means that $(u_*,v_*)\in\mathcal N_m$.
	\end{enumerate}
	 
\end{lemma}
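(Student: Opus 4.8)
The plan is to prove the two implications separately, with the representation formula \eqref{frstFrec.der} for $E'$ and bulk--surface elliptic regularity as the main tools. For \textbf{(i) $\Rightarrow$ (ii)}, I would first extract a Lagrange multiplier. Set $(\zeta_0,\eta_0) := (\beta\abs{\Omega}+\abs{\Gamma})^{-1}(1,1) \in \VV^1$, which satisfies $\beta\intO\zeta_0\dx + \intG\eta_0\dG = 1$, and put $\lambda := \inn{E'(u_*,v_*)}{(\zeta_0,\eta_0)}_{\VV^1}$. Every $(\zeta,\eta)\in\VV^1$ splits as the tangential vector $(\zeta,\eta) - \big(\beta\intO\zeta\dx + \intG\eta\dG\big)(\zeta_0,\eta_0) \in \Wo$ plus a multiple of $(\zeta_0,\eta_0)$; since a critical point of $E\vert_{\Wm}$ annihilates $\Wo$, this yields
\[
\inn{E'(u_*,v_*)}{(\zeta,\eta)}_{\VV^1} = \lambda\Big(\beta\intO\zeta\dx + \intG\eta\dG\Big)
\qquad\text{for all }(\zeta,\eta)\in\VV^1,
\]
which by \eqref{frstFrec.der} is exactly the weak formulation of the bulk--surface elliptic system $-\Lap u_* + F'(u_*) = \beta\lambda$ in $\Omega$, $-\Lapg v_* + G'(v_*) + \pdnu u_* = \lambda$ on $\Gamma$, $u_*\vert_\Gamma = v_*$.

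Next I would bootstrap the regularity. Testing with $\zeta\in C^\infty_c(\Omega)$ and $\eta=0$ identifies the bulk equation distributionally, and the growth bounds in \eqref{ass:pot} (with $p\le 4$, $q>2$) together with $H^1(\Omega)\emb L^6(\Omega)$ and $H^1(\Gamma)\emb L^r(\Gamma)$ for all $r<\infty$ (valid since $d\le 3$) show $\beta\lambda - F'(u_*)\in L^2(\Omega)$ and $\lambda - G'(v_*)\in L^2(\Gamma)$. The elliptic regularity theorem \cite[Thm.~3.3]{knopf-liu} for bulk--surface problems with Dirichlet-type coupling then gives $(u_*,v_*)\in\HH^2$, hence $(u_*,v_*)\in\Wmt$ (the trace relation and the mean-value constraint being inherited from $\Wm$); one more application of the same theorem, now with $F'(u_*),G'(v_*)\in H^1$ (which follows from $F''(u_*)\in L^3(\Omega)$, $u_*\in W^{1,6}(\Omega)$ and their surface analogues), even yields $(u_*,v_*)\in\HH^3$. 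With this regularity, integration by parts in the weak identity turns it into the strong system \eqref{5.34}; integrating \eqref{5.34:1} over $\Omega$ and \eqref{5.34:2} over $\Gamma$, adding, and using $\intO\Lap u_*\dx = \intG\pdnu u_*\dG$ and $\intG\Lapg v_*\dG = 0$ recovers the multiplier formula \eqref{5.33}. Finally, $(u_*,v_*)\in\mathcal N_m$ follows since the time-independent quadruple $(u_*,v_*,\beta\lambda,\lambda)$ has all the regularity required in Theorem~\ref{THM:WP:KLLM} and solves the weak formulation \eqref{WF:KLLM:1}--\eqref{WF:KLLM:5} (all time derivatives vanish, $\grad(\beta\lambda)=0=\gradg\lambda$, and $\beta\lambda-\beta\lambda=0$), so by uniqueness $S_m^L(t)(u_*,v_*) = (u_*,v_*)$ for all $t\ge 0$.

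For \textbf{(ii) $\Rightarrow$ (i)}, I would test \eqref{5.34:1} with an arbitrary $\zeta$ and \eqref{5.34:2} with $\eta$, where $(\zeta,\eta)\in\VV^1$, integrate over $\Omega$ and $\Gamma$ respectively, integrate by parts, and add; since $\zeta\vert_\Gamma=\eta$, the two boundary contributions containing $\pdnu u_*$ cancel, and \eqref{frstFrec.der} gives $\inn{E'(u_*,v_*)}{(\zeta,\eta)}_{\VV^1} = \lambda\big(\beta\intO\zeta\dx + \intG\eta\dG\big)$. Restricting to $(\zeta,\eta)\in\Wo$, the right-hand side vanishes by the defining constraint of $\Wo$, so $E'(u_*,v_*)$ annihilates the tangent space of $\Wm$ at $(u_*,v_*)$, i.e.\ $(u_*,v_*)$ is a critical point of $E\vert_{\Wm}$.

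The step I expect to require the most care is the regularity bootstrap in (i) $\Rightarrow$ (ii): one must make sure the bulk--surface elliptic regularity theory of \cite{knopf-liu} is applicable with merely $L^2$ right-hand sides and the Dirichlet-type coupling condition, and check carefully that $F'(u_*)$, $G'(v_*)$ (and, for the second bootstrap, their gradients) land in the required $L^2$ spaces, which hinges on the growth exponents $p\le 4$, $q>2$ and the Sobolev embeddings in dimension $d\le 3$. Everything else reduces to routine Lagrange-multiplier and integration-by-parts computations.
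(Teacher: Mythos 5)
Your proposal is correct and follows essentially the same route as the paper: identify criticality of $E\vert_{\Wm}$ with the vanishing of $E'$ on the tangent space $\Wo$, recover the Lagrange multiplier $\lambda$ of \eqref{5.33} by decomposing arbitrary test functions into a constant part plus an element of $\Wo$, read off the weak form of \eqref{5.34}, and upgrade to $\HH^2$ (hence strong) solutions via the bulk--surface elliptic regularity of \cite[Thm.~3.3]{knopf-liu} under the growth conditions \eqref{ass:pot}. Your additional details --- the second bootstrap to $\HH^3$ and the explicit verification via uniqueness that the constant-in-time quadruple is a fixed point of $S_m^L(t)$ --- are correct refinements of steps the paper leaves implicit.
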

\begin{proof}
	We first notice that (i) is equivalent to 
	\begin{align}
	\begin{aligned}
	&\left\langle E^{\prime}(u_*,v_*) ,(\zeta,\xi)\right\rangle_{\VV^1} \\
	&\quad= \int_{\Omega }\left( \nabla u_* \cdot \nabla \zeta 
		+F^{\prime }\left( u_*\right) \zeta  \right)\dx
	+\int_{\Gamma }\left( \gradg v_* \cdot \gradg \xi 
		+G^{\prime }\left( v_*\right) \xi  \right)\dG=0 \label{5.41}
	\end{aligned}
	\end{align}
	for all $ (\zeta,\xi) \in \Wo $.
	Now, for any $ (\overline{\zeta},\overline{\xi})\in \VV^1 $, we define 
	$$\gamma=\dfrac{\beta\int_\Omega  \overline{\zeta} \dx+\int_\Gamma \overline{\xi}  \dG}
		{\beta\abs{\Omega} + \abs{\Gamma} } .$$ 
	Hence, $ (\zeta,\xi)=(\overline{\zeta}-\gamma,\overline{\xi}-\gamma) \in \Wo\, $, and thus \eqref{5.41} is equivalent to
	\begin{align}
	\begin{aligned}
	&\int_{\Omega }\left( \nabla u_*\cdot\nabla \overline{\zeta} 
		+ F^{\prime }\left( u_*\right) \overline{\zeta}  \right)\dx
		+\int_{\Gamma }\left( \gradg v_* \cdot \gradg \overline{\xi} 
		+G^{\prime }\left( v_*\right) \overline{\xi}  \right)\dG \\
	&\qquad =\int_{\Omega }F^{\prime }\left( u_*\right)\gamma\dx
		+\int_{\Gamma }G^{\prime }\left( v_*\right)\gamma \dG 
	=\int_{\Omega } \beta\lambda\, \overline{\zeta} \dx
		+\int_{\Gamma } \lambda\, \overline{\xi} \dG  
	\end{aligned}
	\label{5.42}
	\end{align}
	for all $ (\overline{\zeta},\overline{\xi})\in \VV^1 $.
	This means that $(u_*,v_*) \in \Wm$ is
	a weak solution of the problem (\eqref{5.33},\eqref{5.34}). Moreover, recalling the growth assumption \eqref{ass:pot}, we can use elliptic regularity theory for problems with bulk-surface coupling (see \cite[Thm.~3.3]{knopf-liu}) to conclude that $(u_*,v_*)\in \mathcal{H}^{2} $. Therefore, $(u_*,v_*)$ is actually a strong solution of the problem (\eqref{5.33},\eqref{5.34}) which directly yields $(u_*,v_*)\in \mathcal N_m$. We conclude that (i) and (ii) are equivalent and thus, the proof is complete.
\end{proof}

\begin{lemma}\label{LM.boundedstat}
	Suppose that the assumptions \eqref{ass:dom}-\eqref{ass:pot} hold. Then, the set of stationary points $ \mathcal{N}_{m} $ is a nonempty and bounded subset of $\Wmt$.
\end{lemma}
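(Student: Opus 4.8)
The plan is to combine the existence of an energy minimizer (Lemma~\ref{LM:minimizer}) with the characterization of critical points as steady states (Lemma~\ref{LM:crtclpnt}) to get nonemptiness, and then to derive uniform a priori bounds on any stationary point directly from the stationary system $(\eqref{5.33},\eqref{5.34})$ to get boundedness in $\Wmt$. For nonemptiness: by Lemma~\ref{LM:minimizer}, the functional $E\vert_{\Wm}$ admits a minimizer $(u_*,v_*)\in\Wm$, which is in particular a critical point of $E\vert_{\Wm}$; by Lemma~\ref{LM:crtclpnt}, it follows that $(u_*,v_*)\in\Wmt$ solves $(\eqref{5.33},\eqref{5.34})$, hence $(u_*,v_*)\in\mathcal N_m$ and $\mathcal N_m\neq\emptyset$.

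For boundedness, let $(u,v)\in\mathcal N_m$ be arbitrary, with associated $\lambda\in\R$ given by \eqref{5.33}. The first step is to bound $\lambda$ uniformly. I would test \eqref{5.34:1} with $u$, test \eqref{5.34:2} with $v$, add, and use \eqref{5.34:3} to cancel the boundary term $\int_\Gamma \partial_n u\, v\dG$ against $-\int_\Omega \Delta u\, u\dx$'s boundary contribution, obtaining
\begin{align*}
\norm{\grad u}_{L^2(\Omega)}^2 + \norm{\gradg v}_{L^2(\Gamma)}^2 + \intO F'(u)u\dx + \intG G'(v)v\dG = \lambda\, m.
\end{align*}
Using the convexity/coercivity splitting $F=F_1+F_2$, $G=G_1+G_2$ from \eqref{ass:pot}, together with the growth bounds \eqref{GR:F}, \eqref{GR:G} on $F',G'$ and the Lipschitz bounds on $F_2',G_2'$ (this is where the lower bounds in \eqref{ass:pot} matter, as flagged in the remark pointing to \eqref{EST:F1P} and \eqref{EST:G1P}), one controls $\norm{u}_{L^p(\Omega)}^p + \norm{v}_{L^q(\Gamma)}^q$ and the Dirichlet energies by $C(1+|\lambda|)$. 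Separately, integrating \eqref{5.34:1} over $\Omega$ and \eqref{5.34:2} over $\Gamma$ and adding gives $(\beta|\Omega|+|\Gamma|)\lambda = \int_\Omega F'(u)\dx + \int_\Gamma G'(v)\dG$, and the right-hand side is bounded by $C(1+\norm{u}_{L^{p-1}(\Omega)}^{p-1}+\norm{v}_{L^{q-1}(\Gamma)}^{q-1})$. Feeding the two estimates into each other via Young's inequality (absorbing the sublinear-in-the-norm terms, since $p-1<p$ and $q-1<q$) closes a bootstrap and yields $|\lambda|\le C$ and hence a bound for $(u,v)$ in $\VV^1$ depending only on $m$, $\beta$, $F$, $G$.

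Once $(u,v)$ is bounded in $\VV^1$, I would upgrade to an $\HH^2$ bound by viewing $(\eqref{5.34:1},\eqref{5.34:2},\eqref{5.34:3})$ as the linear elliptic bulk--surface system
\begin{align*}
-\Lap u = \beta\lambda - F'(u)\ \text{ in }\Omega,\quad
-\Lapg v + \partial_n u = \lambda - G'(v)\ \text{ on }\Gamma,\quad u\vert_\Gamma = v,
\end{align*}
and applying the elliptic regularity estimate \cite[Thm.~3.3]{knopf-liu}: the right-hand sides $\beta\lambda - F'(u)$ and $\lambda - G'(v)$ are bounded in $L^2(\Omega)$ and $L^2(\Gamma)$ respectively, using the growth bounds \eqref{ass:pot} together with the embeddings $H^1(\Omega)\emb L^6(\Omega)$, $H^1(\Gamma)\emb L^r(\Gamma)$ and $p\le 4$, $q>2$ (exactly as in the $\HH^3$ part of the proof of Theorem~\ref{THM:WP:KLLM}). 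This gives $\norm{(u,v)}_{\HH^2}\le C$ with $C$ depending only on $\Omega$, $\beta$, $F$, $G$, $m$, and since $(u,v)\in\Wm$ by definition of $\mathcal N_m$, we conclude $\mathcal N_m$ is a bounded subset of $\Wmt$.

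The main obstacle is the first a priori step: extracting a uniform bound on the Lagrange multiplier $\lambda$ and on the $L^p\times L^q$ norms simultaneously. The coupling of the bulk and surface potentials through a single scalar $\lambda$ (a consequence of the GMS/KLLM mass law fixing only one degree of freedom) means the estimate cannot be done componentwise; one must test with $(u,v)$ as an element of $\Wm$ and carefully exploit the sign of $F_1'(u)u$, $G_1'(v)v$ (nonnegative up to a controlled defect since $F_1,G_1$ are convex and nonnegative) to absorb the indefinite terms coming from $F_2',G_2'$ and from $\lambda m$. Everything after that is routine linear elliptic regularity.
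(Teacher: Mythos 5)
Your proposal is correct and follows essentially the same route as the paper: nonemptiness via Lemma~\ref{LM:minimizer} combined with Lemma~\ref{LM:crtclpnt}, then testing \eqref{5.34:1} with $u$ and \eqref{5.34:2} with $v$, using the convexity of $F_1,G_1$ and the growth bounds to gain coercivity in $L^p(\Omega)\times L^q(\Gamma)$, controlling $\lambda$ through \eqref{5.33} and Young's inequality, and finishing with the bulk--surface elliptic regularity of \cite[Thm.~3.3]{knopf-liu}. Your two-step ``bootstrap'' for $|\lambda|$ is just a reorganization of the paper's single Young's-inequality absorption in \eqref{EST:ML}, so the arguments coincide.
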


\begin{proof}
	Let $C$ denote a generic constant depending only on $\Omega$, $F$, $G$, $\beta$ and $m$ which may change its value from line to line.
	We first notice that Lemma~\ref{LM:minimizer} and Lemma~\ref{LM:crtclpnt} directly imply that $ \mathcal{N}_{m} $ is nonempty. Let now $(u,v)\in \mathcal{N}_{m}$ be arbitrary and let $\lambda \in\R$ as defined in \eqref{5.33}. Testing \eqref{5.34:1} with $u $ and $ \eqref{5.34:2} $ with $v $, and summing the obtained identities we get
	\begin{align}
		\label{5.47}
		\begin{aligned}
		&\left\| \nabla u\right\|_{L^{2}\left( \Omega\right) }^{2} 
			+\left\| \gradg v\right\|_{L^{2}\left( \Gamma\right) }^{2}
			+ \int_{\Omega }F_1^{\prime }\left( u \right)u\dx
			+ \int_{\Gamma }G_1^{\prime }\left( v \right)v \dG \\
		&\qquad =m\lambda
			- \int_{\Omega }F_2^{\prime }\left( u \right)u\dx
			- \int_{\Gamma }G_2^{\prime }\left( v \right)v \dG.
		\end{aligned}
	\end{align}
	Performing a Taylor expansion, recalling that $F_1''$ and $G_1''$ are non-negative, and invoking the growth conditions \eqref{GR:F} and \eqref{GR:G} in \eqref{ass:pot}, we deduce that
	\begin{alignat}{2}
		\label{EST:F1P}
		F_1'(s)\, s &\ge F_1(s) - F_1(0) &&\ge a_{F'}\abs{s}^p - C,\\
		\label{EST:G1P}
		G_1'(s)\, s &\ge G_1(s) - G_1(0) &&\ge a_{G'}\abs{s}^q - C
	\end{alignat}
	for all $s\in\R$.
	Moreover, recalling \eqref{5.33} and the growth assumptions in \eqref{ass:pot}, by means of Young's inequality we infer that for any $\alpha>0$ there exists a constant $C_\alpha$ depending only on $\Omega$, $F$, $G$, $\beta$, $m$ and $\alpha$ such that
	\begin{align}
	\label{EST:ML}
		\abs{m\lambda}
			+ \int_{\Omega } \abs{F_2^{\prime}(u)} \abs{u} \dx
			+ \int_{\Gamma } \abs{G_2^{\prime}(v)} \abs{v} \dG
		\leq C_\alpha + \alpha \big(\left \|u\right\|_{L^{p}\left( \Omega\right) }^{p} 
			+\left\|v\right\|_{L^{q}\left( \Gamma\right) }^{q}\big) .
	\end{align}
	Fixing $\alpha = \frac 1 2 \min\{a_{F'},a_{G'}\}$ and using the estimates \eqref{EST:F1P}--\eqref{EST:ML}, we conclude from \eqref{5.47} that
	\begin{align}
	\label{5.48}
		\begin{aligned}
		& \left\| \nabla u\right\|_{L^{2}\left( \Omega\right) }^{2} 
		+ \left\| \gradg v\right\|_{L^{2}\left( \Gamma\right) }^{2}
		+ \frac{a_{F'}}{2}  \norm{u}_{L^p(\Omega)}^p
		+ \frac{a_{G'}}{2} \norm{v}_{L^q(\Gamma)}^q \le C .
		\end{aligned}
	\end{align}
	This eventually yields $\left\| ( u,v)\right\|_{\HH^1}\leq C$. From the growth conditions in \eqref{ass:pot} we infer $\norm{F'(u)}_{L^2(\Omega)} \le C$ and $\norm{G'(v)}_{L^2(\Gamma)} \le C$. Hence, we can apply regularity theory for elliptic problems with bulk-surface coupling (see \cite[Thm.~3.3]{knopf-liu}) to conclude that 
	$\norm{(u,v)}_{\HH^2} \le C$,
	which completes the proof.
\end{proof}

\subsubsection{Existence of a global attractor}\label{subsec:att}
We first prove the following asymptotic compactness property by exploiting the smoothing estimate \eqref{SMOOTH:KLLM}.

\begin{lemma}[Asymptotic compactness]\label{lm.asympcompact}
	Suppose that the assumptions \eqref{ass:dom}-\eqref{ass:pot} are satisfied, and let $B$ be a bounded subset
	of $ \Wm$. 
	Then for any sequences $\left\{ (u_k,v_k) \right\}_{k\in\N}\subset B$ and $\{t_k\}_{k\in\N} \subset \RP$ with $t_k\to\infty$ as $k\to\infty$,
	the sequence 
	$\left\{ S_{m}^{L}\left( t_k\right) (u_k,v_k) \right\}_{k\in\N}$ 
	has a strongly convergent subsequence in $ \Wmt$.
\end{lemma}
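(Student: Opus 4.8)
The plan is to obtain asymptotic compactness as a fairly direct consequence of the parabolic smoothing estimate \eqref{SMOOTH:KLLM} combined with the compact Sobolev embedding $\HH^3 \emb\emb \HH^2$, which is available because $\Omega$ is a bounded domain with $C^3$-boundary (so that, in addition to $H^3(\Omega)\emb\emb H^2(\Omega)$, also $H^3(\Gamma)\emb\emb H^2(\Gamma)$ on the compact $C^3$-manifold $\Gamma$).

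Before running the argument I would make two preliminary observations. First, the constant $C_*$ in \eqref{SMOOTH:KLLM} depends on the initial datum only through $\norm{(u_0,v_0)}_{\HH^1}$; since $B$ is bounded in $\Wm\subset\HH^1$, it can therefore be chosen uniformly for all initial data taken from $B$. Second, I would upgrade \eqref{SMOOTH:KLLM} from ``almost all $t\ge 0$'' to ``all $t>0$'': fixing $t_0>0$ and a sequence $\tau_n\to t_0$ along which \eqref{SMOOTH:KLLM} holds, the family $\{(u^L,v^L)(\tau_n)\}_{n}$ is bounded in $\HH^3$, so a subsequence converges weakly in $\HH^3$; by Proposition~\ref{PROP:IMP:KLLM} it also converges to $\big(u^L(t_0),v^L(t_0)\big)$ in $\HH^2$, hence the two limits coincide, and weak lower semicontinuity of $\norm{\cdot}_{\HH^3}$ together with continuity of $t\mapsto\big((1+t)/t\big)^{1/2}$ at $t_0$ yields $\norm{\big(u^L(t_0),v^L(t_0)\big)}_{\HH^3}\le C_*\big((1+t_0)/t_0\big)^{1/2}$.

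With these preparations the proof is short. Since $t_k\to\infty$, we have $t_k\ge 1$ for all sufficiently large $k$, and then $\norm{S_m^L(t_k)(u_k,v_k)}_{\HH^3}=\norm{\big(u^L(t_k),v^L(t_k)\big)}_{\HH^3}\le\sqrt 2\,C_*$ with $C_*$ independent of $k$; thus $\{S_m^L(t_k)(u_k,v_k)\}_{k\in\N}$ is bounded in $\HH^3$. By the compact embedding $\HH^3\emb\emb\HH^2$ there is a subsequence converging strongly in $\HH^2$ to some $(u_*,v_*)$. Finally I would check $(u_*,v_*)\in\Wmt$: the identity $u_*\vert_\Gamma=v_*$ is inherited from $u^L(t_k)\vert_\Gamma=v^L(t_k)$ since the trace operator is continuous on $H^2(\Omega)$, and the constraint $\beta\abs{\Omega}\mean{u_*}_\Omega+\abs{\Gamma}\mean{v_*}_\Gamma=m$ passes to the limit by continuity of the mean-value functionals on $\LL^2$ together with the mass conservation law \eqref{MASS:KLLM}. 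The essential input --- the $\HH^3$-bound at positive times --- has already been secured in Theorem~\ref{THM:WP:KLLM}, so no genuinely new difficulty arises here; the only points requiring a little care are the uniformity of $C_*$ over the bounded set $B$ and the passage from the almost-everywhere bound \eqref{SMOOTH:KLLM} to a pointwise bound valid at the prescribed instants $t_k$, which is exactly where Proposition~\ref{PROP:IMP:KLLM} enters.
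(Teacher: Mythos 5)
Your proof is correct and follows essentially the same route as the paper: a uniform $\HH^3$-bound from the smoothing property \eqref{SMOOTH:KLLM} for $t_k\ge 1$, followed by the compact embedding $\HH^3\emb\emb\HH^2$ and the observation that $\Wmt$ is closed under the limit. The two preliminary points you flag (uniformity of $C_*$ over $B$ and upgrading the almost-everywhere bound to the prescribed instants $t_k$ via Proposition~\ref{PROP:IMP:KLLM}) are handled only implicitly in the paper, so your treatment is if anything slightly more careful.
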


\begin{proof}
	We first obtain from the smoothing property \eqref{SMOOTH:KLLM} that for all $k\in\N$ with $t_k\geq 1$,
	\begin{align*}
	\left\| S_{m}^{L}\left( t_k\right)(u_k,v_k) \right\| _{\HH^3}\leq C_* \left(\dfrac{t_k+1}{t_k} \right)^{1/2}\leq \sqrt{2}\, C_* . 
	\end{align*}
	Recall that the constant $ C_* $ from \eqref{SMOOTH:KLLM} does not depend on $ t_k $. Hence, there exists $(u_*,v_*) \in \HH^3$ such that $S_{m}^{L}\left( t_k\right)(u_k,v_k)$ converges to $(u_*,v_*)$ weakly in $\HH^3$ as $k\to\infty$, up to extraction of a subsequence. As $\HH^3$ is compactly embedded in $\HH^2$, we further deduce that $S_{m}^{L}\left( t_k\right)(u_k,v_k)$ converges to $(u_*,v_*)$ strongly in $\HH^2$. Moreover, since $S_{m}^{L}\left( t_k\right)(u_k,v_k) \in \Wmt$ for all $k\in\N$, we eventually conclude that $(u_*,v_*)\in \Wmt$ which proves the claim.
\end{proof}


\begin{lemma}[Gradient system]
	\label{lm:gradient}
	Suppose that \eqref{ass:dom}-\eqref{ass:pot} hold.
	Then the energy functional $E$ that was defined in \eqref{DEF:EN} is a strict Lyapunov function for the dynamical system $(\Wm, \{S_{m}^{L}\left(t\right)\}_{t\ge 0})$. This means that the dynamical system is a so-called \emph{gradient system}.
\end{lemma}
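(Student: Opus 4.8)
The plan is to verify the three defining properties of a strict Lyapunov function for the semigroup $\{S_m^L(t)\}_{t\ge 0}$ on the metric space $\Wm$ (which carries the $\HH^1$-topology): (a) $E\vert_{\Wm}$ is continuous; (b) for every $(u_0,v_0)\in\Wm$ the map $t\mapsto E\big(S_m^L(t)(u_0,v_0)\big)$ is non-increasing; and (c) if this map is constant on $\RP$ --- equivalently, in view of (b), if $E\big(S_m^L(t_0)(u_0,v_0)\big)=E(u_0,v_0)$ for some $t_0>0$ --- then $(u_0,v_0)\in\mathcal N_m$. Property (a) is immediate from Lemma~\ref{LEM:ENC2}, since $E$ is of class $C^2$, hence continuous, on $\VV^1$, and $\Wm$ is a closed affine subspace of $\VV^1$ equipped with the induced norm.

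For (b), let $(u^L,v^L,\mu^L,\theta^L)$ denote the weak solution with datum $(u_0,v_0)$. Since $\big(u^L(s),v^L(s)\big)\in\Wm$ for every $s\ge 0$ (by \eqref{MASS:KLLM} and Proposition~\ref{PROP:IMP:KLLM}), uniqueness of weak solutions implies that the shifted trajectory $t\mapsto\big(u^L(s+t),v^L(s+t)\big)$ is the weak solution starting from $\big(u^L(s),v^L(s)\big)$; applying the energy inequality \eqref{ENERGY:KLLM} to it yields
\[
	E\big(u^L(\tau),v^L(\tau)\big)
	+ \tfrac12\int_s^\tau \bignorm{\big(\mu^L(r),\theta^L(r)\big)}_{L,\beta}^2\,\dr
	\le E\big(u^L(s),v^L(s)\big)
\]
for a.e.\ $s\ge 0$ and a.e.\ $\tau\ge s$. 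Because $(u^L,v^L)\in C([0,\infty);\HH^1)$ by Proposition~\ref{PROP:IMP:KLLM} and $E$ is continuous on $\VV^1$, the map $\tau\mapsto E\big(u^L(\tau),v^L(\tau)\big)$ is continuous; since $(\mu^L,\theta^L)\in L^2_{\mathrm{loc}}([0,\infty);\HH^1)$, the dissipation integral is continuous and non-negative in $\tau$ as well. Hence the inequality extends to all $0\le s\le\tau$, which is precisely (b).

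For (c), assume $E\big(u^L(t_0),v^L(t_0)\big)=E(u_0,v_0)$ for some $t_0>0$. Then the displayed estimate with $s=0$ and $\tau=t_0$ forces $\int_0^{t_0}\bignorm{\big(\mu^L,\theta^L\big)}_{L,\beta}^2\,\dt=0$. Recalling from \eqref{est3.5} that $\bignorm{\big(\mu^L(t),\theta^L(t)\big)}_{L,\beta}^2=\norm{\grad\mu^L(t)}_{L^2(\Omega)}^2+\norm{\gradg\theta^L(t)}_{L^2(\Gamma)}^2+\tfrac1L\norm{\beta\theta^L(t)-\mu^L(t)}_{L^2(\Gamma)}^2$, I would deduce that for a.e.\ $t\in(0,t_0)$ the potentials $\mu^L(t)$ and $\theta^L(t)$ are constant in space with $\mu^L(t)\vert_\Gamma=\beta\theta^L(t)$. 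Inserting this into the weak formulation \eqref{WF:KLLM:1}--\eqref{WF:KLLM:2} gives $\langle\delt u^L(t),w\rangle_{H^1(\Omega)}=\langle\delt v^L(t),z\rangle_{H^1(\Gamma)}=0$ for all $(w,z)\in\HH^1$, so $\delt(u^L,v^L)(t)=0$ in $(\HH^1)'$ for a.e.\ $t\in(0,t_0)$ and therefore $(u^L,v^L)\equiv(u_0,v_0)$ on $[0,t_0]$. Then \eqref{WF:KLLM:3}--\eqref{WF:KLLM:4} read $\mu^L(t)=-\Lap u_0+F'(u_0)$ and $\theta^L(t)=-\Lapg v_0+G'(v_0)+\deln u_0$ on $(0,t_0)$; as the right-hand sides are $t$-independent and the left-hand sides are spatially constant, there is $\lambda\in\R$ with $\mu^L\equiv\beta\lambda$ and $\theta^L\equiv\lambda$ (using also $\mu^L\vert_\Gamma=\beta\theta^L$), so that $(u_0,v_0)$ solves the stationary system \eqref{5.34}. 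Consequently $(u_0,v_0)$ paired with the constant potentials $(\beta\lambda,\lambda)$ is a time-independent weak solution of \eqref{CH:INT.} on $\RP$, and the uniqueness part of Theorem~\ref{THM:WP:KLLM} gives $S_m^L(t)(u_0,v_0)=(u_0,v_0)$ for all $t\ge 0$, i.e., $(u_0,v_0)\in\mathcal N_m$. Alternatively, once $(u_0,v_0)$ is recognized as a critical point of $E\vert_{\Wm}$, Lemma~\ref{LM:crtclpnt} yields $(u_0,v_0)\in\mathcal N_m$ at once.

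The only genuinely delicate point is upgrading the ``almost everywhere'' energy inequality \eqref{ENERGY:KLLM}, which is stated with fixed initial time $0$, to monotonicity in both time arguments; this is handled by the shift-and-uniqueness argument combined with the $C([0,\infty);\HH^1)$-regularity from Proposition~\ref{PROP:IMP:KLLM}. The remaining part of (c) is a standard vanishing-dissipation argument, and here $L>0$ is in fact helpful: the extra term $\tfrac1L\norm{\beta\theta-\mu}^2$ in the dissipation immediately provides the chemical-equilibrium relation $\mu\vert_\Gamma=\beta\theta$, which in the GMS limit $L=0$ instead follows directly from the Dirichlet-type coupling condition.
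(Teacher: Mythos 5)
Your proposal is correct and follows essentially the same route as the paper: monotonicity of $E$ via the energy inequality applied to shifted trajectories, and strictness via the vanishing-dissipation argument ($\grad\mu^L=0$, $\gradg\theta^L=0$, $\beta\theta^L=\mu^L$ forcing $\delt(u^L,v^L)=0$ and hence membership in $\mathcal N_m$). The only cosmetic differences are that the paper phrases strictness as a contradiction argument while you argue directly, and that you spell out the continuity upgrade of the a.e.\ energy inequality, which the paper leaves implicit.
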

\begin{proof}
	Let $(u^L,v^L,\mu^L,\theta^L)$ denote an arbitrary weak solution of the system \eqref{CH:INT.} on $\RP$ to the initial datum $(u_0,v_0)\in\Wm$ in the sense of Theorem~\ref{THM:WP:KLLM}. In particular, this means that $\big(u^L(t),v^L(t)\big) = S_m^L(t)(u_0,v_0)$ for all $t\ge 0$.
	Interpreting $(u^L,v^L,\mu^L,\theta^L)$ as a weak solution starting at any time $s\ge 0$ (instead of zero), the energy inequality \eqref{ENERGY:KLLM}
	yields that for almost all $t\in\RP$ with $t\ge s$, 
	\begin{align}
	\label{5.29}
	\begin{aligned}
	&E\big( u^{L} (t),v^{L} (t) \big) \\
	& \;\; + \frac{1}{2} \int_{s}^{t} \bignorm{\nabla \mu^{L}(\tau)}_{L^2(\Omega)}^{2}
		+ \bignorm{\gradg \theta^{L}(\tau)}_{L^2(\Gamma)}^{2}	 
		+\frac{1}{L} \bignorm{ \beta\theta^{L}(\tau)-\mu^{L}(\tau) }_{L^2(\Gamma)}^{2} \dtau \\
	&\leq E\left( u^{L}(s),v^{L}(s) \right).
	\end{aligned}  
	\end{align}%
	This implies that $E$ 
	is nonincreasing along weak solutions of the system \eqref{CH:INT.}. 	
	Hence, the energy functional $E$ is a Lyapunov function for the dynamical system $\{S_{m}^{L}\left(t\right)\}_{t\ge 0}$. 
	
	To prove that the Lyapunov function $E$ is actually strict, we assume that there exists a weak solution $(u^L,v^L,\mu^L,\theta^L)$ as well as $s,t\in\RP$ with $s<t$ such that $(u^L,v^L) \notin \mathcal N_m$ and 
	\begin{equation*}
	E \left(u ^{L}(t),v^{L}(t) \right) 
	= E \left(u ^{L}(s),v^{L}(s) \right) .
	\end{equation*}%
	Since, $E$ is nonincreasing along weak solutions, this directly implies that $E(u^L(\cdot),v^L(\cdot))$ is constant on the time interval $[s,t]$.
	Thus, the inequality \eqref{5.29} entails that
	\begin{subequations}
	\label{5.30}
	\begin{alignat}{2}
		 \label{5.30:1}
	\nabla \mu ^{L} &=0 &&\quad\text{a.e.~in }\Omega\times[s,t] , \\ 
		 \label{5.30:2}
	\gradg \theta^{L} &=0 &&\quad\text{a.e.~on }\Gamma\times[s,t] , \\ 
		 \label{5.30:3}
	\beta\theta^{L} &=\mu^{L} &&\quad\text{a.e.~on }\Gamma\times[s,t] .
	\end{alignat}
	\end{subequations}
	Plugging these relations into the weak formulations \eqref{WF:KLLM:1} and \eqref{WF:KLLM:2}, we infer that $u^L$ and $v^L$ are constant in time on $[s,t]$. It immediately follows that $\mu^L$ and $\theta^L$ are also constant on $[s,t]$, and from the uniqueness of weak solutions, we eventually conclude that $(u^L,v^L,\mu^L,\theta^L)$ is constant on the whole time interval $\RP$. This means that $(u^L,v^L)$ solves \eqref{5.34} and thus, $(u^L,v^L) \in \mathcal N_m$, which contradicts the assumptions.
	
	We have thus proven that the energy functional $E$ is strictly decreasing along non-stationary weak solutions of the system \eqref{CH:INT.} and thus,
	$E$ is a strict Lyapunov function for the dynamical system $(\Wm , \{S_{m}^{L}\left(t\right)\}_{t\ge 0} ) $.
\end{proof}

To prove the existence of a global attractor which is bounded in $\WW_{\beta,m}^3$, we first recall the following definitions.

\begin{definition} [Global attractor, cf. {\cite[Def.~7.2.1.]{chueshov}}] 
	\label{def:att}
	Let $\left( X,d\right)$ be a metric space and let $\left\{ S(t) \right\} _{t\geq 0}$ be a semigroup on $X$. 
	A set $\mathcal{A}\subset X$ is called a
	global attractor for the dynamical system $(X,\{S(t)\}
	_{t\geq 0})$, if
	\begin{enumerate}[label=$(\mathrm{\roman*})$, ref = $\mathrm{\roman*}$]
	\item $\mathcal{A}$ is a compact subset of $X$,
	\item $\mathcal{A}$ is invariant, i.e. $S(t) \mathcal{A}=%
	\mathcal{A}$ for all $t\ge 0$,
	\item $\underset{t\rightarrow \infty }{\lim }\dist_{X}(S(t) B,\mathcal{A}) =0$ for every bounded set $B\subset X$.
	\end{enumerate}
\end{definition}

In particular, it follows directly from this definition that a global attractor is always unique.

\medskip

\begin{definition}[Unstable manifold, cf. {\cite[Def.~7.5.1.]{chueshov}}]
	\label{DEF:UM}
	Let $\mathcal{N}$ be the set of stationary points of the dynamical system $ \left(X, \{S(t)\}_{t\ge 0}  \right)  $.
	We define the unstable manifold $\mathcal{M}_\mathrm{u}\left( \mathcal{N}\right) $
	emanating from the set $\mathcal{N}$ as the set of all $y\in X$ such that
	there exists a full trajectory $\gamma =\{u(t):t\in \R\}$ where $u$ satisfies the properties%
	\[
	u(0)=y\text{ and }\lim_{t\rightarrow -\infty }\dist_{X}(u(t),\mathcal{N})=0.
	\]
\end{definition}

\medskip

From the results established above, we now deduce the existence of a unique global attractor.

\begin{theorem}[Existence of a unique global attractor]
	\label{thm.glbatt}
	Suppose that the assumptions \eqref{ass:dom}-\eqref{ass:pot} are satisfied. Then, the
	semigroup $\left\{ S_{m}^{L}(t) \right\} _{t\geq 0}$ generated by
	the weak solutions of the problem \eqref{CH:INT.} possesses a unique global attractor ${\mathcal{A}}_{m}^{L} \subset \Wm$, and it holds that 
	\begin{align}
	\label{EQ:UNST}
	\mathcal{A}_{m}^{L}=\mathcal{M}^{L}_\mathrm{u}\left( \mathcal{N}_{m}\right). 
	\end{align}
	Moreover, the global attractor ${\mathcal{A}}_{m}^{L}$ is a bounded subset of $\WW_{\beta,m}^3$.
\end{theorem}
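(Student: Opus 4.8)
The strategy is the standard abstract machinery for gradient systems (see, e.g., \cite[Chap.~7]{chueshov}): a dissipative gradient system whose set of stationary points is bounded, together with asymptotic compactness, possesses a global attractor which coincides with the unstable manifold of the stationary set. The ingredients are already in place: Lemma~\ref{lm.asympcompact} provides asymptotic compactness of $\{S_m^L(t)\}_{t\ge 0}$ in $\Wmt$, Lemma~\ref{lm:gradient} shows that $E$ is a strict Lyapunov function (so the dynamical system is a gradient system), and Lemma~\ref{LM.boundedstat} shows that $\mathcal N_m$ is nonempty and bounded in $\Wmt$. What remains is to assemble these into the conclusion and then upgrade the regularity of the attractor to $\WW_{\beta,m}^3$.

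\textbf{Step 1: dissipativity.} First I would establish that the semigroup $\{S_m^L(t)\}_{t\ge 0}$ is dissipative on $\Wm$, i.e.\ it admits a bounded absorbing set. From the energy inequality \eqref{ENERGY:KLLM} and the lower bound \eqref{GR:F}--\eqref{GR:G} on $F,G$, the quantity $E(u^L(t),v^L(t))$ is nonincreasing in $t$ and bounds $\norm{(u^L(t),v^L(t))}_{\HH^1}$ from above (as in \eqref{est:h1}); combined with the mass constraint this keeps the trajectory in $\Wm$. Hence any ball in $\Wm$ of radius controlled by $E(u_0,v_0)$ is positively invariant, and the smoothing estimate \eqref{SMOOTH:KLLM} shows that after time $t=1$ every bounded set is mapped into a fixed bounded subset of $\WW_{\beta,m}^3$. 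This fixed bounded set of $\WW_{\beta,m}^3$ — which is precompact in $\Wmt$ — serves as a (pre-)absorbing set.

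\textbf{Step 2: existence and structure of the attractor.} With dissipativity (Step 1), asymptotic compactness (Lemma~\ref{lm.asympcompact}), and the gradient structure (Lemma~\ref{lm:gradient}), I invoke the abstract result \cite[Cor.~7.5.7]{chueshov} (or \cite[Thm.~7.5.6]{chueshov}): the gradient system $(\Wm,\{S_m^L(t)\}_{t\ge 0})$ possesses a global attractor $\mathcal A_m^L$, it is connected, and
\begin{align*}
\mathcal A_m^L = \mathcal M_{\mathrm u}^L(\mathcal N_m),
\end{align*}
provided $\mathcal N_m$ is bounded, which is Lemma~\ref{LM.boundedstat}. (One should check the hypotheses of the cited theorem: the set of stationary points is bounded, and the strict Lyapunov function is bounded on bounded sets and bounded from below — both clear here.) Uniqueness of the attractor is automatic from Definition~\ref{def:att}.

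\textbf{Step 3: boundedness in $\WW_{\beta,m}^3$.} Finally, to show $\mathcal A_m^L$ is bounded in $\WW_{\beta,m}^3$, I use invariance $S_m^L(1)\mathcal A_m^L = \mathcal A_m^L$: every point of the attractor is of the form $S_m^L(1)(u_0,v_0)$ with $(u_0,v_0)\in\mathcal A_m^L$, and since $\mathcal A_m^L$ is in particular bounded in $\HH^1$, the smoothing estimate \eqref{SMOOTH:KLLM} evaluated at $t=1$ yields $\norm{S_m^L(1)(u_0,v_0)}_{\HH^3}\le \sqrt 2\, C_*$ with $C_*$ uniform over $\mathcal A_m^L$. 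Hence $\mathcal A_m^L$ is a bounded subset of $\HH^3$, and since it lies in $\Wm$ it is a bounded subset of $\WW_{\beta,m}^3$. I expect the only mildly delicate point to be verifying precisely that the abstract theorem's hypotheses are met (in particular phrasing Step 1 so that the absorbing set is genuinely bounded in the metric of $\Wm$ and positively invariant), but this is routine given the energy inequality and \eqref{SMOOTH:KLLM}; the substantive work has all been done in the preceding lemmas.
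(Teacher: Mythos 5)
Your proposal follows essentially the same route as the paper: the three ingredients (Lemma~\ref{LM.boundedstat} for boundedness of $\mathcal N_m$, Lemma~\ref{lm.asympcompact} for asymptotic compactness, Lemma~\ref{lm:gradient} for the strict Lyapunov function) are fed into \cite[Cor.~7.5.7]{chueshov} to get $\mathcal A_m^L=\mathcal M_{\mathrm u}^L(\mathcal N_m)$, and the $\HH^3$-bound is obtained from invariance together with the smoothing estimate \eqref{SMOOTH:KLLM}, exactly as in the paper. Your Step 1 is not required by the cited abstract result (dissipativity there is a consequence of the gradient structure and the boundedness of $\mathcal N_m$), and as phrased it slightly overclaims — the constant $C_*$ in \eqref{SMOOTH:KLLM} depends on the bounded set through the $\HH^1$-norm of the initial data, so smoothing alone does not produce one fixed absorbing set — but since that step is superfluous, the argument stands.
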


\begin{proof}
	Due to Lemma~\ref{LM.boundedstat}, Lemma~\ref{lm.asympcompact} and Lemma~\ref{lm:gradient}, the existence of a unique global attractor ${\mathcal{A}}_{m}^{L} \subset \Wm$ to the dynamical system $(\Wm,S_{m}^{L}\left(t\right)) $ which can be expressed as $\mathcal{A}_{m}^{L}=\mathcal{M}^{L}_\mathrm{u}\left( \mathcal{N}_{m}\right) $ follows directly from \cite[Cor.~7.5.7]{chueshov}. 
	The fact that $\mathcal{A}_{m}^{L}$ is bounded in $\HH^3$ is a direct consequence of the invariance property in Definition~\ref{def:att}(ii) and the smoothing property \eqref{SMOOTH:KLLM}.
\end{proof}

\subsubsection{Convergence to stationary points}\label{subsec:convstatpoints}

The invariance property of the global attractor ensures that for any $ (u_0,v_0) \in \mathcal A_m^L $, the corresponding weak solution $(u^L,v^L,\mu^L,\theta^L)$ exists on $\R$ and it holds that
\begin{align*}
(u_0,v_0) \in \left\{ (u^{L}(t),v^{L}(t)) \,\big\vert\, t\in\R \right\} \subset \mathcal A_m^L.
\end{align*}
A detailed proof can be found, e.g., in \cite[Lem.~6.1]{chueshov-2}.
In particular, due to Proposition~\ref{PROP:CD:KLLM}(b), this weak solution satisfies $(u^L,v^L)\in C(\R;\HH^2)$.
This allows the following definition:

\begin{definition}[$\omega$-limit set and $\alpha$-limit set]
	\label{DEF:LIMSET}
	For any initial datum $ (u_{0},v_{0}) \in \Wm\,$, let $(u^L,v^L,\mu^L,\theta^L)$ denote the corresponding weak solution of the system \eqref{CH:INT.}.
	We define the set
	\begin{align*}
	&\omega^{L} \left( u_{0} ,v_0\right) 
	:= \left\{ 
	(u_*,v_*) \in \mathcal{A}_{m}^{L} 
	\; \middle| \;
	\begin{aligned}
	& \text{$\exists \{t_k\}_{k\in\N} \subset \RP$ with $t_k\to \infty$ such that} \\
	& \text{$S^{L}_{m}\left( t_{k}\right)(u_{0},v_0) \to (u_*,v_*)$ in $\Wmt$}\\
	\end{aligned}
	\right\},
	\end{align*}
	which is called the \emph{$\omega $-limit set} of $(u_{0},v_0)$.	
	If $ (u_{0},v_{0}) \in \mathcal A^L_m$, we further define 
	\begin{align*}
	&\alpha^{L} \left( u_{0} ,v_0\right) 
	:= \left\{ 
	(u_*,v_*) \in \mathcal{A}_{m}^{L} 
	\; \middle| \;
	\begin{aligned}
	& \text{$\exists \{t_k\}_{k\in\N} \subset \R$ with $t_k\to -\infty$ such that} \\
	& \text{$\big(u^L(t_k),v^L(t_k)\big) \to (u_*,v_*)$ in $\Wmt$}\\
	\end{aligned}
	\right\},
	\end{align*}
	which is referred to as the \emph{$\alpha $-limit set} of $(u_{0},v_0)$.
\end{definition}

\pagebreak[2]

For our purposes, the following properties will be essential.

\begin{lemma}[Properties of the limit sets]
	\label{LEM:LIMSET}
	The limit sets have the following properties:
	\begin{enumerate}[label=$(\mathrm{\alph*})$, ref = $\mathrm{\alph*}$]
		\item For any $ (u_{0},v_{0}) \in \Wm\,$, the set $\omega^{L} \left( u_{0} ,v_0\right)$ is nonempty, compact in $\Wmt$, and invariant in the sense of Definition~\ref{def:att}(ii).
		Moreover, it holds that
		\begin{align}
		\label{LIM:OM}
		\lim_{t \to \infty} \dist_{\HH^2} \big(S^{L}_{m}(t)(u_{0},v_{0}),\omega^L(u_0,v_0)\big)
		=0, \quad
		\omega^{L} (u_{0},v_0) \subset \mathcal N_m \subset \Wmt.
		\end{align}
		\item For any $ (u_{0},v_{0}) \in \mathcal A^L_m\,$, the set $\alpha^{L} \left( u_{0} ,v_0\right)$ is nonempty, compact in $\Wmt$, and invariant in the sense of Definition~\ref{def:att}(ii).
		Moreover, it holds that
		\begin{align}
		\label{LIM:AL}
		\lim_{t \to -\infty} \dist_{\HH^2} \big(S^{L}_{m}(t)(u_{0},v_{0}),\alpha^L(u_0,v_0)\big)
		=0,
		\quad
		\alpha^{L} (u_{0},v_0) \subset \mathcal N_m \subset \Wmt.
		\end{align}
	\end{enumerate}
\end{lemma}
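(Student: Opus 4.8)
The plan is to establish the two parts of Lemma~\ref{LEM:LIMSET} by invoking the standard theory of gradient systems, since all the structural hypotheses have already been verified. For part (a), the key is that the dynamical system $(\Wm,\{S_m^L(t)\}_{t\ge 0})$ is a gradient system with a compact global attractor. First I would recall that by Theorem~\ref{thm.glbatt}, the global attractor $\mathcal A_m^L$ exists and is bounded in $\WW^3_{\beta,m}$, hence (by compact embedding $\HH^3\emb\HH^2$) precompact in $\HH^2$; moreover, by Lemma~\ref{lm.asympcompact} the semigroup is asymptotically compact in $\Wmt$. Given an arbitrary $(u_0,v_0)\in\Wm$ and a sequence $t_k\to\infty$, Lemma~\ref{lm.asympcompact} produces a subsequence along which $S_m^L(t_k)(u_0,v_0)$ converges strongly in $\HH^2$, so $\omega^L(u_0,v_0)$ is nonempty. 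Its compactness in $\Wmt$ follows from the uniform $\HH^3$-bound provided by the smoothing estimate \eqref{SMOOTH:KLLM} (the limit stays in the closed ball, hence in $\Wmt$), together with a standard diagonal/closedness argument. Invariance $S_m^L(t)\,\omega^L(u_0,v_0)=\omega^L(u_0,v_0)$ is the usual consequence of the semigroup property combined with continuity of $S_m^L(t)$ on $\Wmt$ (which is part of Proposition~\ref{PROP:CD:KLLM}(c), i.e.~\eqref{CD:KLLM:3}) and asymptotic compactness; this is exactly \cite[Lem.~7.5.2 and Thm.~7.5.6]{chueshov}.

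Next I would prove the attraction property and the inclusion $\omega^L(u_0,v_0)\subset\mathcal N_m$. The attraction statement $\dist_{\HH^2}(S_m^L(t)(u_0,v_0),\omega^L(u_0,v_0))\to 0$ as $t\to\infty$ follows from precompactness of the forward orbit $\{S_m^L(t)(u_0,v_0):t\ge 1\}$ in $\HH^2$ (again via \eqref{SMOOTH:KLLM} and $\HH^3\emb\HH^2$) by the standard argument: if attraction failed, one could extract $t_k\to\infty$ with $S_m^L(t_k)(u_0,v_0)$ bounded away from $\omega^L(u_0,v_0)$, yet by precompactness a further subsequence converges in $\HH^2$ to a point which, by definition, lies in $\omega^L(u_0,v_0)$ --- a contradiction. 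For the inclusion into the stationary set, I would use the strict Lyapunov structure established in Lemma~\ref{lm:gradient}: along the trajectory, $t\mapsto E(u^L(t),v^L(t))$ is nonincreasing and bounded below (by $0$, since $F,G\ge 0$), hence converges to some $E_\infty$; any point $(u_*,v_*)\in\omega^L(u_0,v_0)$ satisfies $E(u_*,v_*)=E_\infty$ by continuity of $E$ on $\VV^1$ (Lemma~\ref{LEM:ENC2}) together with the $\HH^2$-convergence of $S_m^L(t_k)(u_0,v_0)$; by invariance of $\omega^L$, $E$ is constant along the trajectory through $(u_*,v_*)$, so the strictness of the Lyapunov function forces $(u_*,v_*)\in\mathcal N_m$. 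Finally $\mathcal N_m\subset\Wmt$ is precisely the content of Lemma~\ref{LM.boundedstat}.

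For part (b), I would argue entirely analogously but on the full trajectory through a point $(u_0,v_0)\in\mathcal A_m^L$. As noted just before Definition~\ref{DEF:LIMSET}, invariance of $\mathcal A_m^L$ guarantees that the weak solution extends to a complete trajectory $\{(u^L(t),v^L(t)):t\in\R\}\subset\mathcal A_m^L$, and $(u^L,v^L)\in C(\R;\HH^2)$ by \eqref{CD:KLLM:3}. Since the entire trajectory lies in the attractor, which is bounded in $\HH^3$ and hence precompact in $\HH^2$, the backward orbit $\{(u^L(t),v^L(t)):t\le 0\}$ is precompact in $\HH^2$; this yields nonemptiness, compactness in $\Wmt$, and invariance of $\alpha^L(u_0,v_0)$ just as before. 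The backward attraction $\dist_{\HH^2}(S_m^L(t)(u_0,v_0),\alpha^L(u_0,v_0))\to 0$ as $t\to-\infty$ follows by the same contradiction argument using backward precompactness. For $\alpha^L(u_0,v_0)\subset\mathcal N_m$, one uses that along a complete trajectory in a gradient system $E(u^L(t),v^L(t))$ is nonincreasing in $t\in\R$ and bounded (it takes values in $[\inf_{\mathcal A_m^L}E,\sup_{\mathcal A_m^L}E]$), so it has a limit as $t\to-\infty$; every $\alpha$-limit point then has this common energy value, is invariant, and hence is stationary by strictness of $E$. The main obstacle --- really the only nontrivial point --- is making sure all the convergences and the invariance arguments are carried out in the $\HH^2$-topology rather than merely in $\LL^2$ or $(\HH^1)'$; this is exactly why the uniform smoothing estimate \eqref{SMOOTH:KLLM}, the compact embedding $\HH^3\emb\HH^2$, and the continuity $(u^L,v^L)\in C((0,\infty);\HH^2)$ from Proposition~\ref{PROP:IMP:KLLM} are indispensable, and once these are in place the statement reduces to citing \cite[Ch.~7]{chueshov}.
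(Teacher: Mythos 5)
Your argument is correct, but it follows a more self-contained route than the paper. For part (a) the paper simply invokes the abstract LaSalle-type results of Henry (Thm.~4.3.3) and Cazenave (Thm.~9.2.7), with the asymptotic compactness of Lemma~\ref{lm.asympcompact} as the only verified hypothesis; what you do is essentially unpack those citations, using the smoothing estimate \eqref{SMOOTH:KLLM} for precompactness of forward orbits in $\HH^2$ and the strict Lyapunov function of Lemma~\ref{lm:gradient} to push $\omega^L(u_0,v_0)$ into $\mathcal N_m$ (with Lemma~\ref{LM.boundedstat} for $\mathcal N_m\subset\Wmt$), which is exactly the content of those theorems. For part (b) the difference is more substantive: the paper gets compactness of $\alpha^L(u_0,v_0)$ from the representation of the $\alpha$-limit set as a nested intersection of closed subsets of the compact attractor, and then deduces \eqref{LIM:AL} directly from the identity $\mathcal A^L_m=\mathcal M^L_{\mathrm u}(\mathcal N_m)$ of Theorem~\ref{thm.glbatt} together with Definition~\ref{DEF:UM}, whereas you re-run the gradient-system argument backward in time along the complete trajectory (energy nonincreasing and bounded on $\mathcal A^L_m$, hence convergent as $t\to-\infty$; constancy of $E$ on the invariant $\alpha$-limit set; strictness forces stationarity), plus the standard contradiction argument for backward attraction using precompactness of the backward orbit. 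Both routes are valid and rest on the same ingredients (Theorem~\ref{thm.glbatt}, the smoothing property, Lemma~\ref{lm:gradient}); the paper's version is shorter because it leans on the unstable-manifold characterization already proved, while yours is more explicit and does not need that characterization for part (b), at the modest cost of redoing the invariance and energy arguments for the $\alpha$-limit set.
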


\begin{proof}
The assertions in (a) follow directly from \cite[Thm.~4.3.3]{henry} and \cite[Thm.~9.2.7]{cazenave} thanks to the asymptotic compactness result established in Lemma~\ref{lm.asympcompact}.

It is well-known that the $\alpha$-limit set can be expressed as
\begin{align*}
	\alpha^L(u_0,v_0)
	= \bigcap_{n\in\N} \overline{\phantom{\Big|}\big\{ \big(u^L(t),v^L(t)\big) \;\big\vert\; t \le -n \big\}}^{\HH^2},
\end{align*}
see, e.g., \cite[Eq.~(7.1.4)]{chueshov}. As the intersection of closed sets is also closed, we infer that $\alpha^L(u_0,v_0)$ is a closed subset of $\mathcal A^L_m$. Hence, since $\mathcal A^L_m$ is compact in $\Wmt$ so is $\alpha^L(u_0,v_0)$. Recalling Definition~\ref{DEF:UM}, the assertion \eqref{LIM:AL} follows directly from Theorem~\ref{thm.glbatt} since $\mathcal{A}_{m}^{L}=\mathcal{M}^{L}_\mathrm{u}\left( \mathcal{N}_{m}\right) $.
This proves (b) and thus, the proof is complete.
\end{proof}

We will now show that for every $ (u_{0},v_{0}) \in \Wm\,$, the corresponding state $(u^L(t),v^L(t))=S_m^L(t)(u_0,v_0)$ converges to one single stationary point $(u_\infty,v_\infty) \in \mathcal N_m$ as $ t \rightarrow \infty $. In addtion, we will prove that for every $ (u_{0},v_{0}) \in \mathcal A_m^L\,$, the pair $(u^L(t),v^L(t))$ converges to a single stationary point $(u_{-\infty},v_{-\infty}) \in \mathcal N_m$ as $ t \rightarrow -\infty $.

Arguing as in \cite[Prop.~6.6]{CFP} we next establish a \L ojasiewicz--Simon type inequality.
Here, we restrict ourselves to dealing only with the case $m=0$. To see that this is not really a restriction, we
introduce the constant $M:= m/(\beta\abs{\Omega}+\abs{\Gamma})$. If now $ (u^L,v^L,\mu^L,\theta^L)$ is a weak solution of the system \eqref{CH:INT.} with $(u^L(t),v^L(t)) \in \Wm$ for all $t\ge 0$, then the shifted quadruplet
$$ (u^L_M,v^L_M,\mu^L_M,\theta^L_M) :=  (u^L - M,v^L - M,\mu^L,\theta^L)$$
is in fact a weak solution of the problem \eqref{CH:INT.} written for the modified potentials $F_M := F(\,\cdot\, + M)$ and $G_M := G(\,\cdot\, + M)$ instead of $F$ and $G$. By construction, this solution satisfies $(u_M^L(t),v_M^L(t)) \in \WW_{\beta,0}^1$ for all $t\ge 0$. 
Therefore, the choice $m=0$ does not mean any loss of generality.

\begin{lemma}[\L ojasiewicz--Simon inequality]
	\label{LM.lojasiecwicz}
	Suppose that \eqref{ass:ana} holds, and that $ (u_*,v_*) \in \Wot$ is a critical point of the functional $E\vert_{\Wo} $. Then, there exist constants $ \gamma \in \left( 0, \frac{1}{2}\right]  $ and $ C, \sigma>0 $ depending only on $L$, $\beta$, $\Omega$ and $(u_*,v_*)$ such that for all $ (u,v) \in \Wot$ with $ \norm{ (u,v)- (u_{*},v_{*})}_{\Wo}<\sigma $, the \L ojasiewicz--Simon inequality
	\begin{align} \label{lojasiecwicz}
	\abs{E(u,v)-E(u_{*},v_{*})}^{1-\gamma}\leq C\norm{E^{\prime}(u,v)}_{(\Wo)'}
	\end{align}	
	is satisfied.	
\end{lemma}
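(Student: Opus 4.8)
The plan is to derive \eqref{lojasiecwicz} from an abstract \L ojasiewicz--Simon inequality for analytic functionals on Hilbert spaces, exactly as in \cite[Prop.~6.6]{CFP}. Since the assertion is purely local around $(u_*,v_*)$ and, by the reduction explained above, we may assume $m=0$, it suffices to set up a Gelfand triple $\Wo \emb \mathcal Z \emb (\Wo)'$ --- with $\mathcal Z$ the closure of $\Wo$ in $\LL^2$, i.e.\ the mean-zero subspace of $\LL^2$ --- and to verify three hypotheses: (i) $E\vert_{\Wo}$ is of class $C^2$ near $(u_*,v_*)$ with $E'(u_*,v_*)=0$ in $(\Wo)'$; (ii) the map $(u,v)\mapsto E'(u,v)$ is real-analytic from a neighbourhood of $(u_*,v_*)$ in $\Wo$ into $(\Wo)'$; (iii) the bounded self-adjoint operator $\mathcal L := E''(u_*,v_*)\colon\Wo\to(\Wo)'$ is Fredholm of index zero.

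For (i), the $C^2$-regularity and the explicit formulas \eqref{frstFrec.der}--\eqref{scndFrec.der} are provided by Lemma~\ref{LEM:ENC2}, and $E'(u_*,v_*)$ vanishes on $\Wo$ by the very definition of a critical point. For (iii), the symmetry of the bilinear form in \eqref{scndFrec.der} shows that $\mathcal L$ is self-adjoint. Writing $\mathcal L = \mathcal L_0 + \mathcal K$, where $\mathcal L_0$ corresponds to the principal part $\big((\zeta_1,\eta_1),(\zeta_2,\eta_2)\big)\mapsto \intO\nabla\zeta_1\cdot\nabla\zeta_2\dx + \intG\gradg\eta_1\cdot\gradg\eta_2\dG$ and $\mathcal K$ to the zeroth-order contributions $F''(u_*)\zeta_1\zeta_2$ and $G''(v_*)\eta_1\eta_2$, one sees that $\mathcal L_0\colon\Wo\to(\Wo)'$ is an isomorphism: on $\Wo$ the quantity $\norm{(\grad u,\gradg v)}_{\LL^2}$ is an equivalent norm by the Poincar\'e-type inequality \eqref{POINC:ALT}, so $\mathcal L_0$ is coercive and Lax--Milgram applies. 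Since $(u_*,v_*)\in\Wot\emb L^\infty(\Omega)\times L^\infty(\Gamma)$ in dimension $d\le 3$, the coefficients $F''(u_*)$ and $G''(v_*)$ are bounded, and hence, using the compact embedding $\Wo\emb\LL^2$, the operator $\mathcal K\colon\Wo\to(\Wo)'$ is compact. Thus $\mathcal L$ is a compact perturbation of an isomorphism, so it is Fredholm of index zero.

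The decisive point is (ii). The linear contribution $(u,v)\mapsto(-\Lap u,\,-\Lapg v + \pdnu u)$, read in the weak sense, is a bounded linear --- hence analytic --- map from $\Wo$ into $(\Wo)'$. For the nonlinear contribution we must show that the superposition operators $u\mapsto F'(u)$ and $v\mapsto G'(v)$ are real-analytic from $H^1(\Omega)$ into $L^2(\Omega)$ and from $H^1(\Gamma)$ into $L^2(\Gamma)$, respectively; composition with $\LL^2\emb(\Wo)'$ then gives the claim. Here the growth bounds in \eqref{ass:pot} with $p\le 4$ (and $q>2$) together with $d\le 3$ and the Sobolev embeddings $H^1(\Omega)\emb L^6(\Omega)$, $H^1(\Gamma)\emb L^r(\Gamma)$ for all $r<\infty$ ensure that these maps are well defined, and the analyticity of $F$ and $G$ from \eqref{ass:ana} makes the associated superposition operators analytic via their (locally convergent) power-series expansions. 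Alternatively --- since the inequality is local and $(u_*,v_*)$ is bounded in $L^\infty$ --- one may first modify $F$ and $G$ away from the ranges of $u_*$ and $v_*$ without changing \eqref{lojasiecwicz}, which simplifies the growth bookkeeping.

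Once (i)--(iii) are in place, the abstract theorem (see \cite[Prop.~6.6]{CFP} and the references therein) yields $\gamma\in(0,\tfrac12]$ and $C,\sigma>0$ such that \eqref{lojasiecwicz} holds for all $(u,v)\in\Wo$ with $\norm{(u,v)-(u_*,v_*)}_{\Wo}<\sigma$, in particular for all such $(u,v)$ belonging to $\Wot$; the constants depend only on $L$, $\beta$, $\Omega$ and $(u_*,v_*)$, as claimed. I expect the only genuinely delicate step to be the analyticity in (ii): one has to be careful about which Lebesgue spaces the superposition operators map between and that the estimates on the derivatives of $F'$ and $G'$ are compatible with the admissible growth $p\le 4$, $q>2$ in dimension $d\le 3$ --- this is precisely where assumptions \eqref{ass:pot} and \eqref{ass:ana} enter.
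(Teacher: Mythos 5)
Your overall strategy (abstract \L ojasiewicz--Simon theorem, Fredholm property of $E''(u_*,v_*)$ via a compact perturbation of an isomorphism) is the right one, and your verification of (i) and (iii) is essentially the same as the paper's. The genuine gap is exactly where you suspected it: step (ii). For a general analytic potential satisfying \eqref{ass:pot}--\eqref{ass:ana} the superposition operator $u\mapsto F'(u)$ is \emph{not} real-analytic from $H^1(\Omega)$ into $L^2(\Omega)$ when $d=3$. The $k$-th Fr\'echet derivative would be the $k$-linear map $(h_1,\dots,h_k)\mapsto F^{(k+1)}(u)h_1\cdots h_k$, and boundedness of this map from $(H^1(\Omega))^k$ into $L^2(\Omega)$ requires roughly $h_i\in L^{2k}(\Omega)$, which the embedding $H^1(\Omega)\emb L^6(\Omega)$ only delivers for $k\le 3$. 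So unless $F'$ is a polynomial of degree $\le 3$, the operator is not even $C^4$ between these spaces, let alone analytic; assumption \eqref{ass:ana} does not restrict to polynomials. Your proposed repair --- modifying $F$ and $G$ away from the ranges of $u_*$ and $v_*$ --- does not close the gap, because an $\HH^1$-ball around $(u_*,v_*)$ contains functions of arbitrarily large $L^\infty$-norm, so the modification changes $E$ and $E'$ on the very neighbourhood where \eqref{lojasiecwicz} must be proved.

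The paper avoids this by invoking the \emph{two-space} version of the abstract result (\cite[Cor.~3.11]{Chill}) with $X=\Wot$, $V=\Wo$, $Y=\LL^2$, $W=(\Wo)'$: analyticity is only required for the restriction $E'\vert_{\Wot}:\Wot\to\LL^2$, and since $\Wot\emb\LL^\infty$ (Sobolev embedding in $d\le 3$), superposition operators with analytic nonlinearities are indeed analytic there. This is also why the lemma is stated only for $(u,v)\in\Wot$ (measured in the $\Wo$-norm), rather than for all $(u,v)\in\Wo$ as your argument would give. To fix your proof you would need to either restrict to polynomial potentials or switch to the two-space framework, which additionally requires the elliptic-regularity step showing $\mathrm{Ker}\,E''(u_*,v_*)\subset\Wot$ and the compatible orthogonal decompositions of $(\Wo)'$ and $\LL^2$ that the paper carries out.
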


\textbf{Comment.} According to Lemma~\ref{LM:crtclpnt}, any critical point $ (u_*,v_*) \in \Wo$ of the energy $E$ is actually a stationary point, i.e., the regularity $(u_*,v_*) \in \mathcal N_0 \subset \Wot$ holds automatically. 

\begin{proof}  
To prove the lemma, we will show that the abstract result in \cite[Cor.~3.11]{Chill} can be applied. In the following, we consider the restriction of the energy functional to the linear subspace $\Wo$ of $\VV^1$, and we write $\bar E:= E\vert_{\Wo}$, $\bar E':= E'\vert_{\Wo}$ and $\bar E'':= E''\vert_{\Wo}$.
	
We first notice that Sobolev's embedding theorem yields $ \Wot  \subset \mathcal{L}^{\infty}$. Hence, the restriction of $  \bar E^{\prime} $ to  $ \Wot $ is analytic with values in $ \mathcal{L}^{2}$. For more a more detailed reasoning in a similar situation we refer to \cite[Cor.~4.6]{Chill}.
For arbitrary $(u,v)\in\Wot$, we regard the second-order Fr\'echet derivative $ \bar E''$ as the linearization of $ \bar E'$ which means
\begin{align*}
	 \bar E''  (u,v): \Wo \to (\Wo)', \quad
	(\zeta_1,\eta_1) \mapsto  \bar E'' (u,v)(\zeta_1,\eta_1).
\end{align*} 
In view of \eqref{scndFrec.der},
$ \bar E''(u,v)$ can be interpreted as a continuous, symmetric, elliptic differential operator associated with the bilinear form 
\begin{align*}
	&\left\langle \bar E^{\prime \prime}(u,v)(\zeta_{1},\eta_{1}) ,(\zeta_{2},\eta_{2}) \right\rangle_{\Wo} \\
	&\quad =\intO \nabla \zeta_{1} \cdot \nabla \zeta_{2} + F^{\prime \prime}(u) \zeta_{1} \zeta_{2}\dx 
	+ \intG \gradg \eta_{1} \cdot \gradg \eta_{2} + G^{\prime \prime}(v) \eta_{1} \eta_{2}\dG,
	\quad (\zeta_1,\eta_1), (\zeta_2,\eta_2) \in \Wo,
\end{align*} 
which results from \eqref{scndFrec.der}. 
We further interpret the restriction of $\bar E'' (u,v)$ to $\Wot$ as
\begin{align*}
	 \bar E'' (u,v)\vert_{\Wot}: \Wot \to \LL^2, \quad
	(\zeta_1,\eta_1) \mapsto  \bar E''(u,v)(\zeta_1,\eta_1).
\end{align*} 
Invoking the Lax--Milgram theorem, we deduce that the operator
$ \bar E^{\prime \prime}(u,v) $ has a nontrivial resolvent set. In particular, there exists at least one $\lambda > 0$ such that the linear map
\begin{align*}
	\big( \bar E^{\prime \prime}(u,v) + \lambda \text{id}\big)^{-1}:
	(\Wo)' \to (\Wo)'
\end{align*}
is well-defined. As the embedding $ \Wo \hookrightarrow  (\Wo)'$ 
is compact, we conclude that this linear operator is compact.

By regularity theory for elliptic problems with bulk-surface coupling (see \cite[Thm.~3.3]{knopf-liu}), we conclude that the kernel
$  \text{Ker}\, \bar E^{\prime \prime}(u,v)$ belongs to $\Wot$. We further recall that the embedding $\Wot\emb \LL^2$ is compact.

Now, proceeding as in \cite[Sect.~6.2,\,Thm.~4]{evans}, the Fredholm alternative implies that the kernel
$  \text{Ker}\, \bar E^{\prime \prime}(u,v) $ is finite dimensional, and that the ranges 
$  \text{Rg}\, \bar E^{\prime \prime}(u,v) $ and 
$  \text{Rg}\, \bar E^{\prime \prime}(u,v)\big|\Index[-1pt]{\Wot} $ 
are closed in $ (\Wo)'  $ and $ \mathcal{L}^{2}  $, respectively. 
Furthermore,  
$ (\Wo)' $ can be expressed as the orthogonal sum of 
$  \text{Ker}\, \bar E^{\prime \prime}(u,v) $ and 
$  \text{Rg}\, \bar E^{\prime \prime}(u,v) $,
whereas 
$ \mathcal{L}^{2} $ can be expressed as the orthogonal sum of
$  \text{Ker}\, \bar E^{\prime \prime}(u,v) $ and
$  \text{Rg}\, \bar E^{\prime \prime}(u,v)\big|\Index[-1pt]{\Wot} $. 
In particular, we can define a continuous orthogonal projection 
$ P:\Wo \rightarrow \Wo $ with $ \text{Rg}\, P=\text{Ker}\, \bar E^{\prime \prime}(u,v) $.

Given these considerations, we can conclude the proof by applying the results in \cite[Cor.~3.11]{Chill} with $ X= \Wot$, $V=\Wo$, $ Y=\mathcal{L}^{2} $, and $ W= (\Wo)'$. 
\end{proof}

\begin{theorem}[Convergence to a single stationary point as $t\to\infty$]
	\label{Theorem:omega}
	Suppose that the assumptions \eqref{ass:dom}-\eqref{ass:pot} and \eqref{ass:ana} are satisfied. Then, for any $ (u_{0},v_{0}) \in \Wm  $ there exists a unique stationary point $ (u_{\infty},v_{\infty}) \in \mathcal{N}_m  $ such that 
	\begin{align}
	\label{CONV:OMEGA}
		\lim_{t \to\infty} 
		\bignorm{S_m^{L}(t)(u_{0},v_{0}) - (u_{\infty},v_{\infty}) }_{\HH^2} 
		=0.
	\end{align}
	In particular, this means that $\omega^L(u_0,v_0) = \{(u_{\infty},v_{\infty})\} \subset \mathcal N_m$.
\end{theorem}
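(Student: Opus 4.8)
The plan is to use the standard \L ojasiewicz--Simon argument. First I would reduce to the case $m=0$ exactly as described in the paragraph preceding Lemma~\ref{LM.lojasiecwicz}, so that the relevant phase space is $\Wo$ and the \L ojasiewicz--Simon inequality \eqref{lojasiecwicz} is available near any critical point. Fix $(u_0,v_0)\in\Wo$ and write $(u^L,v^L) = S_m^L(\cdot)(u_0,v_0)$. By Lemma~\ref{LEM:LIMSET}(a), the $\omega$-limit set $\omega^L(u_0,v_0)$ is nonempty, compact in $\Wot$, and contained in $\mathcal N_0$, and moreover $\dist_{\HH^2}(S_m^L(t)(u_0,v_0),\omega^L(u_0,v_0))\to 0$ as $t\to\infty$. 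Since $E$ is a strict Lyapunov function (Lemma~\ref{lm:gradient}) and is bounded below and nonincreasing along the solution, $E(u^L(t),v^L(t))$ decreases to some limit $E_\infty$, and by continuity $E\equiv E_\infty$ on $\omega^L(u_0,v_0)$.

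Next I would fix one point $(u_*,v_*)\in\omega^L(u_0,v_0)\subset\mathcal N_0\cap\Wot$ and invoke Lemma~\ref{LM.lojasiecwicz} to obtain constants $\gamma\in(0,\tfrac12]$, $C>0$ and a neighborhood radius $\sigma>0$ such that \eqref{lojasiecwicz} holds in the $\Wo$-ball of radius $\sigma$ around $(u_*,v_*)$. The key differential inequality comes from testing the evolution equation \eqref{sumweak1} (written from time $s$) with $\SS^L$ applied to $\delt(u^L,v^L)$: one obtains, for a.e.\ $t$,
\begin{align*}
-\frac{\mathrm d}{\mathrm dt} E\big(u^L(t),v^L(t)\big)
= \bignorm{\big(\mu^L(t),\theta^L(t)\big)}_{L,\beta}^2
= \bignorm{\big(\delt u^L(t),\delt v^L(t)\big)}_{L,\beta,*}^2,
\end{align*}
using \eqref{est3.5}; and on the other hand $E'(u^L,v^L)$ acting on test functions in $\Wo$ equals $-\inn{\delt(u^L,v^L)}{\cdot}$ plus a harmless constant-valued contribution killed by the mean-zero constraint, so $\norm{E'(u^L(t),v^L(t))}_{(\Wo)'} \le C\norm{(\delt u^L(t),\delt v^L(t))}_{L,\beta,*}$ after controlling $\norm{\cdot}_{(\Wo)'}$ by $\norm{\cdot}_{L,\beta,*}$ via Lemma~\ref{LEM:LBS} / Lemma~\ref{LEM:INT}. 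Combining these with \eqref{lojasiecwicz} gives, whenever the trajectory lies in the $\sigma$-neighborhood and $E(u^L(t),v^L(t))>E_\infty$,
\begin{align*}
-\frac{\mathrm d}{\mathrm dt}\Big(E(u^L(t),v^L(t))-E_\infty\Big)^{\gamma}
\ge c\,\bignorm{\big(\delt u^L(t),\delt v^L(t)\big)}_{L,\beta,*}
\ge c\,\bignorm{\big(\delt u^L(t),\delt v^L(t)\big)}_{(\HH^1)'},
\end{align*}
which shows that $t\mapsto (E-E_\infty)^\gamma$ is a strict Lyapunov functional whose derivative dominates the ``speed'' of the trajectory.

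The decisive step is then the standard \L ojasiewicz--Simon continuation argument: I would show that the trajectory eventually enters and never leaves a small $\Wo$-ball around $(u_*,v_*)$, so that the differential inequality is valid for all large $t$. Since $(u_*,v_*)\in\omega^L(u_0,v_0)$, there is a sequence $t_k\to\infty$ with $S_m^L(t_k)(u_0,v_0)\to(u_*,v_*)$ in $\HH^2$, hence in $\Wo$; pick $k$ large so that the solution at $t_k$ is well inside the $\sigma$-ball and $(E(u^L(t_k),v^L(t_k))-E_\infty)^\gamma$ is tiny. Integrating the differential inequality from $t_k$ forward and using a continuity/bootstrap argument (as long as the trajectory stays in the ball, the $\Wo$-length of the trajectory tail is bounded by $C(E(u^L(t_k),v^L(t_k))-E_\infty)^\gamma$, which is smaller than $\sigma/2$), one concludes the trajectory never leaves the ball and that $\int_{t_k}^\infty\norm{(\delt u^L,\delt v^L)(\tau)}_{(\HH^1)'}\,\mathrm d\tau<\infty$. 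Therefore $(u^L(t),v^L(t))$ is Cauchy in $(\HH^1)'$ as $t\to\infty$, so it converges to a single limit, which must coincide with $(u_*,v_*)$; in particular $\omega^L(u_0,v_0)=\{(u_*,v_*)\}$ is a singleton. Finally, upgrading the convergence from $(\HH^1)'$ to $\HH^2$ follows from the already-established relative compactness of $\{S_m^L(t)(u_0,v_0)\}_{t\ge1}$ in $\HH^2$ (smoothing property \eqref{SMOOTH:KLLM} plus the compact embedding $\HH^3\emb\HH^2$, cf.\ Lemma~\ref{lm.asympcompact}): any $\HH^2$-subsequential limit must equal the unique $(\HH^1)'$-limit $(u_*,v_*)$, so the full trajectory converges to $(u_\infty,v_\infty):=(u_*,v_*)$ in $\HH^2$, giving \eqref{CONV:OMEGA}.

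The main obstacle I anticipate is the bookkeeping in the continuation argument — making precise that the trajectory cannot escape the $\Wo$-neighborhood where \eqref{lojasiecwicz} holds, which requires controlling the $\Wo$-norm (not merely the $(\HH^1)'$- or $L,\beta,*$-norm) of $u^L(t)-u^L(t_k)$ by the integral of $\norm{\delt(u^L,v^L)}_{L,\beta,*}$. This is handled by interpolation: by Corollary~\ref{COR:INT} (or Lemma~\ref{LEM:INT}) the $\HH^1$-norm is controlled by a power of $\norm{\cdot}_{L,\beta,*}$ times a power of the $\HH^2$- or $\HH^3$-norm, and the latter is bounded uniformly on $[1,\infty)$ by the smoothing estimate \eqref{SMOOTH:KLLM}; so the $\Wo$-length of the trajectory tail is still controlled, up to a uniform constant, by $\big(\int \norm{\delt(u^L,v^L)}_{L,\beta,*}\big)^\vartheta$ for a suitable $\vartheta>0$, which is what the \L ojasiewicz inequality bounds. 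Everything else is routine; this is the place where the uniform-in-$L$ interpolation inequalities of Section~\ref{SEC:PIT} are genuinely used.
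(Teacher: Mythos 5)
Your plan is the classical single--point \L ojasiewicz--Simon argument, and it differs from the paper's proof in two genuine ways. First, you fix one point $(u_*,v_*)\in\omega^L(u_0,v_0)$ and run a trapping/continuation argument to keep the trajectory inside one $\sigma$-ball in $\Wo$, controlling the $\HH^1$-drift by interpolating between $\norm{\cdot}_{L,\beta,*}$ and the $\HH^3$-bound from the smoothing property; the paper instead covers the compact set $\omega^L(u_0,v_0)$ by finitely many \L ojasiewicz balls, obtains a \emph{uniform} inequality \eqref{lojasiecwicz2} on an open set $\mathcal U\supset\omega^L(u_0,v_0)$, and then uses the already known attraction $\dist_{\HH^2}\big(S^L_m(t)(u_0,v_0),\omega^L(u_0,v_0)\big)\to 0$ to place the trajectory in $\mathcal U$ for all $t\ge t_0$ -- no trapping bookkeeping at all. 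Your route buys a more self-contained classical argument; the paper's buys a shorter one that avoids the drift estimate entirely. Second, and more importantly, your key differential inequality rests on the a.e.\ energy \emph{identity} $-\tfrac{\dd}{\dd t}E\big(u^L(t),v^L(t)\big)=\bignorm{(\mu^L(t),\theta^L(t))}_{L,\beta}^2$, whereas Theorem~\ref{THM:WP:KLLM} only provides the energy \emph{inequality} \eqref{ENERGY:KLLM} for these weak solutions; the chain rule for $t\mapsto E(u^L(t),v^L(t))$ is not available off the shelf and would require an additional justification (e.g.\ the mollification-in-time argument used in the proof of Proposition~\ref{PROP:CD:KLLM}, exploiting $(u^L,v^L)\in L^2_{\mathrm{loc}}(\HH^3)\cap H^1_{\mathrm{loc}}((\HH^1)')$ and $(\mu^L,\theta^L)\in L^2_{\mathrm{loc}}(\HH^1)$). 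The paper deliberately sidesteps this: it integrates the energy inequality from an arbitrary time $s$, combines it with \eqref{lojasiecwicz2} and \eqref{ineq1} to get $\int_s^\infty \bignorm{(\mu^L,\theta^L)(\tau)}_{L,\beta}^2\dtau \le C\bignorm{(\mu^L(s),\theta^L(s))}_{L,\beta}^{1/(1-\gamma)}$, and then invokes the integral lemma \cite[Lem.~7.1]{FS} to conclude $\norm{(\delt u^L,\delt v^L)}_{L,\beta,*}\in L^1(t_0,\infty)$, after which both proofs coincide (Cauchy in a weak norm, then upgrade to $\HH^2$ via interpolation and the smoothing estimate). You should also treat separately the case where $E(u^L(t_*),v^L(t_*))=E_\infty$ at some finite $t_*$ (your differential inequality is vacuous there); this is easy via the strict Lyapunov property of Lemma~\ref{lm:gradient}, and the paper does it explicitly. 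With the energy identity justified (or replaced by the integrated argument) and this case added, your proof goes through.
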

\begin{proof}
	As discussed above, it suffices to handle the case $m=0$.
	Let $(u_0,v_0)\in \Wo$ be arbitrary, and let $(u^L,v^L,\mu^L,\theta^L)$ denote the corresponding weak solution of the system \eqref{CH:INT.}.		
	To prove the existence of a unique limit $(u_\infty,v_\infty) \in \mathcal N_0$, it suffices to show that the $ \omega $-limit set of $ (u_{0},v_{0}) $ consists of one single point. 
	
	Since $E$ is bounded from below and decreasing along weak solutions, we know that the limit
	\begin{align*}
	E_{\infty}:=\lim_{t \rightarrow \infty} E(u^{L}(t), v^{L}(t))
	\end{align*}
	exists. Hence, recalling the definition of $\omega^L(u_0,v_0)$, we infer that 
	\begin{align}
	\label{E_INF}
	E(u_*,v_*) = E_{\infty}\quad \text{for all}\; (u_*,v_*)\in \omega^{L}(u_{0},v_{0}).
	\end{align}
	Since $ E $ is nonincreasing along weak solutions, it holds that $ E(u^{L}(t), v^{L}(t))\geq E_{\infty}$ for all $ t\geq0 $. 
	
	If there exists a time $ t_{*}\ge 0 $ such that $ E(u^{L}(t_{*}),v^{L}(t_{*}))= E_{\infty}$, then the existence of a unique limit $(u_\infty,v_\infty) \in \mathcal N_m$ is a direct consequence of Lemma~\ref{lm:gradient}. In particular, we have
	\begin{align*}
		\big(u_0,v_0\big)
		= \big(u^L(t),v^L(t)\big)
		= \big(u^L(t_*),v^L(t_*)\big)
		= \big(u_\infty,v_\infty\big)
	\end{align*}
	for all $t\ge 0$.
	
	In the following, we will thus assume that $ E(u^{L}(t),v^{L}(t))> E_{\infty}$ for all $ t\geq 0 $.	
	From Lemma~\ref{LM.lojasiecwicz} we infer that for every critical point $ (u_*,v_*)\in \Wot $ of $E$ there exist constants $ \gamma(u_*,v_*) \in \left( 0, \frac{1}{2}\right]  $ and $ C(u_*,v_*)$, $\sigma(u_*,v_*)>0 $ such that the the \L ojasiewicz--Simon inequality \eqref{lojasiecwicz} holds for all 
	\begin{align*}
		(u,v) \in B_{\sigma(u_*,v_*)}(u_*,v_*) 
		= \Big\{ (u,v) \in \Wot \,:\, \norm{(u,v) - (u_*,v_*)}_{\Wot} < \sigma(u_*,v_*) \Big\}.
	\end{align*} 
	Obviously, the set $\omega^L(u_0,v_0)$ is covered by the union of all open balls $B_{\sigma(u_*,v_*)}(u_*,v_*)$ with $(u_*,v_*)\in \omega^L(u_0,v_0)$. 
	Since $\omega^L(u_0,v_0)$ is compact in $\Wot$, we can select finitely many points $(u_n,v_n) \in \omega^L(u_0,v_0)$, $n=1,...,N$ such that
	\begin{align*}
		\omega^L(u_0,v_0) 
		\subset \left(\,\bigcup_{n=1}^{N} B_{\sigma(u_n,v_n)}(u_n,v_n) \right)
		=: \mathcal U.
	\end{align*}
	Recalling \eqref{E_INF} and arguing as in \cite[Theorem 2.3]{CFP},
	we can now find constants 
	$\gamma\in(0,\frac 12]$ and $ C > 0 $ depending only on $L$, $\beta$ and $\Omega$ and $(u_0,v_0)$ but not on $n$ or $N$ such that for all $(u,v) \in \cal U $,
	\begin{align} \label{lojasiecwicz2}
	\abs{E(u,v)-E_{\infty}}^{1-\gamma}
	\leq C\norm{E^{\prime}(u,v)}_{(\Wo)'}\;.
	\end{align}
	We point out that $\gamma < \frac 12$ can be assumed without loss of generality.
	Since $\mathcal U$ is an open subset of $\Wot$, and $\omega^L(u_0,v_0) \subset \mathcal U$, there exists a time $t_0\ge 1$ such that $\big(u^L(t),v^L(t)\big) \subset \mathcal U$ for all $t\ge t_0$. In particular, this means that $\big(u^L(t),v^L(t)\big)$ satisfies the inequality \eqref{lojasiecwicz2} for all $t\ge t_0$.
	
	Let now $s\ge t_0$ be arbitrary, and without loss of generality we assume that $\mu^L$ and $\theta^L$ can be evaluated at the time $s$ with $\big(\mu^L(s),\theta^L(s)\big)\in \HH^1$. Now, applying integration by parts on \eqref{frstFrec.der}, we obtain
	\begin{align}\label{eq1}
	\left\langle E^{\prime}\big(u^L(s),v^L(s)\big) ,(\zeta,\xi) \right\rangle_{\Wo}
	=\intO \mu^L(s)\, \zeta\dx
		+\intG \theta^L(s)\, \xi \dG
	\end{align}
	for all $ (\zeta,\xi) \in \Wo $. We now define
	\begin{align*}
		c := \frac{\beta\int_\Omega \mu^L(s) \dx 
			+ \int_\Gamma \theta^L(s) \dG}
			{\beta^2\abs{\Omega} + \abs{\Gamma}}, \quad
		\bar\mu := \mu^L(s) - \beta c, \quad
		\bar\theta := \theta^L(s) - c,
	\end{align*}
	meaning that $(\bar\mu(s),\bar\theta(s))\in\Wo$. For any $(\zeta,\xi) \in \Wo $, we thus obtain
	\begin{align}\label{eq2}
	\intO \mu^L(s)\, \zeta\dx
		+ \intG \theta^L(s)\, \xi \dG
	= \intO \bar\mu \zeta\dx
		+ \intG  \bar\theta \xi \dG.
	\end{align}
	Applying the bulk-surface Poincar\'e type inequality presented in \cite[Lem.~7.1]{knopf-liu} (with $K:=L$ and $\alpha:=\beta$), we conclude that
	\begin{align}\label{ineq1}
	\bignorm{E^{\prime}\big(u^L(s),v^L(s)\big)}_{(\Wo)'} 
	\le \norm{(\bar\mu,\bar\theta)}_{\LL^2} \le c_P \norm{(\bar\mu,\bar\theta)}_{L,\beta} = c_P \bignorm{\big(\mu^L(s),\theta^L(s)\big)}_{L,\beta}\;,
	\end{align}
	where the constant $c_P>0$ depends only on $L$, $\beta$ and $\Omega$.

	Recalling that $(u,v)\in C([1,\infty);\HH^2)$ and interpreting $(u^L,v^L,\mu^L,\theta^L)$ as a weak solution starting at time $s$ (instead of zero), the energy inequality \eqref{ENERGY:KLLM} yields that \eqref{5.29} holds for all $t\in\RP$ with $t\ge s$. As the right-hand side of this inequality does not depend on $t$, we may pass to the limit $t\to \infty$ on the left-hand side, which gives
	\begin{align}
	\label{EST:EN}
	\dfrac{1}{2}\int_{s}^{\infty}
		\bignorm{\big(\mu^L(\tau),\theta^L(\tau)\big)}_{L,\beta}^2 \dtau
		\leq E(u^{L}(s),v^{L}(s))-E_{\infty}.
	\end{align}
	Hence, combining \eqref{lojasiecwicz2}, \eqref{ineq1} and \eqref{EST:EN}, we infer that
	\begin{align*}
	\int_{s}^{\infty}
		\bignorm{\big(\mu^L(\tau),\theta^L(\tau)\big)}_{L,\beta}^2 \dtau 
	\leq C \bignorm{\big(\mu^L(s),\theta^L(s)\big)}_{L,\beta}^{1/(1-\gamma)}
	\end{align*}
	for some constant $C\ge 0$ depending only on $L$, $\beta$ and $\Omega$, and almost all $s \ge t_0$.
	We recall that $\gamma<\frac 12$ was assumed without loss of generality.
	As the expression on the left hand side can also be bounded by a constant via the energy inequality \eqref{ENERGY:KLLM}, we invoke \cite[Lem.~7.1]{FS}
	to deduce that
	\begin{align*}
	\nabla\mu^{L}\in L^{1}(t_{0},\infty;L^{2}(\Omega)), \text{ \ } \nabla\theta^{L}\in L^{1}(t_{0},\infty;L^{2}(\Gamma)),\text{ \ } \beta\theta^{L}-\mu^{L}\in L^{1}(t_{0},\infty;L^{2}(\Gamma)).
	\end{align*}
	(Alternatively, one could also argue as in \cite[Proof of Thm.~2.3]{CFP} to obtain the same result.)
	Recalling the weak formulations \eqref{WF:KLLM:1} and \eqref{WF:KLLM:2}, we further obtain
	$(\delt u^L, \delt v^L) \in L^{1}\big(t_{0},\infty;\HH_\beta^{-1}\big)$.
	Consequently, for any $t_0<t_1<t_2$, we have
	\begin{align*}
	\bignorm{\big(u^L(t_2),v^L(t_2)\big) - \big(u^L(t_1),v^L(t_1)\big)}_{L,\beta,*} 
	\le \int_{t_1}^{t_2} \bignorm{\big(\delt u^L(\tau),\delt v^L(\tau)\big)}_{L,\beta,*} \dtau\;,
	\end{align*}
	and the right-hand side converges to zero as $t_2>t_1\to \infty$.
	Using the interpolation inequalities from Lemma~\ref{LEM:INT:2} and Corollary~\ref{COR:INT} along with the smoothing property \eqref{SMOOTH:KLLM}, we conclude that
	\begin{align*}
	&\bignorm{\big(u^L(t_2),v^L(t_2)\big) - \big(u^L(t_1),v^L(t_1)\big)}_{\HH^2} \\
	&\quad\le C\left(\frac{t+1}{t}\right)^{\frac 38}\bignorm{\big(u^L(t_2),v^L(t_2)\big) - \big(u^L(t_1),v^L(t_1)\big)}_{L,\beta,*}^{\frac 14} \\
	&\quad\longrightarrow 0, \quad
	\text{as $t_2>t_1\to\infty$}.  
	\end{align*}
	Invoking the Cauchy criterion, this implies the existence of a unique limit $(u_{\infty},v_{\infty}) \in \Wot$ such that the convergence assertion \eqref{CONV:OMEGA} is satisfied. In particular, it holds that $(u_{\infty},v_{\infty}) \in \omega^L(u_0,v_0) \subset \mathcal N_0$ which completes the proof.
\end{proof}

\medskip

We will now see that solutions starting in $\mathcal A^L_m$ converge to a stationary point also for $t\to -\infty$.

\begin{theorem}[Convergence to a single stationary point as $t\to -\infty$]
	\label{Theorem:alpha}
	Suppose that the assumptions \eqref{ass:dom}-\eqref{ass:pot} and \eqref{ass:ana} hold. Then, for any $ (u_{0},v_{0}) \in  \mathcal A_{m}^{L}  $, the corresponding state $(u^{L}(t),v^{L}(t))$ exists for all $t\in\R$, and there exists a unique stationary point $ (u_{-\infty},v_{-\infty}) \in \mathcal{N}_{m} $ such that
	\begin{align}
		\label{CONV:ALPHA}
		\lim_{t \to -\infty} 
		\norm{\big(u^{L}(t),v^{L}(t)\big) - (u_{-\infty},v_{-\infty})}_{\HH^2} = 0.
	\end{align}
	In particular, this means that $\alpha^L(u_0,v_0) = \{(u_{-\infty},v_{-\infty})\} \subset \mathcal N_m$.
\end{theorem}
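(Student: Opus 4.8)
The plan is to mirror the proof of Theorem~\ref{Theorem:omega}, now running time backwards along the complete trajectory which exists because $(u_0,v_0)\in\mathcal A_m^L=\mathcal M_{\mathrm u}^L(\mathcal N_m)$. As before it suffices to treat $m=0$. First I would record that the full trajectory $\{(u^L(t),v^L(t))\}_{t\in\R}$ stays inside $\mathcal A_m^L$, which is bounded in $\WW_{\beta,0}^3$ by Theorem~\ref{thm.glbatt}, and that $(u^L,v^L)\in C(\R;\HH^2)$ by Proposition~\ref{PROP:CD:KLLM}. Interpreting $(u^L,v^L,\mu^L,\theta^L)$ as a weak solution started at an arbitrary time $s\in\R$ (whose restriction to $[s,\infty)$ coincides, by uniqueness, with the complete trajectory), the energy inequality \eqref{ENERGY:KLLM} gives $E\big(u^L(t),v^L(t)\big)+\tfrac12\int_s^t\bignorm{(\mu^L(\tau),\theta^L(\tau))}_{L,\beta}^2\dtau\le E\big(u^L(s),v^L(s)\big)$ for all $s\le t$. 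Hence $t\mapsto E(u^L(t),v^L(t))$ is nonincreasing and bounded (since $E$ is continuous on $\VV^1$ and the trajectory is bounded there), so $E_{-\infty}:=\lim_{t\to-\infty}E(u^L(t),v^L(t))=\sup_{t\in\R}E(u^L(t),v^L(t))$ exists and $E\equiv E_{-\infty}$ on $\alpha^L(u_0,v_0)$. Letting $s\to-\infty$ in the energy inequality yields the backward analogue of \eqref{EST:EN}: $\tfrac12\int_{-\infty}^{t}\bignorm{(\mu^L(\tau),\theta^L(\tau))}_{L,\beta}^2\dtau\le E_{-\infty}-E(u^L(t),v^L(t))$ for every $t\in\R$.

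If $E(u^L(t_*),v^L(t_*))=E_{-\infty}$ for some $t_*\in\R$, then by the strict Lyapunov property (Lemma~\ref{lm:gradient}) the trajectory is stationary on $(-\infty,t_*]$ and there is nothing to prove. Otherwise $E(u^L(t),v^L(t))<E_{-\infty}$ for all $t$. Since $\alpha^L(u_0,v_0)$ is a compact subset of $\Wot$ contained in $\mathcal N_0$ (Lemma~\ref{LEM:LIMSET}(b)), I would cover it by finitely many balls $B_{\sigma(u_n,v_n)}(u_n,v_n)$ on which the \L ojasiewicz--Simon inequality of Lemma~\ref{LM.lojasiecwicz} holds, and argue as in \cite[Thm.~2.3]{CFP} to obtain uniform constants $\gamma\in(0,\tfrac12)$ and $C>0$ with $\abs{E(u,v)-E_{-\infty}}^{1-\gamma}\le C\norm{E'(u,v)}_{(\Wo)'}$ on the open neighbourhood $\mathcal U:=\bigcup_n B_{\sigma(u_n,v_n)}(u_n,v_n)$. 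By \eqref{LIM:AL} there is $t_0\ge 1$ with $(u^L(t),v^L(t))\in\mathcal U$ for all $t\le -t_0$. Exactly as in the proof of Theorem~\ref{Theorem:omega}, rewriting $E'$ via integration by parts in \eqref{frstFrec.der}, subtracting suitable mean-value constants from $\mu^L(s)$ and $\theta^L(s)$, and applying the bulk--surface Poincaré inequality \cite[Lem.~7.1]{knopf-liu}, one gets $\norm{E'(u^L(s),v^L(s))}_{(\Wo)'}\le c_P\bignorm{(\mu^L(s),\theta^L(s))}_{L,\beta}$ for a.e.\ $s\le -t_0$.

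Combining the backward energy estimate, this bound, and the \L ojasiewicz--Simon inequality gives $\int_{-\infty}^{s}\bignorm{(\mu^L(\tau),\theta^L(\tau))}_{L,\beta}^2\dtau\le C\bignorm{(\mu^L(s),\theta^L(s))}_{L,\beta}^{1/(1-\gamma)}$ with $1/(1-\gamma)<2$. Invoking \cite[Lem.~7.1]{FS} (or arguing directly with $t\mapsto(E_{-\infty}-E(u^L(t),v^L(t)))^\gamma$ as in \cite[Proof of Thm.~2.3]{CFP}), I conclude $\nabla\mu^L\in L^1(-\infty,-t_0;L^2(\Omega))$, $\gradg\theta^L\in L^1(-\infty,-t_0;L^2(\Gamma))$ and $\beta\theta^L-\mu^L\in L^1(-\infty,-t_0;L^2(\Gamma))$, hence via \eqref{WF:KLLM:1}--\eqref{WF:KLLM:2} and the operator $\SS^L$ that $(\delt u^L,\delt v^L)\in L^1(-\infty,-t_0;\HH_\beta^{-1})$. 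Therefore $\bignorm{(u^L(t_2),v^L(t_2))-(u^L(t_1),v^L(t_1))}_{L,\beta,*}\le\int_{t_1}^{t_2}\bignorm{(\delt u^L,\delt v^L)}_{L,\beta,*}\dtau\to 0$ as $t_1<t_2\to-\infty$, so by the Cauchy criterion there is $(u_{-\infty},v_{-\infty})\in\Wot$ with $(u^L(t),v^L(t))\to(u_{-\infty},v_{-\infty})$ in $\norm{\cdot}_{L,\beta,*}$. Interpolating this with the uniform $\HH^3$-bound on $\mathcal A_m^L$ via Corollary~\ref{COR:INT} and Lemma~\ref{LEM:INT:2} upgrades the convergence to $\HH^2$, and $(u_{-\infty},v_{-\infty})\in\alpha^L(u_0,v_0)\subset\mathcal N_m$, so $\alpha^L(u_0,v_0)=\{(u_{-\infty},v_{-\infty})\}$, which is \eqref{CONV:ALPHA}.

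The \emph{main obstacle} I anticipate is the backward energy estimate: one must carefully justify that the energy inequality \eqref{ENERGY:KLLM}, formulated for weak solutions on $\RP$, may be used along the two-sided bounded trajectory inside the attractor on arbitrary subintervals $[s,t]\subset\R$. This rests on time-translation invariance together with uniqueness of weak solutions, namely that the restriction to $[s,\infty)$ of the complete bounded trajectory is precisely the weak solution with initial datum $(u^L(s),v^L(s))$. Everything else --- the finite covering of the $\alpha$-limit set, the uniformisation of the \L ojasiewicz--Simon constants, and the final interpolation step --- is a routine repetition of the corresponding parts of the proof of Theorem~\ref{Theorem:omega}.
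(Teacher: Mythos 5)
Your proposal is correct and follows essentially the same route as the paper: the paper's own proof establishes the existence of $E_{-\infty}$ via compactness of $\mathcal A^L_m$ in $\Wmt$ and continuity of $E$, notes that $E\equiv E_{-\infty}$ on $\alpha^L(u_0,v_0)$, and then explicitly delegates the rest to ``proceeding analogously to the proof of Theorem~\ref{Theorem:omega}'' --- which is precisely the backward energy estimate, finite covering by \L ojasiewicz--Simon balls, $L^1$-in-time integrability, Cauchy criterion and interpolation that you spell out. The point you flag as the main obstacle (using the energy inequality on two-sided subintervals of the complete trajectory) is resolved exactly as you suggest, by time-translation invariance and uniqueness of weak solutions, just as in the forward-time argument.
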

\begin{proof}
	As in the proof of Theorem~\ref{Theorem:omega}, it suffices to address the case $m=0$. 
	Let $(u_{0},v_{0}) \in \mathcal A_0^L$ be arbitrary, and let $(u^L,v^L,\mu^L,\theta^L)$ denote the corresponding weak solution of the system \eqref{CH:INT.}.
	
	We recall that the state $\big(u^L(t),v^L(t) \big)$ associated with $ (u_{0},v_{0}) $ exists for all times $t\in \R$ and lies in $A^L_0$.
	According to Definition~\ref{def:att} and Theorem~\ref{thm.glbatt}, the global attractor $\mathcal A^L_0$ is a compact subset of $\Wot$. Hence, since $E$ is continuous, we know that $E(\mathcal A^L_0)\subset \R$ is compact. As $E$ is nonincreasing along weak solutions, we infer that the limit 
	\begin{align*}
	E_{-\infty}:=\lim_{t \rightarrow -\infty} E(u^{L}(t),v^{L}(t))
	\end{align*}
	exists. Recalling the definition of $\alpha^L(u_0,v_0)$, we conclude that
	\begin{align*}
		E(u_*,v_*) = E_{-\infty} \quad\text{for all $(u_*,v_*) \in \alpha^L(u_0,v_0)$}.
	\end{align*}
	
	Based on this, the claim of Theorem~\ref{Theorem:alpha} can be established by proceeding analogously to the proof of Theorem~\ref{Theorem:omega}.
\end{proof}

As a consequence, we obtain the following result which provides further information of the geometric structure of the global attractor.
\begin{corollary} \label{gradient-like}
	Under the assumptions \eqref{ass:dom}-\eqref{ass:pot} and \eqref{ass:ana}, the global attractor $ {\cal{A}}_{m}^{L} $ of the dynamical system	$ (\Wm, S_{m}^{L}(t)) $ can be defined as the union of the unstable manifolds of all stationary points, i.e.,
	\begin{align}
	\label{ATT:KLLM}
	{\cal{A}}_{m}^{L}=\underset{(u_*,v_*) \in \mathcal{N}_{m}}{\bigcup}\mathcal{M}^L_\text{u}\big(\{(u_*,v_*)\}\big).
	\end{align}
\end{corollary}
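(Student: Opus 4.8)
The plan is to deduce the representation \eqref{ATT:KLLM} from the already established structural results: the characterization $\mathcal{A}_m^L=\mathcal{M}_\mathrm{u}^L(\mathcal{N}_m)$ from Theorem~\ref{thm.glbatt}, together with the convergence-to-a-single-stationary-point result of Theorem~\ref{Theorem:alpha} (convergence as $t\to-\infty$) and the $\alpha$-limit set properties collected in Lemma~\ref{LEM:LIMSET}(b). The point is that under the analyticity assumption \eqref{ass:ana}, the unstable manifold of the \emph{whole} set $\mathcal{N}_m$ decomposes into the union of the unstable manifolds of the individual equilibria, because every backward trajectory in the attractor is \emph{asymptotic to one single equilibrium} rather than merely to the set $\mathcal{N}_m$.

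First I would recall Definition~\ref{DEF:UM}: $y\in\mathcal{M}_\mathrm{u}^L(\mathcal{N}_m)$ iff there is a full trajectory $\gamma=\{(u^L(t),v^L(t)):t\in\R\}$ with $(u^L(0),v^L(0))=y$ and $\dist_{\HH^2}\big((u^L(t),v^L(t)),\mathcal{N}_m\big)\to 0$ as $t\to-\infty$. The inclusion $\supseteq$ in \eqref{ATT:KLLM} is immediate: if $y\in\mathcal{M}_\mathrm{u}^L(\{(u_*,v_*)\})$ for some single equilibrium $(u_*,v_*)\in\mathcal{N}_m$, then the defining backward trajectory converges to $(u_*,v_*)\in\mathcal{N}_m$, hence $y\in\mathcal{M}_\mathrm{u}^L(\mathcal{N}_m)=\mathcal{A}_m^L$. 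For the reverse inclusion $\subseteq$, let $y\in\mathcal{A}_m^L$. Since $\mathcal{A}_m^L=\mathcal{M}_\mathrm{u}^L(\mathcal{N}_m)$, there is a full trajectory through $y$ lying in $\mathcal{A}_m^L$ with $\dist_{\HH^2}((u^L(t),v^L(t)),\mathcal{N}_m)\to 0$ as $t\to-\infty$. Now the key input: because $y\in\mathcal{A}_m^L$, Theorem~\ref{Theorem:alpha} applies to this very trajectory and yields a \emph{unique} stationary point $(u_{-\infty},v_{-\infty})\in\mathcal{N}_m$ with $\|(u^L(t),v^L(t))-(u_{-\infty},v_{-\infty})\|_{\HH^2}\to 0$ as $t\to-\infty$; equivalently $\alpha^L(u_0,v_0)=\{(u_{-\infty},v_{-\infty})\}$. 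This exhibits a full trajectory through $y$ converging backward to the \emph{single} point $(u_{-\infty},v_{-\infty})$, so by Definition~\ref{DEF:UM}, $y\in\mathcal{M}_\mathrm{u}^L(\{(u_{-\infty},v_{-\infty})\})\subset\bigcup_{(u_*,v_*)\in\mathcal{N}_m}\mathcal{M}_\mathrm{u}^L(\{(u_*,v_*)\})$. Combining the two inclusions gives \eqref{ATT:KLLM}.

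The main obstacle, such as it is, is essentially bookkeeping rather than a genuine difficulty: one must be careful that the full trajectory supplied by the characterization $\mathcal{A}_m^L=\mathcal{M}_\mathrm{u}^L(\mathcal{N}_m)$ is the \emph{same} trajectory to which Theorem~\ref{Theorem:alpha} is applied. This is fine because, as noted just before Definition~\ref{DEF:LIMSET}, for any $y\in\mathcal{A}_m^L$ the backward-in-time solution exists, is unique, stays in $\mathcal{A}_m^L$, and has $\HH^2$-regularity (via Proposition~\ref{PROP:CD:KLLM}), so the full trajectory through $y$ inside the attractor is uniquely determined; there is no ambiguity. It remains only to remark that the analyticity hypothesis \eqref{ass:ana} is used, but only indirectly, through Theorem~\ref{Theorem:alpha} — which is where the \L ojasiewicz--Simon inequality (Lemma~\ref{LM.lojasiecwicz}) enters to upgrade convergence to $\mathcal{N}_m$ into convergence to a single point. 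Thus the corollary is really a clean repackaging of Theorem~\ref{thm.glbatt} and Theorem~\ref{Theorem:alpha}, and the proof can be kept to a few lines.
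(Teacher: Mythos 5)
Your argument is correct and is exactly the route the paper takes: the paper's proof is the one-line remark that the corollary "follows directly from Theorem~\ref{Theorem:alpha} and Definition~\ref{DEF:UM}", and your two inclusions (the trivial one via $\mathcal{A}_m^L=\mathcal{M}^L_\mathrm{u}(\mathcal N_m)$ from Theorem~\ref{thm.glbatt}, and the reverse one via backward convergence to a single equilibrium from Theorem~\ref{Theorem:alpha}) simply spell that out. No gaps; your aside on uniqueness of the backward trajectory is not even needed, since Theorem~\ref{Theorem:alpha} is stated for the full trajectory through any point of the attractor.
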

The assertion follows directly from Theorem~\ref{Theorem:alpha} and Definition~\ref{DEF:UM}.

\medskip

\begin{remark} \normalfont
	If the set of stationary points were finite, Corollary~\ref{gradient-like} could also be established without the \L ojasiewicz--Simon inequality. However, as the set of stationary points is usually infinite, we would merely get ${\cal{A}}_{m}^{L}=\mathcal{M}^L_\text{u}( \mathcal{N}_{m})$ (see \eqref{EQ:UNST}) instead of \eqref{ATT:KLLM} if we disregard the \L ojasiewicz--Simon inequality.
\end{remark}

\subsection{Long-time dynamics of the GMS model}\label{LT:GMS}

Based on a result from \cite{PZ} on closed semigroups, the existence of a global attractor in the ``finite-energy phase space'' has already been established for the GMS model ($L=0$) in \cite[Cor.~3.11]{GMS}.

However, it is also possible to obtain the existence of a global attractor for the dynamical system $ \big( \Wm,S_{m}^{0}(t) \big)  $ by using the same arguments as in Section~\ref{sec:long-time}.
It is worth mentioning that the GMS model has the same set of stationary points $ \mathcal{N}_{m} $ as the KLLM model. Hence, we already know from Subsection~\ref{sec:stat} that the stationary point set of the GMS model is nonempty and bounded in $ \Wm $.
The smoothing property \eqref{SMOOTH:GMS} implies the asymptotic compactness of the dynamical system $ \big( \Wm,S_{m}^{0}(t) \big) $, meaning that an analogue of Lemma~\ref{lm.asympcompact} can be established.
Furthermore, it can be shown that the energy functional $E$ is a strict Lyapunov function for the dynamical system $ \big( \Wm,S_{m}^{0}(t) \big) $ and consequently, the existence of a unique global attractor $\mathcal A_m^0$ can be proved analogously as in Subsection~\ref{subsec:att}.

It was further established in \cite[Thm.~3.22]{GMS} that the $\omega$-limits set consists of one single stationary point. We point out that Theorem~\ref{Theorem:alpha} could be adapted to the case $L=0$ and thus, Corollary~\ref{gradient-like} remains true for the GMS model.

\section{Stability of the global attractor for the GMS model} \label{sec:contglbatt}

We intend to study the stability of the global attractor $\mathcal A^0_m$ of the GMS model ($L=0$) with respect to perturbations $\mathcal A^L_m$ with small $L>0$. 
At least formally, this means we have to investigate the asymptotic limit $L\to 0$ of the family $\{ \mathcal A^L_m \}_{L>0}$ of global attractors for the KLLM model. 
To provide a rigorous notion of such a limit, we recall the following definition which is included in \cite[Thm.~7.2.8]{chueshov}.

\begin{definition}[Upper semicontinuity of global attractors]
	\label{DEF:USA}
	Let $X$ be a Banach space, and let $\Lambda$ be a metric space. 
	Suppose that for any $\lambda \in \Lambda$, $ (X,\{S^{\lambda}(t)\}_{t\ge 0}) $ is a dynamical system possessing a global attractor $\mathcal A^\lambda \subset X$.
	Then, the family $ \{\mathcal A^\lambda \}_{\lambda\ge 0} $ is called \emph{upper semicontinuous at the point $ \lambda_* \in \Lambda$} if
	\begin{align*}
	\lim_{\lambda\rightarrow \lambda_* } \dist_X\big(\mathcal A^{\lambda},\mathcal A^{\lambda_*} \big) =0.
	\end{align*}
\end{definition}

To prove that the family $ \{ \mathcal A_{m}^L\}_{L\ge 0} $ is upper semicontinuous at $L=0$, we need to know how a weak solution $(u^L,v^L,\mu^L,\theta^L)$ to the KLLM model behaves in the asymptotic limit $L\to 0$. 

\begin{lemma}[The asymptotic limit $L \to 0$]
	\label{LEM:CONV:GMS}
	Suppose that \eqref{ass:dom}--\eqref{ass:pot} hold, let $L>0$ and $m\in\R$ be arbitrary, and let $(u_0,v_0) \in \Wm$ be any initial datum. 
	Let $(u^L,v^L,\mu^L,\theta^L)$ denote the corresponding unique weak solution to the KLLM model, 
	and let $(u^0,v^0,\mu^0,\theta^0)$ denote the corresponding weak solution of the GMS model. Then, for any $T_*>0$, it holds that
	\begin{align}\label{CONV:GMS}
	(u^L,v^L) \to (u^0,v^0) \quad \text{strongly in $C([0,T_*];\LL^2) $ as $L\to 0$}.
	\end{align}
	Moreover, there exist constants $A,B>0$ depending (monotonically increasingly) on $\norm{(u_0,v_0)}_{\HH^1}$ but not on $L$ such that for all $L>0$,
	\begin{align}\label{cnvrg.rateLW}
	\bignorm{(u^L,v^L) - (u^0,v^0)}_{C([1,T_*];\LL^2)} \leq A e^{BT_*}\, L^{\frac 14}.
	\end{align}
\end{lemma}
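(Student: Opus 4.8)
The convergence \eqref{CONV:GMS} is already contained in \cite{KLLM}, so the substance here is the quantitative rate \eqref{cnvrg.rateLW}, which I would obtain by an energy estimate for the difference of the two solutions, in the spirit of the proof of Proposition~\ref{PROP:CD:KLLM}. The first step is to record the $L$-independent a priori information. By the energy inequalities \eqref{ENERGY:KLLM} and \eqref{ENERGY:GMS} together with the growth conditions \eqref{ass:pot} and Sobolev's embedding, both $(u^L,v^L)(t)$ and $(u^0,v^0)(t)$ are bounded in $\VV^1$, uniformly in $L$ and $t$, by a monotone increasing function of $\norm{(u_0,v_0)}_{\HH^1}$ (note that $E(u_0,v_0)$ is itself dominated by such a function), and in addition $\int_0^{T_*}\tfrac1L\norm{\beta\theta^L(t)-\mu^L(t)}_{L^2(\Gamma)}^2\dt\le E(u_0,v_0)$; for $t\ge1$ the smoothing estimates \eqref{SMOOTH:KLLM} and \eqref{SMOOTH:GMS} moreover give uniform $\HH^3\emb W^{1,\infty}$ bounds. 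This relaxation bound is the only place where $L$ enters, and its size $O(L)$ is what eventually produces the exponent in \eqref{cnvrg.rateLW}.

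Next, writing $\bar u:=u^L-u^0$, $\bar v:=v^L-v^0$, $\bar\mu:=\mu^L-\mu^0$, $\bar\theta:=\theta^L-\theta^0$, one notes that by the common mass law \eqref{MASS:KLLM}--\eqref{MASS:GMS} the pair $(\bar u(t),\bar v(t))$ has vanishing generalized mean, and that $\bar u(0)=\bar v(0)=0$ because both solutions start from $(u_0,v_0)$. Testing the KLLM weak formulation \eqref{WF:KLLM:1}--\eqref{WF:KLLM:2} only against $(w,z)\in\Db$ makes the relaxation terms $\tfrac1L(\beta\theta^L-\mu^L)$ cancel, so that subtracting the GMS formulation \eqref{WF:GMS:1} gives
\[
\biginn{\delt\bar u}{w}_{H^1(\Omega)}+\biginn{\delt\bar v}{z}_{H^1(\Gamma)}=-\intO\grad\bar\mu\cdot\grad w\dx-\intG\gradg\bar\theta\cdot\gradg z\dG,\qquad(w,z)\in\Db.
\]
The central step is then to test this identity with $(w,z)=\SS^0(\bar u(t),\bar v(t))\in\HH^1_{0,\beta}\subset\Db$ (admissible since $(\bar u,\bar v)\in\DD^{-1}_\beta$), to use the weak characterization \eqref{WF:LIN} of $\SS^0$ on the left-hand side, to insert $\bar\mu=-\Lap\bar u+(F'(u^L)-F'(u^0))$ and $\bar\theta=-\Lapg\bar v+(G'(v^L)-G'(v^0))+\deln\bar u$ on the right, and to integrate by parts. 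Using $(u^L,v^L),(u^0,v^0)\in L^2(0,T_*;\HH^3)$, the regularity $\SS^0(\bar u,\bar v)\in\HH^2$ from \cite[Thm.~3.3]{knopf-liu}, the cancellation of the interface terms generated by $\deln\bar u$, and the coupling conditions $\mu^0\vert_\Gamma=\beta\theta^0$ and $L\deln\mu^L=\beta\theta^L-\mu^L$ (which yield $(\bar\mu-\beta\bar\theta)\vert_\Gamma=-(\beta\theta^L-\mu^L)$), one arrives at
\[
\tfrac12\tfrac{\dd}{\dd t}\norm{(\bar u,\bar v)(t)}_{0,\beta,*}^2+\norm{(\grad\bar u,\gradg\bar v)(t)}_{\LL^2}^2=-\intO\bar u\,R_F\dx-\intG\bar v\,R_G\dG-\intG\deln\SS^0_\Omega(\bar u,\bar v)\,(\beta\theta^L-\mu^L)\dG,
\]
with $R_F:=F'(u^L)-F'(u^0)$ and $R_G:=G'(v^L)-G'(v^0)$.

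It remains to estimate the right-hand side so that the $L$-dependence is confined to the last term. The growth bounds \eqref{ass:pot}, Sobolev's embedding and the uniform $\VV^1$-bound give $\norm{R_F}_{L^2(\Omega)}\le C\norm{\bar u}_{H^1(\Omega)}$ and $\norm{R_G}_{L^2(\Gamma)}\le C\norm{\bar v}_{H^1(\Gamma)}$ with $C$ independent of $L$ (for $t\ge1$ this is even more immediate from the $W^{1,\infty}$-bound); by Young's inequality and the interpolation inequality \eqref{IEQ:INT:2} with a small parameter, the first two terms are controlled by $\tfrac12\norm{(\grad\bar u,\gradg\bar v)}_{\LL^2}^2+C\norm{(\bar u,\bar v)}_{0,\beta,*}^2$. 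The mismatch term I would bound via the trace estimate and the elliptic bound $\norm{\deln\SS^0_\Omega(\bar u,\bar v)}_{L^2(\Gamma)}\le C\norm{\SS^0(\bar u,\bar v)}_{\HH^2}\le C\norm{(\bar u,\bar v)}_{\LL^2}$, then apply \eqref{IEQ:INT:2} and Young's inequality once more, putting the gradient contribution into the dissipation and leaving $C\norm{(\bar u,\bar v)}_{0,\beta,*}^2+C\norm{\beta\theta^L-\mu^L}_{L^2(\Gamma)}^2$. Integrating in time, using $\bar u(0)=\bar v(0)=0$, $\int_0^{T_*}\norm{\beta\theta^L-\mu^L}_{L^2(\Gamma)}^2\dt\le L\,E(u_0,v_0)$, and Gronwall's lemma gives $\norm{(\bar u,\bar v)(t)}_{0,\beta,*}^2\le C e^{Ct}E(u_0,v_0)L$ on $[0,T_*]$; combining this with the uniform bound $\norm{(\grad\bar u,\gradg\bar v)(t)}_{\LL^2}\le C$ and the interpolation inequality \eqref{IEQ:INT:1}, $\norm{(\bar u,\bar v)(t)}_{\LL^2}^2\le c\,\norm{(\grad\bar u,\gradg\bar v)(t)}_{\LL^2}\norm{(\bar u,\bar v)(t)}_{0,\beta,*}$, yields $\norm{(\bar u,\bar v)(t)}_{\LL^2}\le Ce^{Ct/4}E(u_0,v_0)^{1/4}L^{1/4}$, and taking the supremum over $t\in[1,T_*]$ gives \eqref{cnvrg.rateLW} with $A,B$ monotone increasing in $\norm{(u_0,v_0)}_{\HH^1}$.

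The main obstacle is the central step: the two models sit in different functional frameworks — with the singular relaxation term $\tfrac1L(\beta\theta^L-\mu^L)$ and the Dirichlet-type versus Robin-type coupling — so the difference equation is available only on the common test space $\Db$, and the numerous integration-by-parts contributions produced by testing with $\SS^0(\bar u,\bar v)$ must be organized so that every term except a single boundary-mismatch term is absorbed into the dissipation or the Gronwall term by the $L$-independent bounds, while that remaining term is controlled \emph{quadratically} by $\norm{\beta\theta^L-\mu^L}_{L^2(\Gamma)}^2$, so that the $O(L)$ bound on its time integral survives. The final interpolation between $\LL^2$ and the dual norm $\norm{\cdot}_{0,\beta,*}$ then costs one square root, which is exactly why \eqref{cnvrg.rateLW} carries the exponent $\tfrac14$ rather than $\tfrac12$.
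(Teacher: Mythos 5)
Your argument is correct, but it is worth noting that the paper's own proof of this lemma is essentially a citation: both \eqref{CONV:GMS} and the $L^{1/4}$ rate are taken directly from \cite[Thm.~4.1]{KLLM}, and the only new content in the paper is the observation that, upon inspecting that proof, the constant $c(T_*)$ arises from Gronwall's lemma and therefore grows at most exponentially in $T_*$, so that one may write $c(T_*)=Ae^{BT_*}$. What you have done instead is reconstruct the underlying difference estimate from scratch, and your reconstruction is sound: testing the KLLM weak formulation only against pairs in $\Db$ indeed cancels the $\tfrac1L(\beta\theta^L-\mu^L)$ terms; testing the resulting difference identity with $\SS^0(\bar u,\bar v)$ (admissible since the common mass law puts $(\bar u,\bar v)$ in $\DD_\beta^{-1}$) produces $-\tfrac12\tfrac{\dd}{\dd t}\|(\bar u,\bar v)\|_{0,\beta,*}^2$ on the left; the $\deln\bar u$ boundary contributions cancel after integration by parts; and the single surviving mismatch term $\int_\Gamma(\beta\bar\theta-\bar\mu)\,\deln\SS^0_\Omega(\bar u,\bar v)\dG$ reduces, via $\mu^0\vert_\Gamma=\beta\theta^0$, to a term controlled quadratically by $\|\beta\theta^L-\mu^L\|_{L^2(\Gamma)}$, whose time integral is $O(L)$ by the energy inequality. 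The final interpolation \eqref{IEQ:INT:1} costs the square root that turns $L^{1/2}$ into $L^{1/4}$, exactly as you say. The only steps that would need explicit justification in a written version are the $H^2$ elliptic bound $\|\SS^0(\bar u,\bar v)\|_{\HH^2}\le C\|(\bar u,\bar v)\|_{\LL^2}$ together with the trace estimate for $\deln\SS^0_\Omega$ (both covered by \cite[Thm.~3.3]{knopf-liu}) and the chain rule for $t\mapsto\|(\bar u,\bar v)(t)\|_{0,\beta,*}^2$, which is standard in the Gelfand-triple setting. Your route buys a self-contained proof with explicit tracking of the constants' dependence on $\|(u_0,v_0)\|_{\HH^1}$; the paper's route is shorter but delegates exactly that tracking to an inspection of an external proof.
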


\begin{proof}
	The convergence \eqref{CONV:GMS} has already been established in \cite[Thm.~4.1]{KLLM}. Moreover, is was also shown that there exists a positive constant $c(T_*)$ depending (monotonically increasingly) on $\norm{(u_0,v_0)}_{\HH^1}$ and $T_*$ but not on $L$ such that for all $L>0$,
	\begin{align*}
		\bignorm{(u^L,v^L) - (u^0,v^0)}_{C([1,T_*];\LL^2)} \leq c(T_*)\, L^{\frac 14}.
	\end{align*} 
	Studying the proof of \cite[Thm.~4.1]{KLLM} carefully, we find that the constant $c(T_*)$ depends (at most) exponentially on $T_*$. This results from the application of Gronwall's lemma. We can thus find constants $A,B>0$ independent of $L$ such that $c(T_*) = A e^{BT_*}$. Thus, the proof is complete.
\end{proof}

By means of the smoothing properties \eqref{SMOOTH:KLLM} and \eqref{SMOOTH:GMS}, we can further generalize this convergence result to higher order norms.

\begin{corollary}[Convergence rates for the limit $L\to 0$]
	\label{COR:CONV:GMS}
	Suppose that \eqref{ass:dom}--\eqref{ass:comp} hold, let $L>0$ and $m\in\R$ be arbitrary, and let $(u_0,v_0) \in \Wm$ be any initial datum. 
	Let $(u^L,v^L,\mu^L,\theta^L)$ denote the corresponding unique weak solution to the KLLM model, 
	and let $(u^0,v^0,\mu^0,\theta^0)$ denote the corresponding weak solution of the GMS model. Then, for any $T_*>1$, 
	\begin{align}
		\label{CONV}
		(u^L,v^L) \to (u^0,v^0) \quad \text{strongly in $C([1,T_*];\HH^2) $ as $L\to 0$}.
	\end{align}
	Moreover, for any $T_*>1$, there exist constants $C_1(T_*),C_2(T_*)>0$ depending (monotonically increasingly) on $\norm{(u_0,v_0)}_{\HH^1}$ but not on $L$ such that
	\begin{align}
		\label{CRATE:1}
		\bignorm{(u^L,v^L) - (u^0,v^0)}_{C([1,T_*];\HH^1)} \leq C_1(T_*)\, L^{\frac{1}{6}},\\
		\label{CRATE:2}
		\bignorm{(u^L,v^L) - (u^0,v^0)}_{C([1,T_*];\HH^2)} \leq C_2(T_*)\, L^{\frac{1}{12}}.
	\end{align}
\end{corollary}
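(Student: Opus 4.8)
The plan is to obtain \eqref{CRATE:1} and \eqref{CRATE:2} by interpolating the $\mathcal{L}^2$-convergence rate of Lemma~\ref{LEM:CONV:GMS} against the uniform-in-$L$ $\mathcal{H}^3$-bounds furnished by the smoothing properties \eqref{SMOOTH:KLLM} and \eqref{SMOOTH:GMS}. Throughout I abbreviate $(\bar u,\bar v):=(u^L,v^L)-(u^0,v^0)$.

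First I would note that for almost all $t\ge 1$ the triangle inequality together with \eqref{SMOOTH:KLLM} and \eqref{SMOOTH:GMS} gives
\begin{align*}
	\bignorm{(\bar u(t),\bar v(t))}_{\HH^3}
	\le \bignorm{(u^L(t),v^L(t))}_{\HH^3} + \bignorm{(u^0(t),v^0(t))}_{\HH^3}
	\le (C_*+C_0)\Big(\tfrac{1+t}{t}\Big)^{\frac 12}
	\le \sqrt 2\,(C_*+C_0),
\end{align*}
where $C_*,C_0>0$ depend (monotonically increasingly) on $\norm{(u_0,v_0)}_{\HH^1}$. Simultaneously, Lemma~\ref{LEM:CONV:GMS} yields $\norm{(\bar u(t),\bar v(t))}_{\LL^2}\le A\,e^{BT_*}\,L^{1/4}$ for all $t\in[1,T_*]$ with $A,B>0$ independent of $L$.

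Next I would apply the interpolation inequalities \eqref{INT:H1:ALT} and \eqref{INT:H2:ALT} from Lemma~\ref{LEM:INT:2} to $(\bar u(t),\bar v(t))$ — which is permitted for almost all $t\ge 1$ since the smoothing properties place $(\bar u(t),\bar v(t))$ in $\HH^3$ — to obtain, for almost all $t\in[1,T_*]$,
\begin{align*}
	\bignorm{(\bar u(t),\bar v(t))}_{\HH^1}
	&\le C_3\,\bignorm{(\bar u(t),\bar v(t))}_{\HH^3}^{\frac 13}\,\bignorm{(\bar u(t),\bar v(t))}_{\LL^2}^{\frac 23}
	\le C_3\big(\sqrt 2(C_*+C_0)\big)^{\frac 13}\big(A e^{BT_*}\big)^{\frac 23}\,L^{\frac 16},\\
	\bignorm{(\bar u(t),\bar v(t))}_{\HH^2}
	&\le C_4\,\bignorm{(\bar u(t),\bar v(t))}_{\HH^3}^{\frac 23}\,\bignorm{(\bar u(t),\bar v(t))}_{\LL^2}^{\frac 13}
	\le C_4\big(\sqrt 2(C_*+C_0)\big)^{\frac 23}\big(A e^{BT_*}\big)^{\frac 13}\,L^{\frac 1{12}}.
\end{align*}
Since $(u^L,v^L),(u^0,v^0)\in C((0,\infty);\HH^2)$ by Proposition~\ref{PROP:IMP:KLLM} and Proposition~\ref{PROP:ICR:GMS}, the map $t\mapsto(\bar u(t),\bar v(t))$ is continuous into $\HH^2$ (hence into $\HH^1$), so these almost-everywhere bounds upgrade to bounds on the supremum over $[1,T_*]$. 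Choosing $C_1(T_*):=C_3(\sqrt 2(C_*+C_0))^{1/3}(A e^{BT_*})^{2/3}$ and $C_2(T_*):=C_4(\sqrt 2(C_*+C_0))^{2/3}(A e^{BT_*})^{1/3}$ yields \eqref{CRATE:1} and \eqref{CRATE:2}, and both constants inherit the monotone dependence on $\norm{(u_0,v_0)}_{\HH^1}$ from $C_*,C_0,A,B$; finally \eqref{CONV} is immediate because $L^{1/12}\to0$ as $L\to0$.

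I do not anticipate a real obstacle: the argument is a short interpolation once Lemma~\ref{LEM:CONV:GMS} and the smoothing estimates are in hand. The only two points needing a little care are the passage from the almost-everywhere estimates to a genuine $C([1,T_*];\HH^2)$-statement, handled via the $\HH^2$-continuity of the solutions, and verifying that the smoothing constants $C_*,C_0$ may be taken non-decreasing in $\norm{(u_0,v_0)}_{\HH^1}$, which follows by inspecting the dependence recorded in Theorem~\ref{THM:WP:KLLM}(v) and Theorem~\ref{THM:WP:GMS}(v).
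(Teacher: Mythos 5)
Your proposal is correct and follows essentially the same route as the paper: interpolating the $\LL^2$-rate $L^{1/4}$ from Lemma~\ref{LEM:CONV:GMS} against the uniform $\HH^3$-bounds from the smoothing properties via the inequalities of Lemma~\ref{LEM:INT:2}, yielding exactly the exponents $\tfrac{2}{3}\cdot\tfrac14=\tfrac16$ and $\tfrac13\cdot\tfrac14=\tfrac1{12}$. Your extra care in upgrading the almost-everywhere bounds to the supremum via the $\HH^2$-continuity of the solutions is a welcome detail that the paper handles implicitly.
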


\begin{proof}
	Let $T_*>1$ be arbitrary.
	We first recall that $(u^L,v^L), (u^0,v^0) \in C([1,T_*];\HH^2)$ according to Proposition~\ref{PROP:CD:KLLM} and Proposition~\ref{PROP:CD:GMS}. Using the interpolation inequality \eqref{INT:H1:ALT} from Lemma~\ref{LEM:INT:2}, and the smoothing properties \eqref{SMOOTH:KLLM} and \eqref{SMOOTH:GMS}, we deduce that
	\begin{align*}
		&\underset{t\in[1,T_*]}{\sup}\; \norm{(u^L,v^L)(t) - (u^0,v^0)(t)}_{\HH^1} \\
		&\quad \le C \underset{t\in[1,T_*]}{\sup}\; \norm{(u^L,v^L)(t) - (u^0,v^0)(t)}_{\LL^2}^{\frac 23} \left[ (C_* + C_0) (1+T_*)^{\frac 12} \right]^{\frac 13}.
	\end{align*}
	Invoking the convergence rate \eqref{cnvrg.rateLW} we directly conclude \eqref{CRATE:1} from the above estimate.
	The estimate \eqref{CRATE:2} can be established similarly. In particular, this proves \eqref{CONV} and thus, the proof is complete.
\end{proof}

We now intend to establish the following stability result which is the main result of this section. Here, the term ``stability'' is to be understood as semicontinuity of the family of perturbed global attractors.

\begin{theorem}[Stability of the global attractor $\mathcal A^0_m$]
	\label{uppercont.GMS}
	Suppose that the assumptions \eqref{ass:dom}-\eqref{ass:ana} hold and let $m\in\R$ be arbitrary. Then, the family of global attractors  $\{ \mathcal A_{m}^{L}\} _{L\geq0} $ is upper semicontinuous at $L=0$ in the sense of Definition~\ref{DEF:USA}.
\end{theorem}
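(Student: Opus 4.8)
The plan is to verify the abstract criterion for upper semicontinuity of global attractors stated in \cite[Thm.~7.2.8]{chueshov}, which reduces the claim to two ingredients: (a) a uniform (in $L$) dissipativity/compactness bound placing all attractors $\mathcal A_m^L$ inside a fixed compact subset of $\Wm$, and (b) a convergence property of the semigroups $S_m^L(t)\to S_m^0(t)$ as $L\to 0$, uniform in $t$ on compact time intervals and uniform on bounded (hence on the relevant compact) sets of initial data. Ingredient (a) is essentially already available: by the invariance property in Definition~\ref{def:att}(ii) and the smoothing estimate \eqref{SMOOTH:KLLM} (with the constant $C_*$ depending only on $\Omega$, $\beta$, $F$, $G$ and $\norm{(u_0,v_0)}_{\HH^1}$, \emph{not} on $L$), every $\mathcal A_m^L$ with $L\in(0,1]$ is contained in a fixed ball $\mathcal B$ of $\WW_{\beta,m}^3$, and the same bound holds for $\mathcal A_m^0$ by \eqref{SMOOTH:GMS}; since $\WW_{\beta,m}^3\emb\HH^2$ compactly, $\overline{\mathcal B}^{\HH^2}$ is the desired common compact attracting neighbourhood. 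To make the $\HH^1$-norm appearing in $C_*$ uniform over all these attractors we first note that any point of $\mathcal A_m^L$ lies on a complete bounded trajectory, so the energy inequality \eqref{ENERGY:KLLM}/\eqref{ENERGY:GMS} together with the growth assumptions \eqref{ass:pot} bounds its $\HH^1$-norm by a constant depending only on $\sup_{\mathcal A_m^L}E$, which is itself controlled uniformly once one knows $\mathcal N_m$ is bounded in $\Wmt$ (Lemma~\ref{LM.boundedstat}) — indeed $E$ attains its supremum over any attractor at a stationary point by the gradient structure (Lemma~\ref{lm:gradient} / \cite[Cor.~7.5.7]{chueshov}).

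Ingredient (b) is exactly Corollary~\ref{COR:CONV:GMS}: for any $T_*>1$ and any initial datum in a bounded set one has $\norm{S_m^L(t)(u_0,v_0)-S_m^0(t)(u_0,v_0)}_{\HH^2}\le C_2(T_*)L^{1/12}\to 0$, uniformly for $t\in[1,T_*]$ and uniformly over the bounded set (the constants $C_1,C_2$ depend only on $\norm{(u_0,v_0)}_{\HH^1}$, which is uniformly bounded on $\overline{\mathcal B}^{\HH^2}$). Combining (a) and (b), the hypotheses of \cite[Thm.~7.2.8]{chueshov} are met with $X=\HH^2$ (or, more precisely, with the phase space $\Wm$ and the metric induced by $\HH^2$, which is the topology in which the attractors are compact by Theorem~\ref{thm.glbatt}), and the theorem yields $\dist_{\HH^2}(\mathcal A_m^L,\mathcal A_m^0)\to 0$ as $L\to 0$, i.e. upper semicontinuity at $L=0$ in the sense of Definition~\ref{DEF:USA}.

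As a self-contained alternative (should one prefer not to quote the black-box theorem), I would argue by contradiction: if upper semicontinuity failed there would be $\delta>0$, a sequence $L_k\to 0$ and points $a_k\in\mathcal A_m^{L_k}$ with $\dist_{\HH^2}(a_k,\mathcal A_m^0)\ge\delta$. Each $a_k$ sits on a complete bounded trajectory of $S_m^{L_k}$, so for every $T>0$ we can write $a_k=S_m^{L_k}(T)(b_k^T)$ with $b_k^T\in\mathcal A_m^{L_k}\subset\overline{\mathcal B}^{\HH^2}$. By compactness (pass to a subsequence) $b_k^T\to b^T$ in $\HH^2$; by Corollary~\ref{COR:CONV:GMS} together with continuous dependence on data for the GMS semigroup (Proposition~\ref{PROP:CD:GMS}) one gets $a_k\to S_m^0(T)(b^T)=:a^T$ in $\HH^2$; a diagonal argument over $T=1,2,\dots$ produces a limit $a$ lying on a complete bounded trajectory of $S_m^0$ contained in $\overline{\mathcal B}^{\HH^2}$, hence $a\in\mathcal A_m^0$, contradicting $\dist_{\HH^2}(a,\mathcal A_m^0)\ge\delta$. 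The main obstacle in either route is purely bookkeeping: ensuring that all the constants ($C_*$ in the smoothing estimate, $C_2(T_*)$ in the convergence rate, and the energy bound controlling $\norm{\cdot}_{\HH^1}$ on the attractors) are genuinely independent of $L$ and depend on the initial data only through a quantity that is uniformly bounded over $\bigcup_{L\in[0,1]}\mathcal A_m^L$; once this uniformity is nailed down, the topological part is routine. I would single out the uniform-in-$L$ $\HH^1$-bound on the attractors (via the gradient structure and Lemma~\ref{LM.boundedstat}) as the one step deserving a careful written argument.
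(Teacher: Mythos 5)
Your proposal is correct and follows the same overall strategy as the paper: both verify the two hypotheses of the abstract criterion \cite[Thm.~7.2.8]{chueshov} (Proposition~\ref{PROP:CRIT}), using the $L$-independence of the smoothing constants and Corollary~\ref{COR:CONV:GMS} together with Proposition~\ref{PROP:CD:GMS} for the semigroup-convergence condition. The one place where you genuinely diverge is the construction of the common compact set: you bound $\bigcup_{L}\mathcal A_m^L$ directly, arguing via the gradient structure that $\sup_{\mathcal A_m^L}E\le\sup_{\mathcal N_m}E$ (uniform in $L$ by Lemma~\ref{LM.boundedstat}), hence a uniform $\HH^1$ bound via \eqref{est:h1}, hence a uniform $\HH^3$ bound by invariance and \eqref{SMOOTH:KLLM}/\eqref{SMOOTH:GMS}; the paper instead builds an $L$-uniform compact \emph{absorbing} set $\mathcal K_m$ as the closure of $\bigcup_{L}\bigcup_{t\ge1}S_m^L(t)\,\mathcal U$ for a bounded neighbourhood $\mathcal U$ of $\mathcal N_m$, and then traps each attractor inside it by invariance. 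Your route is leaner for Theorem~\ref{uppercont.GMS} alone; the paper's pays off later because the absorbing set $\mathcal K_m$ is reused in Section~\ref{SEC:EXP}. Two small points to tidy up: (1) phrasing the criterion "with $X=\HH^2$" is imprecise, since $\Wm\not\subset\HH^2$ and the semigroup is only defined on $\Wm$ — the clean choice is the $\HH^1$ metric (as in the paper), for which all the needed estimates \eqref{CRATE:1} and \eqref{CD:GMS:2} are available, and your stronger $\HH^2$ statement can then be recovered a posteriori by interpolation on the compact set; (2) in your main route you should state explicitly that condition (ii) requires splitting $S^{L_k}(t)x_k-S^0(t)x_*$ into a semigroup-perturbation term (Corollary~\ref{COR:CONV:GMS}) and a continuous-dependence term (Proposition~\ref{PROP:CD:GMS}), as you do only in your alternative contradiction argument.
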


To prove this theorem, we will exploit the following abstract result which can be found in \cite[Thm.~7.2.8]{chueshov}.

\begin{proposition}[A criterion for upper semicontinuity of global attractors]
	\label{PROP:CRIT}
	Let $X$ be a Banach space, and let $\Lambda$ be a metric space. 
	Suppose that for any $\lambda \in \Lambda$, $ (X,\{S^{\lambda}(t)\}_{t\ge 0}) $ is a dynamical system possessing a global attractor $\mathcal A^\lambda \subset X$.
	We further assume that the following conditions hold:%
	\begin{enumerate}[label = $\mathrm{(\roman*)}$]
		\item There exists a compact set $ K \subset X$ such that $\mathcal A^\lambda \subset K $ for all $\lambda\ge 0$.
		\item If $(x_k)_{k\in\N} \subset X$ and $ \lambda_{k} \in \Lambda$ are sequences satisfying
		\begin{itemize}
			\item $x_k \in \mathcal A^{\lambda_k}$ for all $k\in\N$,
			\item $x_k \to x_*$ as $k\to\infty$,
			\item $\lambda_k \to \lambda_*$ as $k\to\infty$,
		\end{itemize}
		then there exists $t_*>0$ such that $ S^{\lambda_{k}}(t)x_{k} \rightarrow S^{\lambda_*}(t)x_* $ in $X$ for all $t>t_*$.
	\end{enumerate}
	Then the family $ \{\mathcal A^\lambda \}_{\lambda\ge 0} $ is upper semicontinuous at the point $ \lambda_* $.
\end{proposition}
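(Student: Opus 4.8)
The plan is to argue by contradiction, using only the two defining features of a global attractor recorded in Definition~\ref{def:att}, namely invariance and the attraction of bounded sets, together with the two hypotheses. Suppose the family $\{\mathcal A^\lambda\}$ were \emph{not} upper semicontinuous at $\lambda_*$. Unwinding the definition of the Hausdorff semidistance $\dist_X$ in \eqref{DEF:SEMIDIST}, there would then exist a number $\eps>0$, a sequence $\lambda_k\to\lambda_*$, and points $x_k\in\mathcal A^{\lambda_k}$ with $\dist_X(x_k,\mathcal A^{\lambda_*})\ge\eps$ for all $k\in\N$. Hypothesis~(i) confines all $x_k$ to the $\lambda$-independent compact set $K$, so after passing to a subsequence we may assume $x_k\to x_*$ in $X$ with $x_*\in K$. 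Since $\mathcal A^{\lambda_*}$ is compact, hence closed, the map $y\mapsto\dist_X(y,\mathcal A^{\lambda_*})$ is continuous, whence $\dist_X(x_*,\mathcal A^{\lambda_*})\ge\eps$. The entire argument therefore reduces to the single assertion that $x_*\in\mathcal A^{\lambda_*}$, which would contradict this inequality.

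To place $x_*$ inside $\mathcal A^{\lambda_*}$, I would run the limit solution backwards in time. Invariance $S^{\lambda_k}(t)\mathcal A^{\lambda_k}=\mathcal A^{\lambda_k}$ from Definition~\ref{def:att}(ii) yields, for each $k$, a complete trajectory $\gamma_k\colon\R\to\mathcal A^{\lambda_k}\subset K$ with $\gamma_k(0)=x_k$ and $S^{\lambda_k}(t)\gamma_k(s)=\gamma_k(s+t)$ for all $s\in\R$ and $t\ge0$. Because every slice $\gamma_k(s)$ lies in the compact set $K$, a diagonal extraction over a countable dense set of times produces a subsequence along which $\gamma_k(s)\to\gamma_*(s)\in K$ for all rational $s$, with $\gamma_*(0)=x_*$. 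Fixing a base time $s_0$ and applying hypothesis~(ii) to the convergent data $\gamma_k(s_0)\to\gamma_*(s_0)$ and $\lambda_k\to\lambda_*$, I obtain a threshold $t_*(s_0)$ such that $S^{\lambda_k}(t)\gamma_k(s_0)\to S^{\lambda_*}(t)\gamma_*(s_0)$ for every $t>t_*(s_0)$; since the left-hand side equals $\gamma_k(s_0+t)\to\gamma_*(s_0+t)$, the limit trajectory satisfies the eventual semigroup relation $\gamma_*(s_0+t)=S^{\lambda_*}(t)\gamma_*(s_0)$ for all large $t$. If one can realise $x_*=\gamma_*(0)=S^{\lambda_*}(\tau)\gamma_*(-\tau)$ with $\gamma_*(-\tau)\in K$ for arbitrarily large $\tau$, then the attraction property of Definition~\ref{def:att}(iii) gives $\dist_X(x_*,\mathcal A^{\lambda_*})\le\dist_X\bigl(S^{\lambda_*}(\tau)K,\mathcal A^{\lambda_*}\bigr)\to0$ as $\tau\to\infty$, forcing $x_*\in\mathcal A^{\lambda_*}$ by closedness. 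The abstract fact isolating this final step is that any point representable as $S^{\lambda_*}(\tau_n)p_n$ with $p_n\in K$ and $\tau_n\to\infty$ must belong to $\mathcal A^{\lambda_*}$, which is exactly where attraction of the bounded set $K$ is used.

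I expect the coordination of two time scales to be the main obstacle. The threshold $t_*$ delivered by hypothesis~(ii) depends on the particular convergent sequence of initial data, whereas the attraction estimate only becomes effective beyond some attraction time; naively matching a single backward time $\tau$ to the attraction time re-introduces a sequence-dependent $t_*(\tau)$ and leads to the circular requirement $\tau>t_*(\tau)$. The device that breaks this circularity is precisely the passage to the full backward trajectories $\gamma_k$ together with one diagonal extraction: the convergence $\gamma_k(s)\to\gamma_*(s)$ is then simultaneously available at all rational times, the limit obeys the eventual semigroup relation from every base point, and every slice stays in the fixed compact set $K$. Exploiting that the attraction of $K$ holds at \emph{all} sufficiently large times — not merely one — one can then select a base point far enough in the past so that the eventual relation and the attraction estimate are jointly in force, keeping $\gamma_*(-\tau)$ within the $\lambda$-independent compact set $K$; this is the delicate interplay of invariance, compactness and attraction underlying \cite[Thm.~7.2.8]{chueshov}, and it is the step I would write out with the greatest care.
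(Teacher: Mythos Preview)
The paper does not supply its own proof of this proposition; it is quoted as an abstract result from \cite[Thm.~7.2.8]{chueshov} and then used as a black box in the proof of Theorem~\ref{uppercont.GMS}. So there is nothing in the paper to compare your argument against.

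Your contradiction framework and the reduction to showing $x_*\in\mathcal A^{\lambda_*}$ are exactly the standard route, and you are right that the sequence-dependent threshold $t_*$ is the only non-obvious point. However, the resolution you sketch does not actually close the gap. From the diagonal limit together with hypothesis~(ii) you obtain only the \emph{forward} relation $\gamma_*(s_0+t)=S^{\lambda_*}(t)\gamma_*(s_0)$ for $t>t_*(s_0)$; without backward uniqueness of $S^{\lambda_*}$ this does not let you write $x_*=\gamma_*(0)=S^{\lambda_*}(|s_0|)\gamma_*(s_0)$ unless $|s_0|>t_*(s_0)$, which is precisely the circular inequality you flagged. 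Combining the eventual relations at two base points $s_1<s_2$ only yields $S^{\lambda_*}(r)\big[\gamma_*(s_2)-S^{\lambda_*}(s_2-s_1)\gamma_*(s_1)\big]=0$ for large $r$, and you cannot cancel $S^{\lambda_*}(r)$. So the phrase ``select a base point far enough in the past so that the eventual relation and the attraction estimate are jointly in force'' hides, rather than solves, the difficulty.

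Note that in the paper's own verification of condition~(ii) (Step~2 of the proof of Theorem~\ref{uppercont.GMS}) the threshold is taken as $t_*=1$, uniformly in the sequences. With a uniform $t_*$ the argument is short and the circularity disappears: choose any $T>t_*$ with $\dist_X(S^{\lambda_*}(T)K,\mathcal A^{\lambda_*})<\eps$, use invariance to write $x_k=S^{\lambda_k}(T)y_k$ with $y_k\in K$, pass to a subsequence $y_k\to y_*$, and apply (ii) at the single fixed time $T$ to obtain $x_*=S^{\lambda_*}(T)y_*$, giving the contradiction. If you really want to cover a sequence-dependent $t_*$, you should follow Chueshov's argument directly rather than rely on the backward-trajectory heuristic.
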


\begin{proof}[Proof of Theorem~\ref{uppercont.GMS}]
	In order to apply Proposition~\ref{PROP:CRIT}, we need to verify the conditions (i) and (ii) imposed therein.
	
	\textit{Step 1:} To verify Condition (i), we show that there exists a compact set $\mathcal K_m \subset \Wm$ independent of $L$ such that $\mathcal A^L_m \subset \mathcal K_m$ for all $L\ge 0$.
	
	For the KLLM model ($L>0$), Lemma~\ref{LEM:LIMSET} implies that for every $(u_0,v_0)\in\Wm$, the set $\omega^L(u_0,v_0)$ is included in the set of stationary points. As discussed Subsection~\ref{LT:GMS}, the same holds true for the GMS model ($L=0$). We further recall that the set $\mathcal N_m$ is a bounded subset of $\Wmt$ which does not depend on the parameter $L$.
	Hence, we can choose an open, bounded set $\mathcal U \subset \HH^2$ such that for all $L\ge 0$,
	\begin{align*}
	\omega^L(u_0,v_0) \subseteq \mathcal N_m 
	\subset \mathcal U
	\end{align*}
	for all $(u_0,v_0)\in \Wm$.
	Consequently, for every $L\ge 0$ and every initial datum $(u_0,v_0)\in\Wm$, there exists a time
	$ t^L_{(u_0,v_0)} \ge 1$ such that 
	\begin{align}\label{pointdiss}
	S^L_{m}\left(t\right)(u_0,v_0)\in \mathcal U \quad \text{for all $ t\geq  t^L_{(u_0,v_0)} $.}
	\end{align}
	Now, we define the set 
	\begin{align}\label{absorb}
	\mathcal K_{m}
	:=  \overline{\underset{L \in[0,\infty)}{\bigcup} \; \underset{t\geq 1}{\bigcup} \; S^L_{m}\left(t\right) \mathcal U}^{\HH^1} .
	\end{align}
	Since $ \mathcal U $ is bounded in $\Wmt$ uniformly in $L\ge 0$, the smoothing property \eqref{SMOOTH:KLLM} of the KLLM model implies that $S^L_{m}\left(t\right) \mathcal U$ is bounded in $\HH^3$ uniformly in $L>0$ and $t\ge 1$. Moreover, using the smoothing property \eqref{SMOOTH:GMS} of the GMS model, we conclude that $S^{\infty}_{m}\left(t\right)\mathcal U$ is bounded in $\HH^3$ uniformly in $t\ge 1$. This entails that $ \mathcal K_{m}\subset \Wm $ is bounded in $\HH^{3}$, closed in $\HH^1$, and thus compact in $\Wm$.
	We further infer from \eqref{pointdiss} that $ \mathcal K_{m} $ is an \emph{absorbing set} for every semigroup $S^L_{m}(t)$ with $L\ge 0$ 
	(i.e., for every $L\ge 0$ and every bounded set $B\subset \Wm$, there exists a time $t_B^L\ge 1$ such that $S^L_m(t)B \subset \mathcal K_{m}$ for all $t\ge t_B^L$).
	Recall that for all $L\ge 0$, the global attractor $A_m^L$ is a bounded subset of $\Wm$. We thus conclude that there exists a time $t^L\ge 1$ such that 
	\begin{align*}
		\mathcal A_m^L = S_m^L(t^L) \mathcal A_m^L \subset \mathcal K_m
	\end{align*}
	due to the invariance property of the global attractor. 	
	This directly proves that $\mathcal A^L_m \subset \mathcal K_m$ for all $L\ge 0$.
	
	\textit{Step 2:} To verify Condition (ii) of Proposition~\ref{PROP:CRIT}, let $\{(u_k,v_k)\}_{k\in\N} \subset \Wm$ be any sequence with $(u_k,v_k) \in \mathcal A^{L_k}_m$ and $(u_k,v_k) \to (u_*,v_*)$ in $\HH^1$ as $k\to\infty$, and let $\{L_k\}_{k\in\N} \subset \RP$ be any sequence with $L_k\to 0$ as $k\to\infty$.
	
	Let now $t\ge t_*:=1$ and $\eps>0$ be arbitrary. Using the continuous dependence estimate from Proposition~\ref{PROP:CD:GMS}, we deduce that
	\begin{align}
		&\bignorm{S^{L_k}_m(t)(u_k,v_k) - S^0_m(t)(u_*,v_*)}_{\HH^1} \notag\\
		&\le \bignorm{S^{L_k}_m(t)(u_k,v_k) - S^0_m(t)(u_k,v_k)}_{\HH^1} 
			+ \bignorm{S^0_m(t)(u_k,v_k) - S^0_m(t)(u_*,v_*)}_{\HH^1} \notag \\
		\label{RHS}
		&\le \underset{(u,v)\in \mathcal{K}_{m}}{\sup} \;
		\bignorm{S^{L_k}_m(\cdot)(u,v) - S^0_m(\cdot)(u,v)}_{C([0,t];\HH^1)}  
			+ \Lambda_1^0(t) \bignorm{(u_k,v_k) - (u_*,v_*)}_{L,\beta,*}. 
	\end{align}
	Since $L_k\to 0$ as $k\to\infty$, and since $\mathcal{K}_{m}$ is bounded, Corollary~\ref{COR:CONV:GMS} implies the existence of a number $K_1\in\N$ such that for all $k\ge K_1$, the first summand in \eqref{RHS} is smaller than $\eps/2$. Furthermore, the convergence $(u_k,v_k) \to (u_*,v_*)$ in $\HH^1$ directly implies that 
	\begin{align*}
		\bignorm{(u_k,v_k) - (u_*,v_*)}_{L,\beta,*} \to 0 \quad\text{as $k\to\infty$}.
	\end{align*}
	Hence, there exists a number $K_2\in\N$ such that for all $k\ge K_2$, the second summand in \eqref{RHS} is smaller than $\eps/2$. In summary, we get
	\begin{align*}
		\bignorm{S^{L_k}_m(t)(u_k,v_k) - S^0_m(t)(u_*,v_*)}_{\HH^1} < \eps
		\quad \text{for all $k\ge K:=\max\{K_1,K_2\}$,}
	\end{align*}
	and since $\eps>0$ was arbitrary, this verifies Condition (ii) of Proposition~\ref{PROP:CRIT}.
	
	Eventually, Proposition~\ref{PROP:CRIT} can be applied on the family $\{\mathcal A^L_m\}_{L\ge 0}$ which proves the assertion of Theorem~\ref{uppercont.GMS}.
\end{proof}

\begin{remark} \label{REM:FAIL} \normalfont
	We point out that Proposition~\ref{PROP:CRIT} cannot be used to prove the stability of the global attractor associated with $L=\infty$. This because the semigroups associated with $L<\infty$ and the semigroup associated with $L=\infty$ would have to be defined on the same domain $X$. However, due to the different mass conservation law of the LW model, its solution operator $S^0_{(m_1,m_2)}$ would have to be defined on the linear subspace
	\begin{align*}
		\VV_{(m_1,m_2)} = \big\{ (u,v) \in \VV^1 \suchthat \mean{u}_\Omega = m_1 \;\text{and}\; \mean{v}_\Gamma = m_2\big\}.
	\end{align*}
	This entails that the only reasonable choice for $X$ in Proposition~\ref{PROP:CRIT} would be $X := \Wm \cap \VV_{(m_1,m_2)}$. Thus, $X$ is nonempty only if $m = \beta m_1 + m_2$. However, even in this case, we cannot ensure that $S^L_m X \subseteq X$ for any $L<\infty$, which means that $S^L_m$ does not define a semigroup on the space $X$. Consequently, Proposition~\ref{PROP:CRIT} is not applicable and thus, the results of Section~\ref{sec:contglbatt} cannot be transferred to the scenario $L\to\infty$.
	
	In particular, for the aforementioned reasons, we cannot find a compact set $\mathcal K_m$ which acts as an absorbing set the semigroup corresponding to $L=\infty$ but also for the semigroups associated with $L<\infty$. (Note that if $\mathcal K_m$ were a compact absorbing set, it also would have to absorb itself.) 
	As such a set $\mathcal K_m$ will play a crucial role in the subsequent section, it is also not possible to adapt the results of Section~\ref{SEC:EXP} to the situation $L\to\infty$.
\end{remark}

\section{Existence of a robust family of exponential attractors} \label{SEC:EXP}
In this section, for every $L\in [0,1]$, we intend to construct an exponential attractor for the dynamical system $(\WW^1_{\beta,m} , \{S^{L}_{m}\left(t\right)\}_{t\ge 0})$. We further show that the exponential attractor associated with $L=0$ is robust against perturbations $L>0$ in some certain sense.

We first recall the definition of exponential attractors (see, e.g., \cite[Def.~4.1]{EMZ} in the discrete case and \cite[Def.~7.4.4]{chueshov} in the continuous case).

\begin{definition}[Exponential attractors] \label{DEF:EXP:ATT}
	Let $ M $ be a metric space, let $I=[0,\infty)$ or $I=\N_0$, and let 
	$ \{S(i)\}_{i\ge  0} $ be a semigroup on $M$. Then, a compact set $ {\mathfrak{M}} \subset M $ is called an exponential attractor for the dynamical system $\big(M,\{S(i)\}_{i\in I}\big)$ if the following properties hold:
	\begin{enumerate}[label = $\mathrm{(\roman*)}$]
		\item The set $ {\mathfrak{M}} $ is compact in $M$ and has finite fractal dimension, i.e., $$\dim_{\mathrm{frac},M}(\mathfrak M)<\infty.$$
		\item The set $ {\mathfrak{M}} $ is forward invariant under the semigroup  $ \{S(i)\}_{i\ge  0} $, i.e.,  
		$$ S(i){{\mathfrak{M}}}\subset {{\mathfrak{M}}} 
		\quad\text{for all $i\in I$.}$$
		\item The set $ {\mathfrak{M}} $ is an exponentially attracting set for the dynamical system $\big(M,\{S(i)\}_{i\in I}\big)$, i.e., for every bounded subset $ B \subset M $, there exist constants $C,a>0$ such that
		\begin{align*}
		\dist_{M} \left( S(i)B,{{\mathfrak{M}}}\right) \leq C e^{-a i} \quad \text{for all $i\in I$.}
		\end{align*}
	\end{enumerate}
\end{definition}

\medskip

In the proof of Theorem~\ref{uppercont.GMS}, we obtained the existence of a compact absorbing set $\mathcal K_{m} $ (that does not depend on $L$) such that for all $L\ge 0$ and every bounded set $ B\subset \Wm $ there exists a time $ t_0^L(B) \ge 1$ such that
\begin{align*}
	S_{m}^{L}(t)B \subset \mathcal K_{m} \quad \text{for all $t\geq t_0^L(B)$.}
\end{align*}
As for any $t\ge 0$ the state $S_{m}^{L}(t)$ depends continuously on the parameter $L$, we infer that also the time $t_0^L(B)$ depends continuously on $L$. We thus obtain
\begin{align*}
	\bigcup_{L\in[0,1]} S_{m}^{L}(t)B \subset \mathcal K_{m} \quad \text{for all $t\geq t_0(B) := \underset{L\in[0,1]}{\max}\, t_0^L(B)$.}
\end{align*}
It is worth mentioning that the set $ \mathcal K_{m} $ might not be forward invariant under the semigroup $ \{S_{m}^{L}(t)\}_{t\geq0} $ for any $L\ge 0$. However, as $ \mathcal K_{m} $ is a bounded absorbing set, it also absorbs itself. Thus, defining $ T_{0} := t_0(\mathcal K_m)\ge 1$, we conclude that
\begin{align}
\label{EX:T0}
\bigcup_{L\in[0,1]}S_{m}^{L}(t)\mathcal K_{m} \subset\mathcal K_{m} \quad\text{for all $t\geq T_0$.}
\end{align}

The next step is to construct a family $\{\mathfrak M^L_D\}_{L\in[0,1]}$ of exponential attractors for the discrete dynamical systems  
\begin{align}
	\left( \mathcal K_{m}, \{D_{m}^{L}(n)\}_{n\in\N_0} \right),
	\quad\text{where}\quad
	D^L_m(n) := S^L_m(nT_0) \quad\text{for all $n\in\N_0$},
\end{align}
with $L\in[0,1]$. 
We further show that the exponential attractor associated with $L=0$ is robust against perturbations of the parameter $L$.
Eventually, we will extend these results to the continuous dynamical systems $ \big( \mathcal K_{m},  S_{m}^{L}(t)\big)  $ with $L\in[0,1]$. 

It obviously holds that for all $(u,v)\in\mathcal K_m$,
\begin{align}
	D^L_m(n)(u,v) 
	= \big[ \underbrace{ D^L_m(1) \circ ... \circ D^L_m(1)}_{\text{$n$ times}} \big] (u,v)
	=: \big[ D^L_m(1) \big]^n(u,v).
\end{align}
Therefore, the operator $D^L_m(1) = S^L_m(T_0)$ will play a crucial role in the analysis.

\subsection{Robust exponential attractors for discrete dynamical systems}

To prove the existence of a robust family of exponential attractors for the discrete dynamical system $(\mathcal K_{m}, \{D_{m}^{L}(n)\}_{n\in\N_0})$ we first present a simple generalization of the abstract result \cite[Thm.~4.4.]{EMZ}, which can be established by a rescaling argument.

\begin{lemma}[Robust exponential attractors for general discrete dynamical systems]
	\label{LEM:GEN:DISC}
	Suppose that $X$ and $Y$ are Banach spaces such that $Y$ is compactly embedded in $X$, and let $M$ be a bounded subset of $X$. Let $L_*>0$ be arbitrary. We further assume that there exists a family $\{\mathcal F^L\}_{L\in[0,L_*]}$ of operators $\mathcal F^L:M\to M\cap Y$ which satisfies the following assumptions.
	\begin{enumerate}[label = $\mathrm{(\roman*)}$]
		\item There exist a constant $\Lambda\ge 0$ such that for all $x_1,x_2\in M$ and all $L\in [0,L_*]$,
		\begin{align}\label{Lipcomp.LW}
			\bignorm{\mathcal F^L(x_1) - \mathcal F^L(x_2)}_Y 
			\le \Lambda \norm{x_1-x_2}_X \; .
		\end{align}
		\item There exists constants $\Theta>0$ and $\zeta\in(0,1]$ such that for all $x\in M$, $L\in [0,L_*]$ and $n\in\N_0$,
		\begin{align}\label{asymlmt.LW}
			\bignorm{\big[\mathcal F^L\big]^{n}(x) - \big[\mathcal F^0\big]^{n}(x)}_X
			\leq \Theta^n\, L^\zeta \; ,
		\end{align}
		where $\big[\mathcal F^L\big]^{n}$ denotes the $n$-fold composition $\mathcal F^L \circ ... \circ \mathcal F^L$.
	\end{enumerate}
	Then, for every $ L \in [0,L_*] $, there exists an exponential attractor $ \mathcal{M}^L \subset M$ for the discrete semigroup $(M,\{\mathcal D^L(n)\}_{n\in\N_0})$ with $\mathcal D^L(n):=[\mathcal F^L]^n$, $n\in\N_0$ in the sense of Definition~\ref{DEF:EXP:ATT}.
	 
	Moreover, these exponential attractors can be chosen in such a way that the following properties hold:
	\begin{enumerate}[label = $\mathrm{(\alph*)}$, ref = $\mathrm{(\alph*)}$ ]
		\item There exists constants $C_1>0$ and $0<\gamma<1$ such that for all $L\ge 0$,
		\begin{align}\label{tend.LW}
		\dist_{\mathrm{sym},X}(\mathcal{M}^L,\mathcal M^0)\leq C_1 L^{\zeta\gamma}.
		\end{align}
		\item The fractal dimension of $ \mathcal{M}^L $ is uniformly bounded, i.e., there exists a constant $C_2> 0$ such that for all $ L \in [0,L_{0}]$: 
		\begin{align}\label{expunibd.LW}
		\dim_{\mathrm{frac},X}(\mathcal{M}^L)\leq C_2.
		\end{align}
		\item There exist constants $C_3,A>0$ such that for all $L\in [0,L_*]$ and all $n\in\N_0$,
		\begin{align}
			\dist_X\big(\mathcal D^L(n) M\,,\, \mathcal M^L\big) \le C_3 e^{-A n},
		\end{align}
		meaning that the rate of convergence to these attractors is uniform in $L$.
	\end{enumerate}
\end{lemma}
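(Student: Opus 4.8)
The plan is to derive Lemma~\ref{LEM:GEN:DISC} from the known abstract theorem \cite[Thm.~4.4.]{EMZ} by a simple rescaling argument, so the real content is making the hypotheses of that theorem match up with \eqref{Lipcomp.LW}--\eqref{asymlmt.LW}. The theorem in \cite{EMZ} is usually stated for a family of maps which are \emph{uniform contractions up to a compact (smoothing) perturbation}, i.e.\ it requires a splitting $\mathcal F^L = \mathcal C^L + \mathcal K^L$ where $\mathcal C^L$ is a strict contraction on $M$ (with contraction constant $<\tfrac 12$, say) and $\mathcal K^L$ maps $M$ into a bounded set of the compactly embedded space $Y$, together with the perturbation estimate \eqref{asymlmt.LW}. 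Our hypothesis (i) is stronger: the \emph{whole} map $\mathcal F^L$ is Lipschitz from $X$ to $Y$ with constant $\Lambda$. The obstacle is that $\Lambda$ need not be small, so we cannot apply the theorem directly to the maps $\mathcal F^L$ — we first have to pass to a sufficiently high iterate.

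The first step is therefore to fix an integer $N\in\N$ large enough that $\Lambda$ times the norm of the embedding $Y\emb X$ (call it $C_{\mathrm{emb}}$), raised to the power $N$, is smaller than $\tfrac 14$; more precisely, choosing $N$ with $(C_{\mathrm{emb}}\Lambda)^{N-1}\le \tfrac14$ if $C_{\mathrm{emb}}\Lambda \ge 1$ (and $N=1$ otherwise). Then I would consider the family of maps $\widetilde{\mathcal F}^L := [\mathcal F^L]^N : M \to M\cap Y$. By iterating \eqref{Lipcomp.LW} and interleaving the embedding $Y\emb X$, one gets
\begin{align*}
	\bignorm{\widetilde{\mathcal F}^L(x_1) - \widetilde{\mathcal F}^L(x_2)}_Y
	\le \Lambda\, (C_{\mathrm{emb}}\Lambda)^{N-1}\, \norm{x_1-x_2}_X
	\le \tfrac14 \Lambda\, \norm{x_1-x_2}_X,
\end{align*}
so $\widetilde{\mathcal F}^L$ is Lipschitz $X\to Y$ with a constant we may regard as a (trivial) ``contraction + smoothing'' decomposition with $\mathcal C^L\equiv 0$ and $\mathcal K^L = \widetilde{\mathcal F}^L$; in particular the hypotheses of \cite[Thm.~4.4.]{EMZ} are satisfied for the discrete semigroup generated by $\widetilde{\mathcal F}^L$, since the smoothing constant is bounded uniformly in $L\in[0,L_*]$ and $M$ is bounded in $X$. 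The perturbation hypothesis for the rescaled family follows from \eqref{asymlmt.LW}: writing $n = N$ gives $\|\widetilde{\mathcal F}^L(x) - \widetilde{\mathcal F}^0(x)\|_X \le \Theta^N L^\zeta$, which is exactly a bound of the required form with exponent $\zeta$ and a constant $\Theta^N$ independent of $L$.

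Applying \cite[Thm.~4.4.]{EMZ} to $\{\widetilde{\mathcal F}^L\}_{L\in[0,L_*]}$ then produces, for each $L\in[0,L_*]$, an exponential attractor $\mathcal M^L$ for the discrete dynamical system $(M,\{[\widetilde{\mathcal F}^L]^k\}_{k\in\N_0}) = (M,\{[\mathcal F^L]^{Nk}\}_{k\in\N_0})$, with uniformly bounded fractal dimension, with symmetric-Hausdorff distance $\dist_{\mathrm{sym},X}(\mathcal M^L,\mathcal M^0)\le C_1 L^{\zeta\gamma}$ for some $\gamma\in(0,1)$ and $C_1$ independent of $L$ (this is precisely the robustness conclusion of that theorem, with $L^\zeta$ playing the role of the perturbation parameter), and with a uniform exponential attraction rate $\dist_X([\widetilde{\mathcal F}^L]^k M,\mathcal M^L)\le C_3 e^{-A k}$. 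The last step is to upgrade this to an exponential attractor for the \emph{full} discrete semigroup $\mathcal D^L(n) = [\mathcal F^L]^n$, which only iterates one step at a time rather than in blocks of $N$. For this I would enlarge the attractor by setting
\begin{align*}
	\widehat{\mathcal M}^L := \bigcup_{j=0}^{N-1} [\mathcal F^L]^j\big(\mathcal M^L\big);
\end{align*}
since each $[\mathcal F^L]^j$ is (Lipschitz, hence) continuous and $\mathcal M^L$ is compact, $\widehat{\mathcal M}^L$ is a finite union of compact sets, so it is compact with $\dim_{\mathrm{frac},X}\widehat{\mathcal M}^L \le \max_j \dim_{\mathrm{frac},X}[\mathcal F^L]^j(\mathcal M^L) = \dim_{\mathrm{frac},X}\mathcal M^L$ (Lipschitz images do not increase fractal dimension), which is uniformly bounded; it is forward invariant under $\mathcal D^L(1)=\mathcal F^L$ by construction together with $[\mathcal F^L]^N\mathcal M^L\subset\mathcal M^L\subset\widehat{\mathcal M}^L$; it still attracts $M$ exponentially under $\mathcal D^L(n)$ because for $n = Nk + j$ we have $\dist_X(\mathcal D^L(n)M,\widehat{\mathcal M}^L) \le \dist_X([\mathcal F^L]^j[\widetilde{\mathcal F}^L]^k M, [\mathcal F^L]^j\mathcal M^L) \le (C_{\mathrm{emb}}\Lambda)^{j}\, C_3 e^{-Ak}$, which decays like $e^{-An/N}$ up to a constant depending only on $N,\Lambda,C_{\mathrm{emb}},C_3$; and finally the robustness estimate is preserved, since $\dist_{\mathrm{sym},X}(\widehat{\mathcal M}^L,\widehat{\mathcal M}^0)$ is controlled by $\max_j \dist_{\mathrm{sym},X}([\mathcal F^L]^j\mathcal M^L, [\mathcal F^0]^j\mathcal M^0)$, and each term is bounded by combining the Lipschitz dependence of $[\mathcal F^L]^j$ on its argument (constant $(C_{\mathrm{emb}}\Lambda)^j$), the perturbation bound \eqref{asymlmt.LW} for the $j$ steps, and $\dist_{\mathrm{sym},X}(\mathcal M^L,\mathcal M^0)\le C_1 L^{\zeta\gamma}$ — yielding a bound of the form $C_1' L^{\zeta\gamma}$ with possibly adjusted constant and the \emph{same} exponent $\zeta\gamma$ (shrinking $\gamma$ slightly if necessary to absorb the extra $L^\zeta$ terms, which is harmless since $L\le L_*$). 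Renaming $\widehat{\mathcal M}^L$ back to $\mathcal M^L$ gives the claimed family. The main obstacle in writing this cleanly is purely bookkeeping: tracking how the Lipschitz constants, the perturbation constants $\Theta^j$, and the geometric factors $(C_{\mathrm{emb}}\Lambda)^j$ combine over the at most $N$ ``leftover'' iterates so that all resulting constants remain independent of $L\in[0,L_*]$ — there is no analytical difficulty, only the need to be careful that the exponent $\zeta\gamma$ in \eqref{tend.LW} survives the passage from the block semigroup to the one-step semigroup.
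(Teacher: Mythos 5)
The first step of your argument fails: you cannot make the Lipschitz constant small by iterating. You propose to choose $N$ with $(C_{\mathrm{emb}}\Lambda)^{N-1}\le \tfrac14$, but if $C_{\mathrm{emb}}\Lambda\ge 1$ (the generic case -- in the paper's application $\Lambda=\Lambda(T_0)$ is a large constant) no such $N$ exists, since powers of a number $\ge 1$ do not decrease; iterating a map that is merely Lipschitz, not contractive, from $X$ to itself only inflates the constant. So the reduction to $\widetilde{\mathcal F}^L=[\mathcal F^L]^N$ collapses exactly in the situation it was designed for, and with it the whole subsequent block-to-one-step construction (that part is a standard and essentially correct bookkeeping argument, but it is built on an attractor for the block semigroup that you have not actually obtained).

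The deeper problem is that the smallness you try to manufacture is not required by the cited abstract result, so the detour is based on a misreading of its hypotheses. The version of \cite[Thm.~4.4]{EMZ} invoked in the paper takes as assumptions precisely the uniform smoothing estimate \eqref{Lipcomp.LW} with an \emph{arbitrary} constant $\Lambda$ (the compact embedding $Y\emb X$ does all the work in the covering argument) together with the closeness of iterates \eqref{asymlmt.LW} for exponent $1$; and even in the ``contraction plus smoothing'' formulations you describe, only the contraction part must have a small constant, so the trivial splitting $\mathcal C^L\equiv 0$, $\mathcal K^L=\mathcal F^L$ satisfies the hypotheses at once and no passage to iterates is needed. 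The paper's proof is accordingly short: for $\zeta=1$ apply the theorem directly, read off property (c) from the explicit bound $\dist_X([\mathcal F^L]^n M,\mathcal M^L)\le 2^{1-n}R$ established inside its proof (it is not part of the stated conclusion), and handle $\zeta\in(0,1)$ either by slightly modifying that proof or by the change of variables $L\mapsto K:=L^{\zeta}$. Your proposal glosses over this last point by asserting that the bound $\Theta^N L^{\zeta}$ ``is of the required form'', whereas the cited theorem is stated for a linear dependence on the perturbation parameter, which is exactly what the rescaling is there to fix.
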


\medskip

\begin{proof}
	We first assume that (ii) holds with $\zeta=1$. In this case, the assertions of Lemma~\ref{LEM:GEN:DISC} have already been established in \cite[Thm.~4.4.]{EMZ}. We point out that the assertion (c) is not explicitly stated in \cite[Thm.~4.4.]{EMZ} but follows directly from its proof, in which the authors showed that
	\begin{align*}
		\dist_X\big(\mathcal D^L(n) M\,,\, \mathcal M^L\big)
		= \dist_X\big([\mathcal F^L]^n M\,,\, \mathcal M^L\big)
		\le 2^{1-n} R
		\quad\text{for all $n\in\N_0$.}
	\end{align*}
	Here, $R>0$ denotes the radius of a fixed ball $B_R(x_0)$ in $X$ with some center $x_0\in M$ such that $M\subset B_R(x_0)$. Hence, choosing $C_3 = 2R$ and $A=\ln(2)$, we obtain (c).
	
	Let us now assume that (ii) holds with $\zeta\in(0,1)$. In this case, the assertions can be established by slightly modifying the proof of \cite[Thm.~4.4.]{EMZ}. Alternatively, the change of variables $L\mapsto K:=L^\zeta$ could be used to transform back to the already known situation $\zeta=1$.
\end{proof}

We now want to apply this abstract result on the discrete dynamical system $(\mathcal K_m,\{D^L_m(n)\}_{n\in\N_0})$. To this end, we first need a compact Lipschitz estimate that is uniform in $L\in[0,1]$.

\begin{lemma}[Uniform compact Lipschitz estimate]\label{LEM:CLE}
	Suppose that \eqref{ass:dom}--\eqref{ass:comp} hold, and let $m\in\R$ and $L\in[0,1]$ be arbitrary.
	Moreover, for any $i\in\{1,2\}$, let $(u_{0,i},v_{0,i}) \in \Wm $ be an arbitrary initial datum,
	and let $(u_i^L,v_i^L,\mu_i^L,\theta_i^L)$ denote the corresponding weak solution of the system \eqref{CH:INT.}. 
	
	Then there exists a positive, non-decreasing function $\Lambda\in C(\RP)$ depending only on $\Omega$, $\beta$, $F$ and $G$, such that for all $L\in[0,1]$,
	\begin{align*}
		\bignorm{\big(u^L_{2},v_2^L\big)(t) - \big(u^L_{1},v_1^L\big)(t)}_{\HH^1} 
		\le \Lambda(t) \bignorm{(u_{0,2},v_{0,2})-(u_{0,1},v_{0,1})}_{(\HH^1)'}\,,
		\quad\text{for all $t\ge 1.$} 
	\end{align*}
\end{lemma}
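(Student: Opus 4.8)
The plan is to obtain the estimate by assembling three ingredients that are already available: the continuous-dependence results of Proposition~\ref{PROP:CD:KLLM} and Proposition~\ref{PROP:CD:GMS}, and the $L$-uniform norm comparison of Lemma~\ref{LEM:LBS}. The whole point is that none of the constants occurring in these ingredients depend on $L$.

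\textbf{Step 1: the difference of initial data lies in $\Wo$.} Since $(u_{0,1},v_{0,1})$ and $(u_{0,2},v_{0,2})$ both belong to $\Wm$, they carry the same prescribed mass $m$, so their difference $(\bar u_0,\bar v_0) := (u_{0,2},v_{0,2}) - (u_{0,1},v_{0,1})$ satisfies $\beta\abs{\Om}\mean{\bar u_0}_\Om + \abs{\Ga}\mean{\bar v_0}_\Ga = 0$; together with $(\bar u_0,\bar v_0)\in\VV^1$ this shows $(\bar u_0,\bar v_0)\in\Wo$. Hence Lemma~\ref{LEM:LBS} is applicable and provides a constant $C=C(\beta,\Om)>0$ with
\[
	\norm{(\bar u_0,\bar v_0)}_{L,\beta,*} \le C\, \norm{(\bar u_0,\bar v_0)}_{(\HH^1)'}
	\qquad\text{for all } L\in[0,1].
\]
This is precisely the step that makes the whole estimate uniform in $L$.

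\textbf{Step 2: invoke the continuous-dependence estimates and take a maximum.} For $L\in(0,1]$, estimate \eqref{CD:KLLM:2} of Proposition~\ref{PROP:CD:KLLM} gives, for all $t\ge 1$,
\[
	\bignorm{\big(u^L_{2},v_2^L\big)(t) - \big(u^L_{1},v_1^L\big)(t)}_{\HH^1} \le \Lambda_1^*(t)\, \norm{(\bar u_0,\bar v_0)}_{L,\beta,*},
\]
while for $L=0$ the analogous estimate \eqref{CD:GMS:2} of Proposition~\ref{PROP:CD:GMS} gives the same bound with $\Lambda_1^0$ in place of $\Lambda_1^*$; both $\Lambda_1^*$ and $\Lambda_1^0$ are positive, non-decreasing, continuous functions on $\RP$ depending only on $\Om$, $\beta$, $F$ and $G$. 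Combining this with Step~1 and setting $\Lambda(t) := C\,\max\{\Lambda_1^*(t),\Lambda_1^0(t)\}$ — again positive, non-decreasing, continuous and independent of $L$ — yields
\[
	\bignorm{\big(u^L_{2},v_2^L\big)(t) - \big(u^L_{1},v_1^L\big)(t)}_{\HH^1} \le \Lambda(t)\, \norm{(\bar u_0,\bar v_0)}_{(\HH^1)'}
	\qquad\text{for all } t\ge 1,\ L\in[0,1],
\]
which is the claim.

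\textbf{Main obstacle.} There is no genuinely hard analytic step here; the lemma is a synthesis of facts proved earlier. The only point requiring care is to make sure that the functions $\Lambda_1^*$ and $\Lambda_1^0$ produced in the proofs of Proposition~\ref{PROP:CD:KLLM} and Proposition~\ref{PROP:CD:GMS} carry no hidden dependence on $L$. This is indeed the case, since those proofs use only the $L$-uniform interpolation inequalities of Lemma~\ref{LEM:INT} and Corollary~\ref{COR:INT}, the smoothing properties \eqref{SMOOTH:KLLM}/\eqref{SMOOTH:GMS}, and Gronwall's lemma — together with Lemma~\ref{LEM:LBS} in Step~1, these are exactly the places where the difficult work of decoupling the constants from $L$ was already carried out.
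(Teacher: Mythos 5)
Your proof is correct and follows essentially the same route as the paper: combining the $L$-independent continuous-dependence estimates \eqref{CD:KLLM:2} and \eqref{CD:GMS:2} with Lemma~\ref{LEM:LBS} and setting $\Lambda(t)=C\max\{\Lambda_1^*(t),\Lambda_1^0(t)\}$. Your Step~1, checking that the difference of the initial data lies in $\Wo$ so that Lemma~\ref{LEM:LBS} applies, is a detail the paper leaves implicit but is correctly handled here.
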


\begin{proof}
	Recall that the functions $\Lambda_1^*$ in Proposition~\ref{PROP:CD:KLLM} and $\Lambda_1^0$ in Proposition~\ref{PROP:CD:GMS} are independent of $L$. Along with Lemma~\ref{LEM:LBS}, we obtain
	\begin{align*}
		\bignorm{\big(u^L_{2},v_2^L\big)(t) - \big(u^L_{1},v_1^L\big)(t)}_{\HH^1}
		&\le  \max\{\Lambda^*_1(t),\Lambda^0_1(t)\} \, \bignorm{(u_{0,2},v_{0,2})-(u_{0,1},v_{0,1})}_{L,\beta,*}\\
		&\le  C \max\{\Lambda^*_1(t),\Lambda^0_1(t)\} \, \bignorm{(u_{0,2},v_{0,2})-(u_{0,1},v_{0,1})}_{(\HH^1)'},
	\end{align*}
	for all $L\in[0,1]$ and all $t\ge 1$. This directly proves the claim.
\end{proof}

\medskip

Lemma~\ref{LEM:GEN:DISC} and Lemma~\ref{LEM:CLE} can now be used to establish the following result.

\begin{corollary}[Robust exponential attractors for $(\mathcal K_m,\{D^L_m(n)\}_{n\in\N_0})$]
	\label{COR:REA:DIST}
	Suppose that the assumptions \eqref{ass:dom}-\eqref{ass:comp} are satisfied and let $m\in\R$ be arbitrary. 
	
	Then, for every $ L \in[0,1]$, there exists an exponential attractor $ {\mathfrak M}^{L}_D \subset \mathcal K_m$ for the dynamical system $( \mathcal K_{m}, D_{m}^{L}(n))_{n\in\N_0} $ in the sense of Definition~\ref{DEF:EXP:ATT}, where $M=\mathcal K_m$ is to be understood as a metric subspace of $X=(\HH^1)'$.

	Moreover, these exponential attractors can be chosen in such a way that the following properties hold:
	\begin{enumerate}[label = $\mathrm{(\alph*)}$, ref = $\mathrm{(\alph*)}$ ]
		\item The fractal dimension of $ \mathfrak{M}^L_D $ is uniformly bounded, i.e., there exists a constant $c_1\ge 0$ such that for all $L \in[0,1]$: 
		\begin{align}
		\dim_{\mathrm{frac},{(\HH^1)'}}(\mathfrak{M}^L_D)\leq c_{1}.
		\end{align}
		\item There exist constants $c_2>0$ and $0<\gamma<1$ such that for all $L \in[0,1]$,
		\begin{align}
		\dist_{\mathrm{sym},{(\HH^1)'}}(\mathfrak{M}^L_D,\mathfrak M^0_D)\leq c_{2} L^{\gamma/4}.
		\end{align}
		\item There exist constants $c_3,a >0$ such that for all $L \in[0,1]$ and all $n\in\N_0$,
		\begin{align}
			\dist_{{(\HH^1)'}}\big(D^L(n)\mathcal K_m\,,\,\mathfrak M^L_D\big) \le c_3 e^{-a n},
		\end{align}
		meaning that the rate of convergence to these attractors is uniform in $L$.
	\end{enumerate} 
\end{corollary}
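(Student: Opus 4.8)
The strategy is to verify the hypotheses of the abstract Lemma~\ref{LEM:GEN:DISC} with the choices $X = (\HH^1)'$, $Y = \HH^1$, $M = \mathcal K_m$, $L_* = 1$, and $\mathcal F^L := D^L_m(1) = S^L_m(T_0)$. Recall that $\HH^1$ is compactly embedded in $(\HH^1)'$ (this follows from the compact embedding $\HH^1 \emb \LL^2 \emb (\HH^1)'$), and that $\mathcal K_m$ is a bounded subset of $(\HH^1)'$ since it is even bounded in $\HH^3$ by construction (see Step~1 in the proof of Theorem~\ref{uppercont.GMS}). Moreover, \eqref{EX:T0} guarantees that $\mathcal F^L(\mathcal K_m) = S^L_m(T_0)\mathcal K_m \subset \mathcal K_m$, and the smoothing property \eqref{SMOOTH:KLLM} (together with \eqref{SMOOTH:GMS} for $L=0$) ensures $\mathcal F^L(\mathcal K_m) \subset \mathcal K_m \cap \HH^1$, since $T_0 \ge 1$. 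Thus the abstract framework applies provided we can check the two quantitative assumptions (i) and (ii) of Lemma~\ref{LEM:GEN:DISC}.

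\textbf{Verification of the uniform Lipschitz estimate (i).} For any $(u_{0,1},v_{0,1}), (u_{0,2},v_{0,2}) \in \mathcal K_m$, we apply Lemma~\ref{LEM:CLE} at the time $t = T_0 \ge 1$. This yields
\begin{align*}
	\bignorm{\mathcal F^L(u_{0,2},v_{0,2}) - \mathcal F^L(u_{0,1},v_{0,1})}_{\HH^1}
	\le \Lambda(T_0)\, \bignorm{(u_{0,2},v_{0,2}) - (u_{0,1},v_{0,1})}_{(\HH^1)'}
\end{align*}
for all $L \in [0,1]$, with $\Lambda(T_0)$ independent of $L$. Hence assumption (i) of Lemma~\ref{LEM:GEN:DISC} holds with $\Lambda := \Lambda(T_0)$.

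\textbf{Verification of the asymptotic perturbation estimate (ii).} We must control $\bignorm{[\mathcal F^L]^n(x) - [\mathcal F^0]^n(x)}_{(\HH^1)'}$ uniformly over $x \in \mathcal K_m$ by $\Theta^n L^\zeta$ for suitable $\Theta > 0$ and $\zeta \in (0,1]$. The natural route is a telescoping argument: writing the difference $[\mathcal F^L]^n - [\mathcal F^0]^n$ as a sum of $n$ terms, each of the form $[\mathcal F^0]^{j} \big( \mathcal F^L - \mathcal F^0 \big) [\mathcal F^L]^{n-1-j}$, and estimating each term by combining (a) the one-step perturbation estimate for $\mathcal F^L - \mathcal F^0 = S^L_m(T_0) - S^0_m(T_0)$ and (b) the Lipschitz continuity of $\mathcal F^0 = S^0_m(T_0)$ in a suitable norm. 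For the one-step perturbation estimate, we invoke Corollary~\ref{COR:CONV:GMS}, in particular the convergence rate \eqref{CRATE:1}, which gives $\bignorm{S^L_m(T_0)(u,v) - S^0_m(T_0)(u,v)}_{\HH^1} \le C_1(T_0)\, L^{1/6}$ for all $(u,v)$ in the bounded set $\mathcal K_m$, with $C_1(T_0)$ independent of $L$; since $\HH^1 \emb (\HH^1)'$ continuously, this controls the $(\HH^1)'$-norm as well. For the propagation of the error, we use Lemma~\ref{LEM:CLE} again to obtain Lipschitz continuity of $S^0_m(T_0)$ from $(\HH^1)'$ to $\HH^1$ (in fact, one iteration suffices since all relevant points already lie in $\mathcal K_m$ and hence in $\HH^1$), which gives a Lipschitz constant $\Lambda(T_0)$ per step. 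Telescoping then yields a geometric accumulation $\sum_{j=0}^{n-1} \Lambda(T_0)^{j} \cdot C_1(T_0) L^{1/6} \le n\, \max\{1,\Lambda(T_0)\}^{n-1} C_1(T_0)\, L^{1/6} \le \Theta^n L^{1/6}$ for a suitable $\Theta > 0$ (absorbing the polynomial factor $n$ into the exponential by enlarging $\Theta$). Thus assumption (ii) holds with $\zeta = 1/6$.

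\textbf{Conclusion and the main obstacle.} With (i) and (ii) verified, Lemma~\ref{LEM:GEN:DISC} directly furnishes, for each $L \in [0,1]$, an exponential attractor $\mathfrak M^L_D \subset \mathcal K_m$ for the discrete dynamical system $(\mathcal K_m, \{D^L_m(n)\}_{n\in\N_0})$ viewed as a metric subspace of $X = (\HH^1)'$, together with: the uniform fractal-dimension bound of assertion~(a) from part~(b) of Lemma~\ref{LEM:GEN:DISC}; the semicontinuity estimate $\dist_{\mathrm{sym},(\HH^1)'}(\mathfrak M^L_D, \mathfrak M^0_D) \le c_2 L^{\zeta\gamma} = c_2 L^{\gamma/6}$ of assertion~(b) from part~(a)—here one notes that any exponent of the form $L^{\gamma/6}$ can be rewritten as $L^{\gamma'/4}$ for a suitably smaller $\gamma' \in (0,1)$, which gives the stated form $c_2 L^{\gamma/4}$; and the uniform convergence rate of assertion~(c) from part~(c). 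The main obstacle is the careful bookkeeping in the telescoping argument for (ii): one must ensure that at every intermediate stage the iterates remain inside $\mathcal K_m$ (so that the uniform bounds of Corollary~\ref{COR:CONV:GMS} and Lemma~\ref{LEM:CLE} are applicable at each step), and that the per-step Lipschitz constant is taken with respect to norms that are consistent along the chain—mixing the $\HH^1$-output of $\mathcal F^L$ with the $(\HH^1)'$-input norm of the next factor—so that the geometric sum closes up cleanly. This is precisely why $T_0 \ge 1$ and the absorbing property \eqref{EX:T0} are essential: they guarantee $\mathcal F^L$ maps $\mathcal K_m$ into $\mathcal K_m$ and simultaneously regularizes into $\HH^1$, which is exactly the structure the abstract lemma demands.
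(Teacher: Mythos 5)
Your proof is correct and follows the paper's overall strategy: both apply Lemma~\ref{LEM:GEN:DISC} with $X=(\HH^1)'$, $M=\mathcal K_m$, $\mathcal F^L=D^L_m(1)=S^L_m(T_0)$, and both verify condition~(i) identically via Lemma~\ref{LEM:CLE} at $t=T_0\ge 1$. Where you genuinely diverge is in condition~(ii). The paper does not telescope: it observes that $D^L_m(n)=S^L_m(nT_0)$ and applies Lemma~\ref{LEM:CONV:GMS} directly with $T_*=nT_0$, so that the $\LL^2$-estimate $A e^{B nT_0} L^{1/4}$, combined with the embedding $\LL^2\emb(\HH^1)'$, immediately gives \eqref{asymlmt.LW} with $\zeta=1/4$ and $\Theta=(C+1)e^{CT_0}$ --- the exponential accumulation in $n$ is already encoded in the Gronwall constant $e^{BT_*}$. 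Your telescoping decomposition, with the one-step perturbation bound and the per-step Lipschitz propagation from Lemma~\ref{LEM:CLE}, is a valid and standard alternative; the bookkeeping you flag (iterates staying in $\mathcal K_m$ via \eqref{EX:T0}, and converting between $\HH^1$-output and $(\HH^1)'$-input via the continuous embedding) all goes through. The only cost is that you invoke the $\HH^1$-rate \eqref{CRATE:1} of Corollary~\ref{COR:CONV:GMS} for the one-step error, which yields $\zeta=1/6$ rather than $\zeta=1/4$; since the target norm is $(\HH^1)'$, you could equally well have used the $\LL^2$-rate $L^{1/4}$ of Lemma~\ref{LEM:CONV:GMS} for a single step and recovered the paper's exponent. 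As you correctly note, this loss is immaterial for the statement, because assertion~(b) only claims the existence of \emph{some} $\gamma\in(0,1)$, and $L^{\gamma/6}=L^{\gamma'/4}$ with $\gamma'=2\gamma/3\in(0,1)$.
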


\begin{proof}
	To prove the assertion we intend to apply Lemma~\ref{LEM:GEN:DISC} with $Y=\Wm$, $X=(\HH^1)'$, $M=\mathcal K_m$, $\mathcal F^L = D^L_m(1)$ and $\mathcal D^L = D^L_m$. This means that only the conditions (i) and (ii) of Lemma~\ref{LEM:GEN:DISC} need to be verified.
	
	\textit{Condition (i):}
	We first recall that $D^L_m(1) = S^L_m(T_0)$ with $T_0\ge 1$ as constructed in \eqref{EX:T0}. Hence, Lemma~\ref{LEM:CLE} implies that
	\begin{align*}
		\bignorm{D^L_m(1)(u_2,v_2) - D^L_m(1)(u_1,v_1)}_{\HH^1} 
		&\le \Lambda(T_0) \bignorm{(u_2,v_2) - (u_1,v_1)}_{(\HH^1)'}
	\end{align*}
	for all $L\in[0,1]$.
	This verifies condition (i) of Lemma~\ref{LEM:GEN:DISC}.
	
	\textit{Condition (ii):}
	Furthermore, using the continuous embedding $\LL^2\emb (\HH^1)'$ and applying Lemma~\ref{LEM:CONV:GMS} with $T_*=nT_0$, we infer that for all $L\in [0,1]$ and $(u,v)\in \mathcal K_m$,
	\begin{align*}
	&\bignorm{D^L_m(n)(u,v) - D^0_m(n)(u,v)}_{(\HH^1)'} 
	\le C \bignorm{D^L_m(n)(u,v) - D^0_m(n)(u,v)}_{\LL^2} \\
	&\quad \le C e^{Cn T_0}\, L^{\frac{1}{4}}
	\le \big[ (C+1) e^{C T_0} \big]^n \, L^{\frac{1}{4}}.
	\end{align*}
	Here, $C$ is a positive constant independent of $L$.
	Hence, condition (ii) of Lemma~\ref{LEM:GEN:DISC} with $\zeta=1/4$ and $\Theta=(C+1) e^{C T_0}$ is fulfilled.
	
	This allows us to apply Lemma~\ref{LEM:GEN:DISC} as described above which directly proves the assertions and thus, the proof is complete.
\end{proof}

\subsection{Robust exponential attractors for the continuous dynamical system\mbox{}}

We are now ready to present the main result of this section, which is the existence of a robust family of exponential attractors for the continuous dynamical system $(\mathcal K_m,\{S^L_m(t)\}_{t\ge 0})$.

\begin{theorem}[Robust exponential attractors for $(\mathcal K_m,\{S^L_m(t)\}_{t\ge 0})$]
	Suppose that the assumptions \eqref{ass:dom}-\eqref{ass:comp} are satisfied, and let $m\in\R$ be arbitrary. 
	
	Then, for every $ L \in [0,1]$, there exists an exponential attractor $ {\mathfrak M}^{L} \subset \mathcal K_m$ for the dynamical system $(\mathcal K_m,\{S^L_m(t)\}_{t\ge 0})$ in the sense of Definition~\ref{DEF:EXP:ATT}, where $M=\mathcal K_{m}$ is to be understood as a metric subspace of $\HH^1$.
			
	Moreover, these exponential attractors can be chosen in such a way that the following properties hold:
	\begin{enumerate}[label = $\mathrm{(\alph*)}$, ref = $\mathrm{(\alph*)}$ ]
		\item The fractal dimension of $ \mathfrak{M}^L $ is uniformly bounded, i.e., there exists a constant $C_1\ge 0$ such that for all $ L \in [0,1]$: 
		\begin{align}
		\dim_{\mathrm{frac},\HH^1}(\mathfrak{M}^L)\leq C_{1}.
		\end{align}
		\item There exist constants $C_2>0$ and $0<\gamma<1$ such that for all $L\in [0,1]$,
		\begin{align}
		\dist_{\mathrm{sym},\HH^1}(\mathfrak{M}^L,\mathfrak M^0)
		\leq C_{2} \big( L^{1/6} + L^{\gamma/4}).
		\end{align}
		\item There exists constants $C_3,\alpha >0$ such that for all $L\in [0,1]$ and all $t\ge 0$,
		\begin{align}
		\dist_{\HH^1}\big(S^L_m(t)\mathcal K_m,\mathfrak M^L\big) \le C_3\, e^{-\alpha t}, 
		\end{align}
		meaning that the rate of convergence to these attractors is uniform in $L$.
	\end{enumerate}  
\end{theorem}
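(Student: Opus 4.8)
The strategy is the standard "discrete-to-continuous" passage for exponential attractors, carried out uniformly in $L\in[0,1]$. We already have from Corollary~\ref{COR:REA:DIST} a family $\{\mathfrak M^L_D\}_{L\in[0,1]}$ of exponential attractors for the discrete dynamical systems $(\mathcal K_m,\{D^L_m(n)\}_{n\in\N_0})$, where $D^L_m(n)=S^L_m(nT_0)$, with uniform fractal dimension bound, uniform symmetric-distance estimate $\dist_{\mathrm{sym},(\HH^1)'}(\mathfrak M^L_D,\mathfrak M^0_D)\le c_2 L^{\gamma/4}$, and uniform exponential attraction rate, all measured in the weak norm $(\HH^1)'$. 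The plan is to define
\begin{align}
\label{PLAN:DEF}
\mathfrak M^L := \bigcup_{t\in[T_0,2T_0]} S^L_m(t)\,\mathfrak M^L_D,
\end{align}
and to verify the three defining properties of Definition~\ref{DEF:EXP:ATT} together with (a)--(c), but now with respect to the stronger norm $\HH^1$.

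First I would check \textbf{forward invariance and compactness}. Forward invariance of $\mathfrak M^L$ under $\{S^L_m(t)\}_{t\ge 0}$ follows from the semigroup property and the forward invariance $D^L_m(1)\mathfrak M^L_D\subset\mathfrak M^L_D$ (equivalently $S^L_m(T_0)\mathfrak M^L_D\subset\mathfrak M^L_D$): for $t\ge 0$ and $s\in[T_0,2T_0]$ write $t+s = nT_0 + s'$ with $n\in\N_0$ and $s'\in[T_0,2T_0]$, so $S^L_m(t)S^L_m(s)x = S^L_m(s')[D^L_m(1)]^n x \in S^L_m(s')\mathfrak M^L_D \subset \mathfrak M^L$. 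For compactness in $\HH^1$, note $\mathfrak M^L_D\subset\mathcal K_m$, which by construction (see \eqref{absorb} and the smoothing properties \eqref{SMOOTH:KLLM}, \eqref{SMOOTH:GMS}) is bounded in $\HH^3$; then the map $[T_0,2T_0]\times\mathfrak M^L_D\to\HH^1$, $(t,x)\mapsto S^L_m(t)x$, is continuous — continuity in $x$ by the compact Lipschitz estimate Lemma~\ref{LEM:CLE}, continuity in $t$ by the uniform Hölder estimate Proposition~\ref{PROP:HLD:KLLM}/\ref{PROP:HLD:GMS} — so $\mathfrak M^L$ is the continuous image of a compact set and hence compact in $\HH^1$.

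Next, \textbf{finite fractal dimension} (property (a)): $\mathfrak M^L$ is the Lipschitz-type image of $[T_0,2T_0]\times\mathfrak M^L_D$. More precisely, using Lemma~\ref{LEM:CLE} one controls $\|S^L_m(t)x-S^L_m(t)y\|_{\HH^1}$ by $\|x-y\|_{(\HH^1)'}$ uniformly for $t\in[T_0,2T_0]$ and $L\in[0,1]$, and the uniform Hölder estimate controls $\|S^L_m(t)x-S^L_m(s)x\|_{\HH^1}$ by $|t-s|^{3/16}$; a standard covering argument (cf.~\cite[Thm.~4.4]{EMZ} or \cite[\S7.4]{chueshov}) then gives $\dim_{\mathrm{frac},\HH^1}(\mathfrak M^L)\le \dim_{\mathrm{frac},(\HH^1)'}(\mathfrak M^L_D) + C$, uniformly in $L$ by Corollary~\ref{COR:REA:DIST}(a). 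For \textbf{exponential attraction} (property (c)): given a bounded $B\subset\mathcal K_m$ and $t\ge T_0$, write $t=nT_0+s$ with $s\in[0,T_0)$, decompose $S^L_m(t)B = S^L_m(s+T_0)[D^L_m(1)]^{n-1}B$, and estimate in $\HH^1$ via Lemma~\ref{LEM:CLE} (uniform Lipschitz $(\HH^1)'\to\HH^1$ for times $\ge 1$) against the discrete attraction rate $\dist_{(\HH^1)'}(D^L_m(n-1)B,\mathfrak M^L_D)\le c_3 e^{-a(n-1)}$ from Corollary~\ref{COR:REA:DIST}(c); this yields $\dist_{\HH^1}(S^L_m(t)B,\mathfrak M^L)\le C_3 e^{-\alpha t}$ with $C_3,\alpha$ independent of $L$.

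Finally, the \textbf{robustness estimate} (property (b)), which is the main obstacle. One must bound $\dist_{\mathrm{sym},\HH^1}(\mathfrak M^L,\mathfrak M^0)$. For $y=S^L_m(t)x\in\mathfrak M^L$ with $x\in\mathfrak M^L_D$, $t\in[T_0,2T_0]$, pick $\tilde x\in\mathfrak M^0_D$ with $\|x-\tilde x\|_{(\HH^1)'}\le\dist_{(\HH^1)'}(x,\mathfrak M^0_D)+\varepsilon\le c_2 L^{\gamma/4}+\varepsilon$; then split
\begin{align}
\label{PLAN:SPLIT}
\|S^L_m(t)x - S^0_m(t)\tilde x\|_{\HH^1}
\le \|S^L_m(t)x - S^0_m(t)x\|_{\HH^1}
+ \|S^0_m(t)x - S^0_m(t)\tilde x\|_{\HH^1}.
\end{align}
The second term is bounded by $\Lambda_1^0(2T_0)\|x-\tilde x\|_{(\HH^1)'}\lesssim L^{\gamma/4}$ via Lemma~\ref{LEM:CLE}; the first term is exactly the $L\to 0$ convergence rate of Corollary~\ref{COR:CONV:GMS}, namely $\lesssim L^{1/6}$ in the $\HH^1$-norm on the fixed interval $[1,2T_0]$, uniformly over $x\in\mathcal K_m$. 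Since $S^0_m(t)\tilde x\in\mathfrak M^0$, this gives $\dist_{\HH^1}(\mathfrak M^L,\mathfrak M^0)\le C_2(L^{1/6}+L^{\gamma/4})$; the reverse direction is symmetric (starting from $y=S^0_m(t)\tilde x\in\mathfrak M^0$ and approximating $\tilde x$ by an element of $\mathfrak M^L_D$), so the symmetric distance obeys the same bound. The delicate points here are that all the constants ($\Lambda_1^0$, the constants in Corollary~\ref{COR:CONV:GMS}, $c_2,\gamma$ from Corollary~\ref{COR:REA:DIST}) are genuinely $L$-independent — which is guaranteed by the earlier statements — and that one works on the \emph{fixed} time window $[T_0,2T_0]\subset[1,\infty)$, so that the smoothing estimates and the $\HH^1$-convergence rate \eqref{CRATE:1} apply with constants depending only on $T_0$ (hence only on $\mathcal K_m$), not on the individual trajectory. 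This completes the verification and hence the proof.
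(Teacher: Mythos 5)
Your proposal is correct and follows essentially the same route as the paper: the same definition $\mathfrak M^L=\bigcup_{t\in[T_0,2T_0]}S^L_m(t)\mathfrak M^L_D$, the same Hölder-in-$(t,x)$ covering argument for the dimension bound, the same $t=nT_0+s$ decomposition combined with Lemma~\ref{LEM:CLE} for the uniform attraction rate, and the identical splitting \eqref{PLAN:SPLIT} with Corollary~\ref{COR:CONV:GMS} and the discrete robustness estimate for property (b). No gaps; only cosmetic differences (e.g., you spell out forward invariance, which the paper delegates to the cited reference).
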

\begin{proof}
	To establish the assertion, we proceed similarly as in the proof of \cite[Thm.~5.1]{EMZ} (which in turn is based on the proof of the general result \cite[Thm.~3.1]{eden}).
	Let $T_0\ge 1$ denote the number introduced in \eqref{EX:T0}. 
	Since $\mathcal K_m$ is a compact subset of $\HH^1$, we can find a radius $K>0$ (independent of $L$) such that $\norm{(u,v)}_{\HH^1}\le K$ for all $(u,v) \in \mathcal K_m$.
	Let now $C$ denote a generic positive constant, depending only on $T_0$ and $K$ that may change its value from line to line.
	For any $L\in [0,1]$, we set
	\begin{align}
		\label{CONST:MLW}
		\mathfrak M^L := \bigcup_{t \in [T_{0},2T_{0}]} S_{m}^{L}(t) \mathfrak M^L_D.
	\end{align}
	We now intend to show that $\{\mathfrak M^L\}_{L\in[0,1]}$ is a family of exponential attractors with respect to the $\HH^1$-metric which satisfies the assertions (a), (b) and (c).
	
	Invoking the compact Lipschitz estimate from Lemma~\ref{LEM:CLE}, we infer that $\mathfrak M^L$ is compact in $\HH^1$ for every $L\in [0,1]$.
	We next show that the fractal dimension of $\mathfrak M^L$ is bounded uniformly in $L$.
	To this end, we consider the mapping
	\begin{align*}
		\mathcal F^L: [T_0,2T_0] \times \mathcal K_m \to \mathcal K_m,\quad
		(t,u,v) \mapsto S^L(t)(u,v).
	\end{align*}
	It obviously holds that $\mathfrak M^L = \mathcal F^L([T_0,2T_0]\times \mathfrak M^L_D)$.
	Hence, invoking Proposition~\ref{PROP:CD:KLLM}, Proposition~\ref{PROP:HLD:KLLM}, Proposition~\ref{PROP:CD:GMS}, Proposition~\ref{PROP:HLD:GMS} and Lemma~\ref{LEM:CLE}, we infer that there exists some constant $h \ge 0$ depending only on $T_0$ and $K$, such that 
	\begin{align*}
	&\bignorm{ \mathcal F^L(t_1,u_1,v_1) - \mathcal F^L(t_2,u_2,v_2) }_{\HH^1} 
	= \bignorm{ S_{m}^{L}(t_{1})(u_1,v_1)-S_{m}^{L}(t_{2})(u_2,v_2) }_{\HH^1} \\
	&\quad \le C\big( \bignorm{(u_1,v_1)-(u_1,v_2)}_{\HH^1}
		+ \left|t_{1}-t_{2} \right|^{\frac{3}{16}}\big) 
	\le h \bignorm{(t_1,u_1,v_1)-(t_2,u_1,v_2)}_{\R\times\HH^1}^{\frac{3}{16}} 	
	\end{align*}
	for all $t_1,t_2 \in [T_0,2T_0]$ and all $(u_1,v_1),(u_2,v_2)\in \mathcal K_m$, where $\R$ is endowed with the Euclidean norm. This means that $\mathcal F^L$ is Hölder continuous with exponent $3/16$ and Hölder constant $h$.
	
	\pagebreak[2]
	
	Note that $h r^{3/16} \le r^{1/8}$ if $r>0$ is sufficiently small. Hence, recalling the definition of the fractal dimension (see \eqref{DEF:DIMFRAC}), we obtain 
	\begin{align*}
		&\dim_{\mathrm{frac},\HH^1}\big(\mathfrak M^L\big)
		= \dim_{\mathrm{frac},\HH^1}\big(\mathcal F^L([T_0,2T_0]\times \mathfrak M^L_D)\big) 
		\\[1ex]
		&\quad = \underset{r\to 0}{\lim\sup}\; \frac{\mathcal N_{h r^{3/16}}\big(\mathcal F^L([T_0,2T_0]\times \mathfrak M^L_D)\,;\,\HH^1\big)}{-\ln\big(h\, r^{3/16} \big)}
		\\[1ex]
		&\quad \le  \underset{r\to 0}{\lim\sup}\; \frac{\mathcal N_{r}\big([T_0,2T_0]\times \mathfrak M^L_D\,;\,\R\times\HH^1\big)}{-\ln\big(r^{1/8} \big)}
		\\[1ex]
		&\quad = 8 \dim_{\mathrm{frac},\R\times \HH^1}
			\big([T_0,2T_0]\times \mathfrak M^L_D\big) 
		\\
		&\quad \le 8 \big( \dim_{\mathrm{frac},\HH^1}
		\big(\mathfrak M^L_D\big) + 1 \big)
		\le 8\big(c_1 + 1\big),
	\end{align*}
	where $c_1>0$ is the constant from Corollary~\ref{COR:REA:DIST}(a). This verifies (a).
	
	Proceeding as in the proof of \cite[Thm.~3.1]{eden}, it is straightforward to check that the set $\mathfrak M^L$ is forward invariant, i.e., $S^L(t) \mathfrak M^L \subset \mathfrak M^L$ for all $t\ge 0$. 
	
	Let now $L\in (0,1]$, $t>0$ and $(u,v)\in\mathcal K_m$ be arbitrary. Without loss of generality, we assume that $t\ge T_0$. Then there exist $s\in[T_0,2T_0)$ and $n\in\N_0$ such that $t = nT_0 + s$. 
	Due to Corollary~\ref{COR:REA:DIST}, and since $\mathfrak M^L_D$ is compact in $(\HH^1)'$, we can further find $(\bar u,\bar v) \in \mathfrak M^L_D$ such that 
	\begin{align*}
		\bignorm{ D^L(n)(u,v) - (\bar u,\bar v) }_{(\HH^1)'} \le c_3 e^{-a n}.
	\end{align*}
	Then, recalling $1\le T_0\le s<2T_0$ and $n=(t-s)/T_0$, we can use the compact Lipschitz estimate from Lemma~\ref{LEM:CLE} to conclude that
	\begin{align*}
		&\bignorm{S^L_t (u,v) - S^L_s(\bar u,\bar v)}_{\HH^1}
			= \bignorm{S^L_s D^L(n) (u,v) - S^L_s(\bar u,\bar v)}_{\HH^1} \\
		&\quad \le C \bignorm{D^L(n) (u,v) - (\bar u,\bar v)}_{(\HH^1)'}
			\le C  e^{-an} \\
		&\quad = C e^{-at/T_0} e^{as/T_0}	
			\le C e^{-\alpha t}
	\end{align*}
	with $\alpha := a/T_0$. Since $S_s^L(\bar u,\bar v) \in \mathfrak M^L$, this implies
	\begin{align*}
		\dist_X\big(S^L_m(t)\mathcal K_m,\mathfrak M^L\big) \le C e^{-\alpha t}
	\end{align*}
	which proves (c). In particular, we have thus shown that for all $L\in [0,1]$, $\mathfrak M^L$ is indeed an exponential attractor for the dynamical system $( \mathcal K_{m}, S_{m}^{L}(t)) $ with respect to the $\HH^1$-metric.
	
	It remains to prove (b). Therefore, let $L\in (0,1]$ and $(u_*,v_*)\in\mathfrak M^L$ be arbitrary. Hence, there exist $t\in[T_0,2T_0]$ and $(u_0,v_0) \in \mathfrak M^L_D$ such that $(u_*,v_*) = S^L(t)(u_0,v_0)$. According to Corollary~\ref{COR:REA:DIST}(b), there exists $(\bar u_0,\bar v_0) \in \mathfrak M^0_D$ such that
	\begin{align*}
		\bignorm{(u_0,v_0) - (\bar u_0,\bar v_0)}_{(\HH^1)'} \le c_2 L^{\gamma/4}.
	\end{align*}
	We now set $(\bar u_*,\bar v_*) := S^0(t)(\bar u_0,\bar v_0) \in \mathfrak M^0$.
	Hence, using the Hölder estimate from Proposition~\ref{PROP:CD:GMS}, the convergence estimate from Corollary~\ref{COR:CONV:GMS}, and Lemma~\ref{LEM:LBS}, we conclude that
	\begin{align*}
		&\bignorm{(u_*,v_*) - (\bar u_*,\bar v_*)}_{\HH^1}
			= \bignorm{ S^L(t)(u_0,v_0) - S^0(t)(\bar u_0,\bar v_0)}_{\HH^1} \\
		&\quad \le \bignorm{ S^L(t)(u_0,v_0) - S^0(t)(u_0,v_0) }_{\HH^1}
			+ \bignorm{ S^0(t)(u_0,v_0) - S^0(t)(\bar u_0,\bar v_0) }_{\HH^1} \\
		&\quad \le \bignorm{ S^L(t)(u_0,v_0) - S^0(t)(u_0,v_0) }_{\HH^1}
			+ C \bignorm{ (u_0,v_0) - (\bar u_0,\bar v_0) }_{(\HH^1)'} \\	
		&\quad \le C L^{1/6} + C L^{\gamma/4}.
	\end{align*}
	This directly implies
	\begin{align*}
		\dist_{\HH^1}\big(\mathfrak M^L,\mathfrak M^0\big) 
		\le C \big( L^{1/6} + L^{\gamma/4} \big).
	\end{align*}
	Proceeding analogously, we derive the same estimate for $\dist_{\HH^1}(\mathfrak M^0,\mathfrak M^L) $. Combining both estimates, we obtain (b).
	Thus, the proof is complete. 
\end{proof}

\section*{Acknowledgement} Sema Yayla was supported by the Scientific and Technological Research Council of Turkey (TUBITAK). Harald Garcke and Patrik Knopf were partially supported by the RTG 2339 ``Interfaces, Complex Structures, and Singular Limits''
of the German Science Foundation (DFG). The support is gratefully acknowledged.

%
%

\footnotesize

\bibliographystyle{plain}
\bibliography{GKY}

\end{document}